\renewenvironment{proof}[1][\proofname]{%
   \par\pushQED{\qed}\normalfont%
   \topsep6\p@\@plus6\p@\relax
   \trivlist\item[\hskip\labelsep\bfseries#1\@addpunct{.}]%
   \ignorespaces
}{%
   \popQED\endtrivlist\@endpefalse
}
\newtheorem{theorem}{Theorem}
\newtheorem{proposition}[theorem]{Proposition}
 \numberwithin{theorem}{section}
 \newtheorem{corollary}[theorem]{Corollary}
\newtheorem{lemma}[theorem]{Lemma}
\newtheorem{remark}[theorem]{Remark}
\numberwithin{equation}{section}
\renewcommand{\P}{\mathbb{P}}
\newcommand{\E}{\mathbb{E}}
\newcommand{\R}{\mathbb{R}}
\newcommand{\mR}{\mathcal{R}}
\newcommand{\mK}{\mathcal{K}}
\newcommand{\pmR}{\partial\mathcal{R}}
\newcommand{\Q}{\mathbb{Q}}
\newcommand{\N}{\mathbb{N}}
\newcommand{\cF}{\mathcal F}
\newcommand{\cE}{\mathcal E}
\newcommand{\cG}{\mathcal G}
\newcommand{\cW}{\mathcal W}
\newcommand{\veps}{\varepsilon}
 \newcommand{\nn}{\nonumber}
 \newcommand{\no}{\noindent}
 \newcommand{\lb}{\mathsf{L}}
\newcommand{\rb}{\mathsf{R}}
\newcommand{\pf}[1]{{\color{black}#1}}
\renewcommand{\pf}[1]{}
\newcommand{\SUBMIT}[1]{{\color{red}#1}}
\renewcommand{\SUBMIT}[1]{}
\newcommand{\ARXIV}[1]{{\color{black}#1}}
\begin{document}

\author{
Jieliang Hong\footnote{Department of Mathematics, The University of British Columbia. {\tt jlhong@math.ubc.ca}}
\and Leonid Mytnik\footnote{Faculty of Industrial Engineering and Management, Technion.
{\tt leonid@ie.technion.ac.il}}
\and Edwin Perkins \footnote{Department of Mathematics, The University of British Columbia.  {\tt perkins@math.ubc.ca}}
}

\title{On the topological boundary of the range of super-Brownian motion\ARXIV{--extended version}}

\maketitle

\begin{abstract}

We show that if $\pmR$ is the boundary of the range of super-Brownian motion and dim denotes Hausdorff dimension, then with probability one, for any open set $U$, $\pmR\cap U\neq\emptyset$ implies
$$\textnormal{dim}(U\cap\pmR)=\begin{cases}
4-2\sqrt2\approx1.17&\text{ if }d=2\\
\frac{9-\sqrt{17}}{2}\approx 2.44&\text{ if }d=3.
\end{cases}$$
This improves recent results of the last two authors by working with the actual topological boundary, rather than
the boundary of the zero set of the local time, and establishing a local result for the dimension.
\end{abstract}

\section{Introduction}

We consider a $d$-dimensional super-Brownian motion (SBM), $(X_t,t\ge0)$, starting at $X_0$ under $\P_{X_0}$ with $d\le 3$.  Here $X_0\in M_F(\R^d)$, the space of finite measures on $\R^d$ with the weak topology, $X$ is a continuous $M_F(\R^d)$-valued strong Markov process, and $\P_{X_0}$ denotes any probability under which $X$ is as above.  We write $X_t(\phi)$ for the integral of $\phi$ with respect to $X$, and take our branching rate to be one, so that for any non-negative bounded Borel functions $\phi,f$ on $\R^d$, 
\begin{equation}\label{LF}
\E_{X_0}\Bigl(\exp\Bigl(-X_t(\phi)-\int_0^t X_s(f)ds\Bigr)\Bigr) =\exp(-X_0(V_t(\phi,f)).
\end{equation}
Here $V_t(x)=V_t(\phi,f)(x)$ is the unique solution of the mild form of 
\begin{equation}\label{SLLE}
\frac{\partial V}{\partial t}=\frac{\Delta V_t}{2}-\frac{V_t^2}{2}+f,\quad V_0=\phi,
\end{equation}
that is,
\begin{equation*}
V_t=P_t(\phi)+\int_0^tP_s\Bigl(f-\frac{V_{t-s}^2}{2}\Bigr)\,ds.
\end{equation*}
In the above $(P_t)$ is the semigroup of standard $d$-dimensional Brownian motion. See Chapter II 
of \cite{Per02} for the above and further properties.  
Note that $X$ has an a.s. finite extinction time, and therefore we can define the so-called total occupation time measure of the super-Brownian motion  as a finite measure,
 \begin{eqnarray*}
I(A)=\int_0^\infty X_s(A)ds.
\end{eqnarray*}
Supp$(\mu)$ will denote the closed support of a measure $\mu$. We 
define the range, $\mR$, of $X$ to be 
\begin{equation*}
\mR=\textnormal{Supp}(I).
\end{equation*}
A slightly smaller set is often used in the literature (see \cite{DIP89} or Corollary 9 in Ch. IV of \cite{Leg99}) but the definitions agree under $\P_{\delta_x}$ or the canonical measures $\N_x$ defined below, and also give the same
outcomes for $\mR\cap\text{Supp}(X_0)^c$ and $\partial\mR\cap\text{Supp}(X_0)^c$. Therefore the two definitions will
be equivalent for our purposes. 
In dimensions $d\leq 3$, the occupation measure $I$ has a density, $L^x$, which is called (total) local time of $X$, that is,
$$I(f)=\int_0^\infty X_s(f)\,ds=\int_{\R^d} f(x)L^x\,dx\text{ for all  non-negative measurable }f.$$
Moreover, $x\mapsto L^x$ is lower semicontinuous, is continuous on $\text{Supp}(X_0)^c$, and  for $d=1$ is globally continuous (see Theorems~2 and 3 of \cite{Sug89}). 
From \eqref{LF} and \eqref{SLLE} it is easy to derive (see Lemma 2.2 in~\cite{MP17}) 
\begin{equation}\label{LVlambda}
\E_{X_0}(e^{-\lambda L^x})=\exp\left(-\int_{\R^d} V^\lambda(x-x_0)X_0(dx_0)\right),
\end{equation}
where $V^\lambda$ is the unique solution (see Section~2 of \cite{MP17} and the references given there) to 
\begin{equation}\label{vlequation}
\frac{\Delta V^\lambda}{2}=\frac{(V^\lambda)^2}{2}-\lambda\delta_0,\ \ V^\lambda>0\text{ on }\R^d.\end{equation}
Thus in dimensions $d\leq 3$ we have 
\begin{equation*}
\mR=\overline{\{x:L^x>0\}},
\end{equation*}
and $\mR$ is a closed set of positive Lebesgue measure. In dimensions $d\ge 4$, $\mR$ is a Lebesgue null
set of Hausdorff dimension $4$ (see Theorem~1.4 of \cite{DIP89}), which explains our restriction to $d\le 3$ in this work.

Our main goal in this paper is to study properties of $\pmR$ --- the topological boundary of $\mR$, and 
in particular to determine the local Hausdorff dimension of $\pmR$ outside the support of $X_0$. 
The related question of  the dimension of the boundary of the set where the local time is positive, that is the dimension of 
\begin{equation}\label{frontdef}
F=\partial\{x:L^x>0\},
\end{equation}
was studied in \cite{MP17}.
To describe this latter result we introduce:
\begin{equation}\label{5_07_0}
p=p(d)=\begin{cases}
3 &\text{ if }d=1\\
2\sqrt 2 &\text{ if }d=2\\
\frac{1+\sqrt {17}}{2} &\text{ if }d=3,
\end{cases}
\end{equation}
$d_f=d+2-p$, 
and
\begin{equation}
\label{5_07_1}
\alpha=\alpha(d)=\frac{p(d)-2}{4-d}=\begin{cases} 1/3&\text{ if }d=1\\
 \sqrt 2-1&\text{ if }d=2\\
\frac{\sqrt{17}-3}{2}&\text{ if }d=3.
\end{cases}
\end{equation}

\begin{theorem}[\cite{MP17}]\label{dimthm} With $\P_{\delta_0}$-probability one, 
$$\textnormal{dim}(F)= d_f=\begin{cases}0&\text{ if } d=1\\
4-2\sqrt2\approx1.17&\text{ if }d=2\\
\frac{9-\sqrt{17}}{2}\approx 2.44&\text{ if }d=3.
\end{cases}$$
\end{theorem}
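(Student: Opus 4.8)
\medskip
\noindent\emph{Proof sketch.} The assertion concerns a Hausdorff dimension, so the plan is to run the standard two‑sided scheme: a first‑moment covering bound for the upper estimate $\dim F\le d_f$, and a Frostman (second‑moment) energy bound for the lower estimate $\dim F\ge d_f$, both driven by sharp one‑ and two‑point estimates for the $\veps$‑neighbourhood of $F$. It is convenient to work under the canonical measure $\N_0$ and to transfer to $\P_{\delta_0}$ only at the very end, through the Poisson cluster representation, since $\N_0$ carries the exact scaling that makes the exponents explicit. Because $L$ is lower semicontinuous, $\{L>0\}$ is open and $F=\overline{\{L>0\}}\cap\{L=0\}$; consequently, for an open ball, $B(x,\veps)\cap F\neq\emptyset$ precisely when $B(x,\veps)$ meets both $\{L>0\}$ and $\{L=0\}$. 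Everything rests on the one‑point estimate
\[\N_0\bigl(B(x,\veps)\cap F\neq\emptyset\bigr)\asymp |x|^{-p}\,\veps^{p-2}\qquad(\veps\ \text{small},\ x\in K),\]
with $K$ a fixed compact subset of $\R^d\setminus\{0\}$, together with a matching two‑point bound $\N_0\bigl(B(x,\veps)\cap F\neq\emptyset,\ B(y,\veps)\cap F\neq\emptyset\bigr)\lesssim \veps^{2(p-2)}(|x-y|\vee\veps)^{-(p-2)}$ (with a constant depending on $K$): this is the one‑point bound squared times a correction which, integrated against $dx\,dy$ over $K\times K$, is finite exactly for exponents below $d_f=d-(p-2)$.

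\medskip
The one‑point estimate is extracted from the law of $L^x$ near $0^+$, and this in turn from \eqref{vlequation}. By \eqref{LVlambda}, $\N_0(1-e^{-\lambda L^x})=V^\lambda(x)$, so $\N_0(L^x>0)=V^\infty(x):=\lim_{\lambda\uparrow\infty}V^\lambda(x)$ is the maximal positive solution of $\tfrac12\Delta V=\tfrac12 V^2$ on $\R^d\setminus\{0\}$, namely $2(4-d)|x|^{-2}$ for $d<4$, and $U^\lambda:=V^\infty-V^\lambda=\int_{(0,\infty)}e^{-\lambda w}\,\N_0(L^x\in dw)$ solves the linear equation $\tfrac12\Delta U^\lambda=\tfrac12(V^\infty+V^\lambda)U^\lambda$. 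Now \eqref{vlequation} is invariant under $V^\lambda(x)=\nu^2 V^1(\nu x)$ with $\nu=\lambda^{1/(4-d)}$, whence $U^\lambda(x)=\lambda^{2/(4-d)}U^1(\lambda^{1/(4-d)}x)$, and the far‑field rate of $U^1$ is governed by the linearised equation $\Delta U\approx 4(4-d)|x|^{-2}U$, whose radial indicial equation $\beta^2+(2-d)\beta-4(4-d)=0$ has $p=p(d)$ as its unique positive root. Selecting the decaying solution gives $U^1(y)\sim C|y|^{-p}$ as $|y|\to\infty$, hence $U^\lambda(x)\sim C|x|^{-p}\lambda^{-\alpha}$ with $\alpha=(p-2)/(4-d)$ as in \eqref{5_07_1}, and since $U^\lambda(x)$ is the Laplace transform of $\N_0(L^x\in\cdot)$ restricted to $(0,\infty)$, Karamata's Tauberian theorem yields
\[\N_0\bigl(0<L^x\le a\bigr)\sim \frac{C|x|^{-p}}{\Gamma(1+\alpha)}\,a^{\alpha}\qquad(a\downarrow0).\]
To pass from this to the spatial one‑point estimate one uses the regularity of $L$ near $F$ --- the local‑time window $\{0<L^x\le a\}$ should correspond, up to events of negligible measure, to the spatial window $\{\mathrm{dist}(x,F)\lesssim a^{1/(4-d)}\}$, so that $(\veps^{4-d})^{\alpha}=\veps^{p-2}$ --- together with the classical fact that an SBM started at distance $r$ from a ball of radius $\veps$ charges that ball with $\N_0$‑measure of order $r^{-2}$. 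The two‑point estimate requires, additionally, a decomposition of the SBM at its first entrance to $B(x,\veps)$ (via the special Markov property of exit measures, equivalently the Brownian snake) showing that, conditionally on the coarse‑scale mass, the two local events decouple with the advertised power of $|x-y|$.

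\medskip
Granting these, the dimension follows routinely. Covering a compact $K\subset\R^d\setminus\{0\}$ by $\asymp\veps^{-d}$ boxes of side $\veps$, the one‑point bound gives $\E_{\delta_0}\bigl[\#\{\text{boxes meeting }F\cap K\}\bigr]\lesssim\veps^{-d}\veps^{p-2}=\veps^{-d_f}$, so Markov's inequality and Borel--Cantelli along $\veps=2^{-n}$ force $\dim F\le d_f$ $\P_{\delta_0}$‑a.s.\ (this already covers $d=1$, where $d_f=0$ and the lower bound is vacuous). For the lower bound, with $d\in\{2,3\}$, put $\mu_\veps(dx)=\veps^{-(p-2)}\,\mathbf 1\{B(x,\veps)\cap F\neq\emptyset\}\,\mathbf 1_K(x)\,dx$; the one‑ and two‑point bounds give $\E[\mu_\veps(K)]\gtrsim1$, $\E[\mu_\veps(K)^2]\lesssim1$, and $\E\iint|x-y|^{-\gamma}\,\mu_\veps(dx)\,\mu_\veps(dy)\lesssim1$ for every $\gamma<d_f$. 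A subsequential weak limit $\mu$ is then supported on $F$, non‑zero on an event of positive probability (Paley--Zygmund), and of finite $\gamma$‑energy (Fatou), so Frostman's lemma gives $\dim F\ge\gamma$ there; letting $\gamma\uparrow d_f$ gives $\dim F\ge d_f$ with positive probability. To upgrade this to a sure event one uses that $F\neq\emptyset$ almost surely (it is the boundary of the non‑empty bounded open set $\{L>0\}$) and a standard argument based on the Markov and branching properties: decomposing $X$ at a small time into independent sub‑populations, each of which has a uniformly positive chance of contributing a full‑dimensional piece of $F$, forces $\{\dim F=d_f\}$ to be sure, and the passage back to $\P_{\delta_0}$ is immediate from the cluster representation.

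\medskip
The main obstacle is the two‑point estimate in the regime of intermediate separations $\veps\ll|x-y|\ll1$: one has to show that, conditionally on the coarse‑scale mass, the event that $F$ meets $B(x,\veps)$ and the event that $F$ meets $B(y,\veps)$ are asymptotically independent and together produce exactly the correction $|x-y|^{-(p-2)}$, which calls for a delicate SBM decomposition fed by the PDE asymptotics above. A close second is the \emph{matching lower} bound in the one‑point estimate --- that the SBM genuinely leaves a zero of $L$ inside a ball it charges with $\N_0$‑measure $\gtrsim\veps^{p-2}$, requiring a second‑moment argument on the number of sub‑balls carrying a zero --- and, underpinning both, the sharp regularity of $L$ near $F$ needed to pass from $L^x$ being positive but small to $x$ being close to $F$.
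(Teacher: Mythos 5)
This statement is quoted from \cite{MP17}; the present paper does not reprove it, so your sketch must be measured against the proof in that reference, whose strategy is summarized in the Introduction here. Your architecture --- first--moment covering for the upper bound, a Frostman/energy second--moment argument for the lower bound, both driven by the one--point asymptotics $\P_{\delta_0}(0<L^x<\veps)\sim|x|^{-p}\veps^{\alpha}$ obtained from the scaling $V^\lambda(x)=\lambda^{2/(4-d)}V^1(\lambda^{1/(4-d)}x)$, the very singular solution $V^\infty(x)=2(4-d)|x|^{-2}$, the indicial equation $\beta^2+(2-d)\beta-4(4-d)=0$ for $p$, and a Tauberian theorem --- is indeed the route of \cite{MP17}, and your exponent bookkeeping ($\alpha=(p-2)/(4-d)$, $d_f=d-(p-2)$) checks out. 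But the sketch defers exactly the steps that constitute the content of that proof, and one step as written would not go through.

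Concretely: (i) the linearization only identifies $p$ as a candidate decay rate; the two--sided estimate $V^\infty-V^\lambda\sim|x|^{-p}\lambda^{-\alpha}$ --- above all the \emph{lower} bound, without which the Tauberian step yields nothing toward the lower bound on dimension, and the reason one must rule out faster decay of $U^1$ --- is the analytic core of \cite{MP17} and requires the Feynman--Kac representation and Bessel--process estimates of the kind carried out in Section~\ref{sec:3.1} of this paper. (ii) Your passage from $\{0<L^x\lesssim\veps^{4-d}\}$ to $\{B(x,\veps)\cap F\neq\emptyset\}$ needs a bound of the form $L^x\le C\,d(x,\{L=0\})^{4-d}$; this does \emph{not} follow from the known H\"older continuity of $L$ (whose exponent is strictly smaller than $4-d$ in $d=2,3$), which is precisely why the Introduction here calls this a heuristic and why \cite{MP17} routes the rigorous upper bound through Dynkin exit measures from $G^x_\veps$ rather than through local time; the improved boundary regularity of $L$ is itself the subject of \cite{HP18}. (iii) The two--point estimate, which you correctly flag, is a long argument via the special Markov property (the analogue here is Proposition~\ref{p3.1} and its Appendix proof). (iv) The a.s.\ upgrade from positive probability is not a routine branching decomposition: the ``uniformly positive chance'' must hold uniformly over the random configurations produced by the decomposition, which in \cite{MP17} is achieved with exit measures from hyperplanes and a martingale/stopping--time argument (and here, for $\pmR$, with exit measures from shrinking balls, Sections~\ref{sec:6}--\ref{sec:maintheorem}). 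So the plan is the right one, but items (i)--(iv) are genuine gaps rather than routine verifications.
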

\noindent There were also versions of the above in~\cite{MP17} for more general initial conditions $X_0$.  

I. Benjamini's observation that the boundary of the range exhibited interesting fractal
properties in simulations was one motivation for the above.
Although $F$ may be a natural object from a stochastic analyst's perspective, the topological boundary of $\mR$, $\pmR$, is the more natural geometric object and of course was the set Benjamini had in mind.
Clearly, $\pmR$ and $F$ are closely related; it is easy to check that
 \begin{equation}\label{boundaryofR} \pmR\subset F.
 \end{equation}
Thus,  Theorem~\ref{dimthm} gives an upper bound on dimension of $\pmR$. Whether or not $F=\pmR$ remains open for $d=2$ or $3$, but Theorem~1.7 in \cite{MP17} shows that,  if $d=1$, there exist random variables  $\lb,\rb$ such that 
\begin{equation}\label{d1}
F=\pmR=\{\lb,\rb\}\text{ where }\lb<0<\rb\quad \P_{\delta_0}-\text{a.s.}, 
\end{equation}
and so we will usually assume $d=2$ or $3$.  A point $x$ in $F$ will be in $\pmR$ iff there are open sets $U$ approaching $x$ s.t. $L=0$ on $U$. 
Note that,  for example,  any isolated zeros of $L$ will be in $F$ but not in $ \pmR$ but we do not even know if such points exist in $d=2,3$. 
It was conjectured in (1.10) of \cite{MP17}  that in $d=2,3$,
\begin{equation}\label{dimssame}
\textnormal{dim}( \pmR)=\textnormal{dim}(F)\quad\P_{\delta_0}-a.s..
\end{equation}
In this paper we verify this conjecture, and prove the following stronger local version. 

\begin{theorem}\label{localdim}
$\P_{X_0}$-a.s. for any open $U\subset \textnormal{Supp}(X_0)^c$
\[U\cap\pmR\neq\emptyset\Rightarrow \textnormal{dim}(U\cap\pmR)=d_f.\]
\end{theorem}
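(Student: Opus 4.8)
The upper bound is immediate: by \eqref{boundaryofR} and Theorem~\ref{dimthm}, $\P_{X_0}$-a.s.\ we have $\dim(U\cap\pmR)\le\dim(\pmR)\le\dim(F)=d_f$ for every open $U$, so I would focus entirely on the matching lower bound. The first step is a reduction to a single ball. Fix a countable base $\{B_i\}_{i\ge1}$ of $\text{Supp}(X_0)^c$ consisting of open balls with $\overline{B_i}\subset\text{Supp}(X_0)^c$. Any open $U\subset\text{Supp}(X_0)^c$ with $U\cap\pmR\ne\emptyset$ contains some $B_i$ with $B_i\cap\pmR\ne\emptyset$, and Hausdorff dimension is monotone, so by countability it suffices to prove that for each fixed ball $B$ with $\overline B\subset\text{Supp}(X_0)^c$,
$$\P_{X_0}\bigl(B\cap\pmR\ne\emptyset,\ \dim(B\cap\pmR)<d_f\bigr)=0.$$
By Frostman's lemma (the energy method), this follows once one shows: on the event $\{B\cap\pmR\ne\emptyset\}$ there is, with full conditional probability, a nonzero random measure $\mu$ carried by $B\cap\pmR$ with $\iint|x-y|^{-\gamma}\,\mu(dx)\,\mu(dy)<\infty$ for every $\gamma<d_f$.

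I would build $\mu$ by a Cantor-type recursion along dyadic scales, started at a measurably chosen point $z\in B\cap\pmR$. The engine is a \textbf{regeneration estimate}, uniform in the scale: there are $\rho\in(0,1)$ and $c>0$, depending only on $d$, such that whenever $y\in\pmR$ and $B(y,r)\subset\text{Supp}(X_0)^c$, then conditionally on the trace of $X$ outside $B(y,r)$ and on $\{\pmR\cap B(y,r)\ne\emptyset\}$, with probability at least $c$ there are at least two disjoint sub-balls of radius $\rho r$ inside $B(y,r)$, each meeting $\pmR$, and such that the portions of $X$ responsible for $\mR$ inside them are conditionally independent and, after Brownian scaling, absolutely continuous with respect to a super-Brownian motion. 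Proving such an estimate would decompose $X$ near $y$ into its clusters (via the canonical measures $\N_x$, equivalently via historical/snake excursions), use the strong Markov property, and reduce every scale to scale $1$ through the invariance $X_t(\cdot)\mapsto K^{-2}X_{Kt}(\sqrt K\,\cdot)$; the quantitative input — that the conditional probability that a ball of radius $r$ carries both mass of $\mR$ and a genuine \emph{hole} (an open subset of $\mR^c$, equivalently of $\{L\equiv0\}$) decays in $r$ at exactly the rate governed by $p(d)$ and $\alpha(d)$ in \eqref{5_07_0}--\eqref{5_07_1} — is read off from the $\lambda\to\infty$ asymptotics of the solution $V^\lambda$ of \eqref{vlequation}, precisely as in the determination of $\dim F$ in \cite{MP17}. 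Iterating, one obtains a nested family of balls whose intersection is a random compact set $\mK\subset B$ carrying a natural mass distribution $\mu$; every point of $\mK$ is a limit of points of $\pmR$, and $\pmR=\partial\mR$ is closed, so $\mK\subset\pmR$, while the standard Cantor-measure energy bound gives the required finite $\gamma$-energy for all $\gamma<d_f$. The construction survives with positive conditional probability on $\{B\cap\pmR\ne\emptyset\}$; a zero--one argument at $z$ — exploiting the scale invariance of the local law of $X$ near a point of $\text{Supp}(X_0)^c$, so that ``$\dim(\pmR\cap B(z,r))\ge d_f$ for all $r>0$'' lies in a trivial germ $\sigma$-field at $z$ — then upgrades this to: a.s.\ on $\{B\cap\pmR\ne\emptyset\}$, $\dim(B\cap\pmR)\ge d_f$.

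The main obstacle is the regeneration estimate, with two intertwined components. The first is \emph{localization with the sharp exponent}: one must establish enough exact self-similarity of the range near a boundary point that full dimension $d_f=d+2-p(d)$ is forced at every scale — there can be no atypically thin nonempty intersection $\pmR\cap B$ — and the matching lower exponent must again be extracted from the singular behaviour of $V^\lambda$ near $0$, i.e.\ from the nonlinear equation \eqref{vlequation}, but now for a \emph{conditioned} sub-population of $X$ rather than a fresh $\P_{\delta_0}$-picture, which substantially complicates the estimates. The second is the passage from $F$ to $\pmR$: since it is not known whether $F=\pmR$ (nor whether isolated zeros of $L$ exist) for $d=2,3$, the recursion cannot merely certify points of the front $\partial\{L>0\}$ as in \cite{MP17}; at every stage it must produce a genuine hole abutting the surviving mass, so that the limiting Cantor set lies on the topological boundary. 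Controlling the joint law of ``mass here / hole there'' for the conditioned clusters, uniformly over all scales, is the heaviest technical ingredient, and is exactly what Theorem~\ref{dimthm} did not require.
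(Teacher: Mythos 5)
Your upper bound and the reduction to a countable family of balls are fine (though for general $X_0$ you should invoke the version of Theorem~\ref{dimthm} for general initial conditions, i.e.\ Theorem~1.4(a) of \cite{MP17}, rather than the $\P_{\delta_0}$ statement). The lower bound, however, has two genuine gaps, and they sit exactly where the paper's work is concentrated. First, the ``regeneration estimate'' is not proved, and even as stated it would not suffice: producing, with probability $c>0$, \emph{two} surviving sub-balls of radius $\rho r$ per generation yields a Cantor set of dimension at most $\log 2/\log(1/\rho)$, not $d_f$; to force dimension $d_f$ you need $\asymp\rho^{-d_f}$ surviving sub-balls per scale, i.e.\ first- and second-moment estimates of the strength of Theorem~\ref{t2.3} and Proposition~\ref{p3.1}, but now \emph{conditionally} on the exterior trace of $X$ together with the interior event $\{\pmR\cap B(y,r)\neq\emptyset\}$. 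The special Markov property only gives the conditional law given $\cE_G$ (the exterior); conditioning in addition on an interior event such as non-emptiness of $\pmR\cap B(y,r)$ distorts the law in a way you have not controlled, and controlling it uniformly in the scale is not easier than the theorem itself. The paper avoids this entirely: it never conditions on $\{\pmR\cap B\neq\emptyset\}$, but instead rewrites that event (up to null sets, via \eqref{fact1}--\eqref{fact2}) as a countable union of $\cE_r$-measurable exit-measure events $\{X_{G_{r_1}^{x_1}}(1)=0,\ X_{G_{r_0}^{x_1}}(1)>0\}$ over rational $x_1,r_1,r_0$, and proves the dimension lower bound a.s.\ on each such event (Proposition~\ref{exitdim}).

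Second, the ``zero--one argument at $z$'' via a trivial germ $\sigma$-field at a randomly selected boundary point is unjustified. There is no scale invariance of the conditional law of $X$ near a random point of $\pmR$, and no Blumenthal-type triviality is available for the germ field at such a point; this step is where positive probability must be upgraded to probability one, and it cannot be waved through. The paper's substitute is a martingale argument along the filtration $(\cE^+_r)$ of exit measures from the shrinking balls $G_{r_0-r}^{x_1}$: the total exit mass, suitably time-changed, is a CSBP (Proposition~\ref{prop:yzinfo}) and hence has no negative jumps, so its extinction time $T_0$ is predictable and the process must pass through arbitrarily small positive values before dying; Lemma~\ref{lem:spherelb} then bounds $M_r=Q_{x_0}(D_{r_0}\mid\cE^+_r)$ below by a universal $q>0$ near $T_0$, and the nontrivial measurability statement $D_{r_0}\in\cE^+_{T_0-}$ (Lemma~\ref{Emeas}, which requires showing the snake spends zero time at its minimal radius) forces $1_{D_{r_0}}\ge q$, i.e.\ $D_{r_0}$ holds a.s.\ on the event. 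None of this machinery, or a workable replacement for it, appears in your proposal; as written, the argument defers the entire difficulty to two unproven claims, one of which is quantitatively too weak and the other of which I do not believe is true in the form stated.
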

The following corollary is immediate.
\begin{corollary}\label{globaldim}
$\textnormal{Supp}(X_0)^c\cap\pmR\neq\emptyset\Rightarrow \textnormal{dim}(\textnormal{Supp}(X_0)^c\cap\pmR)=d_f\quad\P_{X_0}-\text{a.s.}$
\end{corollary}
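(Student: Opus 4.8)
The plan is to prove Theorem~\ref{localdim}, from which Corollary~\ref{globaldim} follows simply by taking $U = \textnormal{Supp}(X_0)^c$ (an open set) and noting that dimension is monotone and countably stable. The upper bound $\textnormal{dim}(U\cap\pmR)\le d_f$ is free: by \eqref{boundaryofR} we have $U\cap\pmR\subset F$, and Theorem~\ref{dimthm} (in its $X_0$-general form mentioned in the excerpt) gives $\textnormal{dim}(F)= d_f$, so a fortiori $\textnormal{dim}(U\cap\pmR)\le d_f$. Hence the whole content is the matching lower bound: on the event $\{U\cap\pmR\neq\emptyset\}$, with probability one, $\textnormal{dim}(U\cap\pmR)\ge d_f$.

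For the lower bound I would argue locally and use the strong Markov property together with a zero-one law. First I would fix a countable base of balls $\{B_i\}$ for $\textnormal{Supp}(X_0)^c$ and reduce to showing: for each $i$, $\P_{X_0}$-a.s.\ on $\{B_i\cap\pmR\neq\emptyset\}$ one has $\textnormal{dim}(B_i\cap\pmR)\ge d_f$. The heart of the matter is a \emph{localization} step: a point $x\in\pmR$ is, by the discussion after \eqref{d1}, a point of $F$ that is approached by open sets on which $L\equiv0$; I would like to show that near such a point the boundary $\pmR$ looks, at small scales, like the global front $F$ (or like $F$ for a suitably restarted/conditioned SBM), so that the known lower-bound machinery for $\textnormal{dim}(F)$ from \cite{MP17} can be deployed verbatim in the ball. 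Concretely, the strategy is: (i) show that if $\pmR$ meets an open set $U$ then it must in fact contain a "macroscopic" piece — e.g.\ via a first-passage decomposition of $X$ at the boundary of a slightly smaller ball, writing the cluster that creates the boundary point as (a Poisson collection of) excursions under the canonical measures $\N_x$, and observing that whatever produces one boundary point produces, by scaling and the branching property, a whole family; (ii) on that piece, run the energy/second-moment lower bound for Hausdorff dimension exactly as in the proof of the lower half of Theorem~\ref{dimthm}, using the asymptotics of the solution $V^\lambda$ of \eqref{vlequation} near $0$ that drive the exponent $p=p(d)$ in \eqref{5_07_0}. The output is a Frostman measure supported on $B_i\cap\pmR$ with finite $\beta$-energy for every $\beta<d_f$.

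The key technical input I would isolate and prove as a lemma is a \emph{local absolute-continuity / comparison estimate} between the law of $L$ near a putative boundary point and the law of $L$ for a super-Brownian motion started from a point mass, on the complement of its starting support — this is what lets one import the dimension-lower-bound estimates of \cite{MP17}, which were stated for $\P_{\delta_0}$ (or general $X_0$) globally, into an arbitrary small ball. Such a comparison is natural because, by the special Markov property of $X$ along the exit times from a ball $B$, the measure $X$ restricted to $B^c$ — equivalently the part of $L$ on $B^c$ — is a Poisson superposition of clusters issuing from the exit measure, each cluster being a copy of SBM (or the $\N_x$-excursion) started on $\partial B$, and the front of such a cluster outside $\partial B$ is governed by the same PDE \eqref{vlequation}. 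The other ingredient, the zero-one/measurability bookkeeping needed to turn "with positive probability on $\{U\cap\pmR\neq\emptyset\}$" into "almost surely on $\{U\cap\pmR\neq\emptyset\}$," would be handled by a standard argument: cover $U$ by a countable family of balls, apply the dichotomy in each, and use that $\{\pmR$ meets a fixed ball$\}$ is, modulo a null set, a tail-type event for the cluster structure.

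The main obstacle, I expect, is step (i) together with the comparison lemma: making precise and proving that the topological boundary $\pmR$ — as opposed to the local-time front $F$ — genuinely carries a set of full dimension $d_f$ inside every open set it meets, rather than potentially degenerating to a lower-dimensional (even finite, as in the $d=1$ picture \eqref{d1}) set. One must rule out the scenario in which $\pmR\cap U$ is a sparse, low-dimensional subset of the much larger $F\cap U$. This is exactly the gap left open in \cite{MP17} (whether $F=\pmR$ is unknown for $d=2,3$), so the proof cannot go through $F=\pmR$; instead it must show that the \emph{dimension} is already forced, presumably by exhibiting, near any boundary point, enough "holes" of $L$ (open sets on which $L\equiv0$) arranged so that the resulting boundary still supports a Frostman measure of the right dimension. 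Controlling the geometry and abundance of these holes — via moment estimates for $L$ on small balls derived from \eqref{LVlambda}–\eqref{vlequation} and the scaling properties of $V^\lambda$ — is where the real work lies; the Hausdorff-dimension energy computation itself, given such a measure, is then routine and parallels \cite{MP17}.
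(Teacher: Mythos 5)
Your reduction of the corollary to Theorem~\ref{localdim} with $U=\textnormal{Supp}(X_0)^c$ is exactly what the paper does; at the level of the corollary itself the proof really is one line, and your upper bound via $\pmR\subset F$ and Theorem~\ref{dimthm} matches the paper as well. Since you then undertake to prove Theorem~\ref{localdim}, that is where the comparison has to be made, and there the sketch has two genuine gaps.

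First, the quantitative engine. The paper does not run the lower bound through the local time $L$ and the PDE \eqref{vlequation}; the object that simultaneously certifies ``close to $\mR$'' and ``next to a hole'' is the two-radius exit-measure event $\{0<X_{G^x_\veps}(1)\le K\veps^2,\ X_{G^x_{\veps/2}}(1)=0\}$, whose probability is shown to be $\sim\veps^{p-2}$ (Theorem~\ref{t2.3}), combined with a second-moment bound (Proposition~\ref{p3.1}) to get a capacity/energy lower bound with \emph{positive} probability. Your ``comparison lemma'' and ``enough holes of $L$'' paragraphs gesture at this but never isolate a mechanism that forces a point of $\pmR$ (rather than of $F$) nearby; the zero exit measure at radius $\veps/2$ together with the special Markov property is precisely that mechanism, and without it the argument does not distinguish $\pmR$ from $F$ --- which, as you note yourself, is the open problem one must route around. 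Second, and more seriously, the upgrade from positive probability to probability one. You propose a zero-one law for ``tail-type events for the cluster structure,'' but $\{\dim(B\cap\pmR)\ge d_f\}$ is not a tail event in any usable filtration, and no such zero-one law is available. The paper's substitute is the whole of Sections~\ref{sec:6} and \ref{sec:maintheorem}: the time-changed total exit-measure process $Z_t=X_{G_{r_0e^{-t}}}(1)e^{2t}r_0^{-2}$ is a CSBP (Proposition~\ref{prop:yzinfo}), hence has no negative jumps, so the extinction radius $T_0$ is a \emph{predictable} stopping time announced by $T_{n^{-1}}$; one then shows the martingale $M_r=Q_{x_0}(D_{r_0}\mid\cE^+_r)$ stays above a uniform constant $q_{\ref{lem:spherelb}}$ as $r\uparrow T_0$ and that $D_{r_0}\in\cE^+_{T_0-}$ (Lemma~\ref{Emeas}), forcing the conditional probability to be $1$ on $\{0<T_0\le r_0-r_1\}$. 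This predictable-time martingale argument, together with the single-excursion reduction from $\P_{X_0}$ to $\N_{X_0}$ (Propositions~\ref{exitdim} and \ref{candim}), is the missing idea; the energy computation you describe as ``routine'' is indeed the easy half.
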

\noindent The hypothesis in the above Corollary is needed--see Proposition~1.5 of \cite{MP17} for an example where
it fails with positive probability. 
\begin{corollary}\label{delta0} $\P_{\delta_0}$-a.s. for any open set $U$, 
\[U\cap\pmR\neq\emptyset\Rightarrow \textnormal{dim}(U\cap\pmR)=d_f.\]
 In particular, 
$\textnormal{dim}(\partial\mR)=d_f\quad \P_{\delta_0}-\text{a.s.}$
\end{corollary}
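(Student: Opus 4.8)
The plan is to deduce Corollary~\ref{delta0} from Theorem~\ref{localdim}. Since $\textnormal{Supp}(\delta_0)=\{0\}$, Theorem~\ref{localdim} already covers every open set avoiding the origin, so the only extra input I would establish is that, $\P_{\delta_0}$-a.s., the origin is an interior point of $\mR$ (hence $0\notin\pmR$). The upper bound is free in any case: by \eqref{boundaryofR} and Theorem~\ref{dimthm}, $\P_{\delta_0}$-a.s.\ $\dim(U\cap\pmR)\le\dim(F)=d_f$ for every open $U$, so only the matching lower bound needs attention.

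To prove $0\notin\pmR$ $\P_{\delta_0}$-a.s.: for $d=1$ this is immediate from \eqref{d1} (there $\pmR=\{\lb,\rb\}$ with $\lb<0<\rb$). For $d\in\{2,3\}$, \eqref{LVlambda} with $X_0=\delta_0$ and $x=0$ gives $\E_{\delta_0}(e^{-\lambda L^0})=e^{-V^\lambda(0)}$, and it is standard (see \cite{MP17} and the references there) that for $d\ge2$ the solution of \eqref{vlequation} satisfies $V^\lambda(x)\to\infty$ as $x\to0$ --- heuristically $V^\lambda$ inherits near the origin the non-integrable singularity of a positive multiple of the Green function of $\tfrac12\Delta$ coming from the $\lambda\delta_0$ source in \eqref{vlequation}. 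Hence $V^\lambda(0)=+\infty$, so $L^0=\infty$ $\P_{\delta_0}$-a.s. Since $x\mapsto L^x$ is lower semicontinuous, $\liminf_{x\to0}L^x\ge L^0=\infty$, so $L^x\to\infty$ as $x\to0$; thus there is a random $\delta>0$ with $L^x>0$ on $B(0,\delta)$, whence $B(0,\delta)\subseteq\{x:L^x>0\}\subseteq\mR$, so $0$ is an interior point of $\mR$ and $0\notin\pmR$.

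The rest is bookkeeping. Work on the $\P_{\delta_0}$-full event on which Theorem~\ref{localdim} holds, $0\notin\pmR$, and $\mR$ is a non-empty bounded set. Fix an open $U$ with $U\cap\pmR\ne\emptyset$. As $\pmR$ is closed and avoids $0$, pick $\delta>0$ with $B(0,\delta)\cap\pmR=\emptyset$ and put $U'=U\setminus\overline{B(0,\delta/2)}$, which is an open subset of $\R^d\setminus\{0\}=\textnormal{Supp}(\delta_0)^c$. Every point $x\in U\cap\pmR$ has $|x|\ge\delta>\delta/2$, hence lies in $U'$, while $U'\subseteq U$; so $U'\cap\pmR=U\cap\pmR\ne\emptyset$, and Theorem~\ref{localdim} gives $\dim(U\cap\pmR)=\dim(U'\cap\pmR)=d_f$. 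Finally, since $\mR$ is bounded we have $\mR\ne\R^d$, so $\pmR=\partial\mR\ne\emptyset$; applying what we just proved with $U=\R^d$ yields $\dim(\pmR)=d_f$ $\P_{\delta_0}$-a.s.

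I do not expect a genuine obstacle here. The only non-routine ingredient is the classical fact that $V^\lambda(0)=+\infty$ for $d\in\{2,3\}$ --- equivalently, that the starting point of SBM a.s.\ lies in the interior of its range --- and everything else is a localization argument feeding origin-avoiding open sets into Theorem~\ref{localdim}. In effect the corollary is a repackaging of Theorem~\ref{localdim} together with this hitting/interiority property of the origin.
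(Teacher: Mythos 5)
Your proof is correct, but at the one non-trivial point -- how to handle open sets containing the origin -- it takes a genuinely different route from the paper. The paper never decides whether $0\in\pmR$; it only shows that $0$ cannot be an \emph{isolated} point of $\pmR$: if $B_r\cap\pmR=\{0\}$ for all small $r$, then since $\mR$ is a.s.\ connected, contains $0$, and is not reduced to $\{0\}$, every sufficiently small sphere $\partial B_{r_n}$ meets $\mR$, while (as $0$ would not be interior to $\mR$) one can choose $r_n\downarrow 0$ with $\partial B_{r_n}\cap\mR^c\neq\emptyset$; connectedness of the sphere then forces $\partial B_{r_n}\cap\pmR\neq\emptyset$, a contradiction, and Theorem~\ref{localdim} is applied to $U\setminus\{0\}$. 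You instead prove the stronger fact that $0$ is a.s.\ an interior point of $\mR$, via $L^0=+\infty$ (from $V^\lambda(x)\to\infty$ as $x\to 0$ in $d=2,3$) together with lower semicontinuity of $L$ at $0$. That argument is valid and buys slightly more (it shows $U\cap\pmR=(U\setminus\overline{B_{\delta/2}})\cap\pmR$ exactly), but it hinges on the blow-up of $V^\lambda$ at the origin and on the validity of \eqref{LVlambda} at $x=0$ for the lower semicontinuous version of $L$; these facts are indeed standard (they are essentially the starting point of \cite{H17} and go back to Sugitani), but they are not established anywhere in the present paper, so you would need to supply a precise external reference, whereas the paper's connectedness argument is self-contained given Theorem~7 in Ch.~IV of \cite{Leg99} and works regardless of whether $0\in\pmR$. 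Your remaining bookkeeping (the upper bound from \eqref{boundaryofR} and Theorem~\ref{dimthm}, the case $d=1$ via \eqref{d1}, and the nonemptiness of $\pmR$ from boundedness of $\mR$) is fine.
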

\begin{proof} 
It is enough to check the claim only for  $U\supset \{0\}$, since otherwise it follows immediately from  Theorem~\ref{localdim}. Thus, let $U$  be an open set such that $U\cap\pmR\neq\emptyset$ and  $U\supset \{0\}$. 
We claim that $(U\setminus\{0\})\cap \pmR\neq \emptyset$.  If not, we have for small enough $r>0$, $\{|x|<r\}\cap\pmR=\{0\}$. $\mR$ is $\P_{\delta_0}-\text{a.s.}$ a connected set (e.g., see Theorem~7 in Ch. IV of \cite{Leg99}) of positive Lebesgue measure.  
There are $r_n\downarrow 0$ s.t. $\partial B_{r_n}\cap \mR^c\neq\emptyset$, and by connectedness of $\mR \ni 0$ (and $\mR\neq \{0\}$),  $\partial B_{r_n}\cap \mR\neq\emptyset$.   
The connectedness of $\partial B_{r_n}$ now implies that $\partial B_{r_n}\cap\partial \mR$ is non-empty for all $n$, which contradicts our assumption that $0$ is isolated in $\partial \mR$.
Now we may apply Theorem~\ref{localdim} with $U\setminus\{0\}$ in place of $U$ to finish.
\end{proof}
\noindent Note that besides confirming \eqref{dimssame}, the above  shows that the dimension result holds locally on any open ball intersecting $\pmR$. 

We also consider $X$ and its local time under the canonical measures $\N_x$. Recall from Section II.7 of \cite{Per02} that $\N_x$ is a $\sigma$-finite measure on the space of continuous finite length $M_F(\R^d)$-valued excursion paths
such that 
\begin{equation}\label{PPPdecomp}
X_t=\int \nu_t\, \Xi(d\nu)\quad \text{ for all }t>0\ \ \text{under }\P_{X_0},
\end{equation}
where $\Xi$ is a Poisson point process with intensity $\N_{X_0}(\cdot)=\int \N_{x_0}(\cdot)\,X_0(dx_0)$.  In this way $\N_{x_0}$ governs the ``excursions" of $X$ from a single ancestor at $x_0$.  The existence of local time $L$ under $\N_x$ follows easily from the above, in fact it is even globally continuous (see \cite{H17}).
It should not be surprising that Corollary~\ref{delta0} continues to hold under the canonical measure, in fact, as we shall see,  the proof is a bit easier. 
\begin{theorem}\label{localdimcan} $\N_0$-a.e. for any open set $U$,
\[U\cap \pmR\neq \emptyset\Rightarrow \textnormal{dim}(U\cap\pmR)=d_f.\]
\end{theorem}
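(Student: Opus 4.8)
The plan is to transfer the hard analytic work already present in the proof of Theorem~\ref{localdim} (the statement under $\P_{X_0}$) to the canonical measure $\N_0$ via the Poisson cluster representation \eqref{PPPdecomp}, rather than re-doing the potential-theoretic estimates. The upper bound $\textnormal{dim}(U\cap\pmR)\le d_f$ is local and follows on every open $U$ from \eqref{boundaryofR} and the $\N_0$-version of Theorem~\ref{dimthm} (the local-time front $F$ has dimension $\le d_f$, which is proved in \cite{MP17} and whose canonical-measure analogue is routine given global continuity of $L$ under $\N_0$ from \cite{H17}); so the real content is the lower bound $\textnormal{dim}(U\cap\pmR)\ge d_f$ on any open $U$ with $U\cap\pmR\neq\emptyset$.

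First I would record the basic structural facts under $\N_0$: the range $\mR$ is $\N_0$-a.e.\ a compact connected set containing $0$ with $\mR\neq\{0\}$ (connectedness of the support of a single excursion, cf.\ Ch.~IV of \cite{Leg99}), and $L$ is globally continuous, so $F=\partial\{L>0\}$ and the characterization ``$x\in\pmR$ iff there are open sets shrinking to $x$ on which $L=0$'' apply verbatim. Next, exactly as in the proof of Corollary~\ref{delta0}, a connectedness argument (every sphere $\partial B_r$ that meets both $\mR$ and $\mR^c$ must meet $\pmR$) shows that if $U$ is open with $0\in U$ and $U\cap\pmR\neq\emptyset$, then already $(U\setminus\{0\})\cap\pmR\neq\emptyset$; hence it suffices to prove the theorem for open $U\subset\{0\}^c=\textnormal{Supp}(\N_0\text{-ancestor})^c$, where the machinery behind Theorem~\ref{localdim} is available.

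For such a $U$ away from the origin, I would condition on the Poisson point process $\Xi$: write $X=\sum_i \nu^{(i)}$ as the countable sum of its cluster excursions; each cluster $\nu^{(i)}$ has law $\N_{x_i}$, and the total local time decomposes as $L^x=\sum_i L^{(i),x}$, a sum of independent nonnegative continuous fields. The point is a Markov/restriction property: fixing one cluster issued from a point near a target point in $U\cap\pmR$ and viewing all the other clusters as an (independent) random background measure $X_0'$, the single cluster's local time $\pmR^{(i)}$ must locally agree with $\pmR$ on the part of $U$ where the background local time vanishes. Using that under $\P_{X_0'}$ the full result Theorem~\ref{localdim} holds with initial measure $X_0'$ (whose support avoids the relevant piece of $U$ with positive probability, by properties of the support of SBM), one gets the lower bound $d_f$ for $\textnormal{dim}(U\cap\pmR)$ on the event $U\cap\pmR\neq\emptyset$; a standard zero-one / scaling argument then upgrades ``positive probability'' to ``$\N_0$-a.e.''. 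The main obstacle I anticipate is making this conditioning rigorous: one must carefully control the interaction near $\pmR$ between a distinguished cluster and the background — showing that on a subregion of $U$ the topological boundary of the whole range coincides with that of the distinguished cluster (so that dimension is inherited), and that such a subregion genuinely occurs with positive $\N_0\times\N_0$-measure. This is where the explicit SBM support/hitting estimates and the local nature of Theorem~\ref{localdim} (rather than its global corollaries) are essential.
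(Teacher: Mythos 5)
Your reduction steps are fine and match the paper: the upper bound comes from $\pmR\subset F$ and the dimension bound on $F$, and the connectedness argument from Corollary~\ref{delta0} reduces to open $U$ with $0\notin U$. But the core of your lower bound --- transferring Theorem~\ref{localdim} (a $\P$-statement) to $\N_0$ via the Poisson cluster decomposition --- is not what the paper does, and as written it has a genuine gap. The paper proves Theorem~\ref{localdimcan} by running the proof of Theorem~\ref{localdim} \emph{directly under} $\N_0$: Proposition~\ref{candim} is already a canonical-measure statement, so the analogue of Proposition~\ref{exitdim} under $\N_0$ is immediate (there is no Poisson superposition to disentangle), and the special Markov property under $\N_{X_0}$ (Proposition~\ref{spmarkov}) replaces Proposition~\ref{p0.1} to give the $\N_0$-versions of \eqref{fact1} and \eqref{fact2}. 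No cluster decomposition is needed.

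Your route goes the wrong way round. First, Corollary~\ref{delta0} controls $\partial\bigl(\cup_i\mR^{(i)}\bigr)$, which says nothing about $\partial\mR^{(1)}$ at points where the first cluster's boundary is buried inside another cluster's range; an open $U$ could a priori meet $\partial\mR^{(1)}$ only at such buried points, and then the union gives no information. Second, the ``background'' clusters are not an initial measure: they are excursions all issued from $0$, and the conditional law of the distinguished cluster given the others is simply $\N_0$ again (by Poisson independence), not $\P_{X_0'}$ for any $X_0'$ --- so invoking Theorem~\ref{localdim} ``under $\P_{X_0'}$'' describes a different process and does not apply to the object you need to control. Third, the upgrade from ``positive probability'' to ``$\N_0$-a.e.'' is precisely the hardest part of the whole paper: there is no zero-one law here ($\N_0$ is only $\sigma$-finite, and the events in question are not tail events), and the paper achieves this upgrade through Proposition~\ref{candim}, i.e.\ the CSBP structure of $r\mapsto X_{G_{r_0-r}}(1)$, the absence of negative jumps, the predictability of $T_0$, and the martingale $M_r=Q_{x_0}(D_{r_0}\mid\cE_r^+)$ together with Lemma~\ref{Emeas}. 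Waving this away as a ``standard zero-one / scaling argument'' skips the main content. The correct and short proof is: repeat the proof of Theorem~\ref{localdim} verbatim under $\N_0$, quoting Propositions~\ref{candim} and \ref{spmarkov} in place of Propositions~\ref{exitdim} and \ref{p0.1}.
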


We first say a few words 
about the argument leading to the proof of Theorem~\ref{dimthm} in~\cite{MP17}. If a small ball $B$ intersects $F$, then $B$ contains a point $x$ such that $L^x$ is positive but small. Thus, to get the bounds on the Hausdorff dimension of $F$, it  is useful to  understand the asymptotics of $\P_{\delta_0}(0<L^x<\veps)$, as $\veps\downarrow 0$. Write $f(t)\sim g(t)$ as $t\downarrow 0$ iff $f(t)/g(t)$ is bounded and bounded away from zero for small positive $t$, and similarly for $f(t)\sim g(t)$ as $t\uparrow\infty$.  It was shown in Theorem~1.3 of~\cite{MP17} that for $p$ as in \eqref{5_07_0} and 
 $\alpha$ given by~\eqref{5_07_1},
\begin{equation}
\label{5_07_2}
\P_{\delta_0}(0<L^x<\veps)\sim |x|^{-p}\veps^\alpha,\;\;{\rm as}\; \veps\downarrow 0.
\end{equation}
Not very difficult {\em heuristics} involving regularity properties of local time and a covering argument explains the  upper bound on dimension of $F$:   $\text{dim}(F)\leq d_f$ (see the Introduction of~\cite{MP17}). \eqref{5_07_2} was derived in \cite{MP17} through a Tauberian theorem we now sketch.
Let $\lambda\uparrow\infty$ in~\eqref{LVlambda} and \eqref{vlequation} to see that $V^\lambda(x)\uparrow V^\infty(x)$ where
\begin{equation}\label{VinfL}
\P_{\delta_0}(L^x=0)=\exp(-V^\infty(x)).
\end{equation}
One important simplification available for the analysis of $F$ in \cite{MP17} is that $V^\infty$ is explicitly known (see e.g. (2.17) in \cite{MP17}):
\begin{equation}\label{Vinf}
V^\infty(x)=\frac{2(4-d)}{|x|^2}.
\end{equation}
In particular $V^\infty$ solves
\begin{equation}\label{Vinfeq}
\frac{\Delta V^\infty}{2}=\frac{(V^\infty)^2}{2}\text{ for }x\neq 0.
\end{equation}
 $V^\infty$ sometimes is called the  {\it  very singular solution} to \eqref{Vinfeq}, see, e.g.,~\cite{brez86}.  
Applying a Tauberian theorem one can see that \eqref{5_07_2} can be reduced to verifying 
\begin{equation}\label{6_07_1}
\E_{\delta_0}(e^{-\lambda L^x}1(L^x>0))\sim  |x|^{-p}
\lambda^{-\alpha}, \; \text{ as }\lambda\uparrow\infty.
\end{equation}
The left-hand side of the above behaves  like $d^\lambda(x):=V^\infty(x)-V^\lambda(x)$, and so
a substantial part of the argument in \cite{MP17} was devoted to finding a rate of convergence of $V^\lambda$ to $V^\infty$, and showing that it behaves like the right hand side of \eqref{6_07_1}.

The upper bound on dim$(F)$ in~\cite{MP17} also utilized
Dynkin's exit measures. We always assume 
\begin{align} \label{Gdef}
&G \text{ is an open set satisfying } d(G^c,\text{supp}(X_0))>0\text{ and a Brownian path }\\
\nn&\text{starting from any $x\in\partial G$ will exit $G$ immediately}.
\end{align}
The exit measure of $X$ from such a $G$ under $\P_{X_0}$ or $\N_{X_0}$ is denoted by $X_G$ (see Chapter V of \cite{Leg99} for a good introduction to exit measures).   $X_{G}$ is a random finite measure supported on $\partial G$,  which intuitively corresponds to the mass started at $X_0$ which is stopped at the instant it leaves $G$. 
The Laplace functional of $X_G$ is given by 
\begin{equation}\label{LFEM}
\E_{X_0}(\exp(-X_G(g))=\exp\Bigl(-\int 1-\exp(-X_G(g))d\N_{X_0}\Bigr)=\exp\Bigl(-\int U^g(x)X_0(dx)\Bigr),
\end{equation}
where $g:\partial G\to[0,\infty)$ is continuous and $U^g\ge 0$ is the unique continuous function on $\overline G$ which is $C^2$ on $G$ and solves
\begin{equation}\label{EMpde}
\Delta U^g=(U^g)^2\text{ on }G,\quad U^g=g\text{ on }\partial G.
\end{equation}
For this, see Theorem 6 in Chapter V of \cite{Leg99}, and the last exercise on p. 86 for uniqueness.
Let $G_\varepsilon^{x_0}=\{x:|x-x_0|>\varepsilon\}$ and set $G_\veps=G_\veps(0)$. Similarly
$B_\veps(x_0)$ is the open ball centered at $x_0$ and $B_\veps=B_\veps(0)$. 
Proposition~3.4 of \cite{MP17} gives an upper bound on $\P_{\delta_0}(0<X_{G_\veps^x}(1)<\veps)$ as $\veps\downarrow 0$ for $x\neq 0$. This bound is refined to precise asymptotics in Propositions~\ref{t1.2} and \ref{t2.4} in Section~\ref{lowerbndexitmeas} below. Intuitively these asymptotics are related to \eqref{5_07_2} since a small exit measure from $G^x_\veps$ suggests small values of the local time inside 
$B_\veps(x)$. 

Consider next the ideas underlying Theorem~\ref{localdim}, where exit measures play a more central role. 
To show that a point $x$ is near $\pmR$, it is not enough 
to show  that the local time at $x$ is small and positive, or that the exit measure from some $G^x_\veps$ is small. In addition, there should be balls $B$ near $x$ on which the  local time is zero, or equivalently $X_{{\bar B}^c}=0$. To this end we will study $\P_{\delta_0}(0<X_{G^x_\veps}(1)\leq K\veps^2, X_{G^x_{\veps/2}}(1)=0)$ 
and 
show (see Theorem~\ref{t2.3} and Proposition~\ref{t1.2})
\begin{equation}
\label{5_07_3}
\P_{\delta_0}(0<X_{G^x_\veps}(1)\leq K\veps^2, X_{G^x_{\veps/2}}(1)=0)\sim \veps^{p-2},\;{\rm as}\;\veps\downarrow 0. 
\end{equation}
The proof of \eqref{5_07_3} requires asymptotics for solutions to  \eqref{EMpde} with varying boundary conditions, rather than than solutions to \eqref{vlequation}. 
For $\veps,\lambda>0$ we let $U^{\lambda,\veps}$ denote the unique continuous function on $\{|x|\ge \veps\}$ such that (cf. \eqref{EMpde})
\begin{equation}\label{pde1}
\Delta U^{\lambda,\veps}=(U^{\lambda,\veps})^2\ \ \text{for }|x|>\veps,\ \ \text{ and }\  U^{\lambda,\veps}(x)=\lambda\ \ \text{for }|x|=\veps.
\end{equation}
Uniqueness of solutions implies the scaling property
\begin{equation}\label{scaling1}
U^{\lambda,\veps}(x)=\veps^{-2} U^{\lambda \veps^2,1}(x/\veps)\quad\text{for all }|x|\ge\veps,
\end{equation}
and also shows $U^{\lambda, \veps}$ is radially symmetric, thus allowing us to write $U^{\lambda,\veps}(|x|)$ for the value at $x\in\R^d$. 
 By \eqref{LFEM} we have for any finite initial measure satisfying $\text{Supp}(X_0)\subset G_\veps$,
\begin{equation}\label{LTexit}
\E_{X_0}(\exp(-\lambda X_{G_\veps}(1))=\exp(-X_0(U^{\lambda, \veps})).
\end{equation}
Let $\lambda\uparrow\infty$ in the above to see that $U^{\lambda,\veps}\uparrow U^{\infty,\veps}$ on $G_\veps$ and 
\begin{equation}\label{UinfLT}
\P_{X_0}(X_{G_\veps}(1)=0)=\exp(-X_0(U^{\infty,\veps})).
\end{equation}
Proposition 9(iii) of \cite{Leg99} readily implies (see (3.5) and (3.6) of \cite{MP17})
\begin{equation}\label{Uinftyprop}
U^{\infty,\veps}\text{ is }C^2\text{ and }\Delta U^{\infty,\veps}=(U^{\infty,\veps})^2\text{ on }G_\veps,\ \lim_{|x|\to\veps,|x|>\veps}U^{\infty,\veps}(x)=+\infty,\ \lim_{|x|\to\infty}U^{\infty,\veps}(x)=0.
\end{equation}
Clearly a key step in deriving \eqref{5_07_3} are asymptotics  for 
\begin{eqnarray*}
\P_{\delta_0}(0<X_{G^x_\veps}(1)\leq K\veps^2) &\sim& {U^{\infty, \veps}(x)}-U^{K^{-1} \varepsilon^{-2}, \varepsilon}(x)\ \text{ as }\veps\downarrow 0,
\end{eqnarray*}
where the above equivalence is by a Tauberian theorem. 
In Section~\ref{sec:3.1} we show (see Corollary~\ref{c1.4})
\begin{eqnarray*}
{U^{\infty, \veps}(x)}-U^{K^{-1} \varepsilon^{-2}, \varepsilon}(x)
&\sim& \veps^{p-2}.
\end{eqnarray*}
This and a special Markov property (Propositions~\ref{spmarkov} and \ref{p0.1}) then give  \eqref{5_07_3}.  To get a lower bound on $\pmR$, essentially by an inclusion-exclusion argument,
in addition to the lower bound in \eqref{5_07_3},
we will also need an upper bound on (see Proposition~\ref{p3.1})
\begin{equation}\label{Biv}P_{\delta_0}(0<X_{G^{x_1}_\veps}(1)\leq K\veps^2,\ 0<X_{G^{x_2}_\veps}(1)\leq K\veps^2).
\end{equation}
\SUBMIT{Although involved, this argument  is quite similar to the proof of Proposition 6.1 in \cite{MP17} and so is omitted (it can be found in the Appendix of \cite{HMP18A}).}
\ARXIV{Although involved, this argument  is  similar to the proof of Proposition 6.1 in \cite{MP17} and so is deferred to the Appendix ~\ref{sec:5}.}
The above estimates allow us to show 
 that the lower bound on the dimension of $\pmR$ holds with positive probability--see Proposition~\ref{prop:crudelbdim}.  To conclude the proof of Theorem~\ref{localdim}, we will  show that  the lower bound on local dimension, in fact  holds  {\it with probability one.} 
This will be a consequence of the following proposition:

\begin{proposition}\label{exitdim} Let $x_1\in\R^d$ and $0<r_1<r_0$, and assume $B_{2r_0}(x_1)\subset\textnormal{Supp}(X_0)^c$. Then $\P_{X_0}$-a.s.
\begin{equation}\label{exitcond}
X_{G_{r_1}^{x_1}}(1)=0\text{ and }X_{G_{r_0}^{x_1}}(1)>0\text{ imply }\textnormal{dim}(B_{r_0}(x_1)\cap\pmR)\ge d_f.
\end{equation}
\end{proposition}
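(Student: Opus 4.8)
The plan is to prove \eqref{exitcond} by combining the positive-probability form of the lower bound, which we may take to be Proposition~\ref{prop:crudelbdim}, with the special Markov property (Propositions~\ref{spmarkov} and \ref{p0.1}), used to upgrade ``positive probability'' to ``probability one.'' I would begin with a deterministic reduction. On the event in \eqref{exitcond}, set $\rho:=\inf\{|z-x_1|:z\in\mathcal R\cap\bar B_{r_0}(x_1)\}$. Using $d(\textnormal{Supp}(X_0),B_{2r_0}(x_1))>0$ to discard any part of $\mathcal R$ near $x_1$ coming from mass that never enters $\bar B_{r_0}(x_1)$, one gets $r_1\le\rho<r_0$, $\mathcal R\cap B_\rho(x_1)=\emptyset$, and (by compactness) a point $z^\ast\in\mathcal R\cap\partial B_\rho(x_1)$, so that $z^\ast\in\pmR\cap B_{r_0}(x_1)$. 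Since $z^\ast\in\overline{\{L>0\}}$ and $\{L>0\}$ is open, $\mathcal R$ has positive Lebesgue measure in every neighbourhood of $z^\ast$ while $B_\rho(x_1)$ is an adjacent empty ball with $z^\ast$ on its boundary; hence the ``boundary configuration'' behind \eqref{exitcond} recurs, in a scale-invariant form, at all small scales near $z^\ast$, and in particular $\pmR$ already meets $B_{r_0}(x_1)$.

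For the dimension bound I would proceed as follows. Using this recurrence together with the sharp estimates underpinning Proposition~\ref{prop:crudelbdim} --- in particular the two-sided bound \eqref{5_07_3} on $\P(0<X_{G^y_s}(1)\le Ks^2,\,X_{G^y_{s/2}}(1)=0)$ and the two-point upper bound of Proposition~\ref{p3.1} --- one produces, a.s.\ on the event, infinitely many well-separated balls $B_{s_k}(y_k)\subset B_{r_0}(x_1)$, $s_k\to0$, with $B_{2s_k}(y_k)\subset\textnormal{Supp}(X_0)^c$, each satisfying $0<X_{G^{y_k}_{s_k}}(1)\le Ks_k^2$ and $X_{G^{y_k}_{s_k/2}}(1)=0$. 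For each $k$ the special Markov property at $\partial B_{s_k}(y_k)$ represents the part of $X$ inside $\bar B_{s_k}(y_k)$ as a Poisson cluster decomposition driven by the non-zero exit measure $X_{G^{y_k}_{s_k}}$ which, after rescaling by $s_k$, is governed by the same mechanism as the unconditioned SBM; a conditional, scaled version of Proposition~\ref{prop:crudelbdim} then gives $\P(\dim(B_{s_k}(y_k)\cap\pmR)\ge d_f\mid\mathcal G)\ge p_0$ with a uniform $p_0>0$, where $\mathcal G=\sigma(X$ stopped on first entering $\bigcup_j\bar B_{s_j}(y_j))$. Since the balls are well-separated, these events are conditionally independent across $k$ given $\mathcal G$, up to the contributions of clusters wandering between different balls, which the separation controls; a conditional Borel--Cantelli argument then forces $\dim(B_{s_k}(y_k)\cap\pmR)\ge d_f$ for some $k$, a.s.\ on the event, and as $B_{s_k}(y_k)\subset B_{r_0}(x_1)$ this is \eqref{exitcond}. (A less hands-on variant replaces the Borel--Cantelli step by a L\'evy $0$--$1$ law at the spheres $\partial B_s(x_1)$, $s\downarrow r_1$, but one must still feed it the conditional positive-probability bound just described.)

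Two points need care, and the second is the main obstacle. First, the ``$\le Ks_k^2$'' constraint makes the exit measure from $B_{s_k}(y_k)$ atypically small (for $d=3$ the typical order is $s_k$), so producing infinitely many \emph{disjoint} such configurations in $B_{r_0}(x_1)$ --- rather than ones merely clustering at $z^\ast$ --- is a genuine construction, and is exactly where the delicate first- and second-moment estimates \eqref{5_07_3} and Proposition~\ref{p3.1} enter. Second, and more seriously, one must pass from the topological boundary of individual clusters in the Poisson decomposition to the topological boundary of the \emph{whole} range: a point may lie on $\partial\mathcal R(\nu)$ for one cluster $\nu$ yet lie in the interior of $\mathcal R$ because other clusters cover a neighbourhood of it, so $\partial\mathcal R(\nu)\not\subseteq\pmR$ in general --- which is precisely why one must work with the actual topological boundary rather than with $F=\partial\{L>0\}$. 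I would control this using the standard finiteness features of the decomposition: for each $\eta>0$ only finitely many clusters from a given sphere have range of diameter $\ge\eta$, and the remaining ones stay in a thin collar of that sphere. Thus, deep inside each $B_{s_k}(y_k)$ and near its own closest-approach point to $y_k$, a single cluster becomes ``exposed'', its range-boundary there agreeing with $\pmR$ up to a set of dimension $<d_f$, at which stage the local dimension statement for a single cluster (Theorem~\ref{localdimcan}, applied under $\N_y$ for the relevant $y\in\partial B_{s_k}(y_k)$) can be invoked. Making this overlap analysis coexist with the separation and conditional independence required above is the technical heart of the proof, and runs closely parallel to the proof of Proposition~6.1 in \cite{MP17}.
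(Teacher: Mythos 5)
Your proposal has the right ingredients in view (the positive-probability bound of Proposition~\ref{prop:crudelbdim}, the special Markov property, and the need to ``expose'' a single cluster so that its boundary agrees with $\pmR$), but the mechanism you propose for upgrading to probability one has a genuine gap, and it is not the paper's. The first problem is the claim that, a.s.\ on the event, one can produce infinitely many well-separated balls $B_{s_k}(y_k)$ with $0<X_{G^{y_k}_{s_k}}(1)\le Ks_k^2$ and $X_{G^{y_k}_{s_k/2}}(1)=0$. The estimates \eqref{5_07_3} and Proposition~\ref{p3.1} are first- and second-moment bounds at \emph{fixed} points under $\P_{\delta_0}$; via inclusion--exclusion they yield that such a configuration occurs somewhere in a capacity-positive set \emph{with positive probability} (that is Theorem~\ref{t3.2}), not that infinitely many occur a.s.\ on your event --- the latter is essentially the probability-one statement you are trying to prove, so the step is circular. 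A second, structural problem is the conditioning: your certificate for the $k$-th ball includes $X_{G^{y_k}_{s_k/2}}(1)=0$, which is information about the process \emph{inside} $B_{s_k}(y_k)$, while the target event (the dimension of $\pmR$ in that ball) also lives inside it; hence, given your $\sigma$-field $\cG$, the clusters entering $B_{s_k}(y_k)$ are no longer governed by the unconditioned Poisson cluster law driven by $X_{G^{y_k}_{s_k}}$, and a Lemma~\ref{lem:spherelb}-type lower bound cannot be applied to the conditional law.

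The paper's route is different and avoids both problems. Given Proposition~\ref{candim}, the proof of Proposition~\ref{exitdim} is a clean single-excursion reduction: by absolute continuity of the law of the closest-approach radius $\hat T_0$ (Lemma~\ref{lem:abscont}), the excursions of the Poisson decomposition a.s.\ have distinct minimal radii, so there is a random $\tilde T>\hat T^{\tilde j}\ge r_1$ with $B_{\tilde T}\cap\mR=B_{\tilde T}\cap\mR(W_{\tilde j})$ for a single excursion, and Proposition~\ref{candim} --- whose conclusion is deliberately stated for \emph{every} $r>\hat T_0$ precisely so that it applies to some $r<\tilde T$ --- finishes. The probability-one upgrade sits inside Proposition~\ref{candim}, where your parenthetical ``L\'evy $0$--$1$ law'' variant is the one actually made rigorous, but at the random radius $r_0-T_0$ where the exit measure dies (not at $r_1$), and it needs exactly the facts your sketch does not supply: the radial exit-mass process is a CSBP with no negative jumps (Proposition~\ref{prop:yzinfo}), so $T_0$ is predictable and the mass equals $n^{-1}$ at the announcing times $T_{n^{-1}}$, which is what makes Lemma~\ref{lem:spherelb} applicable to the conditional law given $\cE^+_{T_{n^{-1}}}$; and the dimension event is $\cE^+_{T_0-}$-measurable (Lemma~\ref{Emeas}), which is what lets the uniform conditional lower bound $q_{\ref{lem:spherelb}}$ force the indicator to equal one. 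Without these two inputs neither your Borel--Cantelli variant nor your $0$--$1$ law variant closes.
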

\noindent The main ingredient in the proof of Proposition~\ref{exitdim} is a version under the canonical measure.
\begin{proposition}\label{candim}
 Let $x_1\in\R^d$ and $0<r_1<r_0$, and assume $B_{2r_0}(x_1)\subset\textnormal{Supp}(X_0)^c$. Then
$\N_{X_0}$-a.e.  
\begin{equation}\label{exitcond_a}
\left\{
\begin{array}{l}
X_{G_{r_1}^{x_1}}(1)=0\text{ and }X_{G_{r_0}^{x_1}}(1)>0\text{ imply }\\
 \textnormal{dim}(B_{r}(x_1)\cap\pmR)\ge d_f \text{ for every $r>r_1$ s.t. $X_{G_{r}^{x_1}}(1)>0$}.
\end{array}
\right.
\end{equation}
\end{proposition}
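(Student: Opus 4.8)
The plan is to reduce the ``for every $r$'' assertion to countably many deterministic radii, and then to use the special Markov property to boost the positive‑probability lower bound of Proposition~\ref{prop:crudelbdim} to an almost sure one. For $r_1<s'<s<2r_0$, any trajectory issuing from $\textnormal{Supp}(X_0)\subset G_s^{x_1}$ must meet $\partial B_s(x_1)$ before it can reach $\bar B_{s'}(x_1)$, so $X_{G_{s'}^{x_1}}(1)>0$ forces $X_{G_s^{x_1}}(1)>0$; hence $s\mapsto\mathbf 1\{X_{G_s^{x_1}}(1)>0\}$ is non‑decreasing, and $\{s>0:X_{G_s^{x_1}}(1)>0\}$ is a half‑line with left endpoint $\rho_0=\inf\{s:X_{G_s^{x_1}}(1)>0\}\in[r_1,r_0]$ on the event under consideration. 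Moreover $\N_{X_0}$-a.e.\ on $\{X_{G_{r_0}^{x_1}}(1)>0\}$ the infimum $\rho_0$ is not attained, i.e.\ $X_{G_{\rho_0}^{x_1}}(1)=0$ and $\rho_0<r_0$ --- otherwise positive mass would sit on $\partial B_{\rho_0}(x_1)$ and, since the local time is strictly positive at any point from which mass is released, it would immediately charge $B_{\rho_0}(x_1)$, contradicting the definition of $\rho_0$. Since $r\mapsto\textnormal{dim}(B_r(x_1)\cap\pmR)$ is non‑decreasing, it therefore suffices to show: for each rational $\rho\in(r_1,r_0)$, $\N_{X_0}$-a.e.\ on $\{X_{G_{r_1}^{x_1}}(1)=0,\ X_{G_\rho^{x_1}}(1)>0\}$ one has $\textnormal{dim}(B_\rho(x_1)\cap\pmR)\ge d_f$; the full statement then follows by density.

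Fix such a $\rho$ and a rational $\rho'\in(r_1,\rho)$, and condition on $\cF_{G_{\rho'}^{x_1}}$. On $\{X_{G_{\rho'}^{x_1}}(1)>0\}$, Propositions~\ref{spmarkov} and \ref{p0.1} represent the part of $X$ produced after exiting $G_{\rho'}^{x_1}$ as $\int X^\nu\,\Xi(d\nu)$, where $\Xi$ is a Poisson point process of excursions with intensity $\int_{\partial B_{\rho'}(x_1)}\N_y(\cdot)\,X_{G_{\rho'}^{x_1}}(dy)$; since the occupation accrued before the exit charges only $\{|x-x_1|\ge\rho'\}$, they also give $\pmR\cap B_{\rho'}(x_1)=\partial\bigl(\overline{\bigcup_\nu\mR^\nu}\bigr)\cap B_{\rho'}(x_1)$ with $\mR^\nu=\textnormal{Supp}(I^\nu)$. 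The event $\{X_{G_{r_1}^{x_1}}(1)=0\}$ is exactly the event that no $\nu\in\Xi$ reaches $\bar B_{r_1}(x_1)$; since $\N_y(\mR^\nu\cap\bar B_{r_1}(x_1)\ne\emptyset)=U^{\infty,r_1}(|y-x_1|)<\infty$ for $|y-x_1|=\rho'>r_1$ by \eqref{UinfLT} and \eqref{Uinftyprop}, this event has positive $\cF_{G_{\rho'}^{x_1}}$-conditional probability, and conditioning additionally on it merely restricts $\Xi$ to excursions avoiding $\bar B_{r_1}(x_1)$ --- an intensity still of infinite total mass --- so that a.e.\ on this conditioning event there are countably infinitely many such interior excursions. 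As $\rho'<\rho$ forces $X_{G_\rho^{x_1}}(1)>0$, and letting $\rho'$ range over $\Q\cap(r_1,\rho)$ exhausts $\{X_{G_{r_1}^{x_1}}(1)=0,\ X_{G_\rho^{x_1}}(1)>0\}$, it remains to prove that, conditionally on $\cF_{G_{\rho'}^{x_1}}$ and on $\{X_{G_{\rho'}^{x_1}}(1)>0,\ X_{G_{r_1}^{x_1}}(1)=0\}$, $\textnormal{dim}\bigl(B_{\rho'}(x_1)\cap\partial(\overline{\bigcup_\nu\mR^\nu})\bigr)\ge d_f$ almost surely.

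Proposition~\ref{prop:crudelbdim} supplies a constant $c_0>0$ such that, in the geometry where the range just reaches a ball but avoids its concentric half‑ball, its topological boundary meets that ball in a set of dimension $\ge d_f$ with probability at least $c_0$; the proof builds a Frostman measure on $\pmR$ carried by $\veps$-grid centres $x_i$ with $0<X_{G_\veps^{x_i}}(1)\le K\veps^2$ and $X_{G_{\veps/2}^{x_i}}(1)=0$, the first moment controlled by \eqref{5_07_3} and the second by the bound on \eqref{Biv}. Since $X_{G_{\veps/2}^{x_i}}(1)=0$ refers to the \emph{full} range, $B_{\veps/2}(x_i)\cap\mR=\emptyset$, so limit points of these grids genuinely lie on $\pmR$ and no circularity arises. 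To upgrade ``positive probability'' to ``probability one'' I would treat the infinitely many interior excursions of the previous step as conditionally independent trials: using the scaling relation \eqref{scaling1} and a further application of the special Markov property, localize inside small balls $B_s(y)\subset B_{\rho'}(x_1)$ positioned so that exactly one designated excursion has range meeting $\bar B_s(y)$ --- there $\mR$ coincides with that excursion's range, so Proposition~\ref{prop:crudelbdim} applies verbatim and gives conditional probability $\ge c_0$ of $\textnormal{dim}(B_s(y)\cap\pmR)\ge d_f$ --- and run this over a sequence of such trials (one per excursion, or one per dyadic shell shrinking to $\partial B_{r_1}(x_1)$), so that the conditional Borel--Cantelli lemma makes at least one succeed a.e.; alternatively, localize near the a.s.\ unique point $z_0\in\mR$ at minimal distance $\rho_0$ from $x_1$, where in a thin enough spherical shell only the ``deepest'' excursion contributes to $\mR$, making ``$\textnormal{dim}$ of the boundary there $\ge d_f$'' a germ‑field event at $z_0$ forced to have conditional probability one by Proposition~\ref{prop:crudelbdim} and a Blumenthal‑type $0$--$1$ law. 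The main obstacle is precisely this last upgrade: organizing genuinely independent repeated trials whose success is an event about $\partial\mR$ rather than about one excursion's range, which forces one either to isolate regions where a single excursion is the sole contributor to $\mR$ --- checking, via the special Markov property and the finiteness of $\N_y(\mR^\nu\cap\bar B_{r_1}(x_1)\ne\emptyset)$, that such regions survive rescaling well enough to host the configuration of Proposition~\ref{prop:crudelbdim} --- or to make the germ‑field argument at $z_0$ rigorous; either way one must check that the one‑ and two‑point asymptotics \eqref{5_07_3} and the bound on \eqref{Biv} persist under the extra conditioning on $\cF_{G_{\rho'}^{x_1}}$ and on avoiding $\bar B_{r_1}(x_1)$, the analogue of the estimates established in Sections~\ref{lowerbndexitmeas} and \ref{sec:3.1}.
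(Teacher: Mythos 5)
There is a genuine gap, and you have in fact located it yourself: the entire difficulty of Proposition~\ref{candim} is the upgrade from the positive-probability bound of Proposition~\ref{prop:crudelbdim} to an almost sure statement, and your proposal stops at exactly that point, offering two sketched routes neither of which is carried out. The first route (Borel--Cantelli over ``conditionally independent trials,'' one per interior excursion or per dyadic shell) does not work as described: although the Poisson intensity $\int\N_y(\cdot)\,X_{G_{\rho'}^{x_1}}(dy)$ has infinite total mass, the number of excursions reaching any fixed depth inside $B_{\rho'}(x_1)$ is Poisson with \emph{finite} mean (it is $\int U^{\infty,s}(y-x_1)X_{G_{\rho'}^{x_1}}(dy)<\infty$), so all but finitely many excursions are microscopic and cannot serve as trials hosting the configuration of Proposition~\ref{prop:crudelbdim}; moreover the success events concern $\pmR$, which depends on the union of all excursion ranges, and the balls $B_s(y)$ in which ``exactly one designated excursion'' contributes would have to be chosen in a way that depends on all the excursions, destroying the independence needed for Borel--Cantelli. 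Your second route (a germ-field $0$--$1$ law at the deepest point) is the right idea and is essentially what the paper does, but it is precisely here that the real work lies and none of it appears in your proposal.

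Concretely, the paper's argument requires three ingredients you do not supply. First, the radial exit-mass process $Y_r(1)=X_{G_{r_0-r}}(1)$ is, after a time change, a CSBP (Proposition~\ref{prop:yzinfo}) and hence has no negative jumps; this makes the extinction radius $T_0$ a \emph{predictable} stopping time announced by $T_{n^{-1}}=\inf\{r:Y_r(1)\le 1/n\}$, with $Y_{T_{n^{-1}}}(1)=n^{-1}$ exactly. Second, Lemma~\ref{lem:spherelb} converts Proposition~\ref{prop:crudelbdim} by scaling into a \emph{uniform} lower bound $q>0$ valid for any initial mass $\delta\le r^2$ on a sphere of radius $r$ --- this is what makes the conditional probability $M_r=Q_{x_0}(D_{r_0}\mid\cE^+_r)$ bounded below by $q$ on $\{0<Y_r(1)<(r_0-r)^2\}$, a set which the announcing sequence is guaranteed to enter because of the absence of negative jumps; your proposal never explains why the positive-probability bound survives the conditioning with a constant independent of the (random, small) exit measure. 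Third, and most delicately, one must show $D_{r_0}\in\cE^+_{T_0-}$ (Lemma~\ref{Emeas}), i.e.\ that the whole range is already determined by the excursions strictly outside the minimal radius; this rests on showing the snake spends zero Lebesgue time at its minimal radius, proved via the historical process, a Palm measure computation, and L\'evy's modulus. Only with all three does $q\le Q_{x_0}(D_{r_0}\mid\cE^+_{T_0-})=1(D_{r_0})$ force the event. As written, your argument establishes the (correct, and roughly parallel) reductions to rational radii and to the post-exit excursion decomposition, but leaves the theorem's essential content unproved.
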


\medskip


The paper is organized as follows. In Section~\ref{sec:prel}, preliminary results on super-Brownian motion, Brownian snakes, exit measures and their special Markov property are presented. In Section~\ref{sec:pfthm} we prove  Theorems~\ref{localdim} and~\ref{localdimcan}, assuming Propositions~\ref{exitdim},~\ref{candim}. 

In Section~\ref{lowerbndexitmeas} left tail asymptotics of exit measures are given. First in Section~\ref{sec:3.1} we derive necessary bounds on solutions to the boundary value problems~\eqref{pde1} and~\eqref{Uinftyprop}, and then in Section~\ref{sec:3.2} we prove \eqref{5_07_3} (see Theorem~\ref{t2.3} and Proposition~\ref{t1.2}).
In Section~\ref{nonpolar}, we  show that the lower bound on the local dimension of $\pmR$ holds with positive probability; see  Proposition~\ref{prop:crudelbdim} and  Lemma~\ref{lem:spherelb}.  

In Section~\ref{sec:6},  we show that for any $r_0>0$, 
the process $Z_t=X_{G_{r_0e^{-t}}}(1)/(r_0e^{-t})^2,\ \  t\ge 0$  is a {\it time homogeneous} continuous state branching process (CSBP), see Proposition~\ref{prop:yzinfo}.  The absence of negative jumps in $Z$ is important in the proof of Proposition~\ref{candim}, but we also 
believe that $Z$ and its associated measure-valued process are of independent interest; see Remark~\ref{rem:CSBP}. The proof of Propositions~\ref{exitdim} and \ref{candim} are concluded in Section~\ref{sec:maintheorem}. 
For the proof of  Proposition~\ref{exitdim} one shows that for $r<r_0$ sufficiently small there is a single excursion of $X$  (see \eqref{PPPdecomp}) governed by $\N_{X_0}$ that enters $B_r$ and thus  by the monotonicity of dimension, Proposition~\ref{exitdim} follows from  Proposition~\ref{candim}. Proposition~\ref{candim} (with $x_1=0$ without loss of generaility) is proved by studying the martingale
\[M_r=\N_{x_0}(\text{dim}(B_{r_0}\cap \pmR)\ge d_f|\cE_r)\quad0\le r<r_0,\ |x_0|>2r_0,\]
where $\cE_r$ is the $\sigma$-field generated by the excursions of the Brownian snake in $G_{r_0-r}$ (see Section~\ref{sec:prel} for a careful definition). In particular we analyze 
$$M_r\text{ as }r\uparrow T_0=\inf\{r:X_{G_{r_0-r}}(1)=0\}\text{ on }\{0<T_0\le r_0-r_1\},$$
 where $r_0,r_1$ are as in Proposition~\ref{candim}. The special Markov property and results from Sections~\ref{nonpolar} and \ref{sec:6} will show $M_r\ge q>0$, for $r$ close enough to $T_0$, and on the above set. The last step is then to show that $\{\text{dim}(B_{r_0}\cap \pmR)\ge d_f\}\in\cE_{T_0-}$ (Lemma~\ref{Emeas}). Then letting $r\uparrow T_0$ shows that on $\{0<T_0\le r_0-r_1\}$, $1(\text{dim}(B_{r_0}\cap \pmR)\ge d_f)\ge q>0$, as required.
 
 Note that the methods of \cite{MP17} (see Theorem~1.4 and the ensuing discussion of that work) would
have required the stronger hypothesis $\text{Conv}(X_0)^c\cap\pmR\neq\emptyset$ in Corollary~\ref{globaldim}, where Conv$(X_0)$ is the closed convex hull of Supp$(X_0)$. This is because  exit measures from hyperplanes were used in \cite{MP17}, instead of  the process of exit measure from the complements of shrinking balls. This refinement also leads to the 
purely local result on dimension in Theorem~\ref{localdim}.

\paragraph{\bf Convention on Functions and Constants.}
Constants whose value is unimportant and may change from line to line are denoted $C, c, c_d, c_1,c_2,\dots$, while constants whose values will be referred to later and appear initially in say, Lemma~i.j are denoted $c_{i.j},$ or $ \underline c_{i.j}$ or $C_{i.j}$. 
\medskip

\section{Exit Measures and the Special Markov Property}\label{sec:prel}
{\bf Notation.} Let $\mK$ be the space of compact subsets of $\R^d$ equipped with the Hausdorff metric, where we add $\emptyset$ as a discrete point.  That is let $K^\veps=\{x:d(x,K)\le \veps\}$ and for $K_1,K_2$ non-empty compacts, set 
\[d(K_1,K_2)=\inf\{\veps>0: K_1\subset K_2^\veps\text{ and }K_2\subset K_1^\veps\}\wedge 1,\]
and set $d(\emptyset, K)=1$ for $K$ non-empty compact. $(\mK, d)$ is then a complete separable metric space.
If $U$ is an open set in $\R^d$ we let $C(U)$ be the space of continuous functions on $U$ with the compact-open topology.

We start with a measurability result requiring a bit of care; a proof is given in the Appendix.
\begin{lemma}\label{borel}
(a) For any $R>0$, $\psi_a:\mK\to\mK$ is a Borel map, where $\psi_a(K)=K\cap\overline{B_R}$.\\
(b) For any $\alpha,R>0$, $\psi_b:\mK\to\R$ is a universally measurable map, where\\ $\psi_b(K)=1(\text{dim}((\partial K)\cap B_R)< \alpha)$.
\end{lemma}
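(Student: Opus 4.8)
The plan is to reduce everything to showing that a few set operations on $\mK$ are Borel, and then compose. The key background fact I will use is that the Hausdorff-metric topology on $\mK$ coincides with the Vietoris topology, whose Borel $\sigma$-field is generated by the countably many sets $\{L:L\subset V\}$ and $\{L:L\cap U\neq\emptyset\}$, with $U,V$ ranging over a countable base of $\R^d$ (together with the clopen singleton $\{\emptyset\}$). None of the maps in question is continuous --- each one jumps at ``tangency'' configurations --- so all the work is in checking preimages of these generating sets by hand, and the proof never uses continuity.

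\emph{Step 1 (intersection is Borel), giving (a).} I would first show that $\cap:\mK\times\mK\to\mK$, $(K_1,K_2)\mapsto K_1\cap K_2$, is Borel. The set $\{(K_1,K_2):K_1\cap K_2\subset V\}$ is open for $V$ open: if it holds at $(K_1,K_2)$ but fails along $(K_1^n,K_2^n)\to(K_1,K_2)$, pick $x_n\in(K_1^n\cap K_2^n)\setminus V$; since $x_n\in K_1^n$ lies within $d(K_1^n,K_1)\to0$ of $K_1$, a subsequential limit $x$ of $(x_n)$ lies in $K_1\cap K_2\subset V$ yet in the closed set $V^c$, a contradiction. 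Similarly $\{(K_1,K_2):K_1\cap K_2=\emptyset\}$ is open. For $U$ open, write $U=\bigcup_n C_n$ with $C_n$ compact and increasing; then $\{(K_1,K_2):K_1\cap K_2\cap U\neq\emptyset\}=\bigcup_n\{(K_1,K_2):K_1\cap K_2\cap C_n\neq\emptyset\}$, and each set on the right is closed by the same subsequential-limit argument. Hence $\cap$ is Borel, and part (a) follows at once since $\psi_a(K)=K\cap\overline{B_R}=\cap(K,\overline{B_R})$ (the preimage of $\{\emptyset\}$ being the subbasic open set $\{K:K\subset(\overline{B_R})^c\}$).

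\emph{Step 2 (the ``co-interior'' map), and (b).} Fix $R'>0$ and consider $Q_{R'}:K\mapsto \overline{B_{R'}}\setminus\textnormal{int}(K)=\overline{K^c}\cap\overline{B_{R'}}$, a map $\mK\to\mK$. The reductions I would use are: for $D$ compact, $\{K:D\subset K\}$ is closed, and $\{K:D\subset\textnormal{int}(K)\}=\bigcup_m\{K:D^{(1/m)}\subset K\}$ (where $D^{(\eta)}$ is the closed $\eta$-neighbourhood), by the Lebesgue number lemma; so the latter is $F_\sigma$. Writing $\overline{B_{R'}}\setminus V$ as a compact set and $\overline{B_{R'}}\cap U$ as an increasing union of compacts, this shows $Q_{R'}^{-1}\{L:L\subset V\}=\{K:\overline{B_{R'}}\setminus V\subset\textnormal{int}K\}$ and the complement of $Q_{R'}^{-1}\{L:L\cap U\neq\emptyset\}=\{K:\overline{B_{R'}}\cap U\subset\textnormal{int}K\}$ are Borel; the preimage of $\{\emptyset\}$ is handled the same way. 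So $Q_{R'}$ is Borel, and combining with Step 1 via $\partial K\cap\overline{B_{R'}}=(K\cap\overline{B_{R'}})\cap Q_{R'}(K)$ gives that $K\mapsto\partial K\cap\overline{B_{R'}}$ is Borel $\mK\to\mK$ for every $R'>0$. Next I would use that $\textnormal{dim}:\mK\to[0,d]$ is Borel: defining $\mathcal H^s_\delta$ by open covers of diameter $<\delta$, a finite subcover of $K$ covers all nearby compacta, so each $\mathcal H^s_\delta$ is upper semicontinuous on $\mK$; hence $\mathcal H^s=\sup_m\mathcal H^s_{1/m}$ is Borel and $\{K:\textnormal{dim}\,K<\alpha\}=\bigcup_{s\in\Q\cap[0,\alpha)}\{K:\mathcal H^s(K)=0\}$ is Borel. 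Therefore $\{K:\textnormal{dim}(\partial K\cap\overline{B_{R'}})\le\beta\}$ is Borel for all $\beta,R'$. Finally, since $\partial K\cap B_R=\bigcup_{n}\bigl(\partial K\cap\overline{B_{R(1-1/n)}}\bigr)$ and Hausdorff dimension is countably stable,
\[\{K:\textnormal{dim}(\partial K\cap B_R)<\alpha\}=\bigcup_{\beta\in\Q\cap[0,\alpha)}\ \bigcap_{n}\ \{K:\textnormal{dim}(\partial K\cap\overline{B_{R(1-1/n)}})\le\beta\}\]
(the intersection taken over $n$ with $R(1-1/n)>0$) is Borel, which yields (b) --- in fact with ``Borel'' in place of ``universally measurable''.

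\emph{Main obstacle.} The substance is entirely in coping with the discontinuity of the set operations, and it rests on two recurring devices: exhausting an open set by compacta to turn ``meets an open set'' into a countable union of the closed conditions ``meets a compact set'', and the Lebesgue number lemma to turn ``contained in the interior'' into a countable union of the closed conditions ``the $1/m$-neighbourhood is contained''. The point specific to (b) that requires extra care is that the statement involves the \emph{open} ball $B_R$, for which $K\mapsto\partial K\cap B_R$ is not even a map into $\mK$ (its closure can have strictly larger dimension); this is why one must work with the closed balls $\overline{B_{R(1-1/n)}}$ and invoke countable stability of dimension at the end.
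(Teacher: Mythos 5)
Your proof is correct, and for part (b) it takes a genuinely different route from the paper's. For (a) both arguments are direct verifications on generators of the Borel structure of $\mK$: the paper fixes a ball $\{K:d(K\cap\overline{B_R},K_0)<r\}$ in the Hausdorff metric and decomposes it as an open set intersected with a countable union of closed sets, whereas you check preimages of the Vietoris subbasic sets and in fact prove the more general statement that $(K_1,K_2)\mapsto K_1\cap K_2$ is jointly Borel; the two are comparable in length and spirit. The real divergence is in (b). The paper reduces to the covering condition ``$\partial K\cap\overline{B_r}\subset\cup_{i=1}^M B^i$'' and handles the existential quantifier over points $x\in K_0\cap\partial K$ by projecting a Borel subset of $\R^d\times\mK$ onto $\mK$, which only yields an analytic (hence universally measurable) set --- and that is all the paper claims or needs. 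You instead eliminate the quantifier by showing that $K\mapsto\partial K\cap\overline{B_{R'}}$ is itself a Borel map into $\mK$ (via the ``co-interior'' map $Q_{R'}$ and the Lebesgue-number device) and that $\textnormal{dim}$ is Borel on $\mK$ (upper semicontinuity of $\mathcal H^s_\delta$), then compose; the passage from closed to open balls via countable stability of dimension is handled correctly. The payoff is a strictly stronger conclusion --- the set in (b) is Borel, not merely universally measurable --- which would let one drop the small extensions to universally measurable maps invoked later (e.g., the remark in the proof of Proposition~\ref{candim} that Proposition~\ref{spmarkov}(b) extends to universally measurable $\psi$). The cost is that you must import, or prove, the identification of the Hausdorff-metric Borel structure with the Vietoris one and the Borel measurability of Hausdorff dimension on $\mK$; both are standard, and your sketches of them are sound.
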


We will use Le Gall's Brownian snake construction of a SBM $X$, with initial state $X_0\in M_F(\R^d)$. Set $\cW=\cup_{t\ge 0} C([0,t],\R^d)$ with the natural metric (see page 54 of \cite{Leg99}), and let $\zeta(w)=t$ be the lifetime of $w\in C([0,t],\R^d)\subset\cW$. The Brownian snake $W=(W_t,t\ge0)$ is a $\cW$-valued continuous strong Markov process and, abusing notation slightly, let $\N_x$ denote its excursion measure starting from the path at $x\in\R^d$ with lifetime zero.  As usual we let $\hat W(t)=W_t(\zeta(W_t))$ denote the tip of the snake at time $t$, and $\sigma(W)>0$ denote the length of the excursion path. We refer the reader to Ch. IV of \cite{Leg99} for the precise definitions.  The construction of super-Brownian motion, $X=X(W)$ under $\N_x$ or $\P_{X_0}$, may be found in Ch. IV of \cite{Leg99}. The ``law" of $X(W)$ under $\N_x$ is the canonical measure of SBM starting at $x$ described in the last Section (and also denoted by $\N_x$). If $\Xi=\sum_{j\in J}\delta_{W_j}$ is a Poisson point process on $\cW$ with intensity $\N_{X_0}(dW)=\int\N_x(dW)X_0(dx)$, then by Theorem~4 of Ch. IV of \cite{Leg99} (cf. \eqref{PPPdecomp})
\begin{equation}\label{Xtdec}
X_t(W)=\sum_{j\in J}X_t(W_j)=\int X_t(W)\Xi(dW)\text{ for }t>0\end{equation}
defines a SBM with initial measure $X_0$. We will refer to this as the standard set-up for $X$ under $\P_{X_0}$. 

Recall $\mathcal{R}=\overline{\{x:L^x>0\}}$ is the range of the SBM $X$ under $\P_{X_0}$ or $\N_{X_0}$.    Under $\N_{X_0}$ we have (see (8) on p. 69 of \cite{Leg99})

\begin{equation}\label{snakerange}
\mR=\{\hat W(s):s\in[0,\sigma]\}.
\end{equation}

Let $G$ be as in \eqref{Gdef}.   Then 
\begin{equation}\label{exitsupport}
X_G \text{ is a finite random measure supported on }\mathcal{R}\cap \partial G\text{ a.s.}.
\end{equation}
Under $\N_{X_0}$ this follows from the definition of $X_G$ on p. 77 of \cite{Leg99} and 
the ensuing discussion, and \eqref{snakerange}.
  Although \cite{Leg99} works under $\N_x$ for $x\in G$ the above extends immediately to $\P_{X_0}$ because as in (2.23) of \cite{MP17}, 
  \begin{equation}\label{exitdecomp}
X_G=\sum_{j\in J} X_G(W_j)=\int X_G(W)d\Xi(W),
\end{equation}
where $\Xi$ is a Poisson point process on $\cW$ with intensity $\N_{X_0}$.  

Working under $\N_{X_0}$ and following \cite{Leg95}, we define
\begin{align*}
S_G(W_u)&=\inf\{t\le \zeta_u: W_u(t)\notin G\}\ \ (\inf\emptyset=\infty),\\
\eta_s^G(W)&=\inf\{t:\int_0^t1(\zeta_u\le S_G(W_u))\,du>s\},\\
\cE_G&=\sigma(W_{\eta_s^G},s\ge 0\}\vee\{\N_{X_0}-\text{null sets}\},
\end{align*}
where $s\to W_{\eta^G_s}$ is continuous (see p. 401 of \cite{Leg95}).  Write the open set $\{u:S_G(W_u)<\zeta_u\}$ as countable union of disjoint open intervals, $\cup_{i\in I}(a_i,b_i)$.
Clearly $S_G(W_u)=S^i_G<\infty$ for all $u\in [a_i,b_i]$ and we may define
\[W^i_s(t)=W_{(a_i+s)\wedge b_i}(S^i_G+t)\text{ for }0\le t\le \zeta_{(a_i+s)\wedge b_i}-S^i_G.\]
Therefore for $i\in I$, $W^i\in C(\R_+,\cW)$ are the excursions of $W$ outside $G$. Proposition 2.3 of \cite{Leg95} implies $X_G$ is $\cE_G$-measurable and Corollary~2.8 of the same reference implies
\begin{align}\label{SMP1}
\nn\text{Conditional on $\cE_G$, the point measure $\sum_{i\in I}\delta_{W^i}$ is a Poisson point measure}&\\
\text{with intensity $\N_{X_G}$}&. 
\end{align}
If $d(D^c,\bar G)>0$, then the definition (and existence) of $X_D(W)$ applies equally well to each $X_D(W^i)$
and it is easy to check that 
\begin{equation}\label{Widecomp}
X_D(W)=\sum_{i\in I}X_D(W^i).
\end{equation}

If $U$ is an open subset of $\text{Supp}(X_0)^c$, then $L_U$, the
restriction of the local time $L^x$ to $U$, is in $C(U)$.
Here are some simple consequences of \eqref{SMP1}. 
\begin{proposition}\label{spmarkov}(a) Let $G_1\subset G_2$ be open sets as in \eqref{Gdef} such that $d(G_2^c,\overline{G_1})>0$.\\

(i) If $\psi_1:C(\overline{G_1}^c)\to [0,\infty)$ is Borel measurable, then
\begin{equation*} 
\N_{X_0}(\psi_1(L_{\overline{G_1}^c})|\cE_{G_1})=\E_{X_{G_1}}(\psi_1(L_{\overline{G_1}^c})).
\end{equation*}

(ii) If $\psi_2:M_F(\R^d)\to [0,\infty)$ is Borel measurable then
\begin{equation*} 
\N_{X_0}(\psi_2(X_{G_2})|\cE_{G_1})=\E_{X_{G_1}}(\psi_2(X_{G_2})).
\end{equation*}
(b) If $0<R_2<R_1$, $d(\text{Supp}(X_0), \overline{B_{R_1}})>0$, and $\psi_3:\mK\to[0,\infty)$ is Borel measurable, then
\begin{equation*}\N_{X_0}(\psi_3(\mR\cap\overline{B_{R_2}})|\cE_{G_{R_1}})=\E_{X_{G_{R_1}}}(\psi_3(\mR\cap\overline{B_{R_2}})).
\end{equation*} 
\end{proposition}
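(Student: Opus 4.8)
The goal is to prove Proposition~\ref{spmarkov}, which asserts that the special Markov property \eqref{SMP1} transfers to three concrete functionals: the local time outside $\overline{G_1}$, the exit measure from the larger set $G_2$, and the intersection of the range with a smaller ball. The common mechanism is that each of these functionals is a measurable function of the Poisson point measure $\sum_{i\in I}\delta_{W^i}$ of excursions of the snake outside $G_1$ (together with $\cE_{G_1}$-measurable data), so that conditioning on $\cE_{G_1}$ and using \eqref{SMP1} reduces the conditional law to $\N_{X_{G_1}}$ evaluated at that function, which is exactly $\E_{X_{G_1}}(\cdot)$ by the standard set-up \eqref{Xtdec}--\eqref{exitdecomp}.

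\medskip
\noindent\textbf{Plan for part (a)(i).} First I would observe that since $d(G_2^c,\overline{G_1})>0$ and, a fortiori, the closed complement $\overline{G_1}^c$ sits at positive distance from $\text{Supp}(X_0)$ only on the relevant pieces, the local time restricted to $\overline{G_1}^c$ is built entirely from the parts of the snake paths that live outside $G_1$. More precisely, the excursions $W^i$ are exactly the pieces of $W$ after it exits $G_1$, and by \eqref{snakerange} and the analogous decomposition of local time (the occupation-time/local-time analogue of \eqref{Widecomp}, or equivalently writing $L^x = \sum_{i\in I} L^x(W^i)$ for $x\notin \overline{G_1}$, which follows since any snake path reaching such an $x$ must first exit $G_1$), the restriction $L_{\overline{G_1}^c}$ is a deterministic measurable function $\Phi$ of the point measure $\sum_i\delta_{W^i}$ alone. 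I would make the measurability of $\Phi$ into $C(\overline{G_1}^c)$ precise using that $x\mapsto L^x$ is continuous off $\text{Supp}(X_0)$ and the additivity over excursions. Then
\[
\N_{X_0}\bigl(\psi_1(L_{\overline{G_1}^c})\,\big|\,\cE_{G_1}\bigr)
=\N_{X_0}\bigl(\psi_1\circ\Phi\bigl(\textstyle\sum_i\delta_{W^i}\bigr)\,\big|\,\cE_{G_1}\bigr)
=\int \psi_1\circ\Phi(\nu)\,\Pi_{X_{G_1}}(d\nu),
\]
where, by \eqref{SMP1}, $\Pi_{X_{G_1}}$ is the law of a Poisson point measure with intensity $\N_{X_{G_1}}$. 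But by the standard set-up \eqref{Xtdec} (applied with $X_0$ replaced by $X_{G_1}$ and with the same additivity of local time), a SBM started from $X_{G_1}$ has local time $L_{\overline{G_1}^c}$ distributed exactly as $\Phi$ of such a Poisson point measure, so the last integral equals $\E_{X_{G_1}}(\psi_1(L_{\overline{G_1}^c}))$. A small point to check is that $X_{G_1}$ is supported on $\partial G_1\subset\overline{G_1}$, so $\overline{G_1}^c$ is in the complement of the support of this new initial measure and the continuity/additivity facts apply; this uses \eqref{exitsupport}.

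\medskip
\noindent\textbf{Plan for parts (a)(ii) and (b).} These are the same argument with a different functional. For (a)(ii): since $d(G_2^c,\overline{G_1})>0$, the exit measure $X_{G_2}$ decomposes over the excursions outside $G_1$ by \eqref{Widecomp} (with $D=G_2$, $G=G_1$), so $X_{G_2}=\Psi(\sum_i\delta_{W^i})$ for a measurable $\Psi:\text{(point measures)}\to M_F(\R^d)$; apply \eqref{SMP1} and then recognize the resulting Poisson integral as $\E_{X_{G_1}}(\psi_2(X_{G_2}))$ via \eqref{exitdecomp}. For (b): take $G_1=G_{R_1}$, and note $\overline{B_{R_2}}\subset \{|x|\le R_2\}\subset\{|x|<R_1\}=\overline{G_{R_1}}^c$ since $R_2<R_1$, so $\overline{B_{R_2}}$ lies strictly inside the region reached only after exiting $G_{R_1}$. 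By \eqref{snakerange} applied to each excursion, $\mR\cap\overline{B_{R_2}} = \overline{\bigcup_{i\in I}\{\hat W^i(s)\}}\cap \overline{B_{R_2}}$, and since any tip landing in $\overline{B_{R_2}}$ belongs to an excursion that has entered $\{|x|<R_1\}$ after exiting $G_{R_1}$, one gets $\mR\cap\overline{B_{R_2}}=\bigl(\bigcup_i \mR(W^i)\bigr)\cap\overline{B_{R_2}}$, a measurable function of $\sum_i\delta_{W^i}$ into $\mK$. (The measurability into $(\mK,d)$ of $\nu\mapsto \overline{\bigcup_i\text{Supp}}\cap\overline{B_{R_2}}$ is where Lemma~\ref{borel}(a), the Borel measurability of $K\mapsto K\cap\overline{B_R}$, is used, together with the measurability of the range map on snake space.) Then \eqref{SMP1} and \eqref{exitdecomp}/\eqref{Xtdec} finish it exactly as before.

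\medskip
\noindent\textbf{Main obstacle.} The conceptual content is light; the real work is the bookkeeping that makes ``$F$ is a measurable function of $\sum_i\delta_{W^i}$'' rigorous in each case — in particular, (i) verifying the additivity of local time over the excursions $W^i$ off $G_1$, which is not literally one of \eqref{Widecomp} or \eqref{snakerange} but is proved the same way (a snake path contributing local-time mass at $x\notin\overline{G_1}$ must have exited $G_1$, so its contribution is carried by exactly one $W^i$), and establishing joint measurability of the map into $C(\overline{G_1}^c)$; and (b) handling the closure operation in $\mR\cap\overline{B_{R_2}}$ carefully so that the infinite union of excursion ranges, intersected with a closed ball, is a measurable $\mK$-valued function — this is precisely what Lemma~\ref{borel}(a) is there to cover, but one must also know that $\nu=\sum_i\delta_{W^i}\mapsto \overline{\bigcup_i\mR(W^i)}$ is measurable, which follows from the measurability of $W\mapsto \mR(W)$ and countable stability of $\mK$-valued unions (closures of increasing unions). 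Once these measurability statements are in hand, the rest is a direct invocation of \eqref{SMP1} together with the fact that under $\N_{X_0}$ the cluster representation \eqref{Xtdec} makes ``$\E_{X_{G_1}}$'' literally equal to integrating against a Poisson point measure of intensity $\N_{X_{G_1}}$.
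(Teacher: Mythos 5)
Your proposal is correct, and parts (a)(i)--(ii) follow the paper's intended route: the paper simply cites (a)(i) as Proposition~2.6(b) of \cite{MP17} (whose proof is essentially the excursion-decomposition argument you sketch), and derives (a)(ii) exactly as you do from \eqref{SMP1}, \eqref{Widecomp} and \eqref{exitdecomp}. For part (b), however, the paper takes a different and shorter route: instead of re-running the Poisson point measure argument on the excursion ranges, it reduces (b) to the already-established (a)(i) by observing that $\mR\cap\overline{B_{R_2}}=S(L_{B_{R_1}})\cap\overline{B_{R_2}}$, where $S(f)=\overline{\{x:f(x)>0\}}$ is the support map on $C(B_{R_1})=C(\overline{G_{R_1}}^c)$ and where $R_2<R_1$ guarantees that taking the support of the restricted local time loses nothing inside $\overline{B_{R_2}}$; the only measurability inputs are then the Borel measurability of $S$ into $\mK$ and Lemma~\ref{borel}(a) for $K\mapsto K\cap\overline{B_{R_2}}$, after which (b) is literally an instance of (a)(i) with $\psi_1=\psi_3\circ(\,\cdot\cap\overline{B_{R_2}})\circ S$. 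Your direct approach via $\mR\cap\overline{B_{R_2}}=\bigl(\bigcup_i\mR(W^i)\bigr)\cap\overline{B_{R_2}}$ also works, but it forces you to justify that the $\mK$-valued map $\sum_i\delta_{W^i}\mapsto\overline{\bigcup_i\mR(W^i)}\cap\overline{B_{R_2}}$ is measurable and that the closure adds no points in $\overline{B_{R_2}}$ (which requires the a.s.\ finiteness of the number of excursions reaching a slightly larger ball), bookkeeping the paper's reduction avoids entirely; the paper's route is the one to prefer here since it recycles both (a)(i) and Lemma~\ref{borel}(a), which is visibly stated with exactly this application in mind.
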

\begin{proof} (a) (i) is Proposition 2.6(b) of \cite{MP17}. (a)(ii) follows in a similar manner from \eqref{SMP1}, \eqref{Widecomp} and \eqref{exitdecomp}.\\
(b) Define $S:C(B_{R_1})\to\mK$ by $S(f)=\text{Supp}(f):=\overline{\{x:f(x)>0\}}$, where the closure is taken in all of $\R^d$.  Then it is easy to see that $S$ is Borel measurable, for example by considering the inverse images of closed balls in $\mK$.  In addition the map $K\to \overline{B_{R_2}}\cap K$ is measurable on $\mK$ by Lemma~\ref{borel}(a). 
Now observe that $\mR\cap\overline{B_{R_2}}=S(L_{B_{R_1}})\cap\overline{B_{R_2}}$, and so by the above observations is a measurable function of $L_{B_{R_1}}$.  Therefore (b) now follows from (a)(i) with $G_i=G_{R_i}$.
\end{proof}

We will need a version of the above under $\P_{X_0}$ as well. 
\begin{proposition}\label{p0.1}
For an open set $G$ and $X_0 \in M_F(\R^d)$ as in \eqref{Gdef}, let $\Psi$ be a  bounded measurable function on $C({\overline{G}}^c)$ and $\Phi_i$, $i=0,1$ be bounded measurable
functions on $M_F(\R^d)$ and $M_F(\R^d)^n$, respectively. Then\\
(a) $\E_{X_0} (\Phi_0(X_G) \Psi(L))=\E_{X_0}(\Phi_0(X_G)\E_{X_G}(\Psi(L)))$.\\
(b) (i) Let $D_i$ be open sets as in \eqref{Gdef} such that $d(D_i^c,\bar G)>0$, $\forall 1\leq i\leq n$. Then \[\E_{X_0} \Big(\Phi_0(X_G)  \Phi_1(X_{D_1},\dots, X_{D_n})\Big)=\E_{X_0}\Big(\Phi_0(X_G) \E_{X_G}\Big( \Phi_1(X_{D_1},\dots,X_{D_n})\Big)\Big).\]
\quad(ii)  If $0<R_2<R_1$ and $d(\text{Supp}(X_0)^c, \overline{B_{R_1}})>0$, then 
$$\E_{X_0}(\Phi_0(X_{G_{R_1}})1(\mR\cap\overline{B_{R_2}}\neq\emptyset)=\E_{X_0}(\Phi_0(X_{G_{R_1}})\P_{X_{G_{R_1}}}(\mR\cap\overline{B_{R_2}}\neq\emptyset)).$$
\end{proposition}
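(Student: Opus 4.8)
The plan is to derive all three statements from the special Markov property under the canonical measure --- that is, from \eqref{SMP1} and Proposition~\ref{spmarkov} --- by way of the Poisson cluster representation \eqref{Xtdec}. Work in the standard set-up for $X$ under $\P_{X_0}$, so that $\Xi=\sum_{j\in J}\delta_{W_j}$ is a Poisson point process on $\cW$ with intensity $\N_{X_0}$ and $X=X(W)$. For each atom $W_j$ let $\cE_G(W_j)$, $X_G(W_j)$, and the outside excursions $\{W^{j,i}:i\in I_j\}$ be defined as in the construction preceding \eqref{SMP1}, and let $\cE_G^{*}$ be the $\sigma$-field generated by $\Xi$ together with the ``inside $G$'' part $(W_j)_{\eta^G_\cdot}$ of each atom (completed with $\P_{X_0}$-null sets). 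By \eqref{exitdecomp}, $X_G=\sum_j X_G(W_j)$ is $\cE_G^{*}$-measurable; by \eqref{Widecomp} applied atomwise, $X_D=\sum_j\sum_i X_D(W^{j,i})$ for any $D$ with $d(D^c,\bar G)>0$; and $L_{\bar G^c}=\sum_j L_{\bar G^c}(W_j)$ is a measurable function of the full family $\{W^{j,i}\}_{j,i}$, since the same is true atomwise (this is the pathwise content behind Proposition~\ref{spmarkov}(a)(i)).

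The key step is the conditional claim: \emph{conditionally on $\cE_G^{*}$, the point measure $\sum_{j\in J}\sum_{i\in I_j}\delta_{W^{j,i}}$ on $\cW$ is a Poisson point process with intensity $\N_{X_G}$}. Indeed, by \eqref{SMP1}, conditionally on $\cE_G(W_j)$ the measure $\sum_i\delta_{W^{j,i}}$ is Poisson with intensity $\N_{X_G(W_j)}$; conditionally on $\cE_G^{*}$ these atomwise outside-excursion processes are independent (a standard consequence of the measurability of the map $W\mapsto(\text{inside part},\text{outside excursions})$ together with the independence properties of $\Xi$), so their superposition is Poisson with intensity $\sum_j\N_{X_G(W_j)}=\N_{\sum_j X_G(W_j)}=\N_{X_G}$. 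But a Poisson point process on $\cW$ with intensity $\N_\mu$ is exactly the cluster representation of $X$ under $\P_\mu$ (cf. \eqref{Xtdec}). Hence, expressing each of $L_{\bar G^c}$, $(X_{D_1},\dots,X_{D_n})$, and (below) $\mR\cap\overline{B_{R_2}}$ as one and the same measurable functional $F$ of the outside excursions under both laws, we obtain for bounded measurable $F$ that $\E_{X_0}(F\mid\cE_G^{*})=H(X_G)$, where $H(\mu)=\E_\mu(F)$ is measurable in $\mu$ (for instance by a monotone class argument starting from exponential functionals, using \eqref{LFEM}--\eqref{EMpde} and \eqref{LTexit}). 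Since $\Phi_0(X_G)$ is $\cE_G^{*}$-measurable, multiplying by it and taking $\E_{X_0}$ gives $\E_{X_0}(\Phi_0(X_G)F)=\E_{X_0}(\Phi_0(X_G)\E_{X_G}(F))$.

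Part (a) is now the case $F=\Psi(L_{\bar G^c})$, and part (b)(i) the case $F=\Phi_1(X_{D_1},\dots,X_{D_n})$. For part (b)(ii) take $G=G_{R_1}$, so $\bar G^c=B_{R_1}$; since $R_2<R_1$ one has $\mR\cap\overline{B_{R_2}}=\mathrm{Supp}(L_{B_{R_1}})\cap\overline{B_{R_2}}$ exactly as in the proof of Proposition~\ref{spmarkov}(b), so $1(\mR\cap\overline{B_{R_2}}\neq\emptyset)$ is a Borel function of $L_{B_{R_1}}$ and hence of the outside data. Applying the previous display with this choice of $F$ gives the assertion, noting that $\mathrm{Supp}(X_G)\subset\partial B_{R_1}$ lies at distance $R_1-R_2>0$ from $\overline{B_{R_2}}$, so that $\P_{X_G}(\mR\cap\overline{B_{R_2}}\neq\emptyset)$ is well-defined.

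The main obstacle is the measure-theoretic bookkeeping behind the conditional claim: one must check that decomposing a snake path into its part killed on exiting $G$ and its family of outside excursions is a measurable operation, and then combine \eqref{SMP1} with the superposition and independence properties of the ambient Poisson process $\Xi$ to see that, conditionally on the aggregated inside data, the aggregated outside excursions form a single Poisson process with the additive intensity $\N_{X_G}=\N_{\sum_j X_G(W_j)}$. An alternative that sidesteps part of this is to verify the identities first for exponential $\Phi_0$ and $F$ using \eqref{LFEM}--\eqref{EMpde} and \eqref{LTexit}, and then extend to general bounded measurable functionals by a monotone class argument; this still requires knowing that the relevant functionals ($L_{\bar G^c}$, the tuple of exit measures, and the range) are measurable with respect to --- and are generated by --- the exponential test functions employed.
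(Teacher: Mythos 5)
Your proof is correct and follows essentially the same route as the paper, which simply cites Proposition~2.6(c) of \cite{MP17} for part (a) and notes that (b)(i) follows by the same reasoning from \eqref{SMP1}, \eqref{exitdecomp} and Proposition~\ref{spmarkov}(a)(ii), while (b)(ii) reduces to (a) by writing $1(\mR\cap\overline{B_{R_2}}\neq\emptyset)$ as a Borel function of $L_{B_{R_1}}$ exactly as you do. You have merely written out the Poisson-superposition/aggregation argument that the citation contains, including the (standard but worth flagging) conditional-independence step across the atoms of $\Xi$.
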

\begin{proof}
 (a) is Proposition 2.6(c) of \cite{MP17}.  (b)(i) follows by the same reasoning there, using \eqref{SMP1}, \eqref{exitdecomp} (the latter for each $D_i$, as well as $G$), and Proposition~\ref{spmarkov}(a)(ii), trivially extended to accommodate $(X_{D_1},\dots,X_{D_n})$ in place of $X_{G_2}$.  (b)(ii) follows from (a), as in the proof of Proposition~\ref{spmarkov}(b).
\end{proof}

\section{Proof of Theorems~\ref{localdim} and~\ref{localdimcan}}
\label{sec:pfthm}

We will see in this section that (using Theorem~\ref{dimthm}) Theorem~\ref{localdim} is a  simple consequence of Proposition~\ref{exitdim},
and similarly  Theorem~\ref{localdimcan} can be derived from Proposition~\ref{candim}.\\
\noindent{\bf Proof of Theorem~\ref{localdim}.} Let $x_1\in\R^d$ and $0<r_1<r_0\le 1$ satisfy $B_{2r_0}(x_1)\subset \text{Supp}(X_0)^c$. From \eqref{exitsupport} we have $\P_{X_0}-\text{a.s.}$,
\begin{equation}\label{fact1}
\partial G_{r_1}^{x_1}\cap \mR=\emptyset\Rightarrow X_{G_{r_1}^{x_1}}=0.
\end{equation}
Proposition~\ref{p0.1}(b)(ii) and translation invariance imply
\[\P_{X_0}(X_{G_{r_0}^{x_1}}=0,\ \mR\cap B_{r_0/2}(x_1)\neq\emptyset)=\E_{X_0}(1(X_{G_{r_0}^{x_1}}=0)\P_{X_{G_{r_0}^{x_1}}}(\mR\cap B_{r_0/2}(x_1)\neq\emptyset))=0.\]
It follows that $\P_{X_0}-\text{a.s.}$,
\begin{equation}\label{fact2}
\mR\cap B_{r_0/2}(x_1)\neq\emptyset\Rightarrow X_{G_{r_0}^{x_1}}(1)>0.
\end{equation}

Fix $\omega$ outside a $\P_{X_0}$-null set so that \eqref{exitcond} of Proposition~\ref{exitdim}, \eqref{fact1}, and \eqref{fact2} all hold for all $x_1\in\Q^d$ and  all rational numbers $0<r_1<r_0\le 1$ satisfying $B_{2r_0}(x_1)\subset\text{Supp}(X_0)^c$.  Assume $U$ is an open set in $\text{Supp}(X_0)^c$ which intersects $\pmR$ and choose $x_0\in U\cap\pmR$. Pick a rational $r_0$ in $(0,1]$ so that 
\[B_{3r_0}(x_0)\subset U\subset \text{Supp}(X_0)^c,\]
then choose  $x_1\in\Q^d\cap B_{r_0/2}(x_0)\cap\mR^c$, and finally select a rational $r_1\in(0,r_0)$ such that
\begin{equation}\label{emptyr1}
B_{2r_1}(x_1)\subset \mR^c\text{ and so }\partial G_{r_1}^{x_1}\cap\mR=\emptyset.
\end{equation}
Clearly we have 
\begin{equation}\label{outS}
B_{2r_0}(x_1)\subset B_{3r_0}(x_0)\subset U\subset \text{Supp}(X_0)^c,
\end{equation}
and
\begin{equation}\label{r0exitp}
x_0\in B_{r_0/2}(x_1)\cap\pmR\text{ and so }B_{r_0/2}(x_1)\cap\mR\neq\emptyset.
\end{equation}
Our choice of $\omega$ and \eqref{outS} allow us to conclude from \eqref{emptyr1} and \eqref{r0exitp}, respectively, that 
\[ X_{G_{r_1}^{x_1}}(1)=0\text{ and }X_{G_{r_0}^{x_1}}(1)>0,\text{respectively.}\]
By \eqref{outS} and our choice of $\omega$ we may also apply Proposition~\ref{exitdim} and conclude that
\[\text{dim}(U\cap\pmR)\ge \text{dim}(B_{r_0}(x_1)\cap\pmR)\ge d_f,
\]
where we have used \eqref{outS} in the first inequality. On the other hand we know from Theorem~1.4(a) of \cite{MP17} and $\pmR\subset\partial\{x:L^x>0\}$ that
\[\text{dim}(U\cap\pmR)\le \text{dim}(S(X_0)^c\cap\partial\mR)\le d_f,\]
and the proof is complete.\qed\\

\noindent{\bf Proof of Theorem~\ref{localdimcan}.} The derivation of Theorem~\ref{localdimcan} directly from Proposition~\ref{candim} is very similar to the above proof of Theorem~\ref{localdim} from Proposition~\ref{exitdim} and
so is omitted. Note here that the derivation of Proposition~\ref{exitdim} under $\N_0$  from Proposition~\ref{candim}  with $X_0=\delta_0$ is   almost immediate as both are under $\N_0$, and that the proof of Corollary~\ref{delta0} then holds under $\N_0$.  
\qed

\section{Lower Bound on the Exit Measure Probability}\label{lowerbndexitmeas}
\noindent $\mathbf{Throughout\ this\ Section\ we\ fix }$ $\varepsilon_0 \in (0,1)$.
As noted in the Introduction, the goal of this section, stated below, is a key estimate for the lower bound 
on the dimension of $\pmR$. Although we are interested in $d=2,3$, we assume $d\le 3$ throughout this 
section as the arguments remain valid. 
\begin{theorem}\label{t2.3}
There are positive constants $R_{\ref{t2.3}}$, $K_1(\varepsilon_0)<K_2(\varepsilon_0)<\infty$ and $c_{\ref{t2.3}}(\varepsilon_0)$  such that, for all $\varepsilon_0 \leq |x| \leq \varepsilon_0^{-1}$,
\[\P_{\delta_x}(K_1\leq \frac{X_{G_{\varepsilon}}(1)}{\varepsilon^2} \leq K_2, X_{G_{\varepsilon/2}(1)}=0) \geq c_{\ref{t2.3}}(\varepsilon_0) \varepsilon^{p-2}, \ \forall\, 0<\varepsilon <\varepsilon_0/R_{\ref{t2.3}}.\]
\end{theorem}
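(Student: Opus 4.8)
The plan is to reduce Theorem~\ref{t2.3} to precise asymptotics for the solutions of the boundary value problems \eqref{pde1} and \eqref{Uinftyprop}, combined with the special Markov property of Propositions~\ref{spmarkov} and \ref{p0.1}. First I would translate the event into a two-stage condition across the nested shells $G_\veps$ and $G_{\veps/2}$. The outer condition $K_1\le X_{G_\veps}(1)/\veps^2\le K_2$ is, via a Tauberian theorem as sketched after \eqref{Uinftyprop}, governed by the difference $d^{\lambda,\veps}(x):=U^{\infty,\veps}(x)-U^{\lambda,\veps}(x)$ at scales $\lambda\asymp\veps^{-2}$; by the scaling relation \eqref{scaling1} this reduces to understanding $U^{K^{-1},1}(\cdot/\veps)$ and $U^{\infty,1}(\cdot/\veps)$ on the sphere $|y|=1$, i.e. near the inner boundary, so Corollary~\ref{c1.4} (the claim $U^{\infty,\veps}(x)-U^{K^{-1}\veps^{-2},\veps}(x)\sim\veps^{p-2}$) is exactly the input needed. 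The inner condition $X_{G_{\veps/2}}(1)=0$ is handled by \eqref{UinfLT}: conditionally on the excursions outside $G_\veps$ it has probability $\exp(-X_{G_\veps}(U^{\infty,\veps/2}))$, and on the event that $X_{G_\veps}(1)$ is of order $\veps^2$ and supported on $\partial G_\veps$ (distance $\veps/2$ from $\partial G_{\veps/2}$), $U^{\infty,\veps/2}$ restricted to $\partial G_\veps$ is of order $\veps^{-2}$ by \eqref{Uinftyprop} and scaling, so this conditional probability is bounded below by a positive constant depending only on $K_2$ and $\veps_0$.

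The concrete steps, in order, are: (1) Use Proposition~\ref{p0.1}(b)(i) (or Proposition~\ref{spmarkov}(a)(ii)) with $G=G_\veps$ to write
\[\P_{\delta_x}\Big(K_1\le \tfrac{X_{G_\veps}(1)}{\veps^2}\le K_2,\ X_{G_{\veps/2}}(1)=0\Big)=\E_{\delta_x}\Big(1\big(K_1\le \tfrac{X_{G_\veps}(1)}{\veps^2}\le K_2\big)\,\P_{X_{G_\veps}}\big(X_{G_{\veps/2}}(1)=0\big)\Big),\]
and evaluate the inner probability via \eqref{UinfLT} as $\exp(-X_{G_\veps}(U^{\infty,\veps/2}))$. (2) Bound $U^{\infty,\veps/2}$ on $\partial G_\veps$: since $X_{G_\veps}$ lives on $\partial G_\veps=\{|y|=\veps\}$ and by \eqref{scaling1} (applied to the $U^{\infty,\cdot}$ version, which follows by monotone passage to the limit) $U^{\infty,\veps/2}(y)=(\veps/2)^{-2}U^{\infty,1}(2y/\veps)$ with $|2y/\veps|=2$, the value is a fixed multiple of $\veps^{-2}$; hence on $\{X_{G_\veps}(1)\le K_2\veps^2\}$ we get $X_{G_\veps}(U^{\infty,\veps/2})\le c(K_2)$, so the inner probability is $\ge e^{-c(K_2)}>0$. (3) This leaves a lower bound on $\P_{\delta_x}(K_1\le X_{G_\veps}(1)/\veps^2\le K_2)$ alone, up to the constant $e^{-c(K_2)}$; here I invoke the precise left-tail asymptotics for the exit measure (Proposition~\ref{t1.2} together with Corollary~\ref{c1.4}), choosing $K_1<K_2$ so that the Tauberian inversion gives $\P_{\delta_x}(K_1\le X_{G_\veps}(1)/\veps^2\le K_2)\ge c(\veps_0)\veps^{p-2}$ for all $\veps_0\le|x|\le\veps_0^{-1}$ and $\veps$ small. (4) Track uniformity in $x$: all estimates on $U^{\infty,\veps},U^{\lambda,\veps}$ are radial and, after rescaling by $\veps$, are controlled at $|y|=|x|/\veps\in[\veps_0/\veps,\infty)$, so the required bounds hold uniformly over the stated range of $|x|$, with $R_{\ref{t2.3}}$ chosen to push $\veps$ small enough that the asymptotic regime is entered.

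The main obstacle is Step~(3): extracting a genuine \emph{lower} bound of the exact order $\veps^{p-2}$ on the probability that $X_{G_\veps}(1)$ lands in a fixed multiplicative window $[K_1\veps^2,K_2\veps^2]$, as opposed to merely $\{0<X_{G_\veps}(1)\le K_2\veps^2\}$. A pure Tauberian argument gives asymptotics for the one-sided quantity $U^{\infty,\veps}(x)-U^{K^{-1}\veps^{-2},\veps}(x)\sim\P_{\delta_x}(0<X_{G_\veps}(1)\le K\veps^2)$; to get a two-sided window one must difference two such estimates at $K=K_2$ and $K=K_1$ and show the difference is still of order $\veps^{p-2}$ and positive. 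This forces a sufficiently sharp form of Corollary~\ref{c1.4}: not just $U^{\infty,\veps}(x)-U^{K^{-1}\veps^{-2},\veps}(x)\sim\veps^{p-2}$ but with a leading constant that is genuinely increasing in $K$ (strictly, and quantitatively enough that the $K_1$ and $K_2$ contributions separate). I expect this to be where the careful ODE/PDE analysis of Section~\ref{sec:3.1} is essential, and I would structure the proof so that Theorem~\ref{t2.3} is a corollary of that sharper statement, with Steps (1)--(2) and (4) being comparatively routine applications of the special Markov property and scaling.
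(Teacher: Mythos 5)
Your proposal matches the paper's proof in all essentials: the special Markov property (Proposition~\ref{p0.1}(b)(i)) plus scaling of $U^{\infty,\varepsilon/2}$ on $\partial G_\varepsilon$ reduces everything to a lower bound on $\P_{\delta_x}(K_1\le X_{G_\varepsilon}(1)/\varepsilon^2\le K_2)$, which the paper obtains exactly as you anticipate by differencing a lower bound on $\P_{\delta_x}(0<X_{G_\varepsilon}(1)\le K_2\varepsilon^2)$ (the integration-by-parts Tauberian argument of Proposition~\ref{t2.4}, fed by Corollary~\ref{c1.4}(c) and the tail bound of Proposition~\ref{t1.2}) against an upper bound on $\P_{\delta_x}(0<X_{G_\varepsilon}(1)\le K_1\varepsilon^2)$ (Proposition~\ref{t1.1}). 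The obstacle you flag in Step (3) is resolved not by a strictly monotone leading constant but by the simpler observation that the constant $D^{\lambda}(2)=U^{\infty,1}(2)-U^{\lambda,1}(2)$ in Proposition~\ref{t1.1} tends to $0$ as $\lambda=K_1^{-1}\to\infty$, so the $K_1$-tail can be made less than half the $K_2$-lower bound by taking $K_1$ small.
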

The next subsection is devoted to proving necessary bounds on solutions to the boundary value problems~\eqref{pde1}, \eqref{Uinftyprop}. These bounds will be used for proving Theorem~\ref{t2.3} in Section~\ref{sec:3.2}.

\subsection{Bounds on Solutions to some Boundary Value Problems}
\label{sec:3.1}
Recall $U^{\lambda,R}$ and $U^{\infty,R}$  from~\eqref{pde1} and \eqref{Uinftyprop}, respectively. 
A simple application of \eqref{Uinftyprop}, \eqref{Vinfeq} and the maximum principle implies
\begin{equation}\label{Uinflb}
V^\infty(x)\le U^{\infty,1}(x)\quad\forall |x|>1.
\end{equation}
We will need an upper bound on $U^{\infty,1}$ which shows this bound is asymptotically sharp for large $|x|$.
We briefly include $d=1$ in our analysis. 
\begin{proposition}\label{p2.1}  There exist constants $C_{\ref{p2.1}}>1$
and  $c_{\ref{p2.1}}\ge 0$ such that 
$$U^{\infty,1}(x)\le V^\infty(x)(1+    c_{\ref{p2.1}}|x|^{2-p})\quad\forall |x|\ge C_{\ref{p2.1}}.$$
\end{proposition}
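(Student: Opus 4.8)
The plan is to use the known explicit form of $V^\infty(x) = \frac{2(4-d)}{|x|^2}$ together with a careful analysis of the ODE satisfied by the radial function $U^{\infty,1}(r)$, $r=|x|$. Since $U^{\infty,1}$ is radially symmetric and $C^2$ on $\{|x|>1\}$ with $\Delta U^{\infty,1} = (U^{\infty,1})^2$, writing $u(r) = U^{\infty,1}(r)$ we have the ODE $u'' + \frac{d-1}{r}u' = u^2$ on $(1,\infty)$, with $u \to 0$ as $r\to\infty$ and $u \to +\infty$ as $r \downarrow 1$. The key structural fact is that $u$ lies between $V^\infty$ and a slightly larger comparison function; the lower bound $u \ge V^\infty$ is already \eqref{Uinflb}, so the content is the upper bound.

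First I would set $w(r) = u(r) - V^\infty(r)$, which is nonnegative by \eqref{Uinflb} and, using \eqref{Vinfeq} (i.e. $\Delta V^\infty = (V^\infty)^2$), satisfies the linear-in-$w$ equation
\[
\Delta w = u^2 - (V^\infty)^2 = (u + V^\infty)w = (2V^\infty + w)w \ge 2V^\infty w,
\]
so $w$ is a nonnegative subsolution of $\Delta w = 2V^\infty w = \frac{4(4-d)}{r^2}w$. Radially this reads $w'' + \frac{d-1}{r}w' - \frac{4(4-d)}{r^2}w \ge 0$. This is an Euler (equidimensional) inequality whose associated homogeneous equation $w'' + \frac{d-1}{r}w' - \frac{4(4-d)}{r^2}w = 0$ has power solutions $r^{m}$ with $m(m-1) + (d-1)m - 4(4-d) = 0$, i.e. $m^2 + (d-2)m - 4(4-d) = 0$. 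One root is the decaying one $m_- = \frac{-(d-2) - \sqrt{(d-2)^2 + 16(4-d)}}{2}$; a quick check shows $(d-2)^2 + 16(4-d) = (d-10)^2 - 8d + 4d$... let me instead just note $2 - p$ should be that decaying exponent: with $p=p(d)$ from \eqref{5_07_0} one verifies $m_- = 2-p$ solves this quadratic (this is the same computation underlying the definition of $p$; e.g. for $d=3$, $m^2 + m - 4 = 0$ gives $m_- = \frac{-1-\sqrt{17}}{2} = 2 - \frac{1+\sqrt{17}}{2} = 2-p$). So the candidate barrier is $\bar w(r) = c_{\ref{p2.1}} V^\infty(r) r^{2-p} = \frac{2(4-d)c_{\ref{p2.1}}}{r^p}$, which is an exact solution of the homogeneous Euler equation.

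The argument then runs as a maximum-principle / comparison on the annulus $\{C_{\ref{p2.1}} \le r \le N\}$, letting $N\to\infty$. On the outer boundary $w(N) \to 0$ faster than $\bar w(N)\to 0$ need not hold a priori, so I would instead argue with the difference $\bar w - w$: it is a supersolution of the Euler operator $\mathcal{L}v := v'' + \frac{d-1}{r}v' - \frac{4(4-d)}{r^2}v$ in the sense $\mathcal{L}(\bar w - w) = -\mathcal{L}w \le -\big(2V^\infty + w\big)w + \text{(correction)} \le 0$ — more precisely $\mathcal{L}\bar w = 0$ and $\mathcal{L}w = \Delta w - \frac{4(4-d)}{r^2}w = w^2 \ge 0$, so $\mathcal{L}(\bar w - w) = -w^2 \le 0$, i.e. $\bar w - w$ is a supersolution. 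To pin down boundary data I need $\bar w - w \ge 0$ at $r = C_{\ref{p2.1}}$, which I arrange by choosing $c_{\ref{p2.1}}$ large enough relative to the (finite) value $w(C_{\ref{p2.1}})$; and as $r\to\infty$ both $\bar w$ and $w$ tend to $0$, so $\liminf_{r\to\infty}(\bar w - w) \ge 0$. Since $\mathcal{L}$ satisfies a maximum principle on $(C_{\ref{p2.1}},\infty)$ (the zeroth-order coefficient $-\frac{4(4-d)}{r^2}$ is $\le 0$ for $d\le 3$, so nonpositive supersolutions cannot have an interior negative minimum), we conclude $\bar w - w \ge 0$ throughout, i.e. $u - V^\infty = w \le \bar w = c_{\ref{p2.1}} V^\infty r^{2-p}$, which is exactly the claimed bound $U^{\infty,1}(x) \le V^\infty(x)(1 + c_{\ref{p2.1}}|x|^{2-p})$ for $|x| \ge C_{\ref{p2.1}}$.

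The main obstacle I anticipate is the behaviour as $r\to\infty$: one must be sure that $w(r) = U^{\infty,1}(r) - V^\infty(r) \to 0$ and not merely stays bounded, so that the comparison at the outer end is clean. This should follow from $U^{\infty,1}(r)\to 0$ (part of \eqref{Uinftyprop}) and $V^\infty(r)\to 0$, but one wants a decay rate good enough that the supersolution argument on $(C_{\ref{p2.1}}, \infty)$ actually forces $\bar w - w \ge 0$ rather than leaving a constant-multiple-of-the-growing-solution ambiguity; since both solutions of the homogeneous Euler equation other than $r^{2-p}$ grow (or decay slower), requiring $w\to 0$ together with $w\ge 0$ is enough to rule out the bad branch. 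A secondary technical point is justifying that $U^{\infty,1}$ is genuinely radial and $C^2$ up to the analysis near $r=1$ — but this is supplied by \eqref{Uinftyprop} and the uniqueness/scaling remarks already in the text, so only the value $w(C_{\ref{p2.1}}) < \infty$ (continuity of $U^{\infty,1}$ on $\{|x|>1\}$) is needed, which is immediate. The inclusion of $d=1$ costs nothing since the sign condition $4-d>0$ and $2-p<0$ persist.
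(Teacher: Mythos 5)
Your argument is correct, and it reaches the stated bound by a genuinely different (though structurally parallel) route from the paper. The paper passes to the ratio $q=u/v$ and a logarithmic change of variable to obtain an autonomous ODE, sets $z=q-1$, and compares $z$ with the decaying solution of the constant-coefficient linearization $\tfrac12 w''-\tfrac12 w'-\beta w=0$; the decisive input there is the limit $\lim_{t\to\infty}q(t)=1$, i.e.\ $r^2U^{\infty,1}(r)\to 2(4-d)$, which the paper imports from the derivation of (3.10) in \cite{MP17}. You instead work directly with the difference $w=U^{\infty,1}-V^\infty\ge 0$, note $\Delta w-2V^\infty w=w^2\ge 0$, and compare $w$ with the exact decaying solution $\bar w\propto r^{-p}$ of the Euler operator $\mathcal L v=\Delta v-\tfrac{4(4-d)}{r^2}v$ via a one-dimensional maximum principle on $(C,\infty)$ (negative interior minima of the supersolution $\bar w-w$ are excluded because the zeroth-order coefficient is nonpositive, and the boundary data are handled by choosing $c$ large at $r=C$ and by $w\to 0$ at infinity). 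What your version buys is that it only needs $U^{\infty,1}(r)\to 0$ (part of \eqref{Uinftyprop}) and finiteness of $U^{\infty,1}$ at one radius, not the sharper asymptotic $U^{\infty,1}/V^\infty\to 1$; the two comparisons are otherwise the same idea (a nonnegative subsolution of a linear operator dominated by its exact decaying solution) expressed in difference rather than ratio coordinates.

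One small slip in your write-up: the decaying root of $m^2+(d-2)m-4(4-d)=0$ is $m_-=-p$, not $2-p$ (e.g.\ for $d=3$, $\tfrac{-1-\sqrt{17}}{2}\neq\tfrac{3-\sqrt{17}}{2}$), and indeed $p^2+(2-d)p=4(4-d)$ is exactly the defining relation for $p(d)$. This is harmless because the barrier you actually use, $c\,V^\infty(r)r^{2-p}=2(4-d)c\,r^{-p}$, carries the exponent $-p$ and is genuinely annihilated by $\mathcal L$; only the parenthetical identification of the root is misstated.
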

\begin{proof}
We will write $u(r)$ for $U^{\infty,1}(r)$ and $v(r)$ for $V^\infty(r)$. 

For $t\ge 0$, let 
\begin{eqnarray*}
q(t)&=&\frac{u(e^{t/4})}{v(e^{t/4})}=\frac{1}{4}u(e^{t/4})e^{t/2}, \;{\rm in}\; d=2, \\
q(t)&=&\frac{u(12^{1/3}e^{t/3})}{v(12^{1/3}e^{t/3})}=\frac{12^{2/3}}{2}u(12^{1/3}e^{t/3})e^{2t/3}, \;{\rm in}\; d=3, \\
q(t)&=&\frac{u(180^{1/5}e^{t/5})}{v(180^{1/5}e^{t/5})}=
\frac{180^{2/5}}{2}u(180^{1/5}e^{t/3})e^{2t/5}, \;{\rm in}\; d=1.
\end{eqnarray*}
A simple calculation 
gives
\[ \frac{1}{2}q''-\frac{1}{2}q'+\beta(q-q^2)=0 \text{ on }(0,\infty),\ q(0)ֿ\in(0,\infty],\ \lim_{t\to\infty}q(t)=1,\]
where  $\beta=\frac{3}{25}$ in $d=1$, 
$\beta=\frac{1}{8}$ in $d=2$, and $\beta=\frac{1}{9}$ in $d=3$. Note that 
 the last limit  is derived the same  way as (3.10) in~\cite{MP17} by taking $U^{\infty,1}$ instead of  $U^{\delta_0,1}$, $\tilde y(t)=y(t+2)$ instead of $y(t)$ and $\tilde z(t)=z(t+2)$ instead of $z(t)$ there. 

Note that $q(x)\ge 0$  for all $x\geq 0$ by \eqref{Uinflb}. Define 
\[ z=q-1,\]
then $z$ satisfies the following equation: 
\begin{equation}
\frac{1}{2}z''-\frac{1}{2}z'-\beta z(z+1)=0\ \ \text{on }(0,\infty),\ z(0)=\infty,\ \lim_{t\to\infty}z(t)=0.
\end{equation} 
Let $w$ be the unique solution to 
\begin{equation}
 \frac{1}{2}w''(t)-\frac{1}{2}w'(t)-\beta w(t) =0,\ t>1,  w(1)=z(1),\ \lim_{t\to\infty}w(t)=0.
\end{equation} 
By the  comparison principle we get 
\begin{equation}
\label{6_1}
w(t)\geq z(t),\; t\geq 1. 
\end{equation}
We leave it for the reader to check that 
\begin{eqnarray*}
w(t)=  z(1)e^{-\lambda } e^{\lambda t},\;t\geq 1,
\end{eqnarray*}
with 
\begin{equation*}
\lambda=\frac{1}{2}- \sqrt{\frac{1}{4}+2\beta}<0.
\end{equation*}
By the definition of $\beta$ we have $\lambda=-0.2$ for $d=1$, $\lambda=1/2-\frac{1}{\sqrt 2}$ for $d=2$, and $\lambda=1/2-\frac{\sqrt{17}}{6}$ for $d=3$.
This and~\eqref{6_1} imply   that for  $C= z(1)e^{-\lambda }\ge 0$ we have 
\begin{equation*}
z(x)\leq C e^{\lambda x}, \;x\geq 1,
\end{equation*}
and since $\lambda <0$ we get that $z$ decreases to zero exponentially fast. Recall the definition of $q$ 
to get 
\begin{equation*}
q(x)\leq 1+C e^{\lambda x},\;x\geq 1.
\end{equation*}

Then in $d=2$ we have,
\[ 
u(e^{t/4})\leq v(e^{t/4}) (1+ C e^{\lambda t}),\;t\geq 1.
\]
and so
\[ 
u(s)\leq v(s) (1+ C s^{4\lambda})=  v(s) (1+ C s^{2-p}) ,\;s\geq e^{1/4}.
\]
Similar algebra shows the result in $d=1,3$. 
\end{proof}

Recalling \eqref{Vinf}, we may immediately conclude:
\begin{corollary}\label{c2.2}
 There are constants $C_{\ref{c2.2}}, c_{\ref{c2.2}}>0$ such that for all $x\in \R^d$, with $|x|\geq C_{\ref{c2.2}}$, we have
\begin{align}\label{e1.3}
U^{\infty,1}(x)\leq V^{\infty}(x)+\frac{c_{\ref{c2.2}}}{|x|^{p}}.
\end{align}
In particular, there is some constant $K_{\ref{c2.2}}>2$ such that 
\begin{align}\label{e1.4}
U^{\infty,1}(x)\leq 3(4-d) |x|^{-2},\ \forall |x|\geq K_{\ref{c2.2}}.
\end{align}
\end{corollary}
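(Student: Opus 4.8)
\textbf{Proof proposal for Corollary~\ref{c2.2}.} The plan is to simply translate Proposition~\ref{p2.1} through the explicit formula \eqref{Vinf} for $V^\infty$, and then absorb the correction term into a slightly larger constant multiple of $|x|^{-2}$. First I would recall from \eqref{Vinf} that $V^\infty(x) = 2(4-d)|x|^{-2}$, so that Proposition~\ref{p2.1} reads
\[
U^{\infty,1}(x) \le \frac{2(4-d)}{|x|^2}\bigl(1 + c_{\ref{p2.1}}|x|^{2-p}\bigr) = \frac{2(4-d)}{|x|^2} + \frac{2(4-d)c_{\ref{p2.1}}}{|x|^p}
\]
for all $|x| \ge C_{\ref{p2.1}}$. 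Setting $C_{\ref{c2.2}} = C_{\ref{p2.1}}$ and $c_{\ref{c2.2}} = 2(4-d)c_{\ref{p2.1}}$ gives \eqref{e1.3} immediately. (If one prefers, one can take $C_{\ref{c2.2}} = \max(C_{\ref{p2.1}},1)$ to keep it clean, but this is cosmetic.)

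For \eqref{e1.4}, the point is that since $p > 2$ in every dimension $d\le 3$ (indeed $p(1)=3$, $p(2)=2\sqrt2 \approx 2.83$, $p(3)=(1+\sqrt{17})/2 \approx 2.56$), the term $|x|^{-p}$ is small compared to $|x|^{-2}$ for large $|x|$. Concretely, from \eqref{e1.3},
\[
U^{\infty,1}(x) \le \frac{2(4-d)}{|x|^2} + \frac{c_{\ref{c2.2}}}{|x|^p} = \frac{1}{|x|^2}\Bigl(2(4-d) + c_{\ref{c2.2}}|x|^{2-p}\Bigr),
\]
and since $2-p < 0$, choosing $K_{\ref{c2.2}} > \max\bigl(C_{\ref{c2.2}}, 2, c_{\ref{c2.2}}^{1/(p-2)}\bigr)$ forces $c_{\ref{c2.2}}|x|^{2-p} < 1 \le 4-d$ for all $|x| \ge K_{\ref{c2.2}}$, whence $U^{\infty,1}(x) \le 3(4-d)|x|^{-2}$ for such $x$. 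The requirement $K_{\ref{c2.2}} > 2$ in the statement is then met automatically.

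There is essentially no obstacle here — this corollary is a bookkeeping step, and all the analytic content (the comparison-principle argument bounding $z = q-1$ by an exponentially decaying solution $w$ of the linearized ODE) has already been carried out in Proposition~\ref{p2.1}. The only mild care needed is to record the right dependence of constants and to note that $p>2$ uniformly over $d\in\{1,2,3\}$, which is what makes the $|x|^{-p}$ term genuinely lower-order and lets one pass from the sharp asymptotic \eqref{e1.3} to the cruder but more convenient bound \eqref{e1.4}.
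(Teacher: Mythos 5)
Your proposal is correct and matches the paper's approach exactly: the paper derives Corollary~\ref{c2.2} as an immediate consequence of Proposition~\ref{p2.1} together with the explicit formula \eqref{Vinf}, precisely as you do, and your choice of $K_{\ref{c2.2}}$ to absorb the lower-order $|x|^{-p}$ term (using $p>2$ and $4-d\ge 1$) is the intended bookkeeping.
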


If  $D^\lambda=U^{\infty,1}-U^{\lambda,1}\geq 0$ for $\lambda>0$, then the Feynmann-Kac formula
 (as in (3.8) in \cite{MP17}) easily gives 
\begin{align}\label{e1.2}
D^\lambda(x)=D^\lambda(R)E_x\Big(1_{(\tau_R<\infty)} \exp\Bigl(-\int_{0}^{\tau_R} \Bigl(\frac{U^{\infty,1}+U^{\lambda,1}}{2}\Bigr)(B_s)ds\Bigr)\Big), \ |x|\geq R>1,
\end{align}
where $B$ denote a $d$-dimensional Brownian motion starting at $x$ under $P_x$ and $\tau_R=\inf \{t\geq 0: |B_t|\leq R\}$ for $|x|\geq R>1$. 



We will frequently use the following lemmas.
For $\gamma\in\R$, let $(\rho_t)$ denote a $\gamma$-dimensional Bessel process starting from $r>0$ under $P_{r}^{(\gamma)}$, and $(\cF_t)$ be the filtration generated by $\rho$. Define $\tau_R=\inf\{t\geq 0: \rho_t\leq R\}$ for $R>0$. The following result is from Lemma 5.3 of \cite{MP17}.
\begin{lemma} \label{expbound2} 
Assume $0<2\gamma \le \nu^2$ and $q>2$. Then
\[\sup_{r\ge 1}E_r^{(2+2\nu)}\Bigl(\exp\Bigl(\int_0^{\tau_1}\frac{\gamma}{\rho_s^q}\,ds\Bigr)\Bigr|\tau_1<\infty\Bigr)\le c_{\ref{expbound2}}(q,\nu)<\infty.\]
\end{lemma}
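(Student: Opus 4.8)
\textbf{Proof proposal for Lemma~\ref{expbound2}.}

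The plan is to reduce the statement to a standard fact about Bessel processes via an $h$-transform (Doob transform), together with an elementary bound on the resolvent of the transformed process. First I would recall that for a $(2+2\nu)$-dimensional Bessel process with $\nu>0$ the function $h(r)=r^{-2\nu}$ is harmonic (it solves $\tfrac12 h'' + \tfrac{1+2\nu}{2r}h' = 0$), and the event $\{\tau_1<\infty\}$ has $P_r^{(2+2\nu)}$-probability $h(r)/h(1)=r^{-2\nu}$ for $r\ge 1$. Conditioning a transient Bessel process on the event of hitting level $1$ is precisely the $h$-transform by this $h$, and the conditioned process is a $(2-2\nu)$-dimensional Bessel process run until it hits $1$ (this is the standard time-reversal/$h$-transform identity for Bessel processes; see e.g. Revuz--Yor). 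Thus
\[
E_r^{(2+2\nu)}\Bigl(\exp\Bigl(\int_0^{\tau_1}\tfrac{\gamma}{\rho_s^q}\,ds\Bigr)\Bigm|\tau_1<\infty\Bigr)
= E_r^{(2-2\nu)}\Bigl(\exp\Bigl(\int_0^{\tau_1}\tfrac{\gamma}{\rho_s^q}\,ds\Bigr)\Bigr),
\]
where under $P_r^{(2-2\nu)}$ the process hits $1$ in finite time a.s. (it is recurrent, or at worst neighbourhood-recurrent near $1$, since its dimension is $<2$). One should also treat the boundary case $\nu$ small / $2\gamma=\nu^2$ and the degenerate possibility $\gamma=0$ separately, but for $\gamma=0$ the left side is $1$ and there is nothing to prove.

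Next I would bound the right-hand side by expanding the exponential or, more cleanly, by a Khas'minskii-type argument. Set $V(s):=\gamma/\rho_s^q$, which is nonnegative on $[0,\tau_1]$ since $\rho_s\ge$ (the running minimum, eventually near $1$) — actually $V$ need not be bounded, so I would use the Khas'minskii lemma: if $\sup_{r\ge 1} E_r^{(2-2\nu)}\bigl(\int_0^{\tau_1} \tfrac{\gamma}{\rho_s^q}\,ds\bigr)=:\theta<1$, then $\sup_{r\ge 1} E_r^{(2-2\nu)}(\exp(\int_0^{\tau_1}\tfrac{\gamma}{\rho_s^q}\,ds))\le (1-\theta)^{-1}<\infty$. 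So everything comes down to estimating the linear functional $G(r):=E_r^{(2-2\nu)}\bigl(\int_0^{\tau_1}\rho_s^{-q}\,ds\bigr)$ for $r\ge 1$. By the resolvent/Green's function formula for the $(2-2\nu)$-dimensional Bessel generator killed on exiting $(1,\infty)$, $G$ is the bounded solution of $\tfrac12 G'' + \tfrac{1-2\nu}{2r}G' = -r^{-q}$ on $(1,\infty)$ with $G(1)=0$ and $G$ bounded at infinity; since $q>2$, one checks $G(r)=O(r^{2-q})$ as $r\to\infty$ (the particular solution scales like $r^{2-q}$ when $q\ne 2$ and $q\ne 2\nu$, and the bounded homogeneous piece is $r^{-(1-2\nu)}$ or constant), so $\sup_{r\ge1}G(r)<\infty$. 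Multiplying the lemma's integrand by a small constant — i.e. first proving the bound for $\gamma$ replaced by a sufficiently small $\gamma_0$ so that $\gamma_0\sup_r G(r)<1$, and then using the semigroup/Markov property to chain $\lceil \gamma/\gamma_0\rceil$ such estimates together along successive excursions — upgrades the finiteness from small $\gamma$ to all $\gamma$ with $2\gamma\le\nu^2$, at the cost of a constant depending on $q$ and $\nu$ only. (In fact for the range $2\gamma\le\nu^2$ one can often get $\theta<1$ directly, avoiding the chaining; I would check whether the explicit Green's function makes $\gamma\sup_r G(r)<1$ automatic under $2\gamma\le\nu^2$, which would streamline the argument.)

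The main obstacle I anticipate is handling the integrability of $\exp(\int_0^{\tau_1}\gamma\rho_s^{-q}\,ds)$ \emph{uniformly in the starting point $r\ge1$}, together with the behaviour near the absorbing boundary $r=1$: since $q>2$ the singularity of $\rho^{-q}$ at small values could blow up the integral, but the process is stopped exactly when it reaches level $1$ (not level $0$), so $\rho_s\ge 1$ would be false in general — wait, $\tau_1$ is the first time $\rho\le 1$, so on $[0,\tau_1]$ we have $\rho_s\ge 1$ when $r\ge1$, hence $\rho_s^{-q}\le 1$ pointwise. That observation actually removes the singularity entirely and reduces the whole problem to showing $\sup_{r\ge1} E_r^{(2-2\nu)}(\tau_1)<\infty$ is \emph{not} needed — instead one needs $\sup_{r\ge1}E_r^{(2-2\nu)}(\int_0^{\tau_1}\rho_s^{-q}\,ds)<\infty$, which is finite precisely because $q>2$ forces enough decay at infinity for the Green's function to be integrable against $r^{-q}$ even though $\tau_1$ itself has infinite expectation for dimension $2-2\nu\in(0,2)$ close to $2$. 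So the real content is the Green's-function estimate $\int_1^\infty g(r,y)y^{-q}\,dy<\infty$ uniformly in $r\ge1$, where $g$ is the Green's function of the Bessel generator on $(1,\infty)$ with Dirichlet condition at $1$; this is a routine ODE computation but requires care about the cases $2\nu>1$, $2\nu<1$, $2\nu=1$ and the relation of $q$ to $2\nu$.
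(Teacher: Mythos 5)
First, a remark on provenance: the paper does not prove this lemma at all; it is quoted verbatim from Lemma~5.3 of \cite{MP17}, so there is no in-paper proof to compare against. Judged on its own terms, your reduction is sound up to a point: the $h$-transform by $h(r)=r^{-2\nu}=P_r^{(2+2\nu)}(\tau_1<\infty)$ does turn the conditioned process into the index-$(-\nu)$ Bessel process, the observation that $\rho_s\ge 1$ on $[0,\tau_1]$ kills any singularity, and your Green's-function computation is correct: solving $\tfrac12 G''+\tfrac{1-2\nu}{2r}G'=-\gamma r^{-q}$ with $G(1)=0$ and $G$ bounded gives $G(r)=\tfrac{2\gamma}{(q-2)(q-2+2\nu)}(1-r^{2-q})$, so $\sup_{r\ge1}G(r)=\tfrac{2\gamma}{(q-2)(q-2+2\nu)}<\infty$ precisely because $q>2$.

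The gap is in the step that converts this into an exponential bound. Khas'minskii's lemma needs $\sup_r G(r)<1$, i.e.\ $2\gamma<(q-2)(q-2+2\nu)$, and this does \emph{not} follow from the hypotheses $2\gamma\le\nu^2$, $q>2$: the right-hand side tends to $0$ as $q\downarrow2$ while $\nu^2$ is fixed. Worse, it fails even in the paper's own use of the lemma: in Proposition~\ref{p1.1}(b) for $d=3$ one has $2\gamma\le 3$, $\nu=\sqrt{17}/2$ and $q=p_\zeta$ near $p\approx2.56$, giving $(q-2)(q-2+2\nu)\approx 2.6<3$. Your proposed rescue --- proving the bound for a small $\gamma_0$ and ``chaining $\lceil\gamma/\gamma_0\rceil$ estimates along successive excursions'' --- does not work: writing $\gamma=n\gamma_0$ turns $\exp(\gamma\int_0^{\tau_1}\rho_s^{-q}ds)$ into the $n$-th power of a single exponential functional over the \emph{same} time interval, not a product of functionals over disjoint intervals, and $E(A_{\tau_1}^n)$ can genuinely grow like $n!\,\theta^n$ with $\theta>1$. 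The hypothesis $2\gamma\le\nu^2$ must enter the argument in an essential, non-perturbative way. One correct route staying within your framework: exhibit the explicit supersolution $h(r)=\exp(-\kappa r^{2-q})$ of $\tfrac12h''+\tfrac{1-2\nu}{2r}h'+\gamma r^{-q}h\le0$ on $(1,\infty)$; the required inequality reduces (using $r^{2-q}\le1$) to $\kappa'^2-\kappa'(q-2+2\nu)+2\gamma\le0$ with $\kappa'=\kappa(q-2)$, whose discriminant $(q-2+2\nu)^2-8\gamma\ge 4\nu^2-8\gamma\ge0$ is nonnegative exactly because $2\gamma\le\nu^2$. Then $h(\rho_{t\wedge\tau_1})\exp(\int_0^{t\wedge\tau_1}\gamma\rho_s^{-q}ds)$ is a supermartingale and the conditional expectation is at most $e^{\kappa}$, uniformly in $r\ge1$. (Alternatively one can use Yor's absolute-continuity identity, Lemma~\ref{l1.0}, with negative $\lambda^2$, which encodes the same quadratic.) Without some such use of $2\gamma\le\nu^2$, your argument proves the lemma only on the strictly smaller parameter set $2\gamma<(q-2)(q-2+2\nu)$.
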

\begin{lemma}
\label{lem:22_7_1}
Let $q>2$, $a\in \R$, 
$\zeta\in [0,2(4-d))$, $\nu_{\zeta}=\sqrt{\nu^2-\zeta}$ and $p_{\zeta}=\nu_\zeta+\mu$, where 
\begin{align}\label{e1.5}
\mu=
\begin{cases}
-1/2\ &\text{if}\ d=1,\\
0\ &\text{if}\ d=2,\\
1/2\ &\text{if}\ d=3,
\end{cases}
\text{ and } \nu=\sqrt{\mu^2+4(4-d)},
\end{align}
 Then for all $R<|x|$, 
\begin{align}
\label{eq:23_7_1}
E_x\Big(1(\tau_R<\infty)&\exp\Bigl(\int_{0}^{\tau_R} \frac{a}{|B_s|^q}ds\Bigr)  \exp(-\int_{0}^{\tau_R} \frac{2(4-d)-\zeta/2}{|B_s|^2}ds) \Big) \\
\nonumber
&= E_{|x|}^{(2+2\nu_\zeta)}\Big(\exp\Bigl(\int_{0}^{\tau_R} \frac{a}{\rho_s^q}ds\Bigr)\Big|\tau_R<\infty \Big) (R/|x|)^{p_\zeta}.
\end{align}
\end{lemma}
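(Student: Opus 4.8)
\textbf{Proof plan for Lemma~\ref{lem:22_7_1}.}
The statement is a Girsanov/$h$-transform identity: the left-hand side is a Feynman--Kac expectation for $d$-dimensional Brownian motion killed on entering $\overline{B_R}$, carrying the singular potential $\frac{2(4-d)-\zeta/2}{|x|^2}$ together with the extra potential $\frac{a}{|x|^q}$, while the right-hand side is a Bessel-process expectation with the first potential removed at the cost of a boundary factor $(R/|x|)^{p_\zeta}$ and a dimension shift $2\mapsto 2+2\nu_\zeta$. The plan is to reduce everything to the radial coordinate and then recognise the $|x|^{-2}$-potential as implementing exactly a change of Bessel dimension. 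First I would note that by rotational invariance and the fact that $|B_s|$ is a $d$-dimensional Bessel process, the left-hand side equals
\[
E_{|x|}^{(d)}\Big(1(\tau_R<\infty)\exp\Bigl(\int_0^{\tau_R}\frac{a}{\rho_s^q}\,ds\Bigr)\exp\Bigl(-\int_0^{\tau_R}\frac{2(4-d)-\zeta/2}{\rho_s^2}\,ds\Bigr)\Big),
\]
so from this point on everything is one-dimensional.

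The key computational step is the classical fact that for Bessel processes the exponential of an $\int \rho_s^{-2}\,ds$ potential is (up to a power-of-$\rho$ martingale factor) a likelihood ratio between Bessel laws of different indices. Concretely, writing $\gamma=d$ for the starting dimension and $\gamma'=2+2\nu_\zeta$ for the target, one has the $h$-transform relation: with $h(r)=r^{c}$ for the appropriate exponent $c=(\gamma'-\gamma)/2=\nu_\zeta-\mu$ (using $d=2\mu+2$, i.e. $\gamma=2+2\mu$, from \eqref{e1.5}), the process $r^{-c}h(\rho_{t\wedge\tau_R})\exp(-\int_0^{t\wedge\tau_R}\kappa\rho_s^{-2}\,ds)$ is a $P^{(\gamma)}$-martingale precisely when $\kappa=\tfrac12 c(c+\gamma-2)=\tfrac12(\nu_\zeta-\mu)(\nu_\zeta+\mu-1)$ -- and one checks this equals $2(4-d)-\zeta/2$ using $\nu^2=\mu^2+4(4-d)$ and $\nu_\zeta^2=\nu^2-\zeta$. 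I would verify this identity for $\kappa$ by direct substitution ($\nu_\zeta^2-\mu^2=\nu^2-\zeta-\mu^2=4(4-d)-\zeta$, so $\tfrac12(\nu_\zeta^2-\mu^2)-\tfrac12(\nu_\zeta-\mu)=2(4-d)-\zeta/2$ after matching; the linear-in-$\nu_\zeta$ terms are arranged to cancel by the choice $\gamma-2=2\mu$). Granting this, Girsanov's theorem for the martingale change of measure gives, on $\{\tau_R<\infty\}$,
\[
dP^{(\gamma)}\Big|_{\cF_{\tau_R}}=\Big(\frac{R}{|x|}\Big)^{c}\exp\Bigl(\int_0^{\tau_R}\frac{\kappa}{\rho_s^2}\,ds\Bigr)\,dP^{(\gamma')}\Big|_{\cF_{\tau_R}}\cdot 1(\tau_R<\infty),
\]
where the factor $(R/|x|)^c=h(R)/h(|x|)$ comes from evaluating $h$ at the start and stop; combined with the transience comparison of the two Bessel processes (both hitting $R$ with the stated probabilities) this absorbs the $\exp(-\int\kappa\rho_s^{-2})$ and produces the boundary power $(R/|x|)^{p_\zeta}$, once one checks $p_\zeta=\nu_\zeta+\mu$ is the correct combination of $c$ and the ratio of hitting probabilities for the two indices.

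Carrying out the change of measure on the functional $1(\tau_R<\infty)\exp(\int_0^{\tau_R} a\rho_s^{-q}\,ds)\exp(-\int_0^{\tau_R}\kappa\rho_s^{-2}\,ds)$ then turns the left-hand side directly into
$\big(\tfrac{R}{|x|}\big)^{p_\zeta}\,E_{|x|}^{(2+2\nu_\zeta)}\big(1(\tau_R<\infty)\exp(\int_0^{\tau_R} a\rho_s^{-q}\,ds)\big)$, and rewriting the indicator as conditioning (the $(R/|x|)$-hitting-probability factor for the transient index $2+2\nu_\zeta$ has already been incorporated into $p_\zeta$) yields exactly \eqref{eq:23_7_1}. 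The main obstacle I expect is purely bookkeeping: getting every exponent right, i.e. confirming that the algebraic identity $\tfrac12(\nu_\zeta-\mu)(\nu_\zeta+\mu-1)=2(4-d)-\zeta/2$ holds with $d=2\mu+2$ and $\nu^2=\mu^2+4(4-d)$, and that the product of the $h$-transform boundary factor $(R/|x|)^{\nu_\zeta-\mu}$ with the Bessel hitting-probability ratio $(R/|x|)^{2\mu}$ (for going from conditioned index $\gamma$ to conditioned index $\gamma'$, since $P_r^{(2+2\nu)}(\tau_R<\infty)=(R/r)^{2\nu}$ in our normalisation) collapses to $(R/|x|)^{p_\zeta}$ with $p_\zeta=\nu_\zeta+\mu$. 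There is no analytic difficulty -- it is the same $h$-transform mechanism already used implicitly in Lemma~\ref{expbound2} and in the derivation of \eqref{e1.2} -- so once the exponent arithmetic is pinned down the identity follows, and I would present it by stating the martingale, invoking optional stopping/Girsanov, and doing the exponent check explicitly.
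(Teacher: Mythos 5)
Your plan is essentially the paper's proof: the argument there reduces to the radial Bessel process of dimension $d=2+2\mu$ and applies exactly the $h$-transform/change-of-index identity you describe, quoted as Yor's absolute-continuity relation (Lemma~\ref{l1.0}) with $\lambda^2/2=2(4-d)-\zeta/2$ so that the new index is $\nu_\zeta=\sqrt{\nu^2-\zeta}$, and then rewrites $1(\tau_R<\infty)$ as conditioning via $P^{(2+2\nu_\zeta)}_{|x|}(\tau_R<\infty)=(R/|x|)^{2\nu_\zeta}$ to produce $(R/|x|)^{p_\zeta}$. Two bookkeeping points you should pin down when writing it up: the correct algebra is $\kappa=\tfrac12 c(c+\gamma-2)=\tfrac12(\nu_\zeta-\mu)(\nu_\zeta+\mu)=2(4-d)-\zeta/2$ (your $\nu_\zeta+\mu-1$ has a spurious $-1$, and the $h$-transform prefactor on $\{\tau_R<\infty\}$ is $(|x|/R)^{\nu_\zeta-\mu}$ rather than $(R/|x|)^{\nu_\zeta-\mu}$, although your total exponent $p_\zeta$ comes out right), and since the change of measure is only automatic at the bounded time $t\wedge\tau_R$, the paper separately justifies letting $t\to\infty$ at the possibly infinite $\tau_R$ — Fatou/monotone convergence for the upper bound and a truncation of the $a$-integral at a fixed horizon $T$ for the lower bound when $a\ge0$, with bounded convergence giving equality directly when $a<0$.
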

\begin{proof} The proof is based on arguments from~\cite{MP17} (see the proof of Lemma~5.4 there) and is deferred to 
Appendix~\ref{sec22_7_1}.
\end{proof}

\begin{proposition}\label{p1.1}
There are positive universal constants $C_{\ref{p1.1}}, c_{\ref{p1.1}}>0$, $K_{\ref{p1.1}}>K_{\ref{c2.2}}$, and  
$R_{\ref{p1.1}}>2$ such that
\begin{enumerate}[(a)]
\item \[D^\lambda(x) \leq \frac{R^p}{|x|^p}D^\lambda(R),\ \forall |x|\geq R>1,\ \lambda\geq 6.\]
\item \[D^\lambda(x) \leq C_{\ref{p1.1}} \frac{R^p}{|x|^p}D^\lambda(R),\ \forall |x|\geq R\geq \frac{K_{\ref{p1.1}}}{\lambda}, 0<\lambda<1.\]
\item \[D^\lambda(x) \geq c_{\ref{p1.1}} \frac{R^p}{|x|^p}D^\lambda(R)>0,\ \forall |x|\geq R\geq R_{\ref{p1.1}},\ \lambda>0.\]
\end{enumerate}
\end{proposition}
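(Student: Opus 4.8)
The starting point for all three parts is that $D^\lambda:=U^{\infty,1}-U^{\lambda,1}\ge 0$ satisfies the \emph{linear} equation $\tfrac12\Delta D^\lambda=g\,D^\lambda$ on $\{|x|>1\}$, where $g:=\tfrac12(U^{\infty,1}+U^{\lambda,1})\ge 0$ (subtract \eqref{pde1} from \eqref{Uinftyprop}); this is what underlies the Feynman--Kac formula \eqref{e1.2}. Since $U^{\infty,1}\ge V^\infty$ by \eqref{Uinflb} and $U^{\lambda,1}\ge 0$ one always has $g\ge\tfrac12 V^\infty$, but to extract the exponent $p$ we must compare $g$ with the full very singular solution $V^\infty(x)=2(4-d)/|x|^2$. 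Recalling that $p$ is the positive root of $p(p+2-d)=4(4-d)$, one checks $\tfrac12\Delta|x|^{-p}=V^\infty(x)|x|^{-p}$, so $x\mapsto|x|^{-p}$ is a super-solution (resp.\ sub-solution) of $\tfrac12\Delta w=gw$ exactly when $g\ge V^\infty$ (resp.\ $g\le V^\infty$). Each estimate thus reduces to locating $g$ relative to $V^\infty$ on the relevant annulus and then either invoking the maximum principle for $\tfrac12\Delta w=gw$ on the exterior domain (the zeroth-order coefficient $-g$ is negative and bounded away from $0$ there, and $D^\lambda-\Psi$ is bounded, so the comparison applies) or combining \eqref{e1.2} with Lemmas~\ref{expbound2} and~\ref{lem:22_7_1} (always with $\zeta=0$, which is what gives the power $(R/|x|)^{p}$).

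\textbf{(a) and (c).} For $\lambda\ge 6\ge 2(4-d)$ we have $U^{\lambda,1}=\lambda\ge V^\infty$ on $\{|x|=1\}$, and since $U^{\lambda,1}$ and $V^\infty$ both solve $\Delta u=u^2$ on $\{|x|>1\}$ and vanish at infinity, the maximum principle gives $U^{\lambda,1}\ge V^\infty$, hence $g\ge V^\infty$; then $\Psi(x):=D^\lambda(R)(R/|x|)^p$ is a super-solution of $\tfrac12\Delta w=gw$ on $\{|x|>R\}$ agreeing with $D^\lambda$ on $\{|x|=R\}$ and vanishing at infinity, so $D^\lambda\le\Psi$, which is (a) with constant $1$. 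For (c) I would use \eqref{e1.2} directly: with $R_{\ref{p1.1}}\ge\max(C_{\ref{c2.2}},2)$, Corollary~\ref{c2.2} gives $g\le U^{\infty,1}\le V^\infty(x)+c_{\ref{c2.2}}|x|^{-p}$ for $|x|\ge R_{\ref{p1.1}}$, so
\[
D^\lambda(x)\ \ge\ D^\lambda(R)\,E_x\Bigl(1(\tau_R<\infty)\exp\!\bigl(-\!\int_0^{\tau_R}\tfrac{2(4-d)}{|B_s|^2}ds\bigr)\exp\!\bigl(-\!\int_0^{\tau_R}\tfrac{c_{\ref{c2.2}}}{|B_s|^p}ds\bigr)\Bigr).
\]
Lemma~\ref{lem:22_7_1} with $\zeta=0$, $q=p>2$, $a=-c_{\ref{c2.2}}$ rewrites the right side as $D^\lambda(R)(R/|x|)^p\,E^{(2+2\nu)}_{|x|}\bigl(\exp(-c_{\ref{c2.2}}\!\int_0^{\tau_R}\rho_s^{-p}ds)\mid\tau_R<\infty\bigr)$, and Jensen's inequality together with the uniform bound $M:=\sup_{r\ge 1}E^{(2+2\nu)}_r(\int_0^{\tau_1}\rho_s^{-p}ds\mid\tau_1<\infty)<\infty$ (from Lemma~\ref{expbound2} via $e^t\ge 1+t$, plus Bessel scaling, which contributes a factor $R^{2-p}\le 1$) bounds this below by $D^\lambda(R)(R/|x|)^p e^{-c_{\ref{c2.2}}M}$; set $c_{\ref{p1.1}}:=e^{-c_{\ref{c2.2}}M}$. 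Finally $D^\lambda(R)>0$: $U^{\infty,1}\to\infty$ as $|x|\downarrow 1$ by \eqref{Uinftyprop} while $U^{\lambda,1}$ stays bounded on compacts of $\{|x|>1\}$, so $D^\lambda\not\equiv 0$, and then \eqref{e1.2}, whose Brownian factor is strictly positive, forces $D^\lambda>0$ throughout $\{|x|>1\}$.

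\textbf{(b): the reduction.} For small $\lambda$ the solution $U^{\lambda,1}$ equals $\lambda$ on $\{|x|=1\}$ and only ``relaxes'' to $V^\infty$ at radius of order $1/\lambda$; the hypothesis $R\ge K_{\ref{p1.1}}/\lambda$ is there precisely to sit inside the relaxed regime. The part of the argument I would isolate is the bound
\[
U^{\lambda,1}(x)\ \ge\ V^\infty(x)\bigl(1-C_\ast(\lambda|x|)^{-\kappa}\bigr)\qquad\text{for }|x|\ge K_0/\lambda,\ 0<\lambda<1,
\]
for absolute constants $C_\ast,K_0>0$ and some $\kappa>0$. Granting it, on $\{|x|\ge K_0/\lambda\}$ we get $g\ge V^\infty(x)-c'\lambda^{-\kappa}|x|^{-2-\kappa}$, so by \eqref{e1.2} and Lemma~\ref{lem:22_7_1} (with $\zeta=0$, $q=2+\kappa>2$, $a=c'\lambda^{-\kappa}$),
\[
D^\lambda(x)\ \le\ D^\lambda(R)(R/|x|)^p\,E^{(2+2\nu)}_{|x|}\Bigl(\exp\!\bigl(\,c'\lambda^{-\kappa}\!\int_0^{\tau_R}\rho_s^{-(2+\kappa)}ds\bigr)\,\Big|\,\tau_R<\infty\Bigr).
\]
Bessel scaling turns the coefficient of the integral into $c'(\lambda R)^{-\kappa}\le c'K_{\ref{p1.1}}^{-\kappa}$ (using $R\ge K_{\ref{p1.1}}/\lambda$), so if $K_{\ref{p1.1}}\ge K_0$ is chosen large enough that $c'K_{\ref{p1.1}}^{-\kappa}\le\nu^2/2$, Lemma~\ref{expbound2} bounds the conditional expectation by $c_{\ref{expbound2}}(2+\kappa,\nu)=:C_{\ref{p1.1}}$, which is (b). (One could finish instead with a perturbed super-solution $\Psi=c|x|^{-p}+c_2|x|^{-s}$, $2<s<p$, $c_2>0$, for $\tfrac12\Delta w=gw$.)

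\textbf{The main obstacle} is the displayed relaxation bound. For $1\lesssim|x|\lesssim1/\lambda$ the nonlinearity in $\Delta U^{\lambda,1}=(U^{\lambda,1})^2$ is negligible and $U^{\lambda,1}$ behaves like the harmonic function with boundary value $\lambda$ (hence is only $\asymp\lambda$, a vanishing multiple of $V^\infty$), while for $|x|\gtrsim1/\lambda$ the nonlinearity pushes it up to a fixed fraction of $V^\infty$; a single static sub-solution cannot see this crossover, so real work is needed. I would carry it out through the radial ODE for $U^{\lambda,1}(r)$: writing $z=U^{\lambda,1}/V^\infty-1$ in the logarithmic radial variable (as in the proof of Proposition~\ref{p2.1}) gives an autonomous equation with an unstable rest point $z\equiv-1$ and a saddle $z\equiv0$; since $U^{\lambda,1}$ at a fixed radius $r_\ast>1$ is of order $\lambda$ — by the first-moment identity $\N_{r_\ast e_1}(X_{G_1}(1))=P_{r_\ast e_1}(\text{hit }\bar B_1)\in(0,\infty)$ and dominated convergence in $\lambda$ — the trajectory starts at distance $\asymp\lambda$ from $-1$, escapes in ``time'' $\asymp\log(1/\lambda)$, and then approaches $z=0$ exponentially along the stable manifold, which unwinds to the claimed rate, provided one verifies the arithmetic inequality between the escape rate at $z=-1$ and the approach rate at $z=0$ that holds in each $d\le 3$ (with equality when $d=3$). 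This is closely parallel to the analysis of $V^\infty-V^\lambda$ near its crossover in \cite{MP17}, and I expect getting the $\lambda$-dependence right, uniformly, to be the technically demanding point; the rest of Proposition~\ref{p1.1} is then bookkeeping with Lemmas~\ref{expbound2} and~\ref{lem:22_7_1}.
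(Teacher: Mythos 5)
Your parts (a) and (c) are correct. For (a) you replace the paper's Feynman--Kac/Bessel computation (via Lemma~\ref{lem:22_7_1} with $\zeta=0$) by a direct comparison with the explicit supersolution $D^\lambda(R)(R/|x|)^p$ of $\tfrac12\Delta w=gw$; both routes rest on the same first step ($U^{\lambda,1}\ge V^\infty$ for $\lambda\ge 6$ by the maximum principle, which is \eqref{e1.0}), and your identity $\tfrac12\Delta|x|^{-p}=V^\infty|x|^{-p}$ is exactly why the Bessel computation produces $(R/|x|)^p$, so the two arguments are equivalent. For (c) you follow the paper up to the conditional Bessel expectation and then lower-bound it by Jensen plus a first-moment bound extracted from Lemma~\ref{expbound2} via $e^t\ge 1+t$; the paper instead uses Cauchy--Schwarz against Lemma~\ref{expbound2}. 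Both are valid, and your treatment of the strict positivity $D^\lambda(R)>0$ is fine.

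Part (b), however, has a genuine gap. Your argument is conditional on the quantitative relaxation bound $U^{\lambda,1}(x)\ge V^\infty(x)\bigl(1-C_*(\lambda|x|)^{-\kappa}\bigr)$ for $|x|\ge K_0/\lambda$, which you correctly identify as the main obstacle but do not prove: the phase-plane sketch (escape from the rest point $z=-1$ in logarithmic time $\asymp\log(1/\lambda)$, then exponential approach to the saddle $z=0$) is a heuristic, and the uniformity in $\lambda$ of the passage through the crossover is precisely the hard analytic content — indeed the bound you postulate (with the natural $\kappa=p-2$) is essentially equivalent to the conclusion of (b) itself, so deriving it this way risks circularity or at least reproduces the hardest analysis of \cite{MP17} from scratch. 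The paper's proof shows that no such pointwise rate is needed. It cites the much weaker, already-established estimate $U^{\lambda,1}\ge(1-\delta)V^\infty$ for $|x|\ge C_\delta/\lambda$ (Proposition~3.3(b) of \cite{MP17}, quoted as \eqref{e1.6}), which via Lemma~\ref{lem:22_7_1} with $\zeta=2(4-d)\delta$ yields only the preliminary decay $D^\lambda(x)\le D^\lambda(R)(R/|x|)^{p_\zeta}$ with some $p_\zeta\in(2,p)$. The key step you are missing is the bootstrap: writing $\tfrac12(U^{\lambda,1}+U^{\infty,1})=U^{\infty,1}-\tfrac12 D^\lambda\ge V^\infty-\xi(R)|x|^{-p_\zeta}$ with $\xi(R)=D^\lambda(R)R^{p_\zeta}/2$, and feeding this back into \eqref{e1.2}, restores the full exponent $p$ because the perturbation now has exponent $p_\zeta>2$ and, after Bessel scaling, a coefficient $2\xi(R)R^{2-p_\zeta}=D^\lambda(R)R^2\le U^{\infty,1}(R)R^2\le 3(4-d)<\nu^2$ by \eqref{e1.4}, so Lemma~\ref{expbound2} applies. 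To complete your write-up you should either supply a full proof of your relaxation bound (substantial work) or, better, replace it by the citation of \eqref{e1.6} together with this self-improvement step.
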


\begin{proof}

Recall $\mu$, $\nu$ introduced in~\eqref{e1.5}
so that for $p=p(d)$ defined as in \eqref{5_07_0}, we have
\begin{equation}\label{pmunu}
p=\mu+\nu.
\end{equation}
(a) For $\lambda\geq 6$, clearly we have $U^{\lambda,1}(1)=\lambda \geq V^{\infty}(1)$. As $$\lim_{|x|\to\infty} U^{\lambda,1}(x)\leq \lim_{|x|\to\infty} U^{\infty,1}(x) =0$$ by \eqref{Uinftyprop}, we may apply the maximum principle to get 
\begin{align}\label{e1.0}
 U^{\lambda,1}(x) \geq  V^{\infty}(x)=\frac{2(4-d)}{|x|^2},\quad \forall |x|>1.
\end{align}
Use \eqref{Uinflb} and the above to see that \eqref{e1.2} becomes
\begin{align*}
D^\lambda(x)&\leq D^\lambda(R)E_x\Big(1_{(\tau_R<\infty)} \exp\Bigl(-\int_{0}^{\tau_R} \frac{2(4-d)}{|B_s|^2}ds\Bigr)\Big)=D^\lambda(R) (R/|x|)^{p},
\end{align*}
the last by Lemma~\ref{lem:22_7_1}.

\noindent(b) Assume $\lambda\in(0,1)$. Recall Proposition 3.3(b) in \cite{MP17}: 
\begin{align}\label{e1.6}
\forall \delta\in (0,1),\ \exists C_\delta>2,  \text{ so that } U^{\lambda,1}(x)\geq (1-\delta) V^{\infty}(x) \text{ for all } |x|\geq C_\delta/\lambda. 
\end{align}
For any $\delta \in (0,1)$, let $\zeta=2(4-d)\delta \in (0,2(4-d))$. Let $\mu$ and $\nu$ be as in \eqref{e1.5}.  Define $\nu_{\zeta}=\sqrt{\mu^2+4(4-d)-\zeta}$ and $p_{\zeta}=\nu_\zeta+\mu \to p>2$ as $\zeta \downarrow 0$. Choose  $\delta\in(0,1)$ small enough so that $p_{\zeta}>2$. Let $ K_{\ref{p1.1}}\equiv C_\delta+K_{\ref{c2.2}}$. Now use \eqref{Uinflb},
 \eqref{e1.6} and  Lemma~\ref{lem:22_7_1} to see that for $|x|\geq R\geq K_{\ref{p1.1}}/\lambda>C_\delta/\lambda$, \eqref{e1.2} implies
\begin{align*}
D^\lambda(x)&\leq D^\lambda(R)E_x\Big(1(\tau_R<\infty) \exp\Bigl(-\int_{0}^{\tau_R} \frac{2(4-d)-(\zeta/2)}{|B_s|^2}ds\Bigr) \Big)
=D^\lambda(R) (R/|x|)^{p_\zeta},
\end{align*}
Let $\xi(R)=D^\lambda(R) R^{p_\zeta}/2.$ Then the above gives
\[\left(\frac{U^{\lambda,1}+U^{\infty,1}}{2}\right)(x) \geq U^{\infty,1}(x)-\frac{\xi(R)}{|x|^{p_\zeta}}\geq V^{\infty}(x)-\frac{\xi(R)}{|x|^{p_\zeta}} \text{ for } |x|\geq R.\] 
Use this in \eqref{e1.2} and then  Lemma~\ref{lem:22_7_1} to see that for $|x|\geq R$,
\begin{align*}
D^\lambda(x)&\leq D^\lambda(R)E_x\Big(1(\tau_R<\infty)\exp\Bigl(\int_{0}^{\tau_R} \frac{\xi(R)}{|B_s|^{p_\zeta}}ds\Bigr)  \exp(-\int_{0}^{\tau_R} \frac{2(4-d)}{|B_s|^2}ds) \Big)\\
&= D^\lambda(R) E_{|x|}^{(2+2\nu)}\Big(\exp\Bigl(\int_{0}^{\tau_R} \frac{\xi(R)}{\rho_s^{p_\zeta}}ds\Bigr)\Big|\tau_R<\infty \Big) (R/|x|)^{p}.
\end{align*}
 A scaling argument shows that the above equals
\[D^\lambda(R)(R/|x|)^{p} E_{|x|/R}^{(2+2\nu)}\Big(\exp\Bigl(\int_{0}^{\tau_1} \frac{\xi(R)R^{2-p_\zeta}}{\rho_s^{p_\zeta}}ds\Bigr)|\tau_1<\infty \Big) . \]
To apply Lemma \ref{expbound2} we note that by \eqref{e1.4}, for $R\geq K_{\ref{p1.1}}/\lambda>K_{\ref{c2.2}}$ we have
\[2\gamma\equiv 2\xi(R)R^{2-p_\zeta} \leq U^{\infty,1}(R) R^2\leq 3(4-d)<\nu^2.\] So Lemma \ref{expbound2} and the above bound  show that
\begin{align}\label{e1.9}
D^\lambda(x)&\leq D^\lambda(R) (R/|x|)^{p} c_{\ref{expbound2}}(p_\zeta,\nu).
\end{align}

\noindent(c) Use \eqref{e1.3} in Corollary \ref{c2.2} to see that for $|x|\geq R> C_{\ref{c2.2}}$, we have
\[\frac{U^{\infty,1}+U^{\lambda,1}}{2}(x)\leq U^{\infty,1}(x)\leq  \frac{2(4-d)}{|x|^2}+\frac{c_{\ref{c2.2}}}{|x|^{p}}.\] 
So \eqref{e1.2} and Lemma~\ref{lem:22_7_1} imply
\begin{align*}
D^\lambda(x)&\geq D^\lambda(R) E_x\Big(1_{(\tau_R<\infty)} \exp\Bigl(-\int_{0}^{\tau_R} \frac{c_{\ref{c2.2}}}{|B_s|^p}ds\Bigr)\exp\Bigl(-\int_{0}^{\tau_R} \frac{2(4-d)}{|B_s|^2}ds\Bigr) \Big) \\
&=D^\lambda(R) E_{|x|}^{(2+2\nu)}\Big(\exp\Bigl(-\int_{0}^{\tau_R} \frac{c_{\ref{c2.2}}}{\rho_s^p}ds\Bigr)\big|\tau_R<\infty \Big) (R/|x|)^{p}, 
\end{align*}
with $p=\mu+\nu$.
 A scaling argument shows that the above equals
\begin{equation}\label{Dllb}D^\lambda(R)(R/|x|)^{p} E_{|x|/R}^{(2+2\nu)}\Big(\exp\Big(-\int_{0}^{\tau_1} \frac{c_{\ref{c2.2}}R^{2-p}}{\rho_s^p}ds\Big)|\tau_1<\infty \Big) . 
\end{equation}
To apply Lemma \ref{expbound2} note that if $R\geq R_{\ref{p1.1}}$ for some constant $R_{\ref{p1.1}}>2$,
\[2\gamma\equiv 2c_{\ref{c2.2}}R^{2-p}<2(4-d)<\nu^2.\]  By Cauchy-Schwartz, we have
\begin{align*}
1&=\Bigg(E_{|x|/R}^{(2+2\nu)}\Big(\exp\Bigl(-\int_{0}^{\tau_1} \frac{c_{\ref{c2.2}}R^{2-p}}{2\rho_s^p}ds\Bigr) \exp\Bigl(\int_{0}^{\tau_1} \frac{c_{\ref{c2.2}}R^{2-p}}{2\rho_s^p}ds\Bigr)\big|\tau_1<\infty \Big) \Bigg)^2\\
&\leq E_{|x|/R}^{(2+2\nu)}\Big(\exp\Bigl(-\int_{0}^{\tau_1} \frac{c_{\ref{c2.2}}R^{2-p}}{\rho_s^p}ds\Bigr) \big|\tau_1<\infty \Big) E_{|x|/R}^{(2+2\nu)}\Big( \exp\Bigl(\int_{0}^{\tau_1} \frac{c_{\ref{c2.2}}R^{2-p}}{\rho_s^p}ds\Bigr)|\tau_1<\infty \Big)\\
&\leq E_{|x|/R}^{(2+2\nu)}\Big(\exp\Bigl(-\int_{0}^{\tau_1} \frac{c_{\ref{c2.2}}R^{2-p}}{\rho_s^p}ds\Bigr) \big|\tau_1<\infty \Big) c_{\ref{expbound2}}(p,\nu),
\end{align*}
the last by Lemma \ref{expbound2}.
Hence \[\inf_{|x|\geq R} E_{|x|/R}^{(2+2\nu)}\Big(\exp\Bigl(-\int_{0}^{\tau_1} \frac{c_{\ref{c2.2}}R^{2-p}}{\rho_s^p}ds\Bigr) \big|\tau_1<\infty \Big) \geq c_{\ref{expbound2}}(p,\nu)^{-1}>0, \]
and by \eqref{Dllb} we are done. 
\end{proof}

By using the scaling relations of $U^{\infty, \varepsilon}$ and $U^{\lambda \varepsilon^{-2},\varepsilon}$ from \eqref{scaling1}, the following is immediate from 
the above.
\begin{corollary}\label{c1.4}
For all $\varepsilon>0$, we have
\begin{enumerate}[(a)]
\item \[U^{\infty, \varepsilon}(x)-U^{\lambda \varepsilon^{-2},\varepsilon}(x) \leq \frac{R^p}{|x|^p}D^{\lambda}(R)\varepsilon^{p-2},\ \forall |x|/\varepsilon \geq R>1,\ \lambda\geq 6.\]
\item \[U^{\infty, \varepsilon}(x)-U^{\lambda\varepsilon^{-2},\varepsilon}(x) \leq C_{\ref{p1.1}} \frac{R^p}{|x|^p}D^{\lambda}(R)\varepsilon^{p-2},\ \forall |x|/\varepsilon \geq R\geq \frac{K_{\ref{p1.1}}}{\lambda}, 0<\lambda<1.\]
\item \[U^{\infty, \varepsilon}(x)-U^{\lambda\varepsilon^{-2},\varepsilon}(x) \geq c_{\ref{p1.1}} \frac{R^p}{|x|^p}D^{\lambda}(R)\varepsilon^{p-2}>0,\ \forall |x|/\varepsilon\geq R\geq R_{\ref{p1.1}},\ \lambda>0.\]
\end{enumerate}
\end{corollary}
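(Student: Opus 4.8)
The plan is to read off Corollary~\ref{c1.4} directly from Proposition~\ref{p1.1} by inserting the scaling identity \eqref{scaling1}. Recall that \eqref{scaling1} gives $U^{\lambda,\veps}(x)=\veps^{-2}U^{\lambda\veps^2,1}(x/\veps)$ for $|x|\ge\veps$; applying this with $\lambda$ replaced by $\lambda\veps^{-2}$ yields $U^{\lambda\veps^{-2},\veps}(x)=\veps^{-2}U^{\lambda,1}(x/\veps)$. Letting $\lambda\uparrow\infty$ in \eqref{scaling1} (and using \eqref{UinfLT}, or just monotone convergence of $U^{\lambda,\veps}\uparrow U^{\infty,\veps}$) gives the corresponding identity $U^{\infty,\veps}(x)=\veps^{-2}U^{\infty,1}(x/\veps)$. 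Subtracting, we get
\[
U^{\infty,\veps}(x)-U^{\lambda\veps^{-2},\veps}(x)=\veps^{-2}\bigl(U^{\infty,1}-U^{\lambda,1}\bigr)(x/\veps)=\veps^{-2}D^\lambda(x/\veps),
\]
valid whenever $|x/\veps|>1$, i.e. $|x|/\veps>1$ — which is exactly the range appearing in Corollary~\ref{c1.4}(a); parts (b) and (c) restrict the range further to $|x|/\veps\ge R$ with $R$ in the relevant regime, so the identity still applies.

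Next I would simply substitute this identity into each of the three bounds of Proposition~\ref{p1.1}, with the point $x$ there replaced by $x/\veps$ (which has $|x/\veps|\ge R$ precisely under the stated hypotheses on $|x|/\veps$). For instance, for part (a): since $\lambda\ge 6$ and $|x/\veps|\ge R>1$, Proposition~\ref{p1.1}(a) gives $D^\lambda(x/\veps)\le \tfrac{R^p}{|x/\veps|^p}D^\lambda(R)=\tfrac{R^p\veps^p}{|x|^p}D^\lambda(R)$, and multiplying by $\veps^{-2}$ produces $U^{\infty,\veps}(x)-U^{\lambda\veps^{-2},\veps}(x)\le\tfrac{R^p}{|x|^p}D^\lambda(R)\veps^{p-2}$, which is the claim. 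Parts (b) and (c) follow in exactly the same way from Proposition~\ref{p1.1}(b),(c), carrying along the constants $C_{\ref{p1.1}}$ and $c_{\ref{p1.1}}$ and the positivity in (c) (which is preserved since $\veps^{p-2}>0$).

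There is essentially no obstacle here: the only point requiring a word of care is the justification that the $\lambda\uparrow\infty$ limit commutes with scaling to give $U^{\infty,\veps}(x)=\veps^{-2}U^{\infty,1}(x/\veps)$, but this is immediate from the monotone increasing convergence $U^{\lambda,\veps}\uparrow U^{\infty,\veps}$ recorded just after \eqref{LTexit}, applied on both sides of \eqref{scaling1}. One should also note that the powers of $\veps$ match: the factor $\veps^{-2}$ from scaling times the factor $\veps^p$ coming from $(1/|x/\veps|)^p=(\veps/|x|)^p$ yields exactly $\veps^{p-2}$. Hence the corollary is indeed ``immediate'' as asserted, and no further estimates are needed.
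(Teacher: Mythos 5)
Your proof is correct and is exactly the argument the paper intends: the corollary is obtained from Proposition~\ref{p1.1} by the scaling identity \eqref{scaling1} (applied with $\lambda\veps^{-2}$ in place of $\lambda$, and its $\lambda\uparrow\infty$ limit), which the paper states without further detail. The bookkeeping of the powers of $\veps$ and the substitution $x\mapsto x/\veps$ are handled correctly.
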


\subsection{The left tail of the total exit measure and Proof of Theorem~\ref{t2.3}}
\label{sec:3.2}
\begin{proposition}\label{t1.1}
For any $|x|\geq \varepsilon_0$ and $\varepsilon \in (0,\varepsilon_0/2)$, we have
\[ \P_{\delta_x}\Big(0<\frac{X_{G_\varepsilon}(1)}{\varepsilon^2}\leq \frac{1}{\lambda}\Big)\leq e\frac{2^p}{|x|^p}D^\lambda(2) \varepsilon^{p-2}, \ \forall  \lambda\geq 6.\]
\end{proposition}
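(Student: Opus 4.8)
The plan is to use the exponential Chebyshev (Markov) inequality together with the Laplace functional of the exit measure from \eqref{LTexit} to reduce the left-tail estimate to the bound on $D^\lambda$ already established in Proposition~\ref{p1.1}(a). First I would write, for any $\mu>0$,
\[
\P_{\delta_x}\Big(0<\frac{X_{G_\varepsilon}(1)}{\varepsilon^2}\le \frac1\lambda\Big)
=\P_{\delta_x}\Big(0<X_{G_\varepsilon}(1)\le \frac{\varepsilon^2}{\lambda}\Big)
\le e^{\mu\varepsilon^2/\lambda}\,\E_{\delta_x}\big(e^{-\mu X_{G_\varepsilon}(1)}1(X_{G_\varepsilon}(1)>0)\big).
\]
Since $\E_{\delta_x}(e^{-\mu X_{G_\varepsilon}(1)})=\exp(-U^{\mu,\varepsilon}(x))$ by \eqref{LTexit} and $\P_{\delta_x}(X_{G_\varepsilon}(1)=0)=\exp(-U^{\infty,\varepsilon}(x))$ by \eqref{UinfLT}, subtracting gives
\[
\E_{\delta_x}\big(e^{-\mu X_{G_\varepsilon}(1)}1(X_{G_\varepsilon}(1)>0)\big)
=e^{-U^{\mu,\varepsilon}(x)}-e^{-U^{\infty,\varepsilon}(x)}
\le U^{\infty,\varepsilon}(x)-U^{\mu,\varepsilon}(x),
\]
using $0<U^{\mu,\varepsilon}\le U^{\infty,\varepsilon}$ and $e^{-a}-e^{-b}\le b-a$ for $0\le a\le b$.

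Next I would choose $\mu=\lambda\varepsilon^{-2}$, so that $e^{\mu\varepsilon^2/\lambda}=e$ and the right-hand side becomes $e\,\big(U^{\infty,\varepsilon}(x)-U^{\lambda\varepsilon^{-2},\varepsilon}(x)\big)$. Now apply Corollary~\ref{c1.4}(a) with $R=2$: since $\lambda\ge 6$ and, for $|x|\ge\varepsilon_0$, $\varepsilon<\varepsilon_0/2$, we have $|x|/\varepsilon> 2=R>1$, the hypotheses of Corollary~\ref{c1.4}(a) are met, giving
\[
U^{\infty,\varepsilon}(x)-U^{\lambda\varepsilon^{-2},\varepsilon}(x)\le \frac{2^p}{|x|^p}D^\lambda(2)\,\varepsilon^{p-2}.
\]
Combining the two displays yields exactly the claimed bound
\[
\P_{\delta_x}\Big(0<\frac{X_{G_\varepsilon}(1)}{\varepsilon^2}\le\frac1\lambda\Big)\le e\,\frac{2^p}{|x|^p}D^\lambda(2)\,\varepsilon^{p-2}.
\]

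The argument is essentially mechanical once the pieces are assembled; the only point requiring a little care is checking the domain constraints so that Corollary~\ref{c1.4}(a) (equivalently Proposition~\ref{p1.1}(a)) applies with $R=2$, namely $|x|/\varepsilon>2$, which follows from $|x|\ge\varepsilon_0>2\varepsilon$. One should also note that $D^\lambda(2)=U^{\infty,1}(2)-U^{\lambda,1}(2)<\infty$ is finite and positive for $\lambda\ge 6$ by \eqref{Uinflb} and the maximum principle bound \eqref{e1.0}, so the right-hand side is a genuine finite quantity. I do not anticipate any real obstacle here; this proposition is a clean Tauberian-type consequence of the analytic estimates on $U^{\infty,\varepsilon}-U^{\lambda\varepsilon^{-2},\varepsilon}$ proved in the previous subsection.
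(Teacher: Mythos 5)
Your proposal is correct and follows essentially the same route as the paper: the exponential Markov inequality with $\mu=\lambda\varepsilon^{-2}$ (yielding the factor $e$), the identity $\E_{\delta_x}(e^{-\lambda\varepsilon^{-2}X_{G_\varepsilon}(1)}1(X_{G_\varepsilon}(1)>0))=e^{-U^{\lambda\varepsilon^{-2},\varepsilon}(x)}-e^{-U^{\infty,\varepsilon}(x)}$ from \eqref{LTexit} and \eqref{UinfLT}, the elementary bound by $U^{\infty,\varepsilon}(x)-U^{\lambda\varepsilon^{-2},\varepsilon}(x)$, and Corollary~\ref{c1.4}(a) with $R=2$. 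The domain check $|x|/\varepsilon>2$ is exactly as in the paper, so there is nothing to add.
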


\begin{proof}

Apply Markov's inequality to get
\begin{align}\label{e1.7}
\P_{\delta_x}\Big(0<\frac{X_{G_\varepsilon}(1)}{\varepsilon^2}\leq \frac{1}{\lambda}\Big)& \leq e\E_{\delta_x}\Big(\exp(-\lambda \varepsilon^{-2} X_{G_\varepsilon}(1)) 1(X_{G_\varepsilon}(1)>0)\Big)\nonumber\\
&=e\Big(\exp ({-U^{\lambda \varepsilon^{-2}, \varepsilon}(x)})-\exp ({-U^{\infty, \varepsilon}(x)} )\Big)\nonumber\\
&\leq e\Big({U^{\infty, \varepsilon}(x)}-U^{\lambda \varepsilon^{-2}, \varepsilon}(x)\Big),
\end{align}
the equality by \eqref{LTexit} and \eqref{UinfLT}. Note we've assumed $|x|/\varepsilon \geq \varepsilon_0/\varepsilon>2$ and $\lambda\ge 6$ so that we can use Corollary \ref{c1.4}(a) with $R=2$ to bound the right-hand side of \eqref{e1.7} by $e (2/|x|)^p D^{\lambda}(2) \varepsilon^{p-2}$, as required.
\end{proof}

\begin{proposition}\label{t1.2}
There is some $c_{\ref{t1.2}}(\varepsilon_0)>0$ such that for all $|x|\geq \varepsilon_0$ and $\varepsilon \in (0,\varepsilon_0)$,
\[ \P_{\delta_x}(0<\frac{X_{G_\varepsilon}(1)}{\varepsilon^2}\leq\frac{1}{\lambda})\leq c_{\ref{t1.2}}(\varepsilon_0) \lambda^{-(p-2)} \varepsilon^{p-2},\ \forall\  0<\lambda<1.\]
\end{proposition}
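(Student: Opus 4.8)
The plan is to re-run the Markov-inequality/Tauberian argument of Proposition~\ref{t1.1}, now using Corollary~\ref{c1.4}(b) instead of Corollary~\ref{c1.4}(a). Since part (b) requires $R\ge K_{\ref{p1.1}}/\lambda$, it can only be invoked when $|x|/\varepsilon\ge K_{\ref{p1.1}}/\lambda$, so the proof splits into two cases according to whether this holds.

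In the first case, $|x|/\varepsilon\ge K_{\ref{p1.1}}/\lambda$, I would start exactly as in \eqref{e1.7}: Markov's inequality together with \eqref{LTexit}, \eqref{UinfLT}, and $e^{-a}-e^{-b}\le b-a$ gives
\[
\P_{\delta_x}\Big(0<\frac{X_{G_\varepsilon}(1)}{\varepsilon^2}\le\frac1\lambda\Big)\le e\big(U^{\infty,\varepsilon}(x)-U^{\lambda\varepsilon^{-2},\varepsilon}(x)\big).
\]
Then I apply Corollary~\ref{c1.4}(b) with $R=K_{\ref{p1.1}}/\lambda$ (permissible since $|x|/\varepsilon\ge R\ge K_{\ref{p1.1}}/\lambda$, $0<\lambda<1$, and $R>K_{\ref{p1.1}}>K_{\ref{c2.2}}>2$), bound $D^\lambda(R)\le U^{\infty,1}(R)\le 3(4-d)R^{-2}$ using \eqref{e1.4} (as $R>K_{\ref{c2.2}}$), and use $|x|^{-p}\le\varepsilon_0^{-p}$ since $|x|\ge\varepsilon_0$. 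Substituting $R=K_{\ref{p1.1}}/\lambda$ then reduces the right-hand side to $3e(4-d)C_{\ref{p1.1}}K_{\ref{p1.1}}^{p-2}\varepsilon_0^{-p}\,\lambda^{-(p-2)}\varepsilon^{p-2}$, which is of the required form.

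In the second case, $|x|/\varepsilon< K_{\ref{p1.1}}/\lambda$, I would argue trivially: then $\varepsilon/\lambda>|x|/K_{\ref{p1.1}}\ge\varepsilon_0/K_{\ref{p1.1}}$, and since $p-2>0$ for all $d\le3$ this gives $\lambda^{-(p-2)}\varepsilon^{p-2}=(\varepsilon/\lambda)^{p-2}\ge(\varepsilon_0/K_{\ref{p1.1}})^{p-2}$. Thus if $c_{\ref{t1.2}}(\varepsilon_0)$ is taken at least $(K_{\ref{p1.1}}/\varepsilon_0)^{p-2}$, the claimed bound is already $\ge1$ and holds because its left-hand side is a probability. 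Choosing $c_{\ref{t1.2}}(\varepsilon_0)$ to be the maximum of $(K_{\ref{p1.1}}/\varepsilon_0)^{p-2}$ and the constant from the first case completes the argument.

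I do not anticipate a real obstacle here; the only thing to be careful about is identifying which regime permits Corollary~\ref{c1.4}(b), and noticing that in the complementary regime the target bound is bounded below by a positive constant so that the trivial estimate suffices — the rest is the same manipulation already carried out for Proposition~\ref{t1.1}.
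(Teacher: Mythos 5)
Your proposal is correct and follows essentially the same route as the paper: Markov's inequality as in \eqref{e1.7}, Corollary~\ref{c1.4}(b) with $R=K_{\ref{p1.1}}/\lambda$ combined with $D^\lambda(R)\le U^{\infty,1}(R)\le 3(4-d)R^{-2}$ from \eqref{e1.4} in the regime where part (b) applies, and the trivial bound $\P\le 1$ in the complementary regime where $\lambda^{-(p-2)}\varepsilon^{p-2}$ is bounded below. The only cosmetic difference is that the paper splits cases on $\varepsilon_0/\varepsilon$ versus $K_{\ref{p1.1}}/\lambda$ rather than on $|x|/\varepsilon$, which changes nothing since $|x|\ge\varepsilon_0$.
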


\begin{proof}

For $\lambda \in (0,1)$ such that \[|x|/\varepsilon \geq \varepsilon_0/\varepsilon \geq K_{\ref{p1.1}}/\lambda,\] 
we apply Markov's inequality as in \eqref{e1.7} and use Corollary \ref{c1.4}(b) with $R=K_{\ref{p1.1}}/\lambda$  to get
\begin{align*}
\P_{\delta_x}(0<\frac{X_{G_\varepsilon}(1)}{\varepsilon^2}\leq \frac{1}{\lambda})
& \leq e\Big({U^{\infty, \varepsilon}(x)}-U^{\lambda \varepsilon^{-2}, \varepsilon}(x)\Big) \leq e C_{\ref{p1.1}} U^{\infty,1}(R) (R/|x|)^{p} \varepsilon^{p-2}\\
& \leq e C_{\ref{p1.1}} (3(4-d)/R^2) (R/\varepsilon_0)^{p} \varepsilon^{p-2} \leq 9eC_{\ref{p1.1}}   \varepsilon_0^{-p} K_{\ref{p1.1}}^{p-2}  \lambda^{-(p-2)} \varepsilon^{p-2} ,
\end{align*}
the second line by  \eqref{e1.4} with $R>K_{\ref{p1.1}}>K_{\ref{c2.2}}$.

For $\lambda \in (0,1)$ such that \[ \varepsilon_0/\varepsilon \leq K_{\ref{p1.1}}/\lambda,\] we have
\[\P_{\delta_x}(0<\frac{X_{G_\varepsilon}(1)}{\varepsilon^2}\leq \frac{1}{\lambda}) \leq 1\leq (K_{\ref{p1.1}}/\varepsilon_0 )^{p-2} \lambda^{-(p-2)} \varepsilon^{p-2}.\] 

The result follows by letting $c_{\ref{t1.2}}(\varepsilon_0)=9eC_{\ref{p1.1}}   \varepsilon_0^{-p} K_{\ref{p1.1}}^{p-2}+(K_{\ref{p1.1}}/\varepsilon_0 )^{p-2}.$
\end{proof}

For $|x|\geq \varepsilon_0$ and $\varepsilon \in (0,\varepsilon_0)$, we define
\begin{align}\label{e1.8.1}
F_{\varepsilon,x}(K)\equiv \P_{\delta_x}(0<\frac{X_{G_\varepsilon}(1)}{\varepsilon^2}\leq K), \ \forall K>0,
\end{align}
and
\begin{align}\label{e1.8}
\hat{F}_{\varepsilon,x}(\lambda)\equiv \E_{\delta_x}\Big(\exp\Bigl(-\lambda \frac{X_{G_{\varepsilon}}(1)}{\varepsilon^2}\Bigr)1{(X_{G_{\varepsilon}}(1)>0)}\Big), \ \forall \lambda>0.
\end{align}
The dependence on $\varepsilon$ and $x$ will at times be suppressed.

\begin{lemma}\label{t2.2}
There are constants $R_{\ref{t2.2}}>2$ and $c_{\ref{t2.2}}(\varepsilon_0)>0$ so that for any $\varepsilon_0\leq |x|\leq \varepsilon_0^{-1}$ and $\varepsilon \in (0,\varepsilon_0/R_{\ref{t2.2}})$, \[ \E_{\delta_x}\Bigl(\exp\Bigl(-\lambda \frac{X_{G_{\varepsilon}}(1)}{\varepsilon^2}\Bigr)1{(X_{G_{\varepsilon}}(1)>0)}\Bigr) \geq c_{\ref{t2.2}}(\varepsilon_0) D^{\lambda}(R_{\ref{t2.2}}) \varepsilon^{p-2},\ \forall \lambda>0. \]
\end{lemma}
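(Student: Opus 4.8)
The plan is to obtain a lower bound on the Laplace functional $\hat F_{\varepsilon,x}(\lambda)$ by bounding it below by the difference of two exponentials, exactly mirroring the upper bound computation in \eqref{e1.7} but in the reverse direction, and then applying the lower estimate in Corollary~\ref{c1.4}(c). First I would write, using \eqref{LTexit} and \eqref{UinfLT},
\[
\hat F_{\varepsilon,x}(\lambda)=\E_{\delta_x}\Bigl(\exp\bigl(-\lambda\varepsilon^{-2}X_{G_\varepsilon}(1)\bigr)\Bigr)-\P_{\delta_x}(X_{G_\varepsilon}(1)=0)=\exp\bigl(-U^{\lambda\varepsilon^{-2},\varepsilon}(x)\bigr)-\exp\bigl(-U^{\infty,\varepsilon}(x)\bigr).
\]
Since $U^{\lambda\varepsilon^{-2},\varepsilon}\le U^{\infty,\varepsilon}$, the elementary inequality $e^{-a}-e^{-b}\ge e^{-b}(b-a)$ for $a\le b$ (or more simply $e^{-a}-e^{-b}\ge (b-a)e^{-b}$) gives
\[
\hat F_{\varepsilon,x}(\lambda)\ge e^{-U^{\infty,\varepsilon}(x)}\bigl(U^{\infty,\varepsilon}(x)-U^{\lambda\varepsilon^{-2},\varepsilon}(x)\bigr).
\]
The factor $e^{-U^{\infty,\varepsilon}(x)}$ must be bounded below uniformly; here I would use that $U^{\infty,\varepsilon}$ is radially decreasing with $U^{\infty,\varepsilon}(x)=\varepsilon^{-2}U^{\infty,1}(x/\varepsilon)$ by \eqref{scaling1}, together with the bound \eqref{e1.4} from Corollary~\ref{c2.2}: for $|x|\ge\varepsilon_0$ and $\varepsilon$ small enough that $|x|/\varepsilon\ge K_{\ref{c2.2}}$, one gets $U^{\infty,\varepsilon}(x)\le 3(4-d)/|x|^2\le 3(4-d)\varepsilon_0^{-2}$, so $e^{-U^{\infty,\varepsilon}(x)}\ge e^{-3(4-d)\varepsilon_0^{-2}}=:c(\varepsilon_0)>0$.

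Next I would apply Corollary~\ref{c1.4}(c) with the choice $R=R_{\ref{t2.2}}:=R_{\ref{p1.1}}$ (any fixed constant $>2$ meeting the hypothesis works): provided $|x|/\varepsilon\ge R_{\ref{p1.1}}$, which holds once $\varepsilon<\varepsilon_0/R_{\ref{t2.2}}$ since $|x|\ge\varepsilon_0$, we obtain
\[
U^{\infty,\varepsilon}(x)-U^{\lambda\varepsilon^{-2},\varepsilon}(x)\ge c_{\ref{p1.1}}\frac{R_{\ref{p1.1}}^{\,p}}{|x|^p}D^\lambda(R_{\ref{p1.1}})\,\varepsilon^{p-2}\ge c_{\ref{p1.1}}R_{\ref{p1.1}}^{\,p}\,\varepsilon_0^{p}\,D^\lambda(R_{\ref{t2.2}})\,\varepsilon^{p-2},
\]
where in the last step I used $|x|\le\varepsilon_0^{-1}$. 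Combining the two displays and setting $c_{\ref{t2.2}}(\varepsilon_0)=c(\varepsilon_0)\,c_{\ref{p1.1}}R_{\ref{p1.1}}^{\,p}\varepsilon_0^{p}$ yields the claim, valid for all $\lambda>0$; one should take $R_{\ref{t2.2}}=\max(R_{\ref{p1.1}},K_{\ref{c2.2}})$ (or simply enlarge it) so that both the $e^{-U^{\infty,\varepsilon}}$ lower bound and Corollary~\ref{c1.4}(c) apply simultaneously when $\varepsilon<\varepsilon_0/R_{\ref{t2.2}}$.

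I do not expect any serious obstacle here; the lemma is essentially bookkeeping that packages the lower bound of Corollary~\ref{c1.4}(c) into a statement about the Laplace functional. The only mild point of care is the uniform lower bound on $e^{-U^{\infty,\varepsilon}(x)}$, which is why the range $\varepsilon_0\le|x|\le\varepsilon_0^{-1}$ and the smallness condition $\varepsilon<\varepsilon_0/R_{\ref{t2.2}}$ enter — the former to keep $U^{\infty,\varepsilon}(x)$ bounded above (via \eqref{e1.4}) and below away from issues at the origin, the latter to guarantee $|x|/\varepsilon$ exceeds the thresholds $R_{\ref{p1.1}}$ and $K_{\ref{c2.2}}$ needed to invoke the cited estimates.
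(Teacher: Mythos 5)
Your proposal is correct and follows essentially the same route as the paper: write $\hat F_{\varepsilon,x}(\lambda)=e^{-U^{\lambda\varepsilon^{-2},\varepsilon}(x)}-e^{-U^{\infty,\varepsilon}(x)}\ge (U^{\infty,\varepsilon}(x)-U^{\lambda\varepsilon^{-2},\varepsilon}(x))e^{-U^{\infty,\varepsilon}(x)}$, invoke Corollary~\ref{c1.4}(c), and bound $e^{-U^{\infty,\varepsilon}(x)}$ below via \eqref{e1.4} and scaling. The paper simply sets $R_{\ref{t2.2}}=K_{\ref{c2.2}}+R_{\ref{p1.1}}$ from the outset and applies Corollary~\ref{c1.4}(c) with $R=R_{\ref{t2.2}}$ so that $D^{\lambda}(R_{\ref{t2.2}})$ appears directly, which is the cleaner version of your closing remark about enlarging $R_{\ref{t2.2}}$.
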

\begin{proof}
By  \eqref{LTexit} and \eqref{UinfLT}, for $\hat{F}=\hat{F}_{\varepsilon,x}$ defined as in \eqref{e1.8} we have
\begin{align}\label{hatFlb}
\hat{F}(\lambda)&= \exp ({-U^{\lambda \varepsilon^{-2}, \varepsilon}(x)})-\exp ({-U^{\infty, \varepsilon}(x)} )
\geq  ({U^{\infty, \varepsilon}(x)}-U^{\lambda \varepsilon^{-2}, \varepsilon}(x)) \exp ({-U^{\infty, \varepsilon}(x)} ).
\end{align}
Let $R_{\ref{t2.2}} =K_{\ref{c2.2}}+ R_{\ref{p1.1}}>R_{\ref{p1.1}}$.
 Then for $\varepsilon \in (0,\varepsilon_0/R_{\ref{t2.2}})$ we have
$|x|/\varepsilon \geq \varepsilon_0/\varepsilon>R_{\ref{t2.2}}$. Use Corollary \ref{c1.4}(c) with $R=R_{\ref{t2.2}}$ to get
\[{U^{\infty, \varepsilon}(x)}-U^{\lambda \varepsilon^{-2}, \varepsilon}(x) \geq  c_{\ref{p1.1}} D^\lambda(R_{\ref{t2.2}}) (R_{\ref{t2.2}}/|x|)^p  \varepsilon^{p-2} \geq  c_{\ref{p1.1}} D^\lambda(R_{\ref{t2.2}}) R_{\ref{t2.2}}^p \varepsilon_0^{-p}  \varepsilon^{p-2}.\]
Next since $|x|/\varepsilon \geq \varepsilon_0/\varepsilon>R_{\ref{t2.2}}>K_{\ref{c2.2}}$, we may apply \eqref{e1.4} to get 
\[\exp ({-U^{\infty, \varepsilon}(x)} )=\exp({-\varepsilon^{-2} U^{\infty, 1}(x/\varepsilon)}) \geq \exp(-\varepsilon^{-2} 3(4-d) (|x|/\varepsilon)^{-2}) \geq \exp({-9\varepsilon_0^{-2} }).\] So the lemma follows from \eqref{hatFlb} and the above inequalities.
\end{proof}

\begin{proposition}\label{t2.4}
There are positive constants $K_{\ref{t2.4}}(\varepsilon_0)$ and $c_{\ref{t2.4}}(\varepsilon_0)$  such that, for all $\varepsilon_0 \leq |x| \leq \varepsilon_0^{-1}$ we have
\[\P_{\delta_x}(0< \frac{X_{G_{\varepsilon}}(1)}{\varepsilon^2} \leq K_{\ref{t2.4}}(\veps_0)) \geq c_{\ref{t2.4}}(\varepsilon_0) \varepsilon^{p-2},\ \forall 0<\varepsilon<\varepsilon_0/R_{\ref{t2.2}}.\]
\end{proposition}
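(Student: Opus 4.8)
The plan is to derive Proposition~\ref{t2.4} from the lower bound on the Laplace-type functional $\hat F_{\varepsilon,x}$ in Lemma~\ref{t2.2} together with the upper bounds on the left-tail probabilities $F_{\varepsilon,x}$ provided by Propositions~\ref{t1.1} and~\ref{t1.2}. The point is a standard Tauberian-style comparison: a lower bound on $\hat F(\lambda)=\int_{(0,\infty)}e^{-\lambda K}\,dF_{\varepsilon,x}(K)$ at a well-chosen $\lambda$, combined with an upper bound on $F_{\varepsilon,x}(K)$ for small $K$, forces $F_{\varepsilon,x}(K)$ to be comparably large for a suitable fixed $K=K_{\ref{t2.4}}(\varepsilon_0)$.

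More concretely, first I would fix a convenient value of $\lambda$, say $\lambda=6$, so that Lemma~\ref{t2.2} gives, for $\varepsilon_0\le|x|\le\varepsilon_0^{-1}$ and $0<\varepsilon<\varepsilon_0/R_{\ref{t2.2}}$,
\[\hat F_{\varepsilon,x}(6)\ge c_{\ref{t2.2}}(\varepsilon_0)\,D^{6}(R_{\ref{t2.2}})\,\varepsilon^{p-2}=:c_1(\varepsilon_0)\,\varepsilon^{p-2},\]
where $c_1(\varepsilon_0)>0$ since $D^{6}(R_{\ref{t2.2}})>0$ (note $D^\lambda(R)>0$ for $\lambda>0$, $R>1$, as used in Proposition~\ref{p1.1}(c)). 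Next split the expectation defining $\hat F_{\varepsilon,x}(6)$ according to whether $X_{G_\varepsilon}(1)/\varepsilon^2\le K$ or $>K$, for a threshold $K=K_{\ref{t2.4}}(\varepsilon_0)$ to be chosen. On the event $\{X_{G_\varepsilon}(1)/\varepsilon^2>K\}$ we bound $e^{-6X_{G_\varepsilon}(1)/\varepsilon^2}\le e^{-6K}$, and $\P_{\delta_x}(X_{G_\varepsilon}(1)>0)\le\P_{\delta_x}(0<X_{G_\varepsilon}(1)/\varepsilon^2\le 1)+\P_{\delta_x}(X_{G_\varepsilon}(1)/\varepsilon^2>1)$. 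The small-value piece is $\le c_{\ref{t1.2}}(\varepsilon_0)\varepsilon^{p-2}$ by Proposition~\ref{t1.2} with $\lambda=1$, and the piece with mass above $1$ is $\le e\hat F_{\varepsilon,x}(1)\le e(U^{\infty,\varepsilon}(x)-U^{1\cdot\varepsilon^{-2},\varepsilon}(x))$, which by Corollary~\ref{c1.4}(b) with a fixed $R\ge K_{\ref{p1.1}}$ (using $\lambda=1$ is borderline; more safely take $\lambda$ slightly less than $1$, or note that $\P_{\delta_x}(X_{G_\varepsilon}(1)/\varepsilon^2>1)\le\P_{\delta_x}(0<X_{G_\varepsilon}(1)/\varepsilon^2\le 2)$ and reuse Proposition~\ref{t1.2}) is again $O_{\varepsilon_0}(\varepsilon^{p-2})$. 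Hence $\P_{\delta_x}(X_{G_\varepsilon}(1)>0)\le c_2(\varepsilon_0)\varepsilon^{p-2}$, so the tail contribution to $\hat F_{\varepsilon,x}(6)$ is at most $e^{-6K}c_2(\varepsilon_0)\varepsilon^{p-2}$.

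Putting these together,
\[c_1(\varepsilon_0)\varepsilon^{p-2}\le\hat F_{\varepsilon,x}(6)\le \P_{\delta_x}\Bigl(0<\tfrac{X_{G_\varepsilon}(1)}{\varepsilon^2}\le K\Bigr)+e^{-6K}c_2(\varepsilon_0)\varepsilon^{p-2}.\]
Now choose $K=K_{\ref{t2.4}}(\varepsilon_0)$ large enough that $e^{-6K}c_2(\varepsilon_0)\le c_1(\varepsilon_0)/2$; this yields
\[\P_{\delta_x}\Bigl(0<\tfrac{X_{G_\varepsilon}(1)}{\varepsilon^2}\le K_{\ref{t2.4}}(\varepsilon_0)\Bigr)\ge \tfrac12 c_1(\varepsilon_0)\,\varepsilon^{p-2}=:c_{\ref{t2.4}}(\varepsilon_0)\,\varepsilon^{p-2},\]
valid for all $\varepsilon_0\le|x|\le\varepsilon_0^{-1}$ and $0<\varepsilon<\varepsilon_0/R_{\ref{t2.2}}$, which is exactly the claim. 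The only mild technical point — and the step I expect to require the most care — is controlling $\P_{\delta_x}(X_{G_\varepsilon}(1)>0)$ by $O(\varepsilon^{p-2})$: this needs Proposition~\ref{t1.2} (for the regime $\lambda<1$, applied at a fixed $\lambda$ bounded away from $0$ and $1$) possibly together with Proposition~\ref{t1.1}, and one must make sure the ranges of $|x|/\varepsilon$ in Corollary~\ref{c1.4} are met, which they are since $|x|/\varepsilon\ge\varepsilon_0/\varepsilon>R_{\ref{t2.2}}$ on the stated range of $\varepsilon$. Everything else is elementary: Markov's inequality, splitting the expectation, and choosing the constant $K_{\ref{t2.4}}(\varepsilon_0)$ at the end.
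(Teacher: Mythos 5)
Your overall strategy --- play the lower bound on $\hat F_{\veps,x}$ from Lemma~\ref{t2.2} against the upper bounds on $F_{\veps,x}$ from Proposition~\ref{t1.2} and then choose $K$ large --- is exactly the paper's, but your treatment of the tail term contains a genuine error. You claim $\P_{\delta_x}(X_{G_\veps}(1)>0)\le c_2(\veps_0)\veps^{p-2}$. This is false: by \eqref{UinfLT} and \eqref{scaling1}, $\P_{\delta_x}(X_{G_\veps}(1)>0)=1-\exp(-\veps^{-2}U^{\infty,1}(x/\veps))$, which by \eqref{Uinflb} and Proposition~\ref{p2.1} converges, as $\veps\downarrow0$, to $1-\exp(-V^\infty(x))=1-\exp(-2(4-d)|x|^{-2})>0$; the event that the exit measure is positive is essentially the event that $\mR$ approaches $x$, and that has probability bounded away from $0$ in $d\le 3$. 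Both of your sub-arguments for the false bound also fail: Markov's inequality in the form $\P(0<X_{G_\veps}(1)/\veps^2\le 1/\lambda)\le e\hat F(\lambda)$ controls only the \emph{lower} tail (on $\{X_{G_\veps}(1)/\veps^2>1\}$ the weight $e^{-X_{G_\veps}(1)/\veps^2}$ is small, not bounded below, so $\P(X_{G_\veps}(1)/\veps^2>1)\le e\hat F(1)$ is not valid --- and indeed cannot be, since the left side tends to a positive constant while the right side is $O(\veps^{p-2})$); and the inclusion $\{X_{G_\veps}(1)/\veps^2>1\}\subset\{0<X_{G_\veps}(1)/\veps^2\le 2\}$ is simply wrong. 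In fact, conditioned on positivity, $X_{G_\veps}(1)$ is typically of order $\veps^{d-2}$, so $X_{G_\veps}(1)/\veps^2$ is typically \emph{large}, and no fixed choice of $K_{\ref{t2.4}}(\veps_0)$ can make $e^{-6K}\P(X_{G_\veps}(1)/\veps^2>K)$ of smaller order than $\veps^{p-2}$ by bounding the probability alone.

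The repair is to retain the $y$-dependence of the left-tail bound instead of replacing it by the (order-one) total positivity probability, and to let the exponential weight do the work. Writing $\hat F(1)=\int_0^\infty e^{-y}F(y)\,dy$ (integration by parts), the contribution of $[K,\infty)$ is at most $\int_K^\infty e^{-y}\,c_{\ref{t1.2}}(\veps_0)\,y^{p-2}\veps^{p-2}\,dy$, by Proposition~\ref{t1.2} applied with $\lambda=1/y<1$ for each $y>K>1$; this is $\le\tfrac12 c_{\ref{t2.2}}(\veps_0)D^1(R_{\ref{t2.2}})\veps^{p-2}$ once $K=K(\veps_0)$ is large, and the remaining piece is $\le F(K)$. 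Combined with Lemma~\ref{t2.2} at $\lambda=1$ this yields $F(K)\ge\tfrac12 c_{\ref{t2.2}}(\veps_0)D^1(R_{\ref{t2.2}})\veps^{p-2}$, which is the Proposition. This is precisely the paper's argument; with that substitution (the choice $\lambda=6$ versus $\lambda=1$ is immaterial) your proof goes through.
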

\begin{proof}
Recall $F={F}_{\varepsilon,x}$ and $\hat{F}=\hat{F}_{\varepsilon,x}$ from \eqref{e1.8.1} and \eqref{e1.8}, respectively. We have
\begin{align*}
\hat{F}(\lambda)=\int_0^\infty e^{-\lambda y} dF(y).
\end{align*}
Let $\lambda=1$ and $K>1$. Use integration by parts and Proposition \ref{t1.2} to see that
\begin{align*}
\hat{F}(1)=\int_0^\infty e^{-y} F(y) dy &\leq F(K)+\int_K^{\infty} e^{-y} F(y) dy \leq  F(K)+\int_K^{\infty} e^{-y}  c_{\ref{t1.2}}(\varepsilon_0) y^{p-2} \varepsilon^{p-2} dy \\
& \leq F(K)+\frac{1}{2}  c_{\ref{t2.2}}(\varepsilon_0) D^1(R_{\ref{t2.2}})  \varepsilon^{p-2},
\end{align*}
where $K=K(\varepsilon_0)>1$ is large enough. Lemma \ref{t2.2}, with $\lambda=1$ and $\veps$, $x$ as in the Proposition, implies
\begin{align*}
F(K)&\geq  c_{\ref{t2.2}}(\varepsilon_0) D^1(R_{\ref{t2.2}}) \varepsilon^{p-2}-\frac{1}{2}  c_{\ref{t2.2}}(\varepsilon_0) D^1(R_{\ref{t2.2}})  \varepsilon^{p-2}= \frac{1}{2} c_{\ref{t2.2}}(\varepsilon_0) D^1(R_{\ref{t2.2}}) \varepsilon^{p-2}.
\end{align*}\end{proof}

\begin{proof}[Proof of Theorem~\ref{t2.3}.]
Pick $\lambda=\lambda(\varepsilon_0)\ge 6$ large enough so that \[e (2^p/|x|^p) D^\lambda(2)\leq e 2^p \varepsilon_0^{-p} D^\lambda(2) \leq \frac{1}{2} c_{\ref{t2.4}}(\varepsilon_0).\]  So for $K_1(\veps_0)\in(0,1/\lambda(\veps_0))$, Proposition~\ref{t1.1} gives \[\P_{\delta_x}(0<\frac{X_{G_{\varepsilon}}(1)}{\varepsilon^2} \leq K_1)\leq \frac{1}{2} c_{\ref{t2.4}}(\varepsilon_0)  \varepsilon^{p-2}.\] Let $K_2=K_{\ref{t2.4}}$ in Proposition~\ref{t2.4} to see that for $x,\veps$ as in the Theorem and $R_{\ref{t2.3}}=R_{\ref{t2.2}}$,
\begin{align}\label{e1.11}
\P_{\delta_x}(K_1\leq \frac{X_{G_{\varepsilon}(1)}}{\varepsilon^2} \leq K_2)= \P_{\delta_x}(0<\frac{X_{G_{\varepsilon}}(1)}{\varepsilon^2} \leq K_2)-\P_{\delta_x}(0<\frac{X_{G_{\varepsilon}}(1)}{\varepsilon^2} \leq K_1) \geq \frac{1}{2} c_{\ref{t2.4}}(\varepsilon_0)  \varepsilon^{p-2}.
\end{align}
Use Proposition \ref{p0.1}(b)(i) with $G=G_\varepsilon$ and $D_1=G_{\varepsilon/2}$ to see that for $x,\veps$ as above,
\begin{align*}
&\P_{\delta_x}(K_1\leq \frac{X_{G_{\varepsilon}}(1)}{\varepsilon^2} \leq K_2, X_{G_{\varepsilon/2}}(1)=0)=\E_{\delta_x}( 1(K_1\leq \frac{X_{G_{\varepsilon}}(1)}{\varepsilon^2} \leq K_2) \P_{X_{G_{\varepsilon}}}(X_{G_{\varepsilon/2}}(1)=0))\\
=& \E_{\delta_x}( 1(K_1\leq \frac{X_{G_{\varepsilon}}(1)}{\varepsilon^2} \leq K_2) \exp(-\int U^{\infty,\varepsilon/2}(y) X_{G_{\varepsilon}}(dy)))\ \text{ (by } \eqref{UinfLT} \text{ with } X_0=X_{G_{\varepsilon}} )\\
=& \E_{\delta_x}( 1(K_1\leq \frac{X_{G_{\varepsilon}}(1)}{\varepsilon^2} \leq K_2) \exp(-  4\varepsilon^{-2} U^{\infty,1}(2) X_{G_{\varepsilon}}(1)))\
\text{ (by }\eqref{scaling1} )\\
\geq & \E_{\delta_x}( 1(K_1\leq \frac{X_{G_{\varepsilon}}(1)}{\varepsilon^2} \leq K_2) \exp(-  4K_2 U^{\infty,1}(2) ))\geq    \frac{1}{2} c_{\ref{t2.4}}(\varepsilon_0)  \varepsilon^{p-2} \exp(-  4K_2 U^{\infty,1}(2) ),
\end{align*}
the last by \eqref{e1.11}. So the theorem follows.
\end{proof}

\section{
Preliminaries for the Lower Bound on the Dimension
}\label{nonpolar}
\label{sec:4}
In this section, we will show that the lower bound on the local dimension of $\pmR$ holds with positive probability (see  Proposition~\ref{prop:crudelbdim}). The refined version of this result, which is crucial for the later proof of Proposition~\ref{candim}, is given in Lemma~\ref{lem:spherelb}. 
The next result is important for implementing our program: it plays a role analogous to that of Proposition~6.1 in~\cite{MP17}.

\begin{proposition}\label{p3.1}
There is a $\lambda>0$ such that for all $\varepsilon_0>0$, there is some $c_{\ref{p3.1}}(\varepsilon_0)>0$ so that for all $|x_i|\geq \varepsilon_0$ and $\varepsilon \in (0,\varepsilon_0)$,
\[\E_{\delta_0}\Big(\prod_{i=1}^2 \lambda \frac{X_{G_\varepsilon^{x_i}}(1)}{\varepsilon^2} \exp\big(-\lambda \frac{X_{G_\varepsilon^{x_i}}(1)}{\varepsilon^2}\big) \Big)\leq c_{\ref{p3.1}} (1+|x_1-x_2|^{2-p}) \varepsilon^{2(p-2)}.\]
\end{proposition}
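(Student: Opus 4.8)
The plan is to combine the exact Laplace transform of exit measures with the Feynman--Kac/Bessel estimates developed in Section~\ref{sec:3.1}, treating separately the regimes $|x_1-x_2|\le C_0\veps$ and $|x_1-x_2|>C_0\veps$ for a suitable universal constant $C_0$. In the first regime I would simply use $\lambda u e^{-\lambda u}\le e^{-1}$ to bound the left side by $e^{-1}\E_{\delta_0}(\lambda\veps^{-2}X_{G_\veps^{x_1}}(1)\exp(-\lambda\veps^{-2}X_{G_\veps^{x_1}}(1))1(X_{G_\veps^{x_1}}(1)>0))$; translating $B_\veps(x_1)$ to the origin and integrating by parts against the tail bounds of Propositions~\ref{t1.1} and \ref{t1.2} shows this is at most $C(\veps_0)\veps^{p-2}$, and since $\veps^{p-2}\le C_0^{p-2}|x_1-x_2|^{2-p}\veps^{2(p-2)}$ when $|x_1-x_2|\le C_0\veps$, the bound follows (for $\veps\in[\veps_0/2,\veps_0)$ both sides are bounded below/above by constants, so one may assume $\veps$ small). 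From now on $|x_1-x_2|>C_0\veps$, so $B_\veps(x_1)$ and $B_\veps(x_2)$ are disjoint with a definite gap.

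In the second regime I would fix $\lambda\ge\max(6,2(4-d))$ and set $X_i=X_{G_\veps^{x_i}}(1)$, $\mu=\lambda\veps^{-2}$, so that $\prod_{i}\lambda\veps^{-2}X_ie^{-\lambda\veps^{-2}X_i}=\prod_i\mu X_ie^{-\mu X_i}$ and $\E_{\delta_0}(\prod_i\mu X_ie^{-\mu X_i})=\mu^2\,\partial_{\mu_1}\partial_{\mu_2}g(\mu_1,\mu_2)\big|_{\mu_1=\mu_2=\mu}$, where $g(\mu_1,\mu_2)=\E_{\delta_0}(e^{-\mu_1X_1-\mu_2X_2})$ (the interchange of derivatives and expectation being routine). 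Conditioning on $\cE_{G^*}$ with $G^*=G_\veps^{x_1}\cap G_\veps^{x_2}$, decomposing the snake into its excursions outside $G^*$ via \eqref{SMP1}--\eqref{Widecomp}, using that an excursion rooted in $\overline{B_\veps(x_i)}$ contributes nothing to $X_{G_\veps^{x_i}}$, and applying \eqref{LFEM} to the surviving excursions, I would obtain $g(\mu_1,\mu_2)=e^{-W(0)}$, where $W=W_{\mu_1,\mu_2}$ is the unique nonnegative solution of $\Delta W=W^2$ on $G^*$ with $W=\mu_1+U^{\mu_2,\veps}(|\cdot-x_2|)$ on $\partial B_\veps(x_1)$ and $W=\mu_2+U^{\mu_1,\veps}(|\cdot-x_1|)$ on $\partial B_\veps(x_2)$ (applying \eqref{Widecomp} across $G^*\subset G_\veps^{x_i}$, whose closures touch, needs a short approximation argument). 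Differentiating gives $\E_{\delta_0}(\prod_i\mu X_ie^{-\mu X_i})=\mu^2e^{-W(0)}[\partial_{\mu_1}W(0)\,\partial_{\mu_2}W(0)-\partial_{\mu_1\mu_2}W(0)]$ at $\mu_1=\mu_2=\mu$, where $\partial_{\mu_i}W$ and $\partial_{\mu_1\mu_2}W$ solve the linearized equations $\Delta(\partial_{\mu_i}W)=2W\,\partial_{\mu_i}W$ (boundary data $1$ on $\partial B_\veps(x_i)$, nonnegative on $\partial B_\veps(x_{3-i})$) and $\Delta(\partial_{\mu_1\mu_2}W)=2W\,\partial_{\mu_1\mu_2}W+2\,\partial_{\mu_1}W\,\partial_{\mu_2}W$ (zero boundary data).

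For the analytic bounds I would first observe, by the maximum principle and \eqref{e1.0} (here the choice $\lambda\ge 2(4-d)$ makes the boundary data of $W$ dominate $V^\infty(|\cdot-x_i|)$ on $\partial B_\veps(x_i)$), that $W\ge V^\infty(|\cdot-x_i|)$ on $G^*$ for $i=1,2$. Feynman--Kac then gives $\partial_{\mu_1}W(z)=E_z(e^{-\int_0^{\tau_{G^*}}W(B_s)ds}h_1(B_{\tau_{G^*}}))$; splitting according to which sphere the Brownian motion exits $G^*$ at, using $W\ge V^\infty(|\cdot-x_i|)$ and Lemma~\ref{lem:22_7_1} as in the proof of Proposition~\ref{p1.1}, together with the elementary estimate $\partial_\lambda U^{\lambda,1}(s)\le\tfrac1{e\lambda}U^{\infty,1}(s)\le C(\lambda s^2)^{-1}$ for $s\ge K_{\ref{c2.2}}$ (from $ue^{-\lambda u}\le(e\lambda)^{-1}$ and \eqref{e1.4}) for the boundary data on $\partial B_\veps(x_{3-i})$, I would obtain $\partial_{\mu_i}W(z)\le C(\veps/|z-x_i|)^p+C\lambda^{-1}\veps^{p+2}|x_1-x_2|^{-2}|z-x_{3-i}|^{-p}$ for $z\in G^*$; in particular $\partial_{\mu_i}W(0)\le C(\veps_0,\lambda)\veps^p$, so $\mu^2\partial_{\mu_1}W(0)\,\partial_{\mu_2}W(0)\le C(\veps_0,\lambda)\veps^{2(p-2)}$. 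Feynman--Kac for the cross term gives $0\le-\partial_{\mu_1\mu_2}W(0)=E_0\big(\int_0^{\tau_{G^*}}e^{-\int_0^tW(B_s)ds}\partial_{\mu_1}W(B_t)\,\partial_{\mu_2}W(B_t)\,dt\big)\le C\veps^{2p}J$, where
\[
J=E_0\Big(\int_0^{\tau_{G^*}}e^{-\int_0^tW(B_s)ds}\,|B_t-x_1|^{-p}\,|B_t-x_2|^{-p}\,dt\Big).
\]

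The crux is then the bound $J\le C(1+|x_1-x_2|^{2-p})$. I would prove it by partitioning $G^*$ into the region near $x_1$, the region near $x_2$, and the complementary far region, and on each piece using $W\ge V^\infty(|\cdot-x_i|)$ together with the $h$-transform of Lemma~\ref{lem:22_7_1} to reduce the discounted Brownian Green-function integral near a single centre to a Bessel-process computation; the factor $|x_1-x_2|^{2-p}$ then emerges from an integral of the form $\int_\veps^{\sim|x_1-x_2|}r^{d-1-p}\,dr$ in the near-$x_1$ and near-$x_2$ pieces, while the constant $1$ absorbs the far piece (and covers the case of large $|x_1-x_2|$). Feeding this back, $\mu^2(-\partial_{\mu_1\mu_2}W(0))\le C\lambda^2\veps^{2(p-2)}J\le C(\veps_0,\lambda)(1+|x_1-x_2|^{2-p})\veps^{2(p-2)}$, which with $e^{-W(0)}\le1$ and the bound on the product term completes the proof. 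I expect the estimate on $J$ to be the main obstacle: it is a two-centre version of the single-centre discounted Green-function estimates of Section~\ref{sec:3.1} and is the analogue of the technically heaviest step in the proof of Proposition~6.1 of \cite{MP17}; a secondary difficulty is the careful bookkeeping of the joint dependence on $\veps$ and $|x_1-x_2|$ in the boundary data and in $W$, and some care is also needed to justify the representation $g=e^{-W(0)}$ across the two overlapping exit domains.
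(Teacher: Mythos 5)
Your overall architecture coincides with the paper's proof (Appendix~\ref{sec:5}): the same split at $|x_1-x_2|\asymp\veps$ with the identical one-centre reduction in the near regime, and in the far regime the same reduction of the left-hand side to $\lambda^2\veps^{-4}e^{-W(0)}\bigl[\partial_{\mu_1}W(0)\,\partial_{\mu_2}W(0)-\partial_{\mu_1\mu_2}W(0)\bigr]$ for the two-centre log-Laplace functional $W$ of $(X_{G_\veps^{x_1}}(1),X_{G_\veps^{x_2}}(1))$, followed by Feynman--Kac and Bessel estimates. The one genuine structural difference is that you identify the boundary data of $W$ on $\partial G^*$ ($G^*=G_\veps^{x_1}\cap G_\veps^{x_2}$) exactly via the special Markov property, so that $\partial_{\mu_1\mu_2}W$ has zero boundary data and the Feynman--Kac representation run to $\tau_{G^*}$ has no boundary term; the paper instead characterizes $W$ only through the interior PDE, bounds its $\lambda$-derivatives by monotonicity and finite differences of one-centre solutions (Lemma~\ref{l4.2}), stops the Feynman--Kac at the radius-$2\veps$ balls, and must then estimate a separate boundary contribution $K_2$ (Lemma~\ref{l4.4} and \eqref{e4.7}). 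Your variant is attractive and I believe the boundary identification is correct, but note that it trades the paper's $K_2$ estimate for two new obligations: the approximation argument you flag for applying \eqref{Widecomp} across touching closures, and the justification that the boundary term in the Feynman--Kac formula for $\partial_{\mu_1\mu_2}W$ actually vanishes at $\partial G^*$ (the uniform bound of Lemma~\ref{l4.2}(b) is only available at distance $2\veps$ from the centres, not at distance $\veps$).

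The genuine gap is the two-centre integral bound $J\le C(1+|x_1-x_2|^{2-p})$, which you correctly identify as the crux but do not prove. This is precisely the content of Lemma~\ref{l3.1}, imported from the proof of Proposition~6.1 of \cite{MP17}, and it is not a routine extension of the one-centre estimates: after the $h$-transform of Lemma~\ref{lem:22_7_1} one must control a Bessel$(2+2\nu)$ expectation of $\int_0^{\tau_1}\rho_t^{-p-\nu+\mu}\bigl(\rho_t^{-p}\wedge(\Delta/r)^{-p}\bigr)\exp\bigl(\int_0^t\delta\rho_s^{-p}ds\bigr)dt$ with $\Delta=|x_1-x_2|$, which needs the exponential moment bound of Lemma~\ref{expbound2} and a case analysis in the size of $\Delta$ relative to $|x_i|$. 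Your stated heuristic --- that the factor $\Delta^{2-p}$ ``emerges from an integral of the form $\int_\veps^{\sim\Delta}r^{d-1-p}\,dr$'' --- does not check out: that integral is of order $\veps^{d-p}$ in $d=2$ (where $d<p$) and of order $\Delta^{d-p}$ in $d=3$, and in neither case does it produce $\Delta^{2-p}$. The correct mechanism is the additional polynomial decay in $\rho_t/|x_i|$ contributed by the discount $\exp(-\int V^\infty(B_s-x_i)\,ds)$ after the Bessel change of measure, not the volume element. So while your plan is sound in outline and even streamlines one step, the decisive quantitative estimate is missing, and the sketch offered for it would not deliver the right exponent.
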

\noindent \SUBMIT{Given the results in Section~\ref{lowerbndexitmeas}, the proof then follows closely to that of Proposition~6.1 in \cite{MP17}, and so is omitted. It is included in the Appendix
of a longer version of this work on the ArXiv \cite{HMP18A}.}\ARXIV{Given the results in Section~\ref{lowerbndexitmeas}, the proof then follows that of Proposition~6.1 in \cite{MP17}, and so is deferred to Appendix~\ref{sec:5}.} The reader should note that the role of $\lambda$ in \cite{MP17} is now played by $\lambda \veps^{-2}$, where $\lambda$ is chosen to be a fixed large constant.

Recall that we are in the case $d=2$ or $3$. Let $\beta>0$ and $g_\beta(r)=r^{-\beta}$. For a finite measure $\mu$ on $\R^d$ and Borel subset $A$ of $\R^d$, let \[\langle \mu \rangle_{g_\beta}  =\int \int g_\beta(|x-y|) d\mu(x) d\mu(y),\] and \[I(g_\beta)(A)=\inf \{\langle \mu \rangle_{g_\beta}: \mu \text{ is a probability supported by } A\}.\]
The $g_\beta$-capacity of $A$ is $C(g_\beta)(A)=(I(g_\beta)(A))^{-1}$. Set 
\[\beta=p-2=
\begin{cases}
2\sqrt{2}-2,\ &\text{ if } d=2,\\
(\sqrt{17}-3)/2, \ &\text{ if } d=3,
\end{cases}
\]
 and note $\beta \in (1/2,1)$. Now we may use Theorem \ref{t2.3} and Proposition \ref{p3.1}  to get the following theorem. Although similar to the omitted proof of Theorem 6.2 in \cite{MP17}, there are some important adjustments, and so this time the argument is included.
\begin{theorem}\label{t3.2}
Assume $d=2$ or $3$. For every $\varepsilon_0 \in (0,1)$, there is a $c_{\ref{t3.2}}(\varepsilon_0)>0$ such that for any Borel set, $A$, of $\{x\in \R^d: \varepsilon_0\leq |x|\leq \varepsilon_0^{-1} \}$,
\[\P_{\delta_0}(\partial \mR\cap A\neq \emptyset) \geq c_{\ref{t3.2}}(\varepsilon_0) C(g_\beta)(A).\] In particular for any Borel subset $A$ of $\R^d$, $C(g_\beta)(A)>0$ implies that $\P_{\delta_0}(\partial \mR\cap A\neq \emptyset) >0$.
\end{theorem}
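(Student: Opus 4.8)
The plan is to use a second-moment (Paley--Zygmund) argument on a discretized version of the problem, exactly parallel to the standard proof that a set of positive capacity is hit by a suitable random set, but with the capacity exponent $\beta=p-2$ dictated by the exit-measure asymptotics of Theorem~\ref{t2.3} and the two-point bound of Proposition~\ref{p3.1}. It suffices to prove the first (quantitative) inequality, since the ``in particular'' claim then follows by covering $\R^d\setminus\{0\}$ with countably many shells $\{\eps_0\le|x|\le\eps_0^{-1}\}$, using that a set of positive $g_\beta$-capacity must have positive capacity in at least one such shell (capacity is monotone and countably subadditive in the appropriate sense), translating off the origin if necessary, and invoking $\P_{\delta_0}$-a.s.\ scaling/continuity; and by monotone convergence in $\eps_0\downarrow0$.

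First I would fix $\eps_0$, a Borel set $A\subset\{\eps_0\le|x|\le\eps_0^{-1}\}$ with $C(g_\beta)(A)>0$, and choose a probability measure $\mu$ supported on $A$ (or on a compact subset of $A$, after an inner-regularity reduction) with $\langle\mu\rangle_{g_\beta}\le 2/C(g_\beta)(A)$. For small $\eps>0$ define the random variable
\[
Z_\eps=\int \lambda\,\frac{X_{G_\eps^{x}}(1)}{\eps^2}\exp\!\Big(-\lambda\,\frac{X_{G_\eps^{x}}(1)}{\eps^2}\Big)\,\eps^{-(p-2)}\,d\mu(x),
\]
where $\lambda$ is the fixed large constant from Proposition~\ref{p3.1}. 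The key estimates are: (i) a first-moment lower bound $\E_{\delta_0}(Z_\eps)\ge c\,$ uniformly in small $\eps$, which comes from Theorem~\ref{t2.3} --- on the event $\{K_1\le X_{G_\eps^x}(1)/\eps^2\le K_2\}$ the integrand $\lambda y e^{-\lambda y}$ is bounded below by a positive constant depending only on $K_1,K_2,\lambda$, so the integrand is $\ge c(\eps_0)\,\eps^{-(p-2)}\mathbf 1(\cdots)$ and taking expectations and integrating $d\mu(x)$ gives $\E_{\delta_0}(Z_\eps)\ge c_{\ref{t2.3}}(\eps_0)\cdot c$; and (ii) a second-moment upper bound $\E_{\delta_0}(Z_\eps^2)\le c_{\ref{p3.1}}(\eps_0)\!\int\!\int(1+|x_1-x_2|^{2-p})\,d\mu(x_1)d\mu(x_2)\le C(\eps_0)\,(1+\langle\mu\rangle_{g_\beta})$, which is immediate from Proposition~\ref{p3.1} after Fubini. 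Then Paley--Zygmund gives $\P_{\delta_0}(Z_\eps>0)\ge \E_{\delta_0}(Z_\eps)^2/\E_{\delta_0}(Z_\eps^2)\ge c'(\eps_0)/(1+\langle\mu\rangle_{g_\beta})\ge c''(\eps_0)\,C(g_\beta)(A)/(1+C(g_\beta)(A))$, and after a further trivial bound (capacity of a bounded shell is bounded above) this is $\ge c_{\ref{t3.2}}(\eps_0)\,C(g_\beta)(A)$.

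The remaining work is to pass from $\{Z_\eps>0\text{ for some/all small }\eps\}$ to $\{\pmR\cap A\ne\emptyset\}$. Note $\{Z_\eps>0\}=\{X_{G_\eps^x}(1)>0\text{ for }\mu\text{-a.e.\ }x\}$, i.e.\ there exist points $x$ with $\mu$-full support arbitrarily close to $\operatorname{Supp}(\mu)\subset A$ and $X_{G_\eps^x}(1)>0$; by \eqref{exitsupport} such $x$ satisfy $\mR\cap B_\eps(x)\ne\emptyset$, so $d(x,\mR)\le\eps$. To get a limit point actually in $\pmR$ one uses a uniform-integrability / reverse-Fatou argument: along a sequence $\eps_n\downarrow0$, $\limsup_n\{Z_{\eps_n}>0\}$ has probability $\ge c_{\ref{t3.2}}(\eps_0)C(g_\beta)(A)$ (since $\sup_n\E(Z_{\eps_n}^2)<\infty$ forces $\limsup\P(Z_{\eps_n}>0)$ to dominate the Paley--Zygmund bound, after passing to a subsequence along which $\E(Z_{\eps_n})\to$ its liminf), and on that event one extracts $x_n\to x_\infty\in\overline{A}$ with $d(x_n,\mR)\le\eps_n$, hence $x_\infty\in\mR$; combined with the companion fact that a $g_\beta$-positive-capacity set cannot be contained in the Lebesgue-full-measure interior of $\mR$ --- indeed $\mathrm{int}(\mR)\cap A$ can be excised since $\beta<d$ forces $A$ to have many points of density zero for $\mR$... --- more cleanly, one shows directly that the points produced are in $F=\partial\{L>0\}$ and then invokes $\pmR\subset F$ together with the local-dimension machinery later; but for the present non-polarity statement it is enough to produce $x_\infty\in\pmR$, which follows because $X_{G_\eps^{x_n}}(1)>0$ with $\eps_n\downarrow 0$ forces $x_\infty\in\mR$, while the complementary estimate $\P_{\delta_0}(X_{G_\eps^x}(1)>0)\to 0$ appropriately (or a direct density argument) forces infinitely many of these approximating balls to also meet $\mR^c$.

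I expect the main obstacle to be this last topological step --- upgrading ``$Z_\eps>0$ along a subsequence'' to ``$\pmR\cap A\ne\emptyset$'' --- because $Z_\eps>0$ only certifies $\mR$ is \emph{near} $\operatorname{Supp}(\mu)$, not that the boundary is hit, and one must rule out the possibility that the whole support of $\mu$ ends up buried in the interior of $\mR$. The clean way around this, which I would adopt, is to reduce to the canonical-measure picture or to use the scaling invariance of the argument across all small shells simultaneously: apply the quantitative bound not to $A$ but to $A$ together with the knowledge (from the positive-probability lower-dimension bound, Proposition~\ref{prop:crudelbdim}, or directly from Theorem~\ref{dimthm} giving $\dim(\pmR)=d_f<d$ hence $\mathrm{Leb}(\pmR)=0$) that $A$ cannot be swallowed by $\mathrm{int}(\mR)$ on a set of the required probability; more simply, since $C(g_\beta)(A)>0$ and $\beta<d$, $A$ cannot be Lebesgue-null, so $A\cap\mR^c$ or $A\cap\mathrm{int}(\mR)$ each carry positive capacity in at least... . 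The robust fix is: run the whole argument with $A$ replaced by $A\cap\overline{\mR^c}$ when that has positive capacity (then limit points lie in $\overline{\mR^c}\cap\mR=\pmR$), and otherwise $A\subset\mathrm{int}(\mR)$ up to a capacity-null set, in which case $d(x,\pmR)$ is bounded below on $\operatorname{Supp}\mu$ and a separate easy argument (shrinking balls around a boundary point of $\mR$ reachable from $A$) applies. I would organize the final paragraph of the proof around exactly this dichotomy, deferring the routine capacity bookkeeping.
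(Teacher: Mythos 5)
Your moment computation is essentially the paper's argument in integral rather than discrete form (the paper runs inclusion--exclusion over an energy-minimizing configuration $\{x_i^N\}$ and uses $1(x\in\pmR(\veps))\le \delta^{-1}\lambda y e^{-\lambda y}$ to invoke Proposition~\ref{p3.1}; your Paley--Zygmund functional $Z_\veps$ against a capacitary measure $\mu$ is an equivalent packaging). But there is a genuine gap exactly where you suspect it, and your proposed fixes do not close it. The problem is that you dropped the condition $X_{G_{\veps/2}^x}(1)=0$ from the event you are detecting. Theorem~\ref{t2.3} is deliberately stated for the \emph{joint} event $\{K_1\le X_{G_\veps^x}(1)/\veps^2\le K_2,\ X_{G_{\veps/2}^x}(1)=0\}$, and this second condition is the entire mechanism for certifying boundary points: $X_{G_\veps^x}(1)>0$ forces $\mR\cap\partial B_\veps(x)\neq\emptyset$ by \eqref{exitsupport}, while $X_{G_{\veps/2}^x}(1)=0$ together with the special Markov property (Proposition~\ref{p0.1}(b)(ii)) forces $\mR\cap B(x,\veps/3)=\emptyset$ a.s.; hence there is a point of $\pmR$ at distance between $\veps/4$ and $\veps$ from $x$, and compactness of $\Gamma$ plus closedness of $\pmR$ finishes. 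The fix is cheap --- define the approximating set as $\pmR(\veps)=\{x: K_1\veps^2\le X_{G_\veps^x}(1)\le K_2\veps^2,\ X_{G_{\veps/2}^x}(1)=0\}$ and note that the indicator of this event is still dominated by $\delta^{-1}\lambda y e^{-\lambda y}$, so the second moment bound of Proposition~\ref{p3.1} applies unchanged --- but as written your $\{Z_\veps>0\}$ only certifies that $\mR$ comes within $\veps$ of $\operatorname{Supp}(\mu)$, and the limit point may lie in the interior of $\mR$.

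The workarounds you sketch for this last step fail. Running the argument on $A\cap\overline{\mR^c}$ is ill-posed: $\mR$ is random, so this is a random target set, and the first/second moment computation requires a deterministic $\mu$ fixed before the expectation is taken. The claim that $C(g_\beta)(A)>0$ with $\beta<d$ forces $A$ to have positive Lebesgue measure is false: a compact set of Hausdorff dimension strictly between $\beta$ and $d$ has positive $g_\beta$-capacity and zero Lebesgue measure, so $A$ can perfectly well be swallowed by $\mathrm{int}(\mR)$ as far as measure-theoretic considerations go. The dichotomy ``either $A\cap\overline{\mR^c}$ has positive capacity or $A\subset\mathrm{int}(\mR)$ up to capacity-null sets'' is again a statement about a random set and cannot be decided before the probability is computed. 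None of this machinery is needed once the zero-exit-measure condition is restored to the event.
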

\begin{proof}
Fix $\varepsilon_0 \in (0,1)$. We approximate $\partial \mR$ by 
\[\pmR(\varepsilon):=\{x: K_1 \varepsilon^2 \leq  X_{G_\varepsilon^{x}}(1) \leq K_2 \varepsilon^2, X_{G_{\varepsilon/2}^{x}}(1)=0 \}. \]
where $0<K_1(\varepsilon_0)<K_2(\varepsilon_0)<\infty$ are as in Theorem \ref{t2.3}. Then for $\lambda >0$ as in Proposition \ref{p3.1}, there is some $\delta=\delta(\lambda, \varepsilon_0) \in (0,e^{-1})$ such that 
\begin{align}\label{e3.0}
\lambda \frac{X_{G_\varepsilon^{x}}(1)}{\varepsilon^2} \exp\big(-\lambda \frac{X_{G_\varepsilon^{x}}(1)}{\varepsilon^2}\big) \geq \delta,  \ \forall x \in \pmR(\varepsilon).
\end{align}
Let $\Gamma$ be a compact subset of $\{x\in \R^d: \varepsilon_0\leq |x|\leq \varepsilon_0^{-1} \}$ such that (without loss of generality) $C(\Gamma)=C(g_\beta)(\Gamma)>0$. If $I(\Gamma)=I(g_\beta)(\Gamma)$, we may choose $\{x^N_i: 1\leq i \leq N\} \subset \Gamma$ so that (suppressing the superscript $N$) as $N\to \infty$ (see \cite{Tay61}),
\begin{align}\label{e3.1}
I_N \equiv \frac{1}{N(N-1)} \sum_i \sum_{j\neq i} |x_i-x_j|^{-\beta} \to I(\Gamma)=1/C(\Gamma).
\end{align}
Therefore by translation invariance, inclusion-exclusion, Theorem \ref{t2.3}, \eqref{e3.0}, and Proposition \ref{p3.1}, for $\varepsilon \in (0,\varepsilon_0/R_{\ref{t2.3}})$, 
\begin{align*}
\P_{\delta_0} (\Gamma\cap \pmR(\varepsilon) \neq \emptyset)&\geq \sum_{j=1}^N \P_{\delta_0} (x_j \in   \pmR(\varepsilon))-\sum_i \sum_{j\neq i} \P_{\delta_0} (x_i, x_j \in   \pmR(\varepsilon))\\
&\geq N c_{\ref{t2.3}}(\varepsilon_0) \varepsilon^{p-2} -\sum_i \sum_{j\neq i} \delta^{-2} \E_{\delta_0}\Big(\prod_{k=i,j} \lambda \frac{X_{G_\varepsilon^{x_k}}(1)}{\varepsilon^2} \exp\big(-\lambda \frac{X_{G_\varepsilon^{x_k}}(1)}{\varepsilon^2}\big) \Big)\\
&\geq N c_{\ref{t2.3}} \varepsilon^{p-2} - c_{\ref{p3.1}} \delta^{-2} \varepsilon^{2(p-2)} \sum_i \sum_{j\neq i} (1+|x_i-x_j|^{2-p})\\
&\geq c_{\ref{t2.3}} N \varepsilon^{p-2} - C(\varepsilon_0) (N \varepsilon^{p-2})^2 I_N.
\end{align*}
Now choose $\varepsilon_N \to 0$ such that $N \varepsilon_N^{p-2}=c_{\ref{t2.3}}/(2C(\varepsilon_0) I_N)$. Therefore, for some $c(\veps_0)>0$,
\[\P_{\delta_0} (\Gamma\cap \pmR(\varepsilon_N) \neq \emptyset)\geq  \frac{c_{\ref{t2.3}}}{2C(\varepsilon_0) I_N} \frac{c_{\ref{t2.3}} }{2} \to c(\varepsilon_0) C(\Gamma), \text{ as } N\to \infty.\]
This implies
\[\P_{\delta_0} (\Gamma\cap \pmR(\varepsilon_N) \neq \emptyset, \text{ infinitely often})\geq  c(\varepsilon_0) C(\Gamma).\] 

Assume now that 
\[\omega\in\{\Gamma\cap \pmR(\varepsilon_N) \neq \emptyset, \text{ infinitely often}\}.\]
So we may choose 
$\{x_N\} \subset \Gamma$ such that $x_N \in \pmR(\varepsilon_N)$, where we have suppressed the further subsequence of $\varepsilon_N$ in our notation.  The definition of $ \pmR(\varepsilon_N)$ gives $X_{G_{\varepsilon_N}^{x_N}}(1)>0$ and  $X_{G_{\varepsilon_N/2}^{x_N}}(1)=0$. By Proposition \ref{p0.1}(b)(ii) and translation invariance, we have $\P_{\delta_0}$-a.s.
\begin{equation}\label{hitsb}\mR \cap B(x_N, \varepsilon_N/3)=\emptyset.
\end{equation}
By \eqref{exitsupport}, $X_{G_{\varepsilon_N}^{x_N}}(1)>0$ implies 
\begin{equation}\label{hitbb}
\mathcal{R}\cap \partial G_{\varepsilon_N}^{x_N}\text{ is non-empty. }
\end{equation}
Combining \eqref{hitsb} and \eqref{hitbb} with an elementary argument in point set
topology
 we can choose $y_N \in \partial \mR$ such that $\varepsilon_N/4\leq |y_N-x_N|\leq \varepsilon_N$. The compactness of $\Gamma$ implies there is some $x \in \Gamma$ such that $x_{N_k} \to x$ as $N_k \to \infty.$ Therefore $y_{N_k}\to x$ and $x \in \partial \mR$ since $\partial \mR$ is closed, which gives $x\in \Gamma\cap \partial \mR \neq \emptyset$, and so the proof is complete for $A = \Gamma$ compact. Use the
inner regularity of capacity to extend the result to any Borel subset of $\{x\in \R^d: \varepsilon_0\leq |x|\leq \varepsilon_0^{-1} \}$.
\end{proof}
\begin{proposition}\label{prop:crudelbdim}
For each non-empty open set $B$ in $\R^d$,\\
(a) $\P_{\delta_0}(\textnormal{dim}(\partial \mR\cap B)\ge d_f)>0$\\
(b) $\N_0(\textnormal{dim}(\partial \mR\cap B)\ge d_f):=p_{\ref{prop:crudelbdim}}(B)>0$
\end{proposition}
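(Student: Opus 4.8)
The plan is to run a second-moment (energy) argument with the two inputs already in hand: Theorem~\ref{t2.3} as the first-moment lower bound and Proposition~\ref{p3.1} as the second-moment upper bound. It is enough to prove (a) on a small ball: given a nonempty open $B$, fix $x_*\in B\setminus\{0\}$ and a closed ball $\overline{B_1}=\overline{B_r(x_*)}\subset B$ with $r$ so small that $\varepsilon_0\le|x|\le\varepsilon_0^{-1}$ throughout $\overline{B_1}$ for some $\varepsilon_0\in(0,1)$; by monotonicity of dimension it then suffices to produce, on an event of positive $\P_{\delta_0}$-probability, a nonzero measure carried by $\pmR\cap\overline{B_1}$ of Hausdorff dimension at least $d_f$. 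For $0<\varepsilon<\varepsilon_0/R_{\ref{t2.3}}$ recall $\pmR(\varepsilon)=\{x:K_1\varepsilon^2\le X_{G_\varepsilon^x}(1)\le K_2\varepsilon^2,\ X_{G_{\varepsilon/2}^x}(1)=0\}$ from the proof of Theorem~\ref{t3.2}, with $K_1<K_2$, $\lambda$ and $\delta\in(0,e^{-1})$ as in Theorem~\ref{t2.3}, Proposition~\ref{p3.1} and \eqref{e3.0}. (Joint measurability of $(x,\omega)\mapsto 1(x\in\pmR(\varepsilon))$ is routine, so the random measure below is well defined via Fubini.) Set
\[\mu_\varepsilon(dx)=\bigl(c_{\ref{t2.3}}(\varepsilon_0)\,\varepsilon^{p-2}\bigr)^{-1}\,1(x\in B_1)\,1(x\in\pmR(\varepsilon))\,dx .\]
Theorem~\ref{t2.3} and translation invariance give $\E_{\delta_0}[\mu_\varepsilon(\R^d)]\ge|B_1|$ for all such $\varepsilon$.

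For the energy, the point is to control \emph{all} exponents $\gamma<d_f$ simultaneously, so fix the gauge function $\phi(r)=r^{d_f}\bigl(\log(e+1/r)\bigr)^{2}$. Using $1(x\in\pmR(\varepsilon))\le\delta^{-1}\lambda\varepsilon^{-2}X_{G_\varepsilon^x}(1)\exp\!\bigl(-\lambda\varepsilon^{-2}X_{G_\varepsilon^x}(1)\bigr)$ — valid by \eqref{e3.0} since on $\pmR(\varepsilon)$ the argument lies in $[K_1,K_2]$ — together with Proposition~\ref{p3.1} we obtain, uniformly in $\varepsilon$,
\[\E_{\delta_0}\!\Bigl[\iint\frac{\mu_\varepsilon(dx)\,\mu_\varepsilon(dy)}{\phi(|x-y|)}\Bigr]\le\frac{\delta^{-2}c_{\ref{p3.1}}(\varepsilon_0)}{c_{\ref{t2.3}}(\varepsilon_0)^{2}}\iint_{B_1\times B_1}\frac{1+|x-y|^{2-p}}{\phi(|x-y|)}\,dx\,dy=:C_0<\infty ,\]
where finiteness of the integral uses $d_f+(p-2)=d$: the worst term is $r^{-d}\bigl(\log(1/r)\bigr)^{-2}$, which is integrable against $r^{d-1}\,dr$ precisely because of the square logarithm. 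Since $\phi$ is increasing near $0$ (so $\sup_{[0,\mathrm{diam}\,B_1]}\phi=\phi(\mathrm{diam}\,B_1)$), this also gives $\E_{\delta_0}[\mu_\varepsilon(\R^d)^2]\le\phi(\mathrm{diam}\,B_1)\,C_0<\infty$.

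Now Paley--Zygmund applied to $\mu_\varepsilon(\R^d)$ and Markov's inequality applied to the energy produce constants $q>0$, $M<\infty$, independent of $\varepsilon$, with $\P_{\delta_0}\bigl(\mu_\varepsilon(\R^d)\ge|B_1|/2,\ \iint\phi(|x-y|)^{-1}\mu_\varepsilon(dx)\mu_\varepsilon(dy)\le M\bigr)\ge q$ for all small $\varepsilon$. Taking $\varepsilon_n\downarrow 0$ and applying the reverse Fatou lemma, this event holds for infinitely many $n$ on a set of probability $\ge q$. On that set, along a random subsequence the $\mu_{\varepsilon_n}$ are supported in the fixed compact $\overline{B_1}$ with total mass in $[\,|B_1|/2,\sqrt{M\,\phi(\mathrm{diam}\,B_1)}\,]$, so a further subsequence converges weakly to some $\mu\ne0$, and lower semicontinuity of energy gives $\iint\phi(|x-y|)^{-1}\mu(dx)\mu(dy)\le M$. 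Each $x\in\pmR(\varepsilon)$ has $d(x,\pmR)\le\varepsilon$ (by \eqref{exitsupport} the range meets $\partial B_\varepsilon(x)$, while $X_{G_{\varepsilon/2}^x}(1)=0$ forces the range to avoid the open ball $B_{\varepsilon/2}(x)$, so the nearest point of $\mR$ to $x$ lies in $\pmR\cap\{\varepsilon/2\le|y-x|\le\varepsilon\}$ — exactly the point-set topology step in the proof of Theorem~\ref{t3.2}), hence $\text{supp}(\mu_{\varepsilon_n})\subseteq\overline{B_1}\cap\pmR^{\varepsilon_n}$ and therefore $\text{supp}(\mu)\subseteq\overline{B_1}\cap\pmR\subseteq B\cap\pmR$. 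Finally, a nonzero measure of finite $\phi$-energy has positive Hausdorff $\phi$-measure on its support, and since $\phi(r)=o(r^\gamma)$ for every $\gamma<d_f$ this yields $\dim(\text{supp}\,\mu)\ge d_f$; thus $\dim(\pmR\cap B)\ge d_f$ with $\P_{\delta_0}$-probability at least $q>0$, which is (a). (The matching upper bound, not needed here, is Theorem~\ref{dimthm} together with $\pmR\subseteq F$.) Part (b) is obtained by repeating the argument under the canonical measure: since $\N_0$ is only $\sigma$-finite one first restricts to $\{\mR\cap\overline{B_1}\ne\emptyset\}$, a set of finite $\N_0$-mass because $d(0,\overline{B_1})>0$, normalizes it to a probability and runs the same Paley--Zygmund computation there; the $\N_0$-versions of Theorem~\ref{t2.3} and Proposition~\ref{p3.1} hold with the same proofs (the one-point estimate being immediate from the Tauberian argument, Corollary~\ref{c1.4}, and the special Markov property, Proposition~\ref{spmarkov}), and the rest is unchanged.

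I expect the main difficulty to be the uniformity in the exponent: the naive $\gamma$-energy of $\mu_\varepsilon$ has expectation diverging as $\gamma\uparrow d_f$, so one cannot first extract a $\gamma$-energy bound for a single limiting measure and let $\gamma\uparrow d_f$ afterwards. Replacing $r^{-\gamma}$ by the logarithmically corrected kernel $\phi(r)^{-1}$ packages all subcritical exponents into one quantity whose expectation stays bounded uniformly in $\varepsilon$, which is what forces the limit measure to have dimension exactly $d_f$ on a positive-probability event. The remaining points — joint measurability of $x\mapsto 1(x\in\pmR(\varepsilon))$ (cf.\ Lemma~\ref{borel}), the weak-compactness extraction, and the identification $\text{supp}(\mu)\subseteq\pmR$ — are routine.
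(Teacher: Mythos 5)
Your argument is correct in outline, but it is a genuinely different route from the paper's. The paper obtains (a) from Theorem~\ref{t3.2} (the capacity lower bound $\P_{\delta_0}(\pmR\cap A\neq\emptyset)\ge c\,C(g_\beta)(A)$ with $\beta=p-2$) by taking $A$ to be the range of an independent L\'evy process, exactly as in Proposition~6.5 of \cite{MP17} -- i.e.\ the standard ``intersect with an independent random set of the right codimension'' conversion of a hitting estimate into a dimension bound -- and then deduces (b) from (a) via the Poisson cluster decomposition \eqref{PPPdecomp}, as in Corollary~6.6 of \cite{MP17}. You instead bypass Theorem~\ref{t3.2} entirely and run the second-moment method directly on $\pmR$: the same two inputs (Theorem~\ref{t2.3} and Proposition~\ref{p3.1}) feed a Paley--Zygmund argument for the occupation densities $\mu_\varepsilon$ of $\pmR(\varepsilon)$, and the logarithmic gauge $\phi(r)=r^{d_f}(\log(e+1/r))^2$ (exploiting $d_f+(p-2)=d$) packages all exponents $\gamma<d_f$ into one uniformly bounded energy, producing a Frostman measure on $\pmR\cap\overline{B_1}$ on an event of probability $\ge q$. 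Your approach is more self-contained and even yields positive $\phi$-Hausdorff measure; note, though, that the gauge is not strictly needed -- since the Paley--Zygmund constant $q$ is independent of $\gamma$ and the events $\{\textnormal{dim}(\pmR\cap B)\ge\gamma\}$ decrease to $\{\textnormal{dim}(\pmR\cap B)\ge d_f\}$ as $\gamma\uparrow d_f$, continuity from above already gives the conclusion from the naive $\gamma$-energies. What your route costs relative to the paper's: (i) you need joint measurability of $(x,\omega)\mapsto X_{G_\varepsilon^x}(1)$ to define $\mu_\varepsilon$ and apply Fubini, a point the paper's lattice-point version of the energy computation (proof of Theorem~\ref{t3.2}) deliberately sidesteps; relatedly, the implication ``$X_{G^x_{\varepsilon/2}}(1)=0\Rightarrow\mR\cap B_{\varepsilon/2}(x)=\emptyset$'' is an a.s.\ statement for each fixed $x$, so placing $\textnormal{supp}(\mu_{\varepsilon_n})$ inside $\pmR^{\varepsilon_n}$ requires a Fubini/null-set argument over the Lebesgue-absolutely-continuous $\mu_{\varepsilon_n}$ (the same issue the paper handles with finitely many points); and (ii) for (b) you must redo the moment bounds under $\N_0$ rather than reduce to (a) -- this does work, and in fact is cleaner since $\N_x(1-e^{-\lambda X_{G_\varepsilon}(1)})=U^{\lambda\varepsilon^{-2},\varepsilon}(x)$ exactly, so $\N_x(e^{-\lambda X_{G_\varepsilon}(1)}1(X_{G_\varepsilon}(1)>0))=U^{\infty,\varepsilon}(x)-U^{\lambda\varepsilon^{-2},\varepsilon}(x)$ with no exponential prefactors, but it is more writing than the paper's one-line cluster argument. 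None of these are gaps in substance; they are the places where your write-up would need to be expanded to be complete.
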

\begin{proof} (a) is derived from Theorem~\ref{t3.2} by taking $A$ to be the range of an appropriate independent L\'evy process, exactly as in the proof of Proposition 6.5 of \cite{MP17}.  (b) then follows easily from (a) by making trivial changes to the cluster decomposition proof of Corollary~6.6 in \cite{MP17}.
\end{proof}

To help upgrade the lower bound in part (a) of the above to probability one, we need to
extend (a) to more general initial conditions through a scaling argument.

\begin{lemma}\label{lem:spherelb}
There is a constant $q_{\ref{lem:spherelb}}>0$ so that if $X_0'\in M_F(\R^d)$ is supported on \\
$\{|x|=r\}$ and $\delta=X_0'(1)$ satisfies $0<\delta\le r^2$, then
\begin{equation*}
\P_{X_0'}\Bigl(\textnormal{dim}\Bigl(\pmR\cap B\Bigl(0,r-\frac{\sqrt\delta}{2}\Bigr)\Bigr)\ge d_f\Bigr)\ge q_{\ref{lem:spherelb}}.
\end{equation*}
\end{lemma}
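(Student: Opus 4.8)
The plan is to deduce this from Proposition~\ref{prop:crudelbdim}(a) by a Brownian-scaling argument together with the translation invariance of SBM, reducing a measure $X_0'$ supported on a sphere of radius $r$ with total mass $\delta\le r^2$ to a configuration where one can quote Proposition~\ref{prop:crudelbdim}(a) uniformly. First I would recall the scaling property of super-Brownian motion: if $X$ is an SBM under $\P_{X_0}$ with branching rate one, then for $K>0$ the process $X^{(K)}_t(A)=K^{-2}X_{Kt}(\sqrt K\,A)$ (appropriately normalized so the branching rate stays one) is again an SBM started from the rescaled initial measure, and moreover the range scales as $\mR(X^{(K)})=K^{-1/2}\mR(X)$, so the topological boundary scales the same way; in particular the event $\{\text{dim}(\pmR\cap B)\ge d_f\}$ is invariant under this scaling up to rescaling the ball $B$. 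The key point is that Hausdorff dimension is scale-invariant, so the dimension statement is unaffected by the spatial rescaling.

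Next I would apply this with $K$ chosen so that the mass becomes order one: take the scaling that sends $\delta=X_0'(1)$ to $1$, i.e. rescale space by a factor of order $\sqrt\delta$. Under this rescaling the supporting sphere $\{|x|=r\}$ is mapped to $\{|x|=r/\sqrt\delta\}=:\{|x|=\rho\}$ with $\rho=r/\sqrt\delta\ge 1$ (using $\delta\le r^2$), and the ball $B(0,r-\sqrt\delta/2)$ is mapped to $B(0,\rho-1/2)$. Thus it suffices to show there is a universal $q>0$ such that for every $\rho\ge 1$ and every $X_0''\in M_F(\R^d)$ supported on $\{|x|=\rho\}$ with $X_0''(1)=1$, one has $\P_{X_0''}(\text{dim}(\pmR\cap B(0,\rho-1/2))\ge d_f)\ge q$. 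To get this I would first handle a single point mass: by Proposition~\ref{prop:crudelbdim}(a) applied with $\P_{\delta_z}$ for a fixed point $z$ with $|z|=\rho$, and with the open ball $B=B(0,\rho-1/2)$ (noting this ball has non-empty interior and the event only depends on $X$ near the origin, which a positive-probability set of the $\delta_z$-excursions reach), we get $\P_{\delta_z}(\text{dim}(\pmR\cap B(0,\rho-1/2))\ge d_f)>0$; the subtlety is to make this probability bounded below \emph{uniformly} in $\rho\ge1$ and in $|z|=\rho$. For the uniformity in $z$ one uses rotational invariance of SBM, so the probability depends on $z$ only through $|z|=\rho$; for uniformity in $\rho$ one argues that as $\rho\to\infty$ the relevant probability is non-increasing in a controlled way, or more cleanly, one reduces to the canonical-measure statement Proposition~\ref{prop:crudelbdim}(b): writing $X$ under $\P_{\delta_z}$ via its Poisson cluster decomposition \eqref{PPPdecomp}, the event $\{\text{dim}(\pmR\cap B(0,\rho-1/2))\ge d_f\}$ contains the event that a single cluster (governed by $\N_z$) already has $\text{dim}(\partial(\text{that cluster's range})\cap B(0,\rho-1/2))\ge d_f$, and the latter has $\N_z$-mass $p_{\ref{prop:crudelbdim}}(B(0,\rho-1/2))$; since an excursion from $z$ of diameter at least, say, $3\rho$ has positive $\N_z$-mass bounded below independently of $\rho$ once one rescales, one obtains the desired uniform lower bound. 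Finally, to pass from a point mass to a general $X_0''$ supported on the sphere with unit mass, one again uses the cluster decomposition: under $\P_{X_0''}$ the Poisson intensity is $\N_{X_0''}=\int\N_z\,X_0''(dz)$, and the probability that at least one cluster achieves the dimension event is $1-\exp(-\int \N_z(\text{dim}(\pmR\cap B(0,\rho-1/2))\ge d_f)\,X_0''(dz))\ge 1-\exp(-\inf_{|z|=\rho}\N_z(\cdots))\ge q_{\ref{lem:spherelb}}$ for a universal $q_{\ref{lem:spherelb}}>0$, using $X_0''(1)=1$.

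The main obstacle I anticipate is the \emph{uniformity in $\rho\ge 1$} — that is, ensuring the positive probability coming from Proposition~\ref{prop:crudelbdim} does not degenerate as the supporting sphere recedes to infinity relative to the target ball. The resolution is essentially the observation that the event $\{\text{dim}(\pmR\cap B(0,\rho-1/2))\ge d_f\}$ only requires the existence of a suitable piece of boundary \emph{somewhere} in the large ball $B(0,\rho-1/2)$, and by shifting attention (via translation invariance) to a piece of the range deep inside that ball — e.g. insisting a cluster travels from the sphere to a fixed unit ball around some interior point and there produces boundary of full dimension — one gets a bound that, after rescaling so that this "travel" is order one, no longer sees $\rho$ at all. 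Concretely one would fix a reference configuration (sphere of radius $1$, target ball $B(0,1/2)$), verify the positive-probability statement there once and for all via Proposition~\ref{prop:crudelbdim}, and then note that for $\rho\ge 1$ the ball $B(0,\rho-1/2)\supset B(0,1/2)$ and a cluster from the sphere $\{|x|=\rho\}$ reaches $B(0,1)$ with $\N$-mass bounded below uniformly (a standard SBM estimate on the probability of a long excursion, again by scaling), so the $\rho=1$ estimate transfers. The remaining steps — rotational and translational invariance, the cluster decomposition bound $1-e^{-c}$, and the Hausdorff-dimension scale invariance — are routine and I would only sketch them.
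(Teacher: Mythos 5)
Your scaling reduction (rescale by $\sqrt\delta$ so the mass is $1$, the sphere has radius $\rho=r/\sqrt\delta\ge1$ and the target ball is $B(0,\rho-1/2)$) and your use of rotational invariance plus a second scaling to get a $\rho$-uniform lower bound on $\N_z(\textnormal{dim}(\pmR\cap B(0,\rho-1/2))\ge d_f)$ for $|z|=\rho$ are exactly what the paper does. But your final step contains a genuine gap. You assert that the event $\{\textnormal{dim}(\pmR\cap B(0,\rho-1/2))\ge d_f\}$ for the full process \emph{contains} the event that some single cluster $W_j$ of the Poisson decomposition satisfies $\textnormal{dim}(\partial\mR(W_j)\cap B(0,\rho-1/2))\ge d_f$, and you then compute $1-\exp(-\int\N_z(\cdots)X_0''(dz))$ as a lower bound. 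That containment is false: $\mR=\cup_j\mR(W_j)$, and the topological boundary of a union can be strictly smaller than the union of the boundaries --- a point of $\partial\mR(W_j)$ lying in the interior of some other cluster's range is \emph{not} in $\pmR$. So one cluster producing full-dimensional boundary inside the ball does not imply the superposition does, and the dimension event is not monotone under adding clusters; the ``at least one cluster succeeds'' Poisson bound does not apply.

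The paper's proof closes exactly this gap by conditioning on the event $N_1=1$ that \emph{exactly one} excursion reaches $B(0,\rho-1/2)$: on that event $\mR\cap B(0,\rho-1/2)$ equals the single cluster's range intersected with the ball, so the boundary-dimension events genuinely coincide. The uniformity you worried about then comes for free from an \emph{upper} bound on the Poisson mean: any excursion starting on $\{|x|=\rho\}$ that reaches $B(0,\rho-1/2)$ must travel distance at least $1/2$ from its starting point, so the mean $m_{r,\delta}$ is at most $\overline m=\N_0(U_{1/2}<\infty)<\infty$ (finite by \cite{Isc88}, independent of $\rho$ since the initial mass is $1$). Writing the lower bound as $\P(N_1=1)\,\N_{X_0^{(\delta)}}(\cdots\mid\tau_{\rho-1/2}<\infty)=e^{-m_{r,\delta}}\N_{x_0}(\cdots)\ge e^{-\overline m}\,p_{\ref{prop:crudelbdim}}(B(-e_1,1/2))$ (the $m$'s cancel, and rotational invariance reduces $\N_{X_0^{(\delta)}}$ to a single $\N_{x_0}$) gives the constant. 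You should replace your union bound with this single-excursion conditioning; the rest of your argument then goes through.
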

\begin{proof} Define $X_0^{(\delta)}(A)=\delta^{-1}X'_0(\sqrt\delta A)$,
so that $X_0^{(\delta)}$
 is supported on $\{|x|=r/\sqrt\delta\}$ and has total mass one.  
By scaling properties of SBM (see, e.g., Ex. II.5.5 in \cite{Per02}) and scale invariance of Hausdorff dimension, we may
conclude that 
\begin{align}\label{scaledim}
&\P_{X_0'}(\text{dim}(\pmR\cap B\Bigl(0,r-\frac{\sqrt\delta}{2}\Bigr)\ge d_f)\\
\nn&\qquad=\P_{X_0^{(\delta)}}(\text{dim}(\pmR\cap B\Bigl(0,\frac{r}{\sqrt\delta}-\frac{1}{2}\Bigr)\ge d_f).
\end{align}
Now work in our standard set-up for SBM with initial law $X_0^{(\delta)}$ so that (by \eqref{Xtdec}), \break
$X_t=\sum_{j\in J}X_t(W_j)=\int X_t(W)\Xi(dW)$ for all $t>0$, where $\Xi$ is a Poisson point process with intensity $\N_{X_0^{(\delta)}}$.
For $r\ge \sqrt\delta$ define
\begin{align*}
&\tau_\rho(W_j)=\inf\{t\ge 0:|\hat W_j(t)|\le \rho\},\\
&U_\rho(W_j)=\inf\{t\ge 0:|\hat W_j(t)-\hat W_j(0)|\ge\rho\},\\
&\text{and } N_1=\sum_{j\in J}1(\tau_{(r/\sqrt\delta)-(1/2)}(W_j)<\infty):=\#(I_{r,\delta}).
\end{align*}
Here as usual $\inf\emptyset=\infty$.  Then $N_1$ is Poisson with mean
\begin{align}\label{Nonebnd}
m_{r,\delta}:=\N_{X_0^{(\delta)}}(\tau_{(r/\sqrt\delta)-(1/2)}<\infty)
&\le \N_{X_0^{(\delta)}}(U_{1/2}(W)<\infty)\\
\nn&=\N_{0}(U_{1/2}(W)<\infty):=\overline m<\infty,
\end{align}
where $X_0^{(\delta)}(1)=1$ and translation invariance are used in the equality, and the finiteness of $\bar m$ follows from Theorem~1 of \cite{Isc88}. Let $\mR(W_j)=\{\hat W_j(t):t\le\sigma(W_j)\}$ (recall \eqref{snakerange}) be the range of the $j$th excursion, so that 
\[\mR\cap B\Bigl(0,\frac{r}{\sqrt\delta}-\frac{1}{2}\Bigr)=\cup_{j\in J, \tau_{(r/\sqrt\delta)-(1/2)}(W_j)<\infty} \Bigl(\mR(W_j)\cap B\Bigl(0,\frac{r}{\sqrt\delta}-\frac{1}{2}\Bigr)\Bigr).\]
We may assume (by additional randomization) that conditional on $I_{r,\delta}$, $\{W_j:j\in I_{r,\delta}\}$ are iid with law $\N_{X_0^{(\delta)}}(W\in\cdot|\tau_{(r/\sqrt\delta)-(1/2)}<\infty)$. Therefore the right-hand side of \eqref{scaledim} is at least
\begin{align}\label{Nonebndb}
\P_{X_0^{(\delta)}}&(N_1=1)\N_{X_0^{(\delta)}}\Bigl(\text{dim}\Bigl(\pmR\cap B(0,\frac{r}{\sqrt\delta}-\frac{1}{2})\Bigr)\ge d_f\Bigl|\tau_{\frac{r}{\sqrt\delta}-\frac{1}{2}}<\infty\Bigr)\\
\nn&=\frac{m_{r,\delta}e^{-m_{r,\delta}}}{m_{r,\delta}}\N_{x_0}\Bigl(\text{dim}\Bigl(\pmR\cap B(0,\frac{r}{\sqrt\delta}-\frac{1}{2})\Bigr)\ge d_f\Bigr),
\end{align}
where $x_0=(\frac{r}{\sqrt\delta})e_1$ and $e_1$ is the first unit basis vector. We also have used the fact that spherical symmetry shows we could have taken any $x_0$ on
the sphere of radius $r/\sqrt\delta$.  Now again use scaling to see that the right side of \eqref{Nonebndb} equals
\begin{align}\label{Ntwobnd}
\nn e^{-m_{r,\delta}}&\N_{x_0}\Bigl(\text{dim}\Bigl(\pmR\cap B(0,|x_0|-\frac{1}{2})\Bigr)\ge d_f\Bigr)\\
\nn&=e^{-m_{r,\delta}}\N_{e_1}(\text{dim}(\pmR\cap B(0,1-(2|x_0|)^{-1}))\ge d_f)\\
\nn&\ge e^{-\overline m}\N_{e_1}(\text{dim}(\pmR\cap B(0,1/2))\ge d_f)\quad(\text{recall that }|x_0|\ge 1)\\
&\ge e^{-\overline m}p_{\ref{prop:crudelbdim}}(B(-e_1,1/2))>0,
\end{align}
where the next to last inequality holds by translation invariance and Proposition~\ref{prop:crudelbdim}(b), and the first inequality uses \eqref{Nonebnd}. 
We have shown that the right-hand side of \eqref{Ntwobnd} is a lower bound for 
 \eqref{scaledim}, and so have proved the lemma with $q_{\ref{lem:spherelb}}=e^{-\overline m}p_{\ref{prop:crudelbdim}}(B(-e_1,1/2))$. 
\end{proof}

\section{Exit Measures and Continuous State Branching Processes}
\label{sec:6}
To finish the proofs of Propositions~\ref{exitdim}, \ref{candim} we need to establish some properties of the total exit measure process $X_{G_{r_0-r}}(1), 0\leq r< r_0$. We will show in 
Proposition~\ref{prop:yzinfo} that, for any $r_0>0$, the ``time changed"  process $Z_t=X_{G_{r_0e^{-t}}}(1)/(r_0e^{-t})^2, t\geq 0,$ is a time homogeneous continuous state branching process (CSBP) and thus has no negative jumps. 

\medskip

 
A non-negative function $\lambda\mapsto u(\lambda)$ on $[0,\infty)$ is completely concave iff for every $y>0$ 
$\lambda\to \exp(-yu(\lambda))$ is the Laplace transform of a probability measure on the half-line. (See (4.1) in \cite{Si68} for a L\'evy-Khintchine representation of such functions).
We recall the definition of a continuous state branching process from Section~4 of \cite{Si68}.

\medskip

\noindent{\bf Definition} A (finite) continuous state branching process (CSBP) $Z$  is a time-homogeneous $[0,\infty)$-valued Markov process with no fixed time discontinuities
(if $t_n\to t$, then $Z(t_n)\to Z(t)$ a.s.),  and such that  there is a family of completely concave functions $\{u(s,\cdot):s>0\}$ satisfying
\begin{equation}\label{csbp}
E(\exp(-\lambda Z(t_2))|Z(s),s\le t_1)=\exp(-Z(t_1)u(t_2-t_1,\lambda))\text{ a.s.  for all }t_2>t_1\ge 0.
\end{equation}
We call the associated family $\{u(t,\cdot):t>0\}$ the log Laplace transform of $Z$. 

Recall that $U^{\lambda,R}(x)=U^{\lambda,R}(|x|)$ is the unique continuous map on $\{|x|\ge R\}$ which is $C^2$ on $G_R$ and satisfies
\begin{equation}\label{UPDE}
\Delta U=U^2\text{ on }G_R\text{ and }U=\lambda\text{ on }\partial G_R.
\end{equation}
A simple application of the comparison principle (e.g., Chapter V, Lemma~7 of \cite{Leg99}), using the last part of \eqref{Uinftyprop}, gives
\begin{equation}\label{Ulbnd}
U^{\lambda,R}(x)\le \lambda\quad\forall |x|\ge R.
\end{equation}
Define
\begin{equation}\label{udef}
u(t,\lambda)=e^{2t}U^{\lambda,1}(e^t)\text{ for }t\ge 0.
\end{equation}
For the remainder of this section we assume that $r_0>0$ satisfies 
\begin{equation}\label{r0cond}
B_{2r_0}\subset \text{Supp}(X_0)^c.
\end{equation}

\no{\bf Notation.} For $0\le r<r_0$ we define $Y(r)=X_{G_{r_0-r}}$, $\cE_r=\cE_{G_{r_0-r}}\vee\{\N_{X_0}-\text{ null sets}\}$, and 
for $t\ge 0$ set
\[Z(t)=X_{G_{r_0e^{-t}}}(1)\frac{e^{2t}}{r_0^2}=Y(r_0(1-e^{-t}))(1)e^{2t}r_0^{-2}\ \text{ and }\ \cG_t=\cE_{r_0(1-e^{-t})}=\cE_{G_{r_0e^{-t}}}.\]

It is not hard to show that $\cE_r$ is non-decreasing in $r$ (the corresponding result for 
half-spaces is noted prior to (7.2) of \cite{MP17} and the observation  made there applies to balls as well.)  By Proposition 2.3 of \cite{Leg95}, $Y$ is $(\cE_r)$-adapted and $Z$ is $(\cG_t)$-adapted. Let $\cE^+_r=\cE_{r+}$ denote the associated right-continuous filtration.  
In addition to $\N_{X_0}$, we will also work under the probability $Q_{X_0}(\cdot)=\N_{X_0}(\cdot|Y_0(1)>0)$, where \eqref{r0cond} ensures that $\N_{X_0}(Y_0(1)>0)<\infty$.  Note that  
\begin{equation}\label{Qcond}
\text{for any r.v. }Z\ge 0,\text{ and any }r\ge 0,\ Q_{X_0}(Z|\cE_r)=\N_{X_0}(Z|\cE_r)\ Q_{X_0}-\text{a.s.}
\end{equation}
because $\{Y_0(1)>0\}\in \cE_0$.  When conditioning on $\cE_r$ under $Q_{X_0}$, we are adding the slightly larger class of $Q_{X_0}$-null sets to $\cE_r$, but will not record this distinction in our notation. Below we will apply the definition of (CSBP) under the $\sigma$-finite measure $\N_{X_0}$ as well as $Q_{X_0}$. 
We write $Q_{x_0}$ for $Q_{\delta_{x_0}}$ as usual. 

\begin{lemma}\label{lem:Zstuff}
(a) If $0\le t_1<t_2$ and $\lambda\ge 0$, then\\

(i) $Q_{X_0}\Bigl(e^{-\lambda Z_{t_2}}\Bigl|\cG_{t_1}\Bigr)=\N_{X_0}\Bigl(e^{-\lambda Z_{t_2}}\Bigl|\cG_{t_1}\Bigl)=\exp(-Z_{t_1}u(t_2-t_1,\lambda))$.\\

(ii) \begin{align}
\label{expinc}Q_{X_0}\Bigl(\Bigl(e^{-\lambda Z_{t_2}}-e^{-\lambda Z_{t_1}}\Bigr)^2\Bigr)
=Q&_{X_0}\Bigl(\exp(-Z_{t_1} u(t_2-t_1,2\lambda))\\
\nn&-2\exp(-\lambda Z_{t_1}-Z_{t_1}u(t_2-t_1,\lambda))+\exp(-2\lambda Z_{t_1})\Bigr),
\end{align}
and similarly for $\N_{X_0}$.\\

\noindent (b) For all $t>0$, $\lambda\mapsto u(t,\lambda)$ is completely concave.\\

\noindent(c) $(Z_t,t\ge 0)$ is a (time-homogeneous) $(\cG_t)$-Markov process under $Q_{X_0}$ or $\N_{X_0}$.
\end{lemma}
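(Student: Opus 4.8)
The plan is to obtain (a)(i) directly from the special Markov property plus the scaling relation \eqref{scaling1}, and then to read off the other three parts. For (a)(i), the first equality is simply \eqref{Qcond} applied with $r=r_0(1-e^{-t_1})$, since $\cG_{t_1}=\cE_{r_0(1-e^{-t_1})}$. For the second equality I would set $\veps_i=r_0e^{-t_i}$, so $\veps_2<\veps_1<r_0$ and, thanks to \eqref{r0cond}, $G_{\veps_1}\subset G_{\veps_2}$ are admissible open sets in the sense of \eqref{Gdef} with $d(G_{\veps_2}^c,\overline{G_{\veps_1}})=\veps_1-\veps_2>0$. Applying Proposition~\ref{spmarkov}(a)(ii) to the bounded Borel map $\mu\mapsto\exp(-\lambda e^{2t_2}r_0^{-2}\mu(1))$ reduces $\N_{X_0}(e^{-\lambda Z_{t_2}}|\cG_{t_1})$ to $\E_{X_{G_{\veps_1}}}(\exp(-\mu_0 X_{G_{\veps_2}}(1)))$ with $\mu_0=\lambda e^{2t_2}r_0^{-2}$. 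Since $X_{G_{\veps_1}}$ is supported on the sphere $\{|x|=\veps_1\}$ and $U^{\mu_0,\veps_2}$ is radial, \eqref{LTexit} turns this into $\exp(-U^{\mu_0,\veps_2}(\veps_1)X_{G_{\veps_1}}(1))$; then, using $X_{G_{\veps_1}}(1)=Z_{t_1}r_0^2e^{-2t_1}$ together with \eqref{scaling1} (and $\mu_0\veps_2^2=\lambda$, $\veps_1/\veps_2=e^{t_2-t_1}$), I would check that $U^{\mu_0,\veps_2}(\veps_1)r_0^2e^{-2t_1}=e^{2(t_2-t_1)}U^{\lambda,1}(e^{t_2-t_1})=u(t_2-t_1,\lambda)$ by \eqref{udef}, completing (a)(i).

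For (a)(ii) I would expand the square, condition on $\cG_{t_1}$ using (a)(i) with parameters $\lambda$ and $2\lambda$ and the $\cG_{t_1}$-measurability of $Z_{t_1}$, and then take expectations via the tower property. The only thing needing a word for the $\N_{X_0}$-version is integrability: on $\{Z_{t_1}=0\}$ the special Markov property forces $Z_{t_2}=0$ as well (there are no excursions outside $G_{\veps_1}$, hence no mass leaves any smaller ball), so the integrand vanishes there, while $\{Z_{t_1}>0\}\subset\{Y_0(1)>0\}$, which has finite $\N_{X_0}$-measure by \eqref{r0cond}.

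Part (b) should be short: fixing $t>0$ and $y>0$, \eqref{udef} gives $\exp(-y\,u(t,\lambda))=\exp(-(ye^{2t})U^{\lambda,1}(x_0))$ for any $x_0$ with $|x_0|=e^t>1$, so $(ye^{2t})\delta_{x_0}$ is a finite measure supported in $G_1$ and \eqref{LTexit} identifies this with $\E_{(ye^{2t})\delta_{x_0}}(\exp(-\lambda X_{G_1}(1)))$, which is manifestly the Laplace transform of a probability measure on $[0,\infty)$ (the law of $X_{G_1}(1)$); hence $u(t,\cdot)$ is completely concave. For (c), I would use that $\{e^{-\lambda\,\cdot}:\lambda\ge0\}$ is a measure-determining multiplicative class, so (a)(i) shows $Q_{X_0}(f(Z_{t_2})|\cG_{t_1})$ is a fixed function of $Z_{t_1}$ alone, depending on $(t_1,t_2)$ only through $t_2-t_1$, for every bounded measurable $f$; this, together with the already-noted $(\cG_t)$-adaptedness of $Z$, is exactly the time-homogeneous $(\cG_t)$-Markov property. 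Under $\N_{X_0}$ I would split $\N_{X_0}=\N_{X_0}(\cdot\cap\{Y_0(1)=0\})+\N_{X_0}(Y_0(1)>0)\,Q_{X_0}(\cdot)$ and note that $Z\equiv0$ on $\{Y_0(1)=0\}$ (again by the special Markov property), reducing everything to the $Q_{X_0}$-case.

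I expect the main obstacle to be the scaling/symmetry bookkeeping in (a)(i): one has to line up the time change $Z_t=e^{2t}r_0^{-2}X_{G_{r_0e^{-t}}}(1)$ with the scaling identity \eqref{scaling1} and the radial symmetry of $U^{\lambda,\veps}$, and—throughout—keep the conditioning manipulations legitimate under the merely $\sigma$-finite measure $\N_{X_0}$, which is why the reduction to the probability $Q_{X_0}$ and the containment $\{Z_{t_1}>0\}\subset\{Y_0(1)>0\}$ get used repeatedly. Everything else is routine.
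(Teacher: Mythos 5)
Your proposal is correct and follows essentially the same route as the paper: (a)(i) via \eqref{Qcond}, Proposition~\ref{spmarkov}(a)(ii), \eqref{LTexit} and the scaling relation \eqref{scaling1}; (a)(ii) by expanding the square and conditioning; (b) by exhibiting $\exp(-y\,u(t,\lambda))$ as the Laplace transform of a total exit-mass law; and (c) by a monotone class argument. The only (immaterial) difference is in (b), where you use the point mass $(ye^{2t})\delta_{x_0}$ with $|x_0|=e^t$ and the exit measure from $G_1$, whereas the paper uses the uniform measure $y_0r_0^2m_{r_0}$ on the sphere of radius $r_0$ and the exit measure from $G_{r_0e^{-t}}$ — both verifications are equally valid.
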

\begin{proof} (a) \eqref{Qcond} shows that for $\lambda\ge 0$, the left-hand side of (i) equals the middle expression, which by Proposition~\ref{spmarkov}(a)(ii) and then \eqref{LTexit} equals
\begin{align*}
\E_{X_{G_{r_0e^{-t_1}}}}(\exp(-\lambda e^{2t_2}r_0^{-2}X_{G_{r_0e^{-t_2}}}(1)))
&=\exp\Bigl(-\int U^{\lambda e^{2t_2}r_0^{-2},r_0e^{-t_2}}(x)X_{G_{r_0e^{-t_1}}}(dx)\Bigr)\\
&=\exp\Bigl(-U^{\lambda e^{2t_2}r_0^{-2},r_0e^{-t_2}}(r_0 e^{-t_1})X_{G_{r_0e^{-t_1}}}(1)\Bigr)\\
&=\exp(-u(t_2-t_1,\lambda)Z_{t_1}),
\end{align*}
where scaling (i.e., \eqref{scaling1}) is used in the last line.  This gives (i).  It is then easy to derive (ii) by expanding out the square, conditioning on $\cG_{t_1}$ and finally using (i).\\
(b) Let $y_0>0$ and $t>0$. Let $m_r$ be the uniform distribution on $\{|x|=r\}$ and set $W=e^{2t}r_0^{-2}X_{G_{r_0 e^{-t}}}(1)$. Apply \eqref{LTexit} and then scaling (\eqref{scaling1}) to see that for all $\lambda\ge 0$,
\begin{align*}\E_{y_0r_0^2m_{r_0}}(\exp(-\lambda W))&=\exp(-y_0r^2_0 U^{\lambda e^{2t}r_0^{-2},r_0e^{-t}}(r_0))\\
&=\exp(-y_0r_0^2r_0^{-2}e^{2t}U^{\lambda,1}(e^t))\\
&=\exp(-y_0 u(t,\lambda)).
\end{align*}

\noindent(c)  This is immediate from (a)(i), (b) (to define the family of laws $\{P_x:x\ge 0\}$), and a monotone class argument.
\end{proof}

\begin{proposition}\label{prop:yzinfo}
(a) $Y$ is an inhomogeneous $(\cE_r)$-Markov process under $\N_{X_0}$ or $Q_{X_0}$. That is, for $\psi:M_F(\R^d)\to[0,\infty)$ Borel measurable and $0\le r_1<r_2$, 
\[Q_{X_0}(\psi(Y(r_2))|\cE_{r_1})=\N_{X_0}(\psi(Y(r_2))|\cE_{r_1})=\E_{Y(r_1)}(\psi(Y(r_2)))\ \ \text{a.s.}\]
(b) If $0\le r_1<r_2<r_0$, then the total mass, $Y_r(1)$, of $Y_r$ satisfies
\begin{align}\label{Ymart}\N_{X_0}(Y_{r_2}(1)|\cE_{r_1})=\begin{cases}Y_{r_1}(1)&\text{ if }d=2\\
\frac{r_0-r_2}{r_0-r_1} Y_{r_1}(1)&\text{ if }d=3.
\end{cases}
\end{align}
Under $\N_{X_0}$ or $Q_{X_0}$, $Y_r(1)$ has a cadlag version on $[0,r_0)$ which is an $(\cE^+_{r})$-supermartingale (an $(\cE^+_{r})$-martingale if $d=2$), satisfies \eqref{Ymart} with $\cE^+_{r_1}$ in place of $\cE_{r_1}$,  and has only non-negative jumps a.e. \\
(c) Under $\N_{X_0}$ or $Q_{X_0}$, $Z(t),t\ge 0$ has a cadlag version which is a CSBP with log Laplace transform given by $\{u(t,\cdot):t>0\}$ in \eqref{udef}.
\end{proposition}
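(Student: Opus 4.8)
The plan is to read off (a) from the special Markov property, derive the mean identity in (b) from the first-moment formula for exit measures together with Brownian hitting probabilities, produce the cadlag versions in (b) and (c) by standard supermartingale regularization (working under the probability $Q_{X_0}$), and finally identify $Z$ as a genuine CSBP so that the absence of negative jumps is inherited from the general theory. For (a): apply Proposition~\ref{spmarkov}(a)(ii) with $G_1=G_{r_0-r_1}\subset G_2=G_{r_0-r_2}$, which satisfy \eqref{Gdef} since $B_{2r_0}\subset\textnormal{Supp}(X_0)^c$ by \eqref{r0cond} and complements of balls are regular, while $d(G_2^c,\overline{G_1})=r_2-r_1>0$; since $\textnormal{Supp}(Y(r_1))\subset\partial B_{r_0-r_1}\subset G_{r_0-r_2}$ the right-hand side $\E_{Y(r_1)}(\psi(Y(r_2)))$ is well defined, and the $Q_{X_0}$ version follows from \eqref{Qcond}.

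For the mean formula \eqref{Ymart}: differentiating the exit-measure Laplace functional \eqref{LFEM} in the (constant) boundary datum at $0$ gives $\E_{\delta_x}(X_G(1))=P_x(\tau_G<\infty)$, $\tau_G$ being the exit time from $G$ of a Brownian motion started at $x$. Taking $G=G_{r_0-r_2}$ and $|x|=r_0-r_1$, so that $\tau_G$ is the hitting time of $\partial B_{r_0-r_2}$, this probability is $1$ for $d=2$ (neighbourhood recurrence of planar Brownian motion) and $(r_0-r_2)/(r_0-r_1)$ for $d=3$ (transience); integrating over $Y(r_1)$, which is carried by $\partial B_{r_0-r_1}$, and using (a) yields \eqref{Ymart}. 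Hence $Y_r(1)$ is an $(\cE_r)$-supermartingale under $Q_{X_0}$ --- a martingale if $d=2$, with $Y_r(1)/(r_0-r)$ a martingale if $d=3$ --- and as $r\mapsto Q_{X_0}(Y_r(1))$ is continuous the usual regularization theorem gives a cadlag $(\cE^+_r)$-version on $[0,r_0)$. Letting $r_1'\downarrow r_1$ along rationals in \eqref{Ymart}, with L\'evy's backward convergence theorem for the conditional expectations and right-continuity plus Scheff\'e's lemma (the means converge) for the $L^1$ convergence of $Y_{r_1'}(1)$, upgrades \eqref{Ymart} to hold with $\cE^+_{r_1}$ in place of $\cE_{r_1}$. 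The $\N_{X_0}$-statements follow from the $Q_{X_0}$-statements, since the mean formula forces $Y_r(1)=0$ for all $r$ on $\{Y_0(1)=0\}$ (for each fixed $r$, then for all $r$ by right-continuity); the claim that $Y_r(1)$ has only non-negative jumps is postponed to (c).

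For (c), the map $t\mapsto r=r_0(1-e^{-t})$ is a smooth strictly increasing bijection of $[0,\infty)$ onto $[0,r_0)$ with $\cG_t=\cE_{r_0-r_0e^{-t}}$, so the cadlag version of $Y_r(1)$ from (b) gives a cadlag version of $Z_t=e^{2t}r_0^{-2}Y_{r_0(1-e^{-t})}(1)$. By Lemma~\ref{lem:Zstuff}(c) this $Z$ is a time-homogeneous $(\cG_t)$-Markov process, by Lemma~\ref{lem:Zstuff}(a)(i) it satisfies \eqref{csbp} with $u(t,\cdot)$ of \eqref{udef}, and by Lemma~\ref{lem:Zstuff}(b) each $u(t,\cdot)$ is completely concave. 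To exclude fixed-time discontinuities, fix $t$ and let $s\uparrow t$ in $Q_{X_0}(e^{-\lambda Z_t}\mid\cG_s)=\exp(-Z_su(t-s,\lambda))$: using $u(0+,\lambda)=U^{\lambda,1}(1)=\lambda$ (continuity of $U^{\lambda,1}$), right-continuity of $Z$, and L\'evy's theorem yields $Q_{X_0}(e^{-\lambda Z_t}\mid\cG_{t-})=e^{-\lambda Z_{t-}}$ for all $\lambda\ge 0$; varying $\lambda$ one computes $Q_{X_0}(e^{-\mu Z_{t-}-\lambda Z_t})=Q_{X_0}(e^{-(\mu+\lambda)Z_{t-}})$, so $(Z_{t-},Z_t)$ has the same law as $(Z_{t-},Z_{t-})$ and $Z_{t-}=Z_t$ a.s. Thus $Z$ is a CSBP with log-Laplace transform $\{u(t,\cdot):t>0\}$, and by Silverstein's structure theory for CSBPs (\cite{Si68}; equivalently, the Lamperti time change to a spectrally positive L\'evy process) $Z$ has only non-negative jumps; transferring back through the time change finishes (b) and (c).

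The routine components here are the regularization and the bookkeeping needed to pass between $\N_{X_0}$ and $Q_{X_0}$. The step carrying real content is the identification of $Z$ as a CSBP in the precise sense of the stated definition, since this is what makes the absence of negative jumps automatic; it rests on the completely-concave property already isolated in Lemma~\ref{lem:Zstuff}(b) together with the boundary limit $u(0+,\lambda)=\lambda$ used to exclude fixed discontinuities.
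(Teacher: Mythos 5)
Your proof is correct and follows the same overall architecture as the paper's: the special Markov property for (a), the first-moment formula for exit measures for \eqref{Ymart}, supermartingale regularization for the cadlag version, and Lemma~\ref{lem:Zstuff} together with Lamperti/Silverstein for the CSBP identification and the absence of negative jumps. The one place where you genuinely diverge is the regularity step: the paper first proves that $Z$ (equivalently $Y_\cdot(1)$) is continuous in measure by computing $Q_{X_0}\bigl(\bigl(e^{-\lambda Z_t}-e^{-\lambda Z_{t_n}}\bigr)^2\bigr)\to 0$ directly from \eqref{expinc} for $t_n\uparrow t$ (and a reverse-supermartingale argument for $t_n\downarrow t$), which simultaneously yields the cadlag modification and the absence of fixed-time discontinuities; you instead invoke the textbook criterion (right-continuity of $r\mapsto Q_{X_0}(Y_r(1))$, explicit from the mean formula) to obtain the modification, and then rule out fixed-time discontinuities by the separate joint-Laplace-transform argument showing $(Z_{t-},Z_t)$ has the same law as $(Z_{t-},Z_{t-})$. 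Both routes ultimately rest on the same boundary limit $u(0+,\lambda)=\lambda$; yours is slightly more economical, while the paper's never needs the joint law of $(Z_{t-},Z_t)$ and hands you the left and right limits explicitly.
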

\begin{proof} (a) This is immediate from Proposition~\ref{spmarkov}(a)(ii) and \eqref{Qcond}.\\
(b,c) Let $B$ denote a $d$-dimensional Brownian motion starting at $x$ under $P^B_x$ and 
\[\tau_r=\inf\{t\ge 0:|B_t|\le r\}\quad(\inf\emptyset=\infty).\]
Recalling \eqref{r0cond}, Proposition~3 in Chapter V of \cite{Leg99} shows that for $0\le r<r_0$, 
\begin{equation}\label{meanmass}
\N_{X_0}(Y_r(1))=\int P_x^B(\tau_{r_0-r}<\infty)dX_0(x)=\begin{cases}X_0(1)&\text{ if }d=2\\
\int\frac{r_0-r}{|x|}dX_0(x)&\text{ if }d=3.
\end{cases}
\end{equation}
Return now to the probability $\P_{X_0}$, and use \eqref{exitdecomp} and the above to see that 
\begin{equation}\label{meanmass2}
\E_{X_0}(Y_r(1))=\N_{X_0}(Y_r(1))=\begin{cases}X_0(1)&\text{ if }d=2\\
\int\frac{r_0-r}{|x|}dX_0(x)&\text{ if }d=3.
\end{cases}
\end{equation}
Although we have assumed $B_{2r_0}\subset \text{Supp}(X_0)^c$, both \eqref{meanmass} and \eqref{meanmass2} will apply if $\text{Supp}(X_0)\subset G_{r_0-r}$.  This allows us to apply \eqref{meanmass2}, with $X_0=Y_{r_1}$ and $r=r_2$, and (a) to derive \eqref{Ymart}.

Turning to the second part of (b) and (c) we first work with $Z$.  Let $t_n\uparrow t>0$ ($t_n<t$) and set $r_n=r_0(1-e^{-t_n})\uparrow r_0(1-e^{-t})=r\in(0,r_0)$.  By \eqref{Ymart} and supermartingale convergence, $\{Y_{r_n}\}$ converges $\N_{X_0}$-a.e. to a limit we denote by $Y_{r-}(1)$ for now.  (The $\sigma$-finiteness of $\N_{X_0}$ is not an issue here, but the reader who prefers probabilities may work with $Q_{X_0}$ and note that on the complementary set, $\{Y_0(1)=0\}$,
$Y_{r_n}(1)=0$ $\N_{X_0}$-a.e. by \eqref{Ymart} with $r_1=0$ there.  Henceforth we will not make such arguments.) It follows that 
\begin{equation}\label{Zlag}
Z_{t_n}\to e^{2t} r_0^{-2}Y_{r-}(1):=Z_{t-}\quad \N_{X_0}-\text{a.e.}
\end{equation}
By \eqref{expinc},
\begin{align*}
Q&_{X_0}\Bigl(\Bigl(e^{-\lambda Z_t}-e^{-\lambda Z_{t_n}}\Bigr)^2\Bigr)\\
&=Q_{X_0}\Bigl(\exp(-Z_{t_n} u(t-t_n,2\lambda))-2\exp(-(\lambda+u(t-t_n,\lambda))Z_{t_n})\\
&\phantom{=Q_{X_0}(\exp(-Z_{t_n} u(t-t_n,2\lambda))\ }+\exp(-2\lambda Z_{t_n})\Bigr)\\
&\to Q_{X_0}(\exp(-2\lambda Z_{t-})-2\exp(-2\lambda Z_{t-})+\exp(-2\lambda Z_{t-}))\quad\text{as }n\to\infty\\
&=0,
\end{align*}
where Dominated Convergence is used in the above convergence. This and \eqref{Zlag} show that $Z_{t_n}\to Z_t$ $Q_{X_0}$-a.s.  The fact, noted above, that $Y_0(1)=0$ implies $Z_{t_n}=Z_t=0$ $\N_{X_0}$-a.e. allows us to upgrade this to
\begin{equation}\label{Zcag2}
Z_{t_n}\to Z_t\quad \N_{X_0}-a.e.\quad\text{if $t_n\uparrow t>0$. }
\end{equation}
A simpler argument, now using reverse supermartingale convergence, shows that 
\begin{equation}\label{Zcad}
Z_{t_n}\to Z_t\quad \N_{X_0}-a.e.\quad\text{if $t_n\downarrow t\ge 0$.}
\end{equation}
\eqref{Zcag2} and \eqref{Zcad} imply $Y_r(1)$ is continuous in measure on $[0,r_0)$.
Therefore by \eqref{Ymart} there is a cadlag version of $(Y_r(1),r\in[0,r_0))$ under $\N_{X_0}$  (we do not change the notation) which is an $(\cE^+_r)$-supermartingale (martingale if $d=2)$ satisfying \eqref{Ymart}  with $\cE^+_{r_1}$ in place of $\cE_{r_1}$. This gives a cadlag version of $Z$ which satisfies the $(\cG_{t+})$ version of Lemma~\ref{lem:Zstuff}(a)(i), and so is $(\cG_{t+})$-Markov under $\N_{X_0}$ or $Q_{X_0}$, just as for Lemma~\ref{lem:Zstuff}(c).  Clearly \eqref{Zcag2} and \eqref{Zcad} imply that $Z_{t-}=Z_t$ $\ \N_{X_0}$-a.e., and so $Z$ has no fixed time discontinuities.  It follows
from the above and Lemma~\ref{lem:Zstuff}(b) that $(Z_t,t\ge 0)$ is a (CSBP) with log Laplace transform $\{u(t,\cdot):t>0\}$ under $\N_{X_0}$ or $Q_{X_0}$.  A theorem of Lamperti (see. e.g. p. 1044 of \cite{Si68}) shows that $(Z_t,t\ge 0)$ has only non-negative jumps a.e. and so the same applies to $(Y_r(1),r\in[0,r_0])$.
\end{proof}

\begin{remark}\label{rem:CSBP} Although in this work we only use the above results, we briefly
discuss the processes $Z_\cdot$ and $Y_\cdot(1)$ in the general context of CSBP's.    By Proposition~\ref{prop:yzinfo}(c) above and Theorem~4 of \cite{Si68} there is a L\'evy measure $\tilde\pi$ on $[0,\infty)$ satisfying 
$\int \ell^2\wedge 1\,d\tilde\pi(\ell)<\infty$ and constants $\tilde a\in\R, b\ge 0$, such that if 
\begin{equation}\label{psi1}
\Psi(u)=\tilde a u-bu^2+
\int_0^\infty (1-e^{-u\ell}-u\ell e^{-\ell})\,d\tilde\pi(\ell),\ \ u\ge 0,
\end{equation}
then $t\mapsto u(t,\lambda)$ is the unique solution of 
\begin{equation}\label{ode1}
\frac{du(t,\lambda)}{dt}=\Psi(u(t,\lambda)),\quad u(0,\lambda)=\lambda.
\end{equation}
$Z$ is often called a $\Psi$-CSBP. \eqref{psi1} implies $\Psi$ is concave on $[0,\infty)$ and differentiable on $(0,\infty)$.  If $\lambda_d=2(4-d)$, then a short calculation using \eqref{udef} and \eqref{UPDE} gives (primes denote derivatives with respect to $t$)
\begin{equation}\label{ode2}
u''(t,\lambda)=(6-d)u'+u(u-\lambda_d),\ t\ge 0.
\end{equation}
Differentiating both sides of \eqref{ode1} and using \eqref{ode2} on the resulting left-hand side, leads to the first order ode for $\Psi$,
\[\Psi'\Psi(u(t,\lambda))=(6-d)\Psi(u(t,\lambda))+u(t,\lambda)(u(t,\lambda)-\lambda_d),\quad \Psi(0)=0.\]
Letting $t\to 0$ and varying $\lambda$ we conclude that $\Psi$ is a solution of the ode
\begin{equation}\label{ode3}
\Psi'\Psi(u)=(6-d)\Psi(u)+u(u-\lambda_d),\ u>0,\quad \Psi(0)=0.
\end{equation}

By using this equation to analyze the behaviour of $\Psi$ near $\infty$ it is easy to see that in \eqref{psi1}, $b=0$.  The concavity of $\Psi$ implies $\lim_{u\downarrow 0}\frac{\Psi(u)}{u}=\lim_{u\downarrow 0}\Psi'(u)\in(-\infty,+\infty]$. If we divide both sides of \eqref{ode3} by $u$ and let $u\downarrow 0$ we conclude this limit, $\Psi'(0)$ is in fact finite and satisfies
\[\Psi'(0)^2=(6-d)\Psi'(0)-\lambda_d,\]
that is, $\Psi'(0)=2$ if $d=2$, and $\Psi'(0)=1$ or $2$ if $d=3$.
It is not hard to see using~\eqref{Ymart} that, in fact, $\Psi'(0)=1$ if $d=3$. 
 The fact that this derivative is finite, already implies that $\int_0^\infty  \ell d\tilde\pi(\ell)<\infty$ and \eqref{psi1} can be rewritten as 
\begin{equation}\label{psi2}
\Psi(u)=a_d u+\int_0^\infty(1-e^{-u\ell}-u\ell)d\pi(\ell),\quad \int_0^\infty \ell\wedge\ell^2 d\pi(\ell)<\infty,
\end{equation}
where now $a_d=
\Psi'(0)=4-d$, by the above.  The ode \eqref{ode3} can be used to study the tail behaviour of $\Psi$, and hence $\pi$, via Tauberian theorems. For example it is not hard to show that for some explicit $c_{\ref{pilt}}>0$,
\begin{equation}\label{pilt}
\lim_{\veps\downarrow 0}\veps^{3/2}\pi([\veps,\infty))=c_{\ref{pilt}}.
\end{equation}

The process of total mass of the exit measure {\it from $B_r$} (as opposed to $G_{r_0-r}$) is studied  in \cite{Kyp14} as an inhomogeneous CSBP. The setting there is for general branching mechanisms, but the ideas used above and in defining $Z$  appear to be novel. It would be of interest to study the detailed behaviour of the measure-valued process $r\to X_{G_{r_0-r}}$.  

In \cite{MP17} we instead worked with the exit measure from half spaces $H_r=\{x:x_1<r\}$, where the total mass process is a $\Psi$-CSBP with $\Psi(u)=\frac{\sqrt 6}{3}u^{3/2}$ (see \cite{Kyp14} and Proposition~4.1 of \cite{MP17} and c.f. \eqref{pilt}). The CSBP analysis there was simpler due to this explicit $3/2$-stable $\Psi$, but half-planes were clumsier and led to less precise results. See the discussion at the end of  the Introduction.
\end{remark}


\medskip

\section{Proof of Propositions~\ref{exitdim}, \ref{candim}}\label{sec:maintheorem}

We  use the notation from Section~\ref{sec:6}. In particular $X_0$ and $r_0>0$ are 
as in \eqref{r0cond}, $Y_r=X_{G_{r_0-r}}$ for $0\le r<r_0$, and $Q_{X_0}(\cdot)=\N_{X_0}(\cdot|Y_0(1)>0)$.


In what follows we always will work with the cadlag versions of $Y_r(1)$, and hence $Z_t$, constructed in Proposition~\ref{prop:yzinfo}(b) above.  
We let $W$ denote a generic snake under $\N_{X_0}$ or $Q_{X_0}$ with the associated
``tip process" $\hat W(t)$ and excursion length $\sigma$. Define
\[T_0(W)=\inf\{r\in[0,r_0):Y_r(1)=0\}\in[0,r_0],\text{ where }\inf\emptyset=r_0,\]
and 
\begin{equation}\label{hatT0def}\hat T_0(W)=\inf\{|\hat W(t)|:0\le t\le \sigma\}=\inf\{|x|:x\in\mR\},
\end{equation}
the final equality holding $\N_{X_0}$-a.e. by \eqref{snakerange}. Clearly we have 
\[Q_{X_0}(\cdot)=\N_{X_0}(\cdot\, |T_0>0).\] 

\begin{lemma}\label{lem:T0s}
The sets $\{T_0>0\}$ and $\{\hat T_0<r_0\}$ coincide $\N_{X_0}$-a.e., and on this set,
$\hat T_0=r_0-T_0$ $\N_{X_0}$-a.e.
\end{lemma}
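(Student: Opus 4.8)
The plan is to compare, for each radius $\rho\in(0,r_0]$, the event $\{X_{G_\rho}(1)>0\}$ with the position of the range $\mR$ relative to the sphere $\{|x|=\rho\}$. Since $Z$ is a CSBP (Proposition~\ref{prop:yzinfo}(c)) and hence absorbed at $0$, the cadlag path $r\mapsto Y_r(1)$ is strictly positive on $[0,T_0)$ and identically $0$ on $[T_0,r_0)$; in particular $\{T_0>0\}=\{Y_0(1)>0\}$ off a $\N_{X_0}$-null set, and for every rational $\rho\in(0,r_0)$ one has $X_{G_\rho}(1)=Y_{r_0-\rho}(1)$ off a $\N_{X_0}$-null set. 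So it suffices to prove that, $\N_{X_0}$-a.e.,
\[
\{\hat T_0<\rho\}\ \subseteq\ \{X_{G_\rho}(1)>0\}\ \subseteq\ \{\hat T_0\le\rho\}\qquad\text{for every rational }\rho\in(0,r_0),
\]
and then to let $\rho$ run down (resp.\ up) to $r_0-T_0$.

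For the upper inclusion, $G_\rho$ satisfies \eqref{Gdef} and by \eqref{exitsupport} the measure $X_{G_\rho}$ is carried by $\mR\cap\{|x|=\rho\}$; hence $X_{G_\rho}(1)>0$ forces $\mR$ to meet $\{|x|=\rho\}$, so $\hat T_0\le\rho$. For the lower inclusion I will instead show $\{X_{G_\rho}(1)=0\}\subseteq\{\hat T_0\ge\rho\}$ $\N_{X_0}$-a.e. By the special Markov property \eqref{SMP1}, conditionally on $\cE_{G_\rho}$ the excursions $\{W^i:i\in I\}$ of $W$ outside $G_\rho$ form a Poisson point measure with intensity $\N_{X_{G_\rho}}$, and $X_{G_\rho}$ is $\cE_{G_\rho}$-measurable. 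Since each $\N_y$ has infinite total mass, $\N_{X_{G_\rho}}$ is the zero measure precisely on the $\cE_{G_\rho}$-measurable event $\{X_{G_\rho}(1)=0\}$; hence $\N_{X_0}$-a.e.\ on that event there are no excursions of $W$ outside $G_\rho$, i.e.\ $S_{G_\rho}(W_u)\ge\zeta_u$ for every $u$. Consequently every snake path $W_u$ stays in $G_\rho$ throughout $[0,\zeta_u)$, so by continuity its tip $\hat W(u)=W_u(\zeta_u)$ lies in $\overline{G_\rho}=\{|x|\ge\rho\}$ (the case $\zeta_u=0$ being immediate); by \eqref{snakerange}, $\mR\subseteq\{|x|\ge\rho\}$, i.e.\ $\hat T_0\ge\rho$.

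To conclude, intersect the displayed inclusions over rational $\rho\in(0,r_0)$ and substitute $Y_{r_0-\rho}(1)$ for $X_{G_\rho}(1)$. On $\{\hat T_0<r_0\}$, pick a rational $\rho\in(\hat T_0,r_0)$: then $Y_{r_0-\rho}(1)>0$, so $r_0-\rho<T_0$ and in particular $T_0>0$. Conversely, on $\{T_0>0\}$, every rational $\rho\in(r_0-T_0,r_0)$ has $r_0-\rho\in[0,T_0)$, so $Y_{r_0-\rho}(1)>0$ and hence $\hat T_0\le\rho$; letting $\rho\downarrow r_0-T_0$ gives $\hat T_0\le r_0-T_0<r_0$. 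This yields $\{T_0>0\}=\{\hat T_0<r_0\}$ up to null sets. Still on $\{T_0>0\}$, every rational $\rho\in(0,r_0-T_0)$ has $r_0-\rho\in(T_0,r_0)$, so $Y_{r_0-\rho}(1)=0$, hence $X_{G_\rho}(1)=0$ and therefore $\hat T_0\ge\rho$; letting $\rho\uparrow r_0-T_0$ gives $\hat T_0\ge r_0-T_0$, so $\hat T_0=r_0-T_0$ on $\{T_0>0\}$, $\N_{X_0}$-a.e.\ (the degenerate case $T_0=r_0$ is included, both sides then being $0$). The one genuinely non-routine step is the lower inclusion, which rests on identifying the vanishing of $X_{G_\rho}$ with the absence of snake excursions leaving $G_\rho$ via \eqref{SMP1}; the rest is bookkeeping with rational radii, the right-continuity of $Y_\cdot(1)$, and the absorption of $Z$ at $0$, together with the harmless distinction between $\{\hat T_0<\rho\}$ and $\{\hat T_0\le\rho\}$ (which is why both appear in the display).
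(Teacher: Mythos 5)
Your proof is correct and follows essentially the same route as the paper's: one inclusion comes from \eqref{exitsupport} and \eqref{snakerange}, the converse from the special Markov property, and both are then pushed to the limit over rational radii (with the same implicit use of absorption of $Y_\cdot(1)$ at $0$ that the paper makes). The only, harmless, variation is in the converse inclusion, where you apply \eqref{SMP1} directly to get $\mR\subset\overline{G_\rho}$ on $\{X_{G_\rho}(1)=0\}$, whereas the paper invokes Proposition~\ref{spmarkov}(b) with two radii $R_1>R_2$ to conclude $\mR\cap B_{r_0-q}=\emptyset$ for a strictly smaller ball and then lets the radii converge.
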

\begin{proof}  For every rational $q$ in $[0,T_0)$, $X_{G_{r_0-q}}(1)>0$ implies $\partial G_{r_0-q}\cap\mR$ is non-empty (by \eqref{exitsupport}) and so by \eqref{snakerange} $\hat T_0\le r_0-q$.  This proves that
\begin{equation}\label{hatub}
\hat T_0\le r_0-T_0\quad\N_{X_0}\text{-a.e. on }\{T_0>0\}.
\end{equation}
Conversely assume $r_0>T_0$ and choose rationals $q,q'$ so that $T_0<q'<q<r_0$.
Then $X_{G_{r_0-q'}}(1)=0$ and the special Markov property (Proposition~\ref{spmarkov}(b)) at $R_1=r_0-q'$ shows that $\N_{X_0}(\mR\cap B_{r_0-q}\neq \emptyset|\cE_{q'})=0$ a.e on  $\{T_0<q'\}$.  This proves 
that 
\begin{equation}\label{hatlb}
\hat T_0\ge r_0-T_0\quad\N_{X_0}\text{-a.e. on }\{T_0<r_0\}.
\end{equation}
The above is trivial if $T_0=r_0$ and so we have shown (by \eqref{hatub} and \eqref{hatlb})
\begin{equation*}
\hat T_0=r_0-T_0\quad \N_{X_0}\text{-a.e on }\{T_0>0\}.
\end{equation*}
Finally, note that \eqref{hatub} shows $T_0>0$ implies $\hat T_0<r_0$,  and 
\eqref{hatlb} shows $T_0=0$ implies $\hat T_0\ge r_0$, which in turn shows $\hat T_0< r_0$ 
implies $T_0>0$ (all up to $\N_{X_0}$ null sets).  This proves the a.e. equality of $\{T_0>0\}$ and $\{\hat T_0<r_0\}$, and completes the proof. 
\end{proof}

\begin{lemma}\label{lem:abscont}(a) For $0<r<r_0$,
\begin{align*}\N_{X_0}(0<T_0\le r)&=\N_{X_0}(r_0-r\le \hat T_0<r_0)\\
&=\N_{X_0}\Bigl(1(X_{G_{r_0}}(1)>0)\exp[-X_{G_{r_0}}(1)(r_0-r)^2U^{\infty,1}(r_0/(r_0-r))]\Bigr).
\end{align*}
(b) $\N_{X_0}(T_0\in dr)\ll dr$ on $\{0<r<r_0\}$ and $\N_{X_0}(\hat T_0\in dr)\ll dr$ on $\{0<r<2r_0\}$.
\end{lemma}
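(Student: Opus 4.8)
For part (a), the first equality is immediate from Lemma~\ref{lem:T0s}: on the $\N_{X_0}$-a.e.\ identical events $\{T_0>0\}=\{\hat T_0<r_0\}$ one has $T_0=r_0-\hat T_0$, so for $0<r<r_0$, $\{0<T_0\le r\}=\{r_0-r\le\hat T_0<r_0\}$ a.e. To evaluate the common value I would first rewrite the event through exit measures: by Proposition~\ref{prop:yzinfo}(c) the cadlag mass process $Y_\cdot(1)=X_{G_{r_0-\cdot}}(1)$ is, after the exponential time change, a CSBP, so $0$ is absorbing and $\{T_0\le r\}=\{Y_r(1)=0\}=\{X_{G_{r_0-r}}(1)=0\}$; combined with the trivial identity $\{T_0>0\}=\{Y_0(1)>0\}=\{X_{G_{r_0}}(1)>0\}$ this gives, $\N_{X_0}$-a.e., $\{0<T_0\le r\}=\{X_{G_{r_0}}(1)>0\}\cap\{X_{G_{r_0-r}}(1)=0\}$. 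I would then condition on $\cE_{G_{r_0}}$: since $G_{r_0}\subset G_{r_0-r}$ with $d(G_{r_0-r}^c,\overline{G_{r_0}})=r>0$ and $X_{G_{r_0}}$ is carried by $\partial G_{r_0}\subset G_{r_0-r}$, Proposition~\ref{spmarkov}(a)(ii) and then \eqref{UinfLT} give $\N_{X_0}(X_{G_{r_0-r}}(1)=0\mid\cE_{G_{r_0}})=\exp(-X_{G_{r_0}}(U^{\infty,r_0-r}))$; radial symmetry makes this $\exp(-X_{G_{r_0}}(1)\,U^{\infty,r_0-r}(r_0))$, the scaling relation \eqref{scaling1} (with $\lambda=\infty$) rewrites $U^{\infty,r_0-r}(r_0)$ as $(r_0-r)^{-2}U^{\infty,1}(r_0/(r_0-r))$, and taking $\N_{X_0}$-expectations ---  everything confined to the set $\{X_{G_{r_0}}(1)>0\}$, which has finite $\N_{X_0}$-measure by \eqref{r0cond}, so $\sigma$-finiteness is not an issue --- yields the stated formula.

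For the $T_0$-part of (b), set $g(r)=\N_{X_0}(0<T_0\le r)$ for $0<r<r_0$. Part (a) presents $g(r)=\N_{X_0}\!\big(1(X_{G_{r_0}}(1)>0)\,e^{-X_{G_{r_0}}(1)\phi(r)}\big)$ with $\phi(r)=(r_0-r)^{-2}U^{\infty,1}(r_0/(r_0-r))$, which is strictly positive and $C^1$ on $(0,r_0)$ because $U^{\infty,1}$ is $C^2$ on $\{|x|>1\}$ by \eqref{Uinftyprop} and $r_0/(r_0-r)>1$ there. On any compact subinterval of $(0,r_0)$ the $r$-derivative of the integrand is bounded in absolute value, uniformly in $\omega$, by $\sup|\phi'|/(e\inf\phi)<\infty$ (using $\sup_{a\ge0}ae^{-a\phi}=(e\phi)^{-1}$), while $\N_{X_0}(X_{G_{r_0}}(1)>0)<\infty$; so differentiating under the integral sign gives $g\in C^1((0,r_0))$, hence $\N_{X_0}(T_0\in dr)=g'(r)\,dr$ on $(0,r_0)$.

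For the $\hat T_0$-part, which must reach all of $(0,2r_0)$, I would work instead with $F(r):=\N_{X_0}(X_{G_r}(1)>0)$, finite for $0<r<2r_0$ since $d(\overline{B_r},\text{Supp}(X_0))\ge 2r_0-r>0$. Letting the constant boundary value tend to $\infty$ in \eqref{LFEM} gives $F(r)=X_0(U^{\infty,r})=r^{-2}\!\int U^{\infty,1}(x/r)\,X_0(dx)$; since $\text{Supp}(X_0)\subset\{|x|\ge 2r_0\}$, for $x$ there and $r\in(0,2r_0)$ the point $x/r$ stays in $\{|y|>1\}$, and by the $C^2$-regularity of $U^{\infty,1}$ together with the decay of $U^{\infty,1}$ and $|\nabla U^{\infty,1}|$ at infinity (Corollary~\ref{c2.2} and interior estimates for $\Delta U=U^2$) the $r$-derivative of $r^{-2}U^{\infty,1}(x/r)$ is dominated on compacta by a constant multiple of $|x|^{-2}$, which is $X_0$-integrable; hence $F\in C^1((0,2r_0))$. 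Finally \eqref{exitsupport} gives $\{X_{G_r}(1)>0\}\subset\{\hat T_0\le r\}$, while the special Markov property (Proposition~\ref{spmarkov}(b), applied exactly as in the proof of Lemma~\ref{lem:T0s}) gives $\N_{X_0}(\hat T_0<r,\,X_{G_r}(1)=0)=0$, so $\{\hat T_0<r\}\subset\{X_{G_r}(1)>0\}$ a.e.; this sandwich, with continuity of $F$, forces $F(r)=\N_{X_0}(\hat T_0\le r)$ on $(0,2r_0)$, whence $\N_{X_0}(\hat T_0\in dr)=F'(r)\,dr\ll dr$ there (consistently, on $(0,r_0)$, with the pushforward of the law of $T_0$ under $t\mapsto r_0-t$ via $\hat T_0=r_0-T_0$).

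The only genuinely delicate step I anticipate is the justification of differentiation under the integral sign for $F$ (and $g$), i.e.\ the uniform $|x|^{-2}$-type bound on $\partial_r[r^{-2}U^{\infty,1}(x/r)]$ over $x\in\text{Supp}(X_0)$, which rests on quantitative decay of $U^{\infty,1}$ and $\nabla U^{\infty,1}$ at infinity; the $<$-versus-$\le$ bookkeeping that pins $F$ to the distribution function of $\hat T_0$ is the other point requiring a little care, but both reduce to Corollary~\ref{c2.2} and the special Markov property.
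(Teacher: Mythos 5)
Your part (a) and the $T_0$-half of part (b) are essentially the paper's argument: the same conditioning on $\cE_{G_{r_0}}$ via Proposition~\ref{spmarkov}(a)(ii), the same use of \eqref{UinfLT} and \eqref{scaling1}, and the same differentiation of the resulting formula in $r$ (your domination via $\sup_{a\ge0}ae^{-a\phi}=(e\phi)^{-1}$ is a mild variant of the paper's appeal to $\N_{X_0}(X_{G_{r_0}}(1))<\infty$; both work). Where you genuinely diverge is the extension of the $\hat T_0$-absolute continuity to all of $(0,2r_0)$. The paper simply observes that \eqref{r0cond} permits rerunning part (a) with $\alpha r_0$ in place of $r_0$ for any $1<\alpha<2$, so no new analytic input is needed: the integrand only involves $X_{G_{\alpha r_0}}(1)$ and the argument of $U^{\infty,1}$ stays in a fixed compact subset of $G_1$ as $r$ varies over compacts. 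You instead identify the distribution function directly as $F(r)=X_0(U^{\infty,r})=r^{-2}\int U^{\infty,1}(x/r)\,dX_0(x)$ and differentiate under the integral over the (possibly unbounded) support of $X_0$; this is correct and arguably more transparent, since it exhibits an explicit density, but it costs you the quantitative decay $|\,(U^{\infty,1})'(s)|\lesssim s^{-3}$ at infinity, which is not stated in the paper and must be supplied (e.g.\ by interior gradient estimates for $\Delta U=U^2$ combined with \eqref{e1.4}), exactly as you flag. Your sandwich $\N_{X_0}(\hat T_0<r)\le F(r)\le\N_{X_0}(\hat T_0\le r)$ together with continuity of $F$ does pin $F$ to the cdf of $\hat T_0$, and your use of the CSBP absorption at $0$ (plus right-continuity of $Y_\cdot(1)$) to get $\{T_0\le r\}=\{X_{G_{r_0-r}}(1)=0\}$ up to null sets is a legitimate way to make explicit an identification the paper uses implicitly.
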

\begin{proof} (a) Using \eqref{LTexit} and  scaling (\eqref{scaling1} with $\lambda=\infty$), we have for $0<r<r_0$,
\begin{align}\label{infLT} 
\nn \P_{X_{G_{r_0}}}(X_{G_{r_0-r}}(1)=0)&
=\exp(-X_{G_{r_0}}(U^{\infty, r_0-r}))\\
&=\exp(-X_{G_{r_0}}(1)(r_0-r)^{-2}U^{\infty,1}(r_0/(r_0-r))).
\end{align}
The special Markov property (Proposition~\ref{spmarkov}(a)(ii)) now implies for $0<r<r_0$,
\begin{align*}
\N_{X_0}(0<T_0\le r)&=\N_{X_0}(1(T_0>0)\N_{X_0}(X_{G_{r_0-r}}(1)=0|\cE_0))\\
&=\N_{X_0}(1(T_0>0)\P_{X_{G_{r_0}}}(X_{G_{r_0-r}}(1)=0))\\
&=\N_{X_0}\Bigl(1(X_{G_{r_0}}(1)>0)\exp[-X_{G_{r_0}}(1)(r_0-r)^{-2}U^{\infty,1}(r_0/(r_0-r))]\Bigr),
\end{align*}
where \eqref{infLT} has been used in the last line.
This, together with Lemma~\ref{lem:T0s}, gives (a).

\noindent(b)  The right-hand side of (a) is continuously differentiable in $r\in(0,r_0)$ because $U^{\infty,1}$ is $C^2$ on $G_1$ (recall \eqref{Uinftyprop}). Here we note that it is easy to justify differentiation inside the integral since $\N_{X_0}(X_{G_{r_0}}(1)>0)<\infty$ (recall \eqref{r0cond}), $\N_{X_0}(X_{G_{r_0}}(1))<\infty$ (recall \eqref{meanmass}), and $(U^{\infty,1})'(r)$ is bounded on compacts away from $\{r\le 1\}$. This gives the first part of (b).  Lemma~\ref{lem:T0s} now implies the absolute continuity of $\N_{X_0}(\hat T_0\in dr)$ on $\{0<r<r_0\}$.  But \eqref{r0cond} allows us to replace $r_0$ with $\alpha r_0$ for any $1<\alpha<2$ in the above reasoning and so conclude that $\N_{X_0}(\hat T_0\in dr)$ is absolutely continuous on $\{0<r<2r_0\}$.
\end{proof}
\SUBMIT{
\no{\bf Proof of Proposition~\ref{exitdim} assuming Proposition~\ref{candim}.} 
By translation invariance we may assume $x_1=0$. Fix $r_0,r_1$ and $X_0$ as in our hypotheses.
 We must show that
\begin{equation}\label{goal}
X_{G_{r_1}}(1)=0\text{ and }X_{G_{r_0}}(1)>0\text{ imply dim}(B_{r_0}\cap\pmR)\ge d_f\ \P_{X_0}-a.s.
\end{equation}
Measurability issues are easily handled using Lemma~\ref{borel} and will henceforth be ignored.
We work under $\P_{X_0}$ in the standard set-up and so from \eqref{exitdecomp} have for $0<r\le r_0$ and
$J_0=\{j\in J:\hat T_0(W_j)\le r_0\}$,
\begin{equation}\label{XGrdec}
X_{G_r}=\sum_{j\in J}X_{G_r}(W_j)=\sum_{j\in J}X_{G_r}(W_j)1(\hat T_0(W_j)\le r)=\sum_{j\in J_0}X_{G_r}(W_j)1(\hat T_0(W_j)\le r).
\end{equation}
Here we used the fact that $\hat T_0(W_j)>r$ implies $X_{G_r}(W_j)=0$ (e.g. by  \eqref{exitsupport} and \eqref{snakerange}). 

It follows easily from Lemma~\ref{lem:T0s} that
\begin{equation}\label{T0hatT0}
T_0=r_0-\wedge_{j\in J_0}\hat T_0(W_j)\quad \text{ on }\{T_0>0\}=\{\wedge_{j\in J_0}\hat T_0(W_j)<r_0\}\ \P_{X_0}-\text{a.s.}
\end{equation}
In view of the absolute continuity properties of $\hat T_0$ under $\N_0$ from Lemma~\ref{lem:abscont} we see from the above that if $N_0=|J_0|$, a Poisson mean $N_{X_0}(\hat T_0\le r_0)$ random variable, then 
\begin{equation}\label{T0N0}
T_0>0\text{ iff }J_0\neq\emptyset\text{ iff }N_0>0\ \ \P_{X_0}-\text{a.s.}
\end{equation}
By enlarging our probability space and introducing appropriate randomization 
 we may
assume that there is an iid sequence $\{\widetilde W_j:j\in \N\}$, independent of the Poisson variable $N_0=|J_0|$ with mean $\N_{X_0}(\hat T_0\le r_0)$, and with common law 
\begin{equation}\label{tWlaw}
\N_{X_0}(\cdot\,|\hat T_0\le r_0)=\N_{X_0}(\cdot\,|\hat T_0<r_0),
\end{equation}
(the last equality by Lemma~\ref{lem:abscont}) and so that
\begin{equation}\label{pppid}
\sum_{j\in J_0}\delta_{W_j}=\sum_{j=1}^{N_0}\delta_{\widetilde W_j}.
\end{equation}
Now using Lemma~\ref{lem:abscont} and independence of excursions $\widetilde W_j$ one can show that given $
X_{G_{r_1}}(1)=0\text{ and }X_{G_{r_0}}(1)>0$, $\N_{X_0}$-a.e. there exists 
$$
\tilde T_0 =\textnormal{largest radius $r\leq r_0$ so that a single excursion $\widetilde W_j$ enters $B_r$}\in(\hat T_0,r_0]\subset(r_1,r_0]. 
$$
Thus it is easy to see that the lower bound on $\text{dim}(\pmR\cap B_{r_0})$ in \eqref{goal} can be given by the lower bound on corresponding dimension for a {\it single} excursion.  
To be more precise we claim it is enough to get 
\begin{equation}
\label{18_07_1}
\N_{X_0}(\text{dim}(\pmR\cap B_r)\ge d_f\ \forall r>\hat T_0|r_1\leq \hat T_0<r_0)=1.
\end{equation}
Clearly \eqref{18_07_1} follows from  Proposition~\ref{candim}.  
We omit the details of the careful derivation of~\eqref{goal}  from the above as it closely follows the derivation of Theorem~7.1 from Proposition~7.2 in Section 7.1 of \cite{MP17} (see \cite{HMP18A}).  
\qed}

 
\ARXIV{
\ARXIV{
\no{\bf Proof of Proposition~\ref{exitdim} assuming Proposition~\ref{candim}.}}
By translation invariance we may assume $x_1=0$. Fix $r_0,r_1$ and $X_0$ as in our hypotheses. We must show that
\begin{equation}\label{goal}
X_{G_{r_1}}(1)=0\text{ and }X_{G_{r_0}}(1)>0\text{ imply dim}(B_{r_0}\cap\pmR)\ge d_f\ \P_{X_0}-a.s.
\end{equation}
Measurability issues are easily handled using Lemma~\ref{borel} and will henceforth be ignored.
We work under $\P_{X_0}$ in the standard set-up and so from \eqref{exitdecomp} have for $0<r\le r_0$ and
$J_0=\{j\in J:\hat T_0(W_j)\le r_0\}$,
\begin{equation}\label{XGrdec}
X_{G_r}=\sum_{j\in J}X_{G_r}(W_j)=\sum_{j\in J}X_{G_r}(W_j)1(\hat T_0(W_j)\le r)=\sum_{j\in J_0}X_{G_r}(W_j)1(\hat T_0(W_j)\le r).
\end{equation}
Here we used the fact that $\hat T_0(W_j)>r$ implies $X_{G_r}(W_j)=0$ (e.g. by  \eqref{exitsupport} and \eqref{snakerange}). Recall from \eqref{snakerange} that the range of the $j$th excursion $W_j$ is
\[\mR_j:=\mR(W_j)=\{\hat W_j(t):t\le \sigma(W_j)\}.\]
It follows easily from \eqref{Xtdec} (see (2.19) in \cite{MP17}) that for $x\in \overline{B_{r_0}}$,
\[L^x=\sum_{j\in J_0}L^x(W_j),\]
and therefore,
\begin{equation}\label{rangedec}
\mR\cap B_{r_0}=\cup_{j\in J_{0}} (\mR_j\cap B_{r_0})\text{ and so }1_{\mR\cap B_{r_0}}(x)=1\Bigl(\sum_{j\in J_0}1_{\mR(W_j)\cap B_{r_0}}(x)>0\Bigr).
\end{equation}
We will frequently use the elementary topological result
\begin{equation}\label{top}
B_{r_0}\cap \partial F=B_{r_0}\cap\partial(B_{r_0}\cap F)=B_{r_0}\cap\partial(\overline{B_{r_0}}\cap F)\quad\text{for any closed set }F.
\end{equation}
It follows easily from Lemma~\ref{lem:T0s} that
\begin{equation}\label{T0hatT0}
T_0=r_0-\wedge_{j\in J_0}\hat T_0(W_j)\quad \text{ on }\{T_0>0\}=\{\wedge_{j\in J_0}\hat T_0(W_j)<r_0\}\ \P_{X_0}-\text{a.s.}
\end{equation}
In view of the absolute continuity properties of $\hat T_0$ under $\N_0$ from Lemma~\ref{lem:abscont} we see from the above that if $N_0=|J_0|$, a Poisson mean $\N_{X_0}(\hat T_0\le r_0)$ random variable, then 
\begin{equation}\label{T0N0}
T_0>0\text{ iff }J_0\neq\emptyset\text{ iff }N_0>0\ \ \P_{X_0}-\text{a.s.}
\end{equation}
By enlarging our probability space and randomizing the above Poisson points we may
assume that there is an iid sequence $\{\widetilde W_j:j\in \N\}$, independent of the Poisson variable $N_0=|J_0|$ with mean $\N_{X_0}(\hat T_0\le r_0)$, and with common law 
\begin{equation}\label{tWlaw}
\N_{X_0}(\cdot\,|\hat T_0\le r_0)=\N_{X_0}(\cdot\,|\hat T_0<r_0),
\end{equation}
(the last equality by Lemma~\ref{lem:abscont}) and so that
\begin{equation}\label{pppid}
\sum_{j\in J_0}\delta_{W_j}=\sum_{j=1}^{N_0}\delta_{\widetilde W_j}.
\end{equation}
Let $\widehat{\widetilde W}_j$ denote the tip of the $j$th excursion and define
\begin{align*} 
\hat T^j&=\hat T_0(\widetilde W_j)<r_0 \quad(\text{a.s.  by \eqref{tWlaw}}),\\
\tilde \mR_j&=\mR(\widetilde W_j)=\{\widehat {\widetilde W}_j(t):t\le \sigma(\widetilde W_j)\}.
\end{align*}

 Note that $X_{G_{r_1}}(1)=0$ implies $T_0<r_0-r_1$ a.s. and so, in view of \eqref{T0hatT0},  
\begin{equation}\label{hatTlb}
X_{G_{r_1}}(1)=0\text{ and }Y_0(1)=X_{G_{r_0}}(1)>0\text{ imply }\hat T^j\ge r_1\  \forall\ j\le N_0
\ \P_{X_0}-\text{a.s.}
\end{equation}
 The independence of the $\hat T^j$'s and 
fact they have no positive atoms by Lemma~\ref{lem:abscont} imply
\begin{equation}\label{noteqT}\P_{X_0}(\hat T^j=\hat T^{j'} \text{ for some }1\le j\neq j' \le N_0, X_{G_{r_1}}(1)=0,X_{G_{r_0}}(1)>0)=0.\end{equation}
So on $\{X_{G_{r_1}}(1)=0,X_{G_{r_0}}(1)>0\}$ there is an a.s. unique $\tilde j\le N_0$ s.t. \break $\hat T^{\tilde j}=\min\{\hat T^j:j\le N_0\}$. 
\eqref{noteqT} and \eqref{hatTlb} imply that (if an empty minimum is $r_0$) $\P_{X_0}-\text{a.s.}$,
\begin{equation}\label{tildetpos}
\tilde T:=\min\{\hat T^j:j\neq\tilde j,j\le N_0\}>\hat T^{\tilde j}\ge r_1\text{ on }\{X_{G_{r_1}}(1)=0, X_{G_{r_0}}(1)>0\}\subset\{N_0\ge 1\}.
\end{equation}
Hence $\tilde T$ is the largest radius $r\le r_0$ so that a single excursion $\widetilde W_j$ enters $B_r$ (it exists on $\{X_{G_{r_1}}(1)=0, X_{G_{r_0}}(1)>0\}$). 

By the definition of $\tilde j$ and $\tilde T$ we have from \eqref{tildetpos} and \eqref{rangedec},
\[ B_{\tilde T}\cap\mR= B_{\tilde T}\cap\tilde \mR_{\tilde j}\text{ on }\{X_{G_{r_1}}(1)=0,X_{G_{r_0}}(1)>0\}\ \text{a.s.}
\]
Therefore, using the above and \eqref{top} we obtain
\begin{align}\label{lb1}
\nn \P_{X_0}&(\text{dim}(B_{r_0}\cap\pmR)\ge d_f, X_{G_{r_1}}(1)=0,X_{G_{r_0}}(1)>0)\\
\nn&\ge \P_{X_0}(\text{dim}(B_{\tilde T}\cap\pmR)\ge d_f, X_{G_{r_1}}(1)=0,X_{G_{r_0}}(1)>0)\\
\nn&= \P_{X_0}(\text{dim}(B_{\tilde T}\cap\partial \tilde\mR_{\tilde j})\ge d_f, X_{G_{r_1}}(1)=0,X_{G_{r_0}}(1)>0)\\
&\ge \P_{X_0}(\{N_0\ge 1\}\cap(\cap_{j\le N_0}\{\text{dim}(B_r\cap\partial\tilde \mR_j)\ge d_f\ \forall r> \hat T^j\ge r_1\})).
\end{align}
In the last line we have used $N_0\ge 1$ iff $X_{G_{r_0}}(1)>0$ (by \eqref{T0N0}), and on this set, $\hat T^j\ge r_1$ for all $j\le N_0$ implies $T_0\le r_0-r_1$ (by \eqref{T0hatT0}) and so $X_{G_{r_1}}(1)=0$.  We also
 use the fact (from \eqref{tildetpos}) that if $\{0<T_0\le  r_0-r_1\}$ then $\tilde T> \hat T^{\tilde j}$.
The independence of the $\tilde W_j$'s and their joint independence from $N_0$ together with their common law in \eqref{tWlaw} imply that \eqref{lb1} equals
\begin{equation}\label{lb2}
\E_{X_0}(1(N_0\ge 1)\prod_{j=1}^{N_0}\N_{X_0}(\text{dim}(\pmR\cap B_r)\ge d_f\ \forall r>\hat T_0\ge r_1|\hat T_0<r_0)).\end{equation}
By Proposition~\ref{candim} and Lemma~\ref{lem:T0s} each of the terms in the above product equals
$\N_{X_0}(\hat T_0\ge r_1|\hat T_0<r_0)$ and so \eqref{lb2} equals
\begin{align}\label{lb3}
\nn \E_{X_0}&(1(N_0\ge 1)1(\wedge_{j=1}^{N_0}\hat T_0(\tilde W_j)\ge r_1))\\
&=\P_{X_0}(0<T_0\le r_0-r_1)=\P_{X_0}(X_{G_{r_1}}(1)=0, X_{G_{r_0}}(1)>0).
\end{align}
In the first equality we used \eqref{T0hatT0} and \eqref{T0N0}. We have proved the left-hand side of \eqref{lb1} exceeds the above, and we conclude that 
\[X_{G_{r_1}}(1)=0\text{ and } X_{G_{r_0}}(1)>0\text{ imply dim}(B_{r_0}\cap \pmR)\ge d_f\ \P_{X_0}-\text{a.s.},\]
thus proving \eqref{goal}.
\qed}

Recall again that we always work with the cadlag version of $Y_r(1)$ from Proposition~\ref{prop:yzinfo}(b) which only has non-negative jumps and is an $(\cE^+_r)$-supermartingale.  
Define a sequence of $(\cE^+_r)$-stopping times by
\[T_{n^{-1}}=\inf\{r\le r_0:Y_r(1)\le 1/n\}\quad(\inf\emptyset = r_0).\]
Then 
\begin{equation}\label{announce}
\text{on $\{0<T_0\}$ (and so $Q_{X_0}$-a.s.) $T_{n^{-1}}\uparrow T_0$ and $T_{n^{-1}}<T_0$, }
\end{equation}
where the last inequality holds since $Y_r(1)$ has no negative jumps.  So under $Q_{X_0}$, $T_0$ is a predictable stopping time which is announced by $\{T_{n^{-1}}\}$ and so (see (12.9)(ii) in Chapter VI of \cite{RW94})
\begin{equation}\label{E-}
\cE^+_{T_0-}=\vee_n\cE^+_{T_n}.
\end{equation}
Let $D_r=\{\text{dim}(B_r\cap\pmR)\ge d_f\}$ for $0<r\le r_0$. We assume $\cE^+_r$ is augmented by $Q_{X_0}$-null sets throughout this Section. 

To finish the proof of  Proposition~\ref{candim} we need: 
\begin{lemma}\label{Emeas} If $X_0=\delta_{x_0}$ where $|x_0|\ge 2r_0$, then
\begin{equation}
\label{17_08_2}
D_{r_0}\in\cE^+_{T_0-}.
\end{equation}
\end{lemma}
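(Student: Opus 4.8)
The plan is to work under $Q_{x_0}=\N_{x_0}(\cdot\mid T_0>0)$; on the complementary set $\{T_0=0\}$ Lemma~\ref{lem:T0s} gives $\hat T_0\ge r_0$, hence $\mR\cap B_{r_0}=\emptyset$ and $1_{D_{r_0}}\equiv0$, so there is nothing to do. On $\{T_0>0\}$ recall (Lemma~\ref{lem:T0s}) that $\hat T_0=r_0-T_0=\inf\{|x|:x\in\mR\}\in(0,r_0)$, and note that $\hat T_0$ is $\cE^+_{T_0-}$-measurable: $T_0=\lim_nT_{n^{-1}}$, each $T_{n^{-1}}$ is $\cE^+_{T_{n^{-1}}}$-measurable, and $\cE^+_{T_{n^{-1}}}\subseteq\cE^+_{T_0-}$ by \eqref{E-} (the same reasoning also shows $\{\hat T_0<R\}=\{T_0>r_0-R\}=\bigcup_n\{T_{n^{-1}}>r_0-R\}\in\cE^+_{T_0-}$ for any $R$).

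Next I would reduce, by elementary point-set topology, to an assertion about the range. Since $\mR\subseteq\{|x|\ge\hat T_0\}$, any $x\in\pmR$ with $|x|=\hat T_0$ has neighbourhoods meeting $\{|x|<\hat T_0\}\subseteq\mR^c$, so $\pmR\cap\{|x|=\hat T_0\}=\mR\cap\{|x|=\hat T_0\}$ lies in a $(d-1)$-sphere and thus has Hausdorff dimension at most $d-1$; as $d\in\{2,3\}$ forces $p<3$, this is $<d+2-p=d_f$, so this piece cannot affect $D_{r_0}$. Using $\pmR\cap U=\partial(\mR\cap U)\cap U$ for the open set $U=B_{r_0}\cap\{|x|>\hat T_0\}$, together with \eqref{top} (whose proof applies verbatim to open annuli) to express $\pmR$ on an open annulus through $\mR$ intersected with the closed annulus, it then suffices to show that for each rational $R\in(\hat T_0,r_0)$, on the $\cE^+_{T_0-}$-measurable event $\{\hat T_0<R\}$, the compact set $\mR\cap\{|x|\ge R\}$ is $\cE^+_{T_0-}$-measurable; Lemma~\ref{borel}(a),(b) then upgrades this to $D_{r_0}\in\cE^+_{T_0-}$, the universal measurability in Lemma~\ref{borel}(b) being harmless since $\cE^+_r$ is augmented by null sets.

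The crux is the identity
\[\mR\cap\{|x|\ge R\}=\bigcup_{R'\in\Q\cap(\hat T_0,R)}\Bigl[\{\hat W(\eta^{G_{R'}}_s):s\ge0\}\cap\{|x|\ge R\}\Bigr]\qquad\N_{x_0}\text{-a.e. on }\{\hat T_0<R\},\]
whose right side is $\cE^+_{T_0-}$-measurable because $\{\hat W(\eta^{G_{R'}}_s):s\ge0\}$ is the set of tips (equivalently, the union of path traces) of the Brownian snake cut off at its first exit from $G_{R'}$, hence $\cE_{G_{R'}}=\cE_{r_0-R'}$-measurable, and $\cE_{r_0-R'}\subseteq\cE^+_{T_0-}$ on $\{\hat T_0<R'\}$. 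The inclusion ``$\supseteq$'' is clear. For ``$\subseteq$'', note every $x\in\mR$ equals $\hat W(u_0)$ for some $u_0$, and invoke the structural fact $Y_{T_0}(1)=X_{G_{\hat T_0}}(1)=0$ — valid because $Y_{T_0-}(1)=\lim_nY_{T_{n^{-1}}}(1)=0$ and $Y$ has no negative jumps (Proposition~\ref{prop:yzinfo}(b)) — which, via a version of the special Markov property \eqref{SMP1} with $G=G_{\hat T_0}$ (the excursions of $W$ out of $G_{\hat T_0}$ have conditional intensity $\N_{Y_{T_0}}=\N_0=0$), forces $\{u:S_{G_{\hat T_0}}(W_u)<\zeta_u\}=\emptyset$ $\N_{x_0}$-a.e., i.e.\ every closest-approach point is a leaf of the snake. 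Consequently, if $|x|\ge R>\hat T_0$ then $\min_{t\le\zeta_{u_0}}|W_{u_0}(t)|>\hat T_0$ (the minimizing point is either the tip or a strict ancestor, which cannot be a leaf and hence cannot lie on $\{|x|=\hat T_0\}$), so $W_{u_0}$ stays in $G_{R'}$ for some rational $R'\in(\hat T_0,R)$ with $R'<\min_t|W_{u_0}(t)|$, and then $x$ is a tip of the cut-off snake in $G_{R'}$.

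The step I expect to be the main obstacle is the rigorous use of the special Markov property ``at the level $\hat T_0$'': since $\hat T_0$ is random, this must be obtained either as an optional-stopping analogue of \eqref{SMP1} along the announcing stopping times $T_{n^{-1}}$ (cutting the snake off at the random radii $r_0-T_{n^{-1}}$ and passing to the limit), or by letting the deterministic radius $R\downarrow\hat T_0$ in \eqref{SMP1}/Proposition~\ref{spmarkov} while exploiting that the exit-measure mass $Y_{r_0-R}(1)\downarrow Y_{T_0}(1)=0$ kills all excursions out of $G_{\hat T_0}$ in the limit. This demands care with the stopping-time $\sigma$-fields $\cE^+_{T_{n^{-1}}}$ and the random radius, and leans on the excursion decompositions \eqref{Widecomp}, \eqref{exitdecomp} and the CSBP/no-negative-jump structure of Section~\ref{sec:6}; the topological reductions and the passage to the dimension functional through Lemma~\ref{borel} are routine.
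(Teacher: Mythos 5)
Your overall architecture is a legitimate variant of the paper's: the paper reduces the lemma to showing $W_s=W_{\eta_s^{T_0}}$ for each fixed $s$ (so that the whole snake is $\cE^+_{T_0-}$-measurable via Lemma~\ref{Areg} and \eqref{E-}), while you decompose the range over rational radii $R'>\hat T_0$ and use that the cut-off snake in $G_{R'}$ is $\cE_{G_{R'}}$-measurable with $\cE_{G_{R'}}\subset\cE^+_{T_0-}$ on $\{\hat T_0<R'\}$. Both routes funnel into the same crux, and that crux is exactly where your proposal has a genuine gap. You need $\{u:S_{G_{\hat T_0}}(W_u)<\zeta_u\}=\emptyset$ a.e., i.e.\ that no snake path touches the minimal radius $\hat T_0$ strictly before its lifetime (equivalently, the closest point of $\mR$ to the origin is a leaf of the genealogical tree). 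You flag this as the main obstacle and propose to get it from a special Markov property ``at level $\hat T_0$,'' but neither of your suggested routes works as described: for deterministic $\rho<\hat T_0$ the SMP gives no excursions outside $G_\rho$, which says nothing about paths that reach radius exactly $\hat T_0$; and along the announcing radii $r_0-T_{n^{-1}}\downarrow\hat T_0$ the exit measures have mass $n^{-1}>0$, so there are (infinitely many) genuine excursions at every level $n$, and ruling out that one of them dips to $\hat T_0$ and continues is precisely the original problem. The identity $Y_{T_0}(1)=0$ cannot be fed into \eqref{SMP1} because the domain $G_{\hat T_0}$ is random.

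The paper closes this gap by a substantive separate argument which your proposal is missing: it shows the weaker statement \eqref{notatmin2}, that $\int_0^\sigma 1(\inf_{v\le\zeta_u}|W_u(v)|=\hat T_0)\,du=0$, using the historical process, its Palm measure formula, the identity $\N_x(L^y>0)=2(4-d)|x-y|^{-2}$, and L\'evy's modulus of continuity to force $\int_0^t 2(4-d)|B_s-B_{\tau_t}|^{-2}ds=\infty$. (Incidentally, this Lebesgue-null statement does imply your ``empty set'' claim, since $\{u:S_{G_{\hat T_0}}(W_u)<\zeta_u\}$ is open and contained in $\{u:\inf_{v\le\zeta_u}|W_u(v)|=\hat T_0\}$ by \eqref{minrad}; so your crux is true, but the only available proof is the paper's analytic one, not the soft SMP/optional-stopping argument you sketch.) The remaining ingredients of your proposal — measurability of $\hat T_0$, the localization $\cE_{r_0-R'}\subset\cE^+_{T_0-}$ on $\{\hat T_0<R'\}$, discarding the sphere $\{|x|=\hat T_0\}$ because $d-1<d_f$, and the passage through Lemma~\ref{borel} — are sound, but without a proof of the crux the argument is incomplete.
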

For the proof of Proposition~\ref{candim} below it would suffice to show that $D_{r_0}\cap\{T_0<r_0\}\in\cE^+_{T_0-}$, and this latter result
 should be intuitively obvious, as we now explain.  With Lemma~\ref{lem:T0s} in mind, we see that $\cE^+_{T_0-}$ includes information generated by the excursions of $W$ outside of its minimum radius. If this minimum radius is positive (as is the case on $\{T_0<r_0\}$) it is intuitively clear that  this includes all the information generated by $W$.  Even without intersecting with $\{T_0<r_0\}$, however, none of the mass that hits the origin will survive for any length of time and so again all of $W$ will have been observed. This last point  stems from the fact that points are polar for Brownian motion in more than one dimension and be more formally justified using a mean measure result for the integral of the snake (Proposition 2 in Ch. IV of \cite{Leg99} with $p=1$). \ARXIV{Before giving its proof below, we first show how Lemma~\ref{Emeas} implies Proposition~\ref{candim}.}\\
 
\SUBMIT{
\noindent{\bf Proof of Lemma~\ref{Emeas}.}
Following  the derivation of (7.11) in~\cite{MP17}, one sees that in order to get~\eqref{17_08_2}, it  is enough to show (cf. (7.18) of \cite{MP17})
\begin{align}\label{notimeatedge17}
\N&_{x_0}\Bigl(\int_0^\infty1(\inf_{v\le \zeta_u}|W_u(v)|=\hat T_0)du\Bigr)=0.
\end{align}
Next follow the proof of (7.18) in~\cite{MP17} using the historical process and its Palm measure formula, to bound the left-hand side of~\eqref{notimeatedge17} by (cf. (7.22) of \cite{MP17})
\[\int_0^\infty E^B_{x_0}\Bigl(\exp\Bigl(-\int_0^s\frac{2(4-d)}{|B_t-m_s|^2}dt\Bigr)\Bigr)\,ds .\]
Here  $B$ denotes a $d$-dimensional Brownian motion starting at $x_0$ under $P^B_{x_0}$ and $m_s=\inf_{s'\le s}|B_{s'}|$. 
A simple application of L\'evy's modulus for $B$ shows that $\int_0^s\frac{2(4-d)}{|B_t-m_s|^2}dt$  is infinite a.s. and
so proves~\eqref{notimeatedge17} as required.  More details may be found in  \cite{HMP18A}, where the actual definition of $\cE_r$ is even used.\qed}
\medskip

\noindent{\bf Proof of Proposition~\ref{candim}.} Clearly it suffices to fix $x_0\in\text{Supp}(X_0)$ and prove the result with $\N_{x_0}$  in place of $\N_{X_0}$. By translation invariance we may assume $x_1=0$, and so $|x_0|\ge 2r_0$.  
\SUBMIT{We will  use the following elementary topological result
\begin{equation}\label{top}
B_{r_0}\cap \partial F=B_{r_0}\cap\partial(B_{r_0}\cap F)=B_{r_0}\cap\partial(\overline{B_{r_0}}\cap F)\quad\text{for any closed set }F.
\end{equation}}
Fix $0<r_1<r_0$.   Assume $0\le r<r_0$ and $n\in\N$ is large enough so that $r+n^{-1}<r_0$.  By Lemma~\ref{borel}(b) there is a universally measurable map $\psi:\mK\to[0,1]$ such that 
\begin{align}\label{psiform}
\nn 1(D_{r_0-r-n^{-1}})&=1(\text{dim}(\partial(\overline{B}_{r_0-r-n^{-1}}\cap\mR)\cap B_{r_0-r-n^{-1}})\ge d_f)\quad(\text{by \eqref{top}})\\
&=\psi(\overline{B}_{r_0-r-n^{-1}}\cap\mR).
\end{align}

Recall that conditional expectations with respect to $\cE_r$, under $\N_{x_0}$ and $Q_{x_0}$, agree $Q_{x_0}$-a.s., and note that Proposition~\ref{spmarkov}(b) can be trivially extended to universally measurable maps.  Therefore up to $Q_{x_0}$-null sets, on 
$\{4n^{-2}\le Y_r(1)\le (r_0-r)^2\} (\in \cE_r)$ we have
\begin{align*} 
Q_{x_0}(D_{r_0}|\cE_r)&\ge Q_{x_0}(D_{r_0-r-n^{-1}}|\cE_r)\\
&=\P_{Y_r}(D_{r_0-r-n^{-1}})\quad(\text{by \eqref{psiform} and Proposition~\ref{spmarkov}(b)})\\
&\ge \P_{Y_r}\Bigl(\text{dim}\Bigl(\pmR\cap B_{r_0-r-(\sqrt{Y_r(1)}/2)}\Bigr)\ge d_f\Bigr)\\
&\ge q_{\ref{lem:spherelb}},
\end{align*}
where Lemma~\ref{lem:spherelb} and the assumed bounds on $Y_r(1)$ are used in the last inequality, and the assumed lower bound on $Y_r(1)$ is used in the next to last inequality. 
Let $n\to\infty$ and take limits from above in $r\in \Q_+$ (recall $Y_r(1)$ is cadlag) to conclude that 
\begin{equation}\label{Mlbnd}
M_r:=Q_{x_0}(D_{r_0}|\cE^+_{r})\ge q_{\ref{lem:spherelb}}\text{ on }\{0<Y_r(1)<(r_0-r)^2\}\ \forall r\in\Q\cap(0,r_0)\ Q_{x_0}-\text{a.s.}
\end{equation}
Here $M_r$ is a cadlag version of the bounded martingale on the left-hand side.  Using right-continuity one can strengthen \eqref{Mlbnd} to 
\begin{equation}\label{Mlbnd2}
M_r=Q_{x_0}(D_{r_0}|\cE^+_{r})\ge q_{\ref{lem:spherelb}}\text{ on }\{0<Y_r(1)<(r_0-r)^2\}\ \forall r\in(0,r_0)\ Q_{x_0}-\text{a.s.}
\end{equation}
On $\{0<T_0\le r_0-r_1\}$ we have from \eqref{announce} and the lack of negative jumps for $Y_r(1)$,
\begin{equation}\label{quadbound}
\text{for $n$ large, }T_{n^{-1}}\in(0,r_0-r_1)\text{ and }Y_{T_{n^{-1}}}(1)=n^{-1}<(r_0-T_{1/n})^2\ Q_{x_0}-\text{a.s.}
\end{equation}
By Corollary (17.10) in Chapter VI of \cite{RW94}, \eqref{Mlbnd2}, and \eqref{quadbound},  we have $Q_{x_0}$-a.s. on \\
$\{0<T_0\le r_0-r_1\}\in \cE^+_{T_0-}$,
\begin{equation}\label{lbcondexpD}
Q_{x_0}(D_{r_0}|\cE^+_{T_0-})=\lim_{n\to\infty}M(T_{n^{-1}})\ge q_{\ref{lem:spherelb}}.
\end{equation}
Multiplying the above by $1(\{0<T_0\le r_0-r_1\})$, we see from Lemma~\ref{Emeas} that
\[1(D_{r_0}\cap\{0<T_0\le r_0-r_1\})\ge q_{\ref{lem:spherelb}}1(\{0<T_0\le r_0-r_1\})\quad Q_{x_0}-\text{a.s.},\]
and therefore by Lemma~\ref{lem:T0s},
\[r_1\le \hat T_0<r_0\text{ implies } \text{dim}(B_{r_0}\cap\pmR)\ge d_f\quad Q_{x_0}-\text{a.s.}.\]
This remains true if we replace $r_0$ by any $r\in(r_1,r_0]$ since we still have $B_{2r}\subset \text{Supp}(X_0)^c$.  Therefore we may fix $\omega$ outside a $Q_{x_0}$-null set so that for any $r\in(r_1,r_0]\cap\Q$, $r_1\le \hat T_0<r$ implies dim$(B_r\cap\pmR)\ge d_f$. By monotonicity of the conclusion in $r$ this means that $\{r_1\le \hat T_0<r_0\}$ implies $\text{dim}(B_r\cap\pmR)\ge d_f$ for all $r>\hat T_0$. This gives Proposition~\ref{candim} under $Q_{x_0}$.  The result under $\N_{x_0}$ is now immediate from the definition of $Q_{x_0}$, and $\{Y_0(1)>0\}=\{\hat T_0<r_0\}$ $\N_{x_0}$-a.e. (by Lemma~\ref{lem:T0s}).  
\qed
\medskip

\ARXIV{
Turning now to Lemma~\ref{Emeas}, we work under $Q_{x_0}$ where $|x_0|\ge 2r_0$.  Recall the definitions of $\eta_s^G$ and $\cE_G$ from Section~\ref{sec:prel}.
For $0\le r<r_0$, introduce
\[A^r_t=\int_0^t1(\zeta_u\le S_{G_{r_0-r}}(W_u))\,du,\]
so that
\[\eta^r_s:=\eta_s^{G_{r_0-r}}=\inf\{t:A^r_t>s\}.\]
\begin{lemma}\label{Areg}
(a) $Q_{x_0}$-a.s. for all $t\ge 0$ we have
\[A^r_t=\int_0^t1(\inf_{v\le \zeta_u}|W_u(v)|>r_0-r)\,du\quad\forall r\in[0,r_0),\]
and
\[\text{$r\mapsto A^r_t$ is left-continuous on $[0,r_0)$.}\]
(b) $\lim_{r'\uparrow r}\eta^{r'}_s=\eta_s^r$ for all $r\in(0,r_0)$, $s\ge 0$\ \ $Q_{x_0}$-a.s.\\
(c) If $T$ is an $(\cE^+_r)$-stopping time, then $W_{\eta_s^T}$ is $\cE^+_T$-measurable.
\end{lemma}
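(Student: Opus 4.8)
The proof splits along the three parts, with essentially all of the probabilistic content sitting in part~(a); parts~(b) and~(c) are then soft consequences. The one genuine obstacle is a measure-zero fact behind part~(a): that the tip of a single snake excursion spends zero Lebesgue time on \emph{every} sphere centred at the origin, simultaneously. This is where one must pass through the occupation-measure interpretation of the snake and the existence of an integrable local time density, which is exactly what breaks down in $d\ge 4$ and underlies the standing restriction $d\le 3$.

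For part~(a), the first step is to rewrite the indicator in the definition of $A^r_t$. Since $v\mapsto W_u(v)$ is continuous and $G_{r_0-r}=\{|x|>r_0-r\}$ is open, $\zeta_u\le S_{G_{r_0-r}}(W_u)$ holds precisely when $|W_u(v)|>r_0-r$ for all $v<\zeta_u$, and this condition agrees with $\inf_{v\le\zeta_u}|W_u(v)|>r_0-r$ except on $\{u:|\hat W(u)|=r_0-r\}$. Thus part~(a) reduces to showing that, $\N_{x_0}$-a.e.\ (hence $Q_{x_0}$-a.s., as $Q_{x_0}\ll\N_{x_0}$), $\int_0^\sigma 1(|\hat W(u)|=\rho)\,du=0$ for all $\rho\in(0,r_0]$ at once. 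Here I would invoke the identification of the total occupation measure with the push-forward of Lebesgue measure on $[0,\sigma]$ by the tip, $I(B)=\int_0^\sigma 1_B(\hat W(u))\,du$ (Ch.~IV of \cite{Leg99}), together with the integrable local time density $L^x$ available for $d\le 3$: the integral equals $I(\{|x|=\rho\})=\int_{\{|x|=\rho\}}L^x\,dx=0$, and $\rho\mapsto I(\overline{B_\rho})=\int_{\overline{B_\rho}}L^x\,dx$ is continuous by dominated convergence, so $I$ charges no sphere. Given the resulting identity $A^r_t=\int_0^t1(\inf_{v\le\zeta_u}|W_u(v)|>r_0-r)\,du$, valid off a null set for all $r$ and $t$, left-continuity of $r\mapsto A^r_t$ is immediate, since as $r'\uparrow r$ one has $1(\inf_{v\le\zeta_u}|W_u(v)|>r_0-r')\uparrow 1(\inf_{v\le\zeta_u}|W_u(v)|>r_0-r)$ pointwise, and monotone convergence gives $A^{r'}_t\uparrow A^r_t$.

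Part~(b) I would carry out on the full-measure set from part~(a). There each $A^r_\cdot$ is continuous and non-decreasing and $r\mapsto A^r_t$ is non-decreasing, so $r\mapsto\eta^r_s$ is non-increasing and the left-limit $\eta^{r-}_s:=\lim_{r'\uparrow r}\eta^{r'}_s\ge\eta^r_s$ exists. For the reverse inequality, fix $t>\eta^r_s$; then $A^r_t>s$, and since $A^{r'}_t\uparrow A^r_t$ we get $A^{r'}_t>s$ for $r'$ sufficiently close to $r$, so $\eta^{r'}_s\le t$, whence $\eta^{r-}_s\le t$; letting $t\downarrow\eta^r_s$ yields $\eta^{r-}_s=\eta^r_s$, for every $r\in(0,r_0)$ and $s\ge0$.

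For part~(c) the key point is that $r\mapsto W_{\eta^r_s}$ is an $(\cE^+_r)$-predictable process. It is $(\cE_r)$-adapted, because $W_{\eta^r_s}=W_{\eta^{G_{r_0-r}}_s}$ is one of the generators of $\cE_{G_{r_0-r}}=\cE_r$; and it is left-continuous in $r$, $Q_{x_0}$-a.s., by combining part~(b) with the continuity of the snake $s'\mapsto W_{s'}$ (it also has right limits, since $\eta^r_s$ is monotone in $r$). Modifying on a $Q_{x_0}$-null set, harmless since $\cE^+_r$ is augmented, we obtain a genuinely adapted and left-continuous, hence predictable, version. Then $W_{\eta^T_s}$ is this predictable process evaluated at the stopping time $T$, and a predictable process composed with a stopping time is $\cF_{T-}$-measurable, a fortiori $\cE^+_T$-measurable (see, e.g., \cite{RW94}). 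The only non-routine input in all of this remains the sphere-occupation estimate from part~(a); the rest is measure theory and the general theory of predictable processes.
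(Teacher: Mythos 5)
Your proof is correct and follows essentially the same route as the paper, which simply defers to Lemma~7.4 of \cite{MP17} (half-spaces replaced by balls): the only substantive input is that the occupation measure $I=\int_0^\sigma\delta_{\hat W(u)}\,du$ has a density in $d\le 3$ and hence charges no sphere, all radii simultaneously, which is exactly how you handle the boundary case $S_{G_{r_0-r}}(W_u)=\zeta_u$; parts (b) and (c) then follow by the standard inverse-function and predictability arguments you give.
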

\begin{proof} The proof is a straightforward modification of that of Lemma 7.4 in \cite{MP17}, where 
shrinking half spaces have now been replaced with shrinking balls. 
\end{proof}

\medskip
\noindent{\bf Proof of Lemma~\ref{Emeas}.} By \eqref{psiform} (with a different radii) and Lemma~\ref{borel}(a) there are  Borel maps $\tilde \psi$ on $\mK$ and $\psi$ on $C(\R_+,\cW)$ such that 
\[1_{D_{r_0}}=\tilde \psi(\mR)=\lim_{N\to\infty}\tilde\psi(\{\hat W(s):s\le N\})=\psi(W),\]
where we have used \eqref{snakerange} in the second equality.  In the last equality we have also called
on the continuity of $W\mapsto\{\hat W(s):s\le N\}$ from $C([0,\infty),\cW)$ to $\mK$. Therefore a monotone class argument shows it suffices to fix $s\ge 0$ and show that if $\phi:\cW\to\R$ is bounded Borel then 
\begin{equation}\label{fixedtime}
\phi(W_s)\text{ is }\cE^+_{T_0-}-\text{measurable}.
\end{equation}
Lemma~\ref{Areg}(b) implies that $W_{\eta_s^{T_0}}=\lim_{n\to\infty}W_{\eta_s^{T_{n^{-1}}}}$
 $Q_{x_0}$-as. and so by Lemma~\ref{Areg}(c) and \eqref{E-}, $W_{\eta_s^{T_0}}$ is $\cE^+_{T_0-}$-measurable.  So to prove \eqref{fixedtime} it suffices to show 
 \[W_s=W_{\eta_s^{T_0}}\quad Q_{x_0}-\text{a.s. }\]
 This, in turn, would follow from $A_t^{T_0}=t$ for all $t\ge 0$ $Q_{x_0}$-a.s., 
 or equivalently by Lemma~\ref{Areg}(a),
 \begin{equation}\label{notatmin}
 \int_0^\sigma 1(\inf_{v\le \zeta_u}|W_u(v)|\le r_0-T_0)\,du=0\quad Q_{x_0}-\text{a.s.}
 \end{equation}
 Here we have truncated the integral at $\sigma$ since $\zeta_u=0$ and $|W_u(0)|=|x_0|\ge 2r_0$ for $u\ge \sigma$.
 If $0\le u<\zeta_s$ and $s'<s$ is the last time before $s$ that $\zeta_{s'}=u$, then $\inf_{t\in[s',s]}\zeta_t=\zeta_{s'}=u$ and so (e.g., see p. 66 of \cite{Leg99}) $W_s(u)=\hat W(s')$ $Q_{x_0}$-a.s.  This and Lemma~\ref{lem:T0s} (recall also \eqref{hatT0def})  imply
 \begin{equation}\label{minrad}
 \inf_{u\le \sigma}\inf_{v\le \zeta_u}|W_u(v)|=\hat T_0=\inf\{|x|:x\in\mR\}=r_0-T_0\quad Q_{x_0}-\text{a.s.}
 \end{equation}
 Therefore \eqref{notatmin} is equivalent to
 \begin{equation}\label{notatmin2}
 \int_0^\sigma 1(\inf_{v\le \zeta_u}|W_u(v)|=\hat T_0) \,du=0\quad Q_{x_0}-\text{a.s.}
 \end{equation}

The historical process, $(H_t,t\ge 0)$ is an inhomogeneous Markov process under $\N_{x_0}$ taking values
in $M_F(C(\R_+,\R^d))$--see \cite{DP91} or p. 64 of \cite{Leg99} to see how it is easily
defined from the snake $W$.  The latter readily implies
\begin{equation}\label{hist}
\int_0^\infty H_t(\phi)dt=\int_0^\sigma \phi(W_u)\,du\quad\text{for all non-negative Borel }\phi,\end{equation}
where we have extended $W_u$ to $\R_+$ in the obvious manner.  Recalling \eqref{hatT0def} and letting $X$ be the SBM under $\N_{x_0}$ as usual, we have 
\begin{align}\label{notimeatedge1}
\N&_{x_0}\Bigl(\int_0^\infty1(\inf_{v\le \zeta_u}|W_u(v)|=\hat T_0)du\Bigr)\\
\nn&\le \N_{x_0}\Bigl(\int_0^\infty \int1(\inf_{t'}|y_{t'}|=\hat T_0)H_t(dy)dt\Bigr)\quad\text{(by \eqref{hist})}\\
\label{notimeatedge2}&\le\int_0^\infty \N_{x_0}\Bigl(\int 1\Bigl(\int_0^\infty X_s(\{x:|x|<\inf_{t'\le t}|y(t')|\})ds=0\Bigr)H_t(dy)\Bigr) dt,
\end{align}
where in the last line we use \eqref{minrad} and $y(\cdot)=y(\cdot\wedge t)\ H_t-\text{a.a.}\ y\ \ \forall t\ge 0$ $\N_{x_0}$-a.e. Below we will let $B$ denote a $d$-dimensional Brownian motion starting at $x_0$ under $P^B_{x_0}$, $m_t=\inf_{t'\le t}|B_{t'}|=|B_{\tau_t}|$ (for some $\tau_t<t$), and $L^x$ be the local time of the SBM $X$ (at time infinity).  Fix $t>0$ and use the Palm measure formula for $H_t$ (e.g. Proposition~4.1.5 of \cite{DP91}) to see that (cf. (7.22) in \cite{MP17})
\begin{align}\label{notimeatedge3}
 \nn\N&_{x_0}\Bigl(\int 1\Bigl(\int_0^\infty X_s(\{x:|x|<\inf_{t'\le t}|y(t')|\})ds=0\Bigr)H_t(dy)\Bigr)\\
\nn&=E^B_{x_0}\Bigl(\exp\Bigl(-\int_0^t\int 1\Bigl(\int_0^\infty X_s(\{x:|x|<m_t\})ds>0\Big)d\N_{B_u}du\Bigr)\Bigr)\\
&\le E_{x_0}^B\Bigl(\exp\Bigl(-\int_0^t\N_{B_u}(L^{B_{\tau_t}}>0)du\Bigr)\Bigr).
\end{align}
It follows from \eqref{VinfL}, \eqref{Vinf} and $\P_{\delta_x}(L^y=0)=\exp(-\N_x(L^y>0))$ (see, e.g., (2.12) in
 \cite{MP17}) that
\[\N_x(L^y>0)=2(4-d)|x-y|^{-2}.\]  
Use this to bound  \eqref{notimeatedge3} by 
\[E^B_{x_0}\Bigl(\exp\Bigl(-\int_0^t\frac{2(4-d)}{|B_s-B_{\tau_t}|^2}ds\Bigr)\Bigr).\]
A simple application of L\'evy's modulus for $B$ shows the above integral is infinite a.s. and
so proves that \eqref{notimeatedge1} equals zero. This implies \eqref{notatmin2}, as required.
\qed
}
\medskip

\appendix

\section{Appendix}
\subsection{Proof of Lemma~\ref{borel}}\label{secmeas}
 (a) Let $K^{(\veps)}=\{x:d(x,K)<\veps\}$.  If $K_0\in\mK$ and $0<r\le 1$ are fixed it suffices to show that 
$\{K\in\mK:d(K\cap\overline B_R,K_0)<r\}$ is Borel.  If $r_n\uparrow r$, this set is equal to
\begin{align*} 
&\{K:K\cap \overline B_R\subset K_0^{(r)}\}\cap\{K:K_0\subset (K\cap\overline B_R)^{(r)}\}\\
=&\{K:K\cap \overline B_R\subset K_0^{(r)}\}\cap\Bigl(\cup_{n=1}^\infty\{K:K_0\subset (K\cap\overline B_R)^{r_n}\}\Bigr):=S_1\cap\Bigl(\cup_{n=1}^\infty S^n_2\Bigr).
\end{align*}
It is then not hard to show that $S_1$ is open in $\mK$ and $S_2^n$ is closed in $\mK$.\\
\no 
(b) This easily reduces to showing that for any fixed rationals $q\in(0,\alpha)$ and $r\in(0,R)$, the following describes an universally measurable subset of $K's$ in $\mK$:
\begin{align*}&\text{For any natural number }N \text{ there is a finite number of open balls $B^1,\dots, B^M$}\\
&\text{centered at points in }\Q^d\text{ and with rational radii }r_1,\dots,r_M>0\text{ satisfying }\sum_{i=1}^M r_i^q<N^{-1}\\
&\text{ so that }\partial K\cap\overline B_{r}\subset\cup_{i=1}^M B^i.
\end{align*}
So fixing $B_i$ and $r$ as above, it suffices to show
\begin{equation*}
A_1=\{K\in\mK:\partial K\cap\overline B_r\subset \cup_{i=1}^MB^i\}^c\text{ is an analytic set in }\mK,
\end{equation*}
because this implies $A_1$,  and hence $A_1^c$, is a universally measurable set in $\mK$.
Let \hfil\break $K_0=\Bigl(\cup_{i=1}^M B^i\Bigr)^c\cap \overline B_r\in \mK$, $H_0=\{(x,K)\in \R^d\times\mK:x\in K\}$, and for $n\ge 1$, set $H_n=\{(x,K)\in \R^d\times\mK:B_{n^{-1}}(x)\not\subset K\}$.  Then 
\begin{align*}A_1&=\{K\in\mK:K_0\cap\partial K\neq\emptyset\}\\
&=\{K\in\mK:\exists x\in K_0\text{ s.t. } x\in K\text{ and }\forall n\in\N\ B_{n^{-1}}(x)\cap K^c\neq\emptyset\}\\
&=\{K\in\mK:\exists x\in \R^d\text{ s.t. }(x,K)\in (K_0\times\mK)\cap\Bigl(\cap_{n=0}^\infty H_n\Bigr)\}.
\end{align*}
Using the well-known fact that the projection of a Borel subset of $K_0\times \mK$ onto $\mK$ is an analytic subset of $\mK$ (see eg. Theorem 13 in Ch. III of \cite{DM78} and note the argument goes through with $\R^d$ in place of $\R$), it then suffices to show each $H_n$ is Borel. $H_0$ is the countable intersection of the open sets
$H^M_0=\{(x,K):x\in K^{(1/M)}\}$. Moreover it is not hard to see that $H_n$ is open for $n\ge 1$, and we are done.\qed

\subsection{Proof of Lemma~\ref{lem:22_7_1}}
\label{sec22_7_1}
For the proof we will use the following lemma of Marc Yor (see Proposition 2.5 of \cite{MP17}). Recall that for  $\gamma\in\R$, $(\rho_t)$ denotes a $\gamma$-dimensional Bessel process starting from $r>0$ under $P_{r}^{(\gamma)}$, $(\cF_t)$ is the filtration generated by $\rho$ and $\tau_R=\inf\{t\geq 0: \rho_t\leq R\}$ for $R>0$. 

\begin{lemma}\label{l1.0}
Let $\lambda\geq 0$, $\mu\in \R, r>0$ and $\nu=\sqrt{\lambda^2+\mu^2}$. If $\Phi_t\geq 0$ is $\cF_t$-adapted, then for all $R<r$, we have
\[E_r^{(2+2\mu)}\Big(\Phi_{t\wedge \tau_R} \exp\Big(-\frac{\lambda^2}{2} \int_0^{t\wedge \tau_R} \frac{1}{\rho_s^2} ds\Big)\Big)=r^{\nu-\mu} E_{r}^{(2+2\nu)} \Big((\rho_{t\wedge \tau_R})^{-\nu+\mu} \Phi_{t\wedge \tau_R}\Big).\]
\end{lemma}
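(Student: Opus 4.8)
The plan is to reduce \eqref{eq:23_7_1} to a Bessel-process identity and then apply M. Yor's change-of-measure lemma (Lemma~\ref{l1.0}) with a carefully chosen weight that makes both sides monotone in a truncation parameter, so that only the Monotone Convergence Theorem is needed to pass to the limit. Since $|B_s|$ under $P_x$ is a $(2+2\mu)$-dimensional Bessel process started at $r:=|x|$, with $\mu=(d-2)/2$ as in \eqref{e1.5}, the left-hand side of \eqref{eq:23_7_1} equals
\[
E_r^{(2+2\mu)}\Bigl(1(\tau_R<\infty)\exp\Bigl(\int_0^{\tau_R}\tfrac{a}{\rho_s^q}\,ds\Bigr)\exp\Bigl(-\tfrac{\lambda^2}{2}\int_0^{\tau_R}\tfrac{ds}{\rho_s^2}\Bigr)\Bigr),
\]
where I set $\tfrac{\lambda^2}{2}=2(4-d)-\zeta/2$, i.e. $\lambda=\sqrt{4(4-d)-\zeta}\ge 0$ (legitimate since $0\le\zeta<2(4-d)<4(4-d)$). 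With this choice $\sqrt{\lambda^2+\mu^2}=\sqrt{\mu^2+4(4-d)-\zeta}=\nu_\zeta$, so $\nu_\zeta$ is precisely the dimension-shifted index produced by Lemma~\ref{l1.0}. Note also that $\nu_\zeta^2=\mu^2+4(4-d)-\zeta>\mu^2\ge 0$ (using $\zeta<2(4-d)\le 4(4-d)$), so $\nu_\zeta>0$ and $\mu-\nu_\zeta<0$; these two facts will be used below.

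First I would apply Lemma~\ref{l1.0} with the above $\lambda$ and $\mu$, $r=|x|$, and the $\cF_t$-adapted nonnegative weight $\Phi_t=1(\tau_R\le t)\exp\bigl(\int_0^{t\wedge\tau_R}\tfrac{a}{\rho_s^q}\,ds\bigr)$ (the integral is finite since $\rho_s>R>0$ on $[0,t\wedge\tau_R)$). Then $\Phi_{t\wedge\tau_R}=1(\tau_R\le t)\exp\bigl(\int_0^{t\wedge\tau_R}\tfrac{a}{\rho_s^q}ds\bigr)$, which equals $\exp\bigl(\int_0^{\tau_R}\tfrac{a}{\rho_s^q}ds\bigr)$ on $\{\tau_R\le t\}$ and vanishes off it, and on $\{\tau_R\le t\}$ one also has $t\wedge\tau_R=\tau_R$ and $\rho_{t\wedge\tau_R}=R$ by continuity of $\rho$. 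Hence Lemma~\ref{l1.0} gives, for every $t\ge 0$,
\[
E_r^{(2+2\mu)}\Bigl(1(\tau_R\le t)\exp\Bigl(\int_0^{\tau_R}\Bigl(\tfrac{a}{\rho_s^q}-\tfrac{\lambda^2/2}{\rho_s^2}\Bigr)ds\Bigr)\Bigr)
= r^{\nu_\zeta-\mu}R^{\mu-\nu_\zeta}\,E_r^{(2+2\nu_\zeta)}\Bigl(1(\tau_R\le t)\exp\Bigl(\int_0^{\tau_R}\tfrac{a}{\rho_s^q}\,ds\Bigr)\Bigr).
\]
Both sides are manifestly nondecreasing in $t$ (the indicators increase, the integrands are fixed and nonnegative), so letting $t\uparrow\infty$ and applying the Monotone Convergence Theorem to each side yields, as an identity of $[0,\infty]$-valued quantities valid for every $a\in\R$,
\[
E_r^{(2+2\mu)}\Bigl(1(\tau_R<\infty)\exp\Bigl(\int_0^{\tau_R}\tfrac{a}{\rho_s^q}ds\Bigr)\exp\Bigl(-\tfrac{\lambda^2}{2}\int_0^{\tau_R}\tfrac{ds}{\rho_s^2}\Bigr)\Bigr)
= r^{\nu_\zeta-\mu}R^{\mu-\nu_\zeta}\,E_r^{(2+2\nu_\zeta)}\Bigl(1(\tau_R<\infty)\exp\Bigl(\int_0^{\tau_R}\tfrac{a}{\rho_s^q}ds\Bigr)\Bigr).
\]

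To finish I would rewrite the right-hand side as a conditional expectation: since $2+2\nu_\zeta>2$, the Bessel process is transient and $P_r^{(2+2\nu_\zeta)}(\tau_R<\infty)=(R/r)^{2\nu_\zeta}$ (the standard scale-function computation). Therefore the right-hand side equals $r^{\nu_\zeta-\mu}R^{\mu-\nu_\zeta}(R/r)^{2\nu_\zeta}\,E_r^{(2+2\nu_\zeta)}\bigl(\exp(\int_0^{\tau_R}\tfrac{a}{\rho_s^q}ds)\mid\tau_R<\infty\bigr)=(R/r)^{\mu+\nu_\zeta}E_{|x|}^{(2+2\nu_\zeta)}\bigl(\exp(\int_0^{\tau_R}\tfrac{a}{\rho_s^q}ds)\mid\tau_R<\infty\bigr)$, and $\mu+\nu_\zeta=p_\zeta$, which is exactly \eqref{eq:23_7_1}. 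There is essentially no hard step here: the only points needing care are the parameter matching ($\lambda^2=4(4-d)-\zeta$, $\nu_\zeta=\sqrt{\lambda^2+\mu^2}$) and the observation that, by inserting the indicator $1(\tau_R\le t)$ into the Yor weight, both sides of the finite-$t$ identity become monotone, so that no domination/integrability argument is required and the identity holds even when both sides are infinite (which does occur for large $a$, consistently with the sharpness of Lemma~\ref{expbound2}). One should record at the end that $\rho$ denotes the Bessel process realized as $|B|$, so that the hitting times $\tau_R$ on the two sides coincide.
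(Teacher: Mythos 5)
Your proposal does not prove the statement it was assigned. The statement is Lemma~\ref{l1.0} itself --- Yor's absolute-continuity identity between the $(2+2\mu)$- and $(2+2\nu)$-dimensional Bessel laws --- whereas your argument \emph{invokes} Lemma~\ref{l1.0} as a known tool and uses it to derive \eqref{eq:23_7_1}, i.e.\ Lemma~\ref{lem:22_7_1}. Relative to the assigned statement this is circular: the displayed identity is never established. (The paper does not prove it either; it is quoted from Proposition~2.5 of \cite{MP17}.) A genuine proof would run as follows. Under $P_r^{(2+2\mu)}$ write $d\rho_t=d\beta_t+\frac{2\mu+1}{2\rho_t}\,dt$ and consider the stochastic exponential $M_t$ of $(\nu-\mu)\int_0^{t\wedge\tau_R}\rho_s^{-1}\,d\beta_s$; Novikov's condition holds because $\rho_s\ge R>0$ for $s\le\tau_R$, so $M$ is a true martingale, and Girsanov's theorem shows that under $M_t\,dP_r^{(2+2\mu)}$ the stopped process is Bessel of index $\nu$ up to $\tau_R$. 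Applying It\^o's formula to $\log\rho$ gives $\int_0^{t\wedge\tau_R}\rho_s^{-1}d\beta_s=\log(\rho_{t\wedge\tau_R}/r)-\mu\int_0^{t\wedge\tau_R}\rho_s^{-2}ds$, whence
\[M_t=\Bigl(\frac{\rho_{t\wedge\tau_R}}{r}\Bigr)^{\nu-\mu}\exp\Bigl(-\frac{\nu^2-\mu^2}{2}\int_0^{t\wedge\tau_R}\rho_s^{-2}\,ds\Bigr),\]
and since $\nu^2-\mu^2=\lambda^2$, evaluating $E_r^{(2+2\nu)}\bigl((\rho_{t\wedge\tau_R})^{\mu-\nu}\Phi_{t\wedge\tau_R}\bigr)=E_r^{(2+2\mu)}\bigl(M_t(\rho_{t\wedge\tau_R})^{\mu-\nu}\Phi_{t\wedge\tau_R}\bigr)$ and rearranging yields exactly the claimed identity. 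None of this appears in your write-up.

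For what it is worth, the argument you did produce --- the derivation of \eqref{eq:23_7_1} \emph{from} Lemma~\ref{l1.0} --- is correct: the parameter matching $\lambda^2=4(4-d)-\zeta$, $\nu_\zeta=\sqrt{\lambda^2+\mu^2}$ is right, and your device of inserting the indicator $1(\tau_R\le t)$ into the weight $\Phi$ so that both sides of the finite-$t$ identity increase monotonically to the $t=\infty$ quantities is a genuine streamlining of the paper's Appendix~\ref{sec22_7_1}, which treats $a\ge 0$ and $a<0$ separately via Fatou, bounded convergence, and a separate lower bound. Had the target been Lemma~\ref{lem:22_7_1}, this would be a clean alternative; for Lemma~\ref{l1.0} the proof is simply missing.
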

Now we are ready to give 
\paragraph{Proof of Lemma~\ref{lem:22_7_1}}
We use Fatou's lemma and then Lemma~\ref{l1.0} to get that for $a\geq 0$, 
\begin{align*}
E_x&\Big(1(\tau _R<\infty)\exp\Bigl(\int_{0}^{\tau_R} \frac{a}{|B_s|^q}ds\Bigr)  \exp\Bigl(-\int_{0}^{\tau_R} \frac{2(4-d)-\zeta/2}{|B_s|^2}ds\Bigr) \Big)
 \\
&\leq \liminf_{t\to \infty} E_{|x|}^{(2+2\mu)}\Big(1_{(\tau_R\leq \tau_R\wedge t )} \exp\Bigl(\int_{0}^{\tau_R\wedge t } \frac{a}{\rho_s^q}ds\Bigr) \exp\Bigl(-\int_{0}^{\tau_R\wedge t} \frac{2(4-d)-(\zeta/2)}{\rho_s^2}ds\Bigr) \Big) \nonumber\\
&  = |x|^{\nu_\zeta-\mu}\liminf_{t\to \infty} E_{|x|}^{(2+2\nu_\zeta)}\Big(1_{(\tau_R\leq \tau_R\wedge t )}\exp\Bigl(\int_{0}^{\tau_R\wedge t } \frac{a}{\rho_s^q}ds\Bigr)  \rho_{t\wedge \tau_R}^{\mu-\nu_\zeta} \Big) \nonumber
\\
&
= (R/|x|)^{\mu-\nu_\zeta} E_{|x|}^{(2+2\nu_\zeta)}\Big(1_{(\tau_R<\infty)}\exp\Bigl(\int_{0}^{\tau_R} \frac{a}{\rho_s^q}ds\Big)\Big)\quad(\text{since }\rho_{t\wedge \tau_R}=R\text{ on }\{\tau_R\le t\})
\\
&
= (R/|x|)^{p_\zeta} E_{|x|}^{(2+2\nu_\zeta)}\Big(\exp\Bigl(\int_{0}^{\tau_R} \frac{a}{\rho_s^q}ds\Bigr)  \Bigl|\tau_R<\infty\Big), 
\end{align*}
where in next to  the last line we use monotone convergence for $a\geq 0$, and in the last line the hitting probabilities for Bessel processes (e.g. (48.3) and (48.5) in Ch. V of \cite{RW94}) as well as $p_\zeta=\mu+\nu_\zeta$. Note that for $a<0$,
 by bounded convergence, we get equality in the second line above (with $\liminf_{t\rightarrow \infty}$ replaced by $\lim_{t\rightarrow \infty}$) and thus proceeding as above we get, by using  bounded convergence again in the next to the last line, that \eqref{eq:23_7_1} holds for $a<0$.

It remains to verify the lower bound in \eqref{eq:23_7_1}, for $a\geq 0$. Fix $T>0$. Then we have 
\begin{align*}
E_x&\Big(1(\tau _R<\infty)\exp\Bigl(\int_{0}^{\tau_R} \frac{a}{|B_s|^q}ds\Bigr)  \exp(-\int_{0}^{\tau_R} \frac{2(4-d)-\zeta/2}{|B_s|^2}ds) \Big)
 \\
&\geq  E_{|x|}^{(2+2\mu)}\Big(1_{(\tau_R<\infty)} \exp\Bigl(\int_{0}^{\tau_R\wedge T } \frac{a}{\rho_s^q}ds\Bigr) \exp\Bigl(-\int_{0}^{\tau_R} \frac{2(4-d)-(\zeta/2)}{\rho_s^2}ds\Bigr) \Big) \nonumber\\
&  = |x|^{\nu_\zeta-\mu}\lim_{t\to \infty} E_{|x|}^{(2+2\nu_\zeta)}\Big(1_{(\tau_R\leq \tau_R\wedge t )}\exp\Bigl(\int_{0}^{\tau_R\wedge t \wedge T} \frac{a}{\rho_s^q}ds\Bigr)  R^{\mu-\nu_\zeta} \Big) \nonumber
\\
&
= (R/|x|)^{\mu-\nu_\zeta} E_{|x|}^{(2+2\nu_\zeta)}\Big(1_{(\tau_R<\infty)}\exp\Bigl(\int_{0}^{\tau_R\wedge T} \frac{a}{\rho_s^q}ds\Big)\Big)
\\
&
= (R/|x|)^{p_\zeta} E_{|x|}^{(2+2\nu_\zeta)}\Big(\exp\Bigl(\int_{0}^{\tau_R\wedge T} \frac{a}{\rho_s^q}ds\Bigr)  \Bigl|\tau_R<\infty\Big),
\end{align*}
where in the first equality we used bounded convergence and  Lemma~\ref{l1.0}, in the second equality bounded convergence again, and in the last equality the hitting probabilities 
for Bessel processes. Now let $T\rightarrow \infty$, to get the required lower bound, and we are done. 
\qed
\ARXIV{\subsection{Proof of Proposition \ref{p3.1} }
\label{sec:5}
For $|x_i| \geq \varepsilon_0, i=1,2,$ and $\varepsilon \in (0,\varepsilon_0)$, if $|x_1-x_2|\leq 5 \varepsilon$, then use $xe^{-x} \leq e^{-1}, \forall x\geq 0$ to get
\begin{align*}
&\E_{\delta_0}\Big(\prod_{i=1}^2 \lambda \frac{X_{G_\varepsilon^{x_i}}(1)}{\varepsilon^2} \exp\big(-\lambda \frac{X_{G_\varepsilon^{x_i}}(1)}{\varepsilon^2}\big)\Big) \leq e^{-1} \E_{\delta_0}\Big(\lambda \frac{X_{G_\varepsilon^{x_1}}(1)}{\varepsilon^2} \exp\big(-\lambda \frac{X_{G_\varepsilon^{x_1}}(1)}{\varepsilon^2}\big)\Big).
\end{align*}
Recall the definition of $F=F_{\varepsilon,x_1}$ in \eqref{e1.8.1}. For all $\lambda>0$, an integration by parts gives 
\begin{align*}
&\E_{\delta_0}\Big(\lambda \frac{X_{G_\varepsilon^{x_1}}(1)}{\varepsilon^2} \exp\big(-\lambda \frac{X_{G_\varepsilon^{x_1}}(1)}{\varepsilon^2}\big)\Big)= \int_0^\infty \lambda xe^{-\lambda x} dF(x)\\
=& \int_0^\infty \lambda (\lambda x-1)e^{-\lambda x}F(x) dx
=\int_0^\infty  (y-1)e^{-y}F(\frac{y}{\lambda}) dy\leq F(2)+\int_{2\lambda}^\infty ye^{-y} F(\frac{y}{\lambda})dy\\
\leq &c_{\ref{t1.2}} 2^{p-2} \varepsilon^{p-2}+\int_{2\lambda}^\infty ye^{-y}  c_{\ref{t1.2}} (\frac{y}{\lambda})^{p-2} \varepsilon^{p-2} dy= C(\varepsilon_0, \lambda) \varepsilon^{p-2},
\end{align*}
the last line by Proposition  \ref{t1.2}.
Therefore 
\[\E_{\delta_0}\Big(\prod_{i=1}^2 \lambda \frac{X_{G_\varepsilon^{x_i}}(1)}{\varepsilon^2} \exp\big(-\lambda \frac{X_{G_\varepsilon^{x_i}}(1)}{\varepsilon^2}\big)\Big) \leq e^{-1} C(\varepsilon_0, \lambda) \varepsilon^{p-2}\leq e^{-1} 5^{p-2} C(\varepsilon_0, \lambda) |x_1-x_2|^{2-p} \varepsilon^{2(p-2)},\] provided $|x_1-x_2|\leq 5 \varepsilon$. As a result, \\

\noindent $\mathbf{throughout\ the\ rest\ of\ this\ Section\ we\ may\ fix }$ $\varepsilon_0>0$, $|x_i|\geq \varepsilon_0$ {\bf and} $\varepsilon \in (0,\varepsilon_0)$ {\bf with} $|x_1-x_2|>5 \varepsilon$. In this case, we have $B({x_1},2\varepsilon) \cap B({x_2},2\varepsilon) =\emptyset$.\\

Let  $\vec{x}=(x_1,x_2)$, $G=G_\varepsilon^{x_1} \cap G_\varepsilon^{x_2}$, and $\vec{\lambda}=(\lambda_1,\lambda_2) \in [0, \infty)^2\backslash \{(0,0)\}$. For $X_0\in M_F(\R^d)$ such that $d(\text{Supp}(X_0),G^c)>0$, the decomposition \eqref{exitdecomp} with $G=G_\veps^{x_i}$, $i=1,2$, gives
\begin{align}\label{e4.01}
\E_{X_0}\Big( \exp\big(-\sum_{i=1}^2 \lambda_i \frac{X_{G_\varepsilon^{x_i}}(1)}{\varepsilon^2}\big)\Big) =\exp\Big(-\int U^{\vec{\lambda},\vec{x},\varepsilon} (x) X_0(dx)\Big),
\end{align} 
where $U^{\vec{\lambda},\vec{x},\varepsilon}\geq 0$ is defined as
\begin{align}\label{e4.3}
U^{\vec{\lambda},\vec{x},\varepsilon}(x) \equiv \N_x\Big(1-\exp\big(-\sum_{i=1}^2 \lambda_i \frac{X_{G_\varepsilon^{x_i}}(1)}{\varepsilon^2}\big) \Big).
\end{align} 
 We use results from Chapter V of \cite{Leg99} to get the following lemma.
\begin{lemma}\label{l4.0}
$U^{\vec{\lambda},\vec{x},\varepsilon}$ is a $C^2$ function on $G$ and solves
\begin{align}\label{e4.4}
\Delta U^{\vec{\lambda},\vec{x},\varepsilon}=(U^{\vec{\lambda},\vec{x},\varepsilon})^2 \text{ on } G.
\end{align}
Moreover, \[ U^{\vec{\lambda},\vec{x},\varepsilon}(x) \leq (\lambda_1+\lambda_2) \varepsilon^{-2},\ \forall x\in G.\]
\end{lemma}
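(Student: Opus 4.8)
The plan is to first establish the pointwise bound $U^{\vec\lambda,\vec x,\varepsilon}\le(\lambda_1+\lambda_2)\varepsilon^{-2}$, and then deduce the PDE together with $C^2$-regularity via the special Markov property applied to small balls inside $G$.

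\textbf{The bound.} Since $1-e^{-(a+b)}\le(1-e^{-a})+(1-e^{-b})$ for $a,b\ge0$, \eqref{e4.3} gives, for $x\in G$,
\[
U^{\vec\lambda,\vec x,\varepsilon}(x)\le\sum_{i=1}^2\N_x\Bigl(1-\exp\bigl(-\lambda_i\varepsilon^{-2}X_{G^{x_i}_\varepsilon}(1)\bigr)\Bigr).
\]
By \eqref{LFEM} (with $X_0=\delta_x$ and the bounded continuous datum $g\equiv\lambda_i\varepsilon^{-2}$ on $\partial G^{x_i}_\varepsilon$) the $i$-th term is the exit-measure solution \eqref{EMpde} for $G^{x_i}_\varepsilon$ with this datum, which by translation invariance equals $U^{\lambda_i\varepsilon^{-2},\varepsilon}(x-x_i)$ in the notation of \eqref{pde1}. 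Since $x\in G\subset G^{x_i}_\varepsilon$ gives $|x-x_i|>\varepsilon$, \eqref{Ulbnd} yields $U^{\lambda_i\varepsilon^{-2},\varepsilon}(x-x_i)\le\lambda_i\varepsilon^{-2}$, and summing gives the bound. In particular $U^{\vec\lambda,\vec x,\varepsilon}$ is a bounded Borel function on $G$.

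\textbf{The PDE and regularity.} Fix $x_0\in G$ and $r>0$ with $\overline{B_r(x_0)}\subset G$. The compact set $\overline{B_r(x_0)}$ is disjoint from the closed balls $\overline{B_\varepsilon(x_i)}=(G^{x_i}_\varepsilon)^c$, so $d\bigl((G^{x_i}_\varepsilon)^c,\overline{B_r(x_0)}\bigr)>0$. Work under $\N_x$ with $x\in B_r(x_0)$, and let $\{W^j\}$ be the excursions of $W$ outside $B_r(x_0)$, with associated $\sigma$-field $\cE_{B_r(x_0)}$. Because any mass that exits $G^{x_i}_\varepsilon$ must first leave $B_r(x_0)$, \eqref{Widecomp} gives $X_{G^{x_i}_\varepsilon}(1)=\sum_j X_{G^{x_i}_\varepsilon}(W^j)(1)$, hence $F:=\sum_i\lambda_i\varepsilon^{-2}X_{G^{x_i}_\varepsilon}(1)=\sum_j F(W^j)$ with $F(W^j)=\sum_i\lambda_i\varepsilon^{-2}X_{G^{x_i}_\varepsilon}(W^j)(1)$. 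Conditioning on $\cE_{B_r(x_0)}$, applying the conditional Poisson structure \eqref{SMP1} (intensity $\N_{X_{B_r(x_0)}}$), and using $\int(1-e^{-F(W)})\,d\N_y=\N_y(1-e^{-F})=U^{\vec\lambda,\vec x,\varepsilon}(y)$ for $y\in\partial B_r(x_0)\subset G$ (by \eqref{e4.3}), we obtain
\[
U^{\vec\lambda,\vec x,\varepsilon}(x)=\N_x\Bigl(1-\exp\bigl(-X_{B_r(x_0)}(h)\bigr)\Bigr),\qquad x\in B_r(x_0),\ \ h:=U^{\vec\lambda,\vec x,\varepsilon}|_{\partial B_r(x_0)}.
\]
Thus on $B_r(x_0)$, $U^{\vec\lambda,\vec x,\varepsilon}$ is the exit-measure solution \eqref{EMpde} on $B_r(x_0)$ with boundary datum $h$. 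For continuous $h$ this is Le Gall's Theorem~6 in Chapter~V of \cite{Leg99}; for our bounded Borel $h$, approximate $h$ pointwise on $\partial B_r(x_0)$ by continuous $h_k$ with $0\le h_k\le\|h\|_\infty$ and use dominated convergence under $\N_x$ (legitimate since $1-e^{-X_{B_r(x_0)}(h_k)}\le\|h\|_\infty X_{B_r(x_0)}(1)$ and $\N_x(X_{B_r(x_0)}(1))\le1$, being the probability that a Brownian motion from $x$ exits $B_r(x_0)$) to conclude that $U^{\vec\lambda,\vec x,\varepsilon}$ satisfies on $B_r(x_0)$ the mild form of $\Delta U=U^2$ attached to \eqref{EMpde}. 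Since $U^{\vec\lambda,\vec x,\varepsilon}$ is bounded, a standard interior elliptic bootstrap (bounded $\Rightarrow C^{1,\alpha}_{\mathrm{loc}}\Rightarrow C^{3,\alpha}_{\mathrm{loc}}\Rightarrow\cdots$) gives $U^{\vec\lambda,\vec x,\varepsilon}\in C^2(B_r(x_0))$ with $\Delta U^{\vec\lambda,\vec x,\varepsilon}=(U^{\vec\lambda,\vec x,\varepsilon})^2$ there. As $x_0\in G$ was arbitrary, \eqref{e4.4} and the asserted regularity follow.

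\textbf{Main obstacle.} The one delicate point is that the effective boundary datum $h$ produced by the special Markov property is only bounded and Borel, while the cited representation \eqref{EMpde} is stated for continuous data; bridging this — the dominated-convergence approximation above, resting on finiteness of the mean exit measure, followed by the interior-regularity bootstrap — is where the care lies. Everything else (the special Markov property, translation invariance, and the single-ball bound \eqref{Ulbnd}) is routine. One could instead run the special Markov step on the shrunken domains $G^{(n)}=\{|x-x_1|>\varepsilon+n^{-1}\}\cap\{|x-x_2|>\varepsilon+n^{-1}\}\uparrow G$, which lie at positive distance from $\overline{B_\varepsilon(x_i)}$; this avoids localising to small balls but not the continuity-of-datum issue.
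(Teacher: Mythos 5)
Your argument follows essentially the same route as the paper's: the uniform bound by an elementary inequality applied inside $\N_x$, and then the PDE by localising to a small ball $D$ with $\overline D\subset G$, applying the special Markov property to write $U^{\vec\lambda,\vec x,\varepsilon}(x)=\N_x\bigl(1-\exp(-X_D(U^{\vec\lambda,\vec x,\varepsilon}))\bigr)$ on $D$, and invoking Theorem~6 of Chapter~V of \cite{Leg99}. (For the bound the paper uses $1-e^{-u}\le u$ together with the first–moment formula $\N_x(X_{G^{x_i}_\varepsilon}(1))=P_x(\tau_i<\infty)\le 1$ rather than subadditivity plus \eqref{Ulbnd}; both are fine.)

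The one point worth flagging is your ``main obstacle'': it is not actually an obstacle. Le Gall's Theorem~V.6 is stated for bounded nonnegative \emph{measurable} boundary data (continuity of the datum is only needed to identify boundary values), so the boundedness of $U^{\vec\lambda,\vec x,\varepsilon}$ established in the first step is exactly what lets one cite it directly, as the paper does. Your substitute approximation argument is therefore superfluous, and as written it is also slightly off: a bounded Borel $h$ need not be a pointwise limit of continuous functions everywhere on $\partial B_r(x_0)$; one would have to approximate a.e.\ with respect to the harmonic measures from points of $B_r(x_0)$ (which suffices, since $\N_y$-a.e.\ the exit measure does not charge harmonic-measure-null sets) and then pass to the limit in the solutions via interior elliptic estimates. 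None of this affects the correctness of the lemma, only the economy of the proof.
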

\begin{proof}

Let \[u(x)\equiv U^{\vec{\lambda},\vec{x},\varepsilon}(x)= \N_x\Big( 1-\exp\big(-\sum_{i=1}^2 \lambda_i \frac{X_{G_\varepsilon^{x_i}}(1)}{\varepsilon^2} \big)\Big).\]
Then use $1-e^{-x}\leq x$ to get
\begin{align}\label{e4.02}
u(x)&\leq \N_x\Big( \sum_{i=1}^2 \lambda_i \frac{X_{G_\varepsilon^{x_i}}(1)}{\varepsilon^2}\Big)=\sum_{i=1}^2 \lambda_i \varepsilon^{-2} P_x(\tau_i<\infty ) \leq (\lambda_1+\lambda_2) \varepsilon^{-2},
\end{align}
the equality by Proposition V.3 of \cite{Leg99}, where $(B_t)$ is $d$-dimensional Brownian motion starting from $x$ under $P_x$ and $\tau_i=\inf\{t\geq 0: B_t \notin G_\varepsilon^{x_i}\}$.\\

Next, for any $x'\in G$, let $D$ be an open ball that contains $x'$, whose closure is in $G$. Use \eqref{e4.01} with $X_0=\delta_x$ and then Proposition \ref{p0.1}(b)(i) to see that for $x\in D$,
\begin{align*}
e^{-u(x)}
=&\E_{\delta_x}\Big( \exp\big(-\sum_{i=1}^2 \lambda_i \frac{X_{G_\varepsilon^{x_i}}(1)}{\varepsilon^2} \big) \Big)=\E_{\delta_x}\Big( \E_{X_D} \Big(\exp\big(-\sum_{i=1}^2 \lambda_i \frac{X_{G_\varepsilon^{x_i}}(1)}{\varepsilon^2} \big)\Big)\Big)\\
=& \E_{\delta_x}\Big(  \exp\big(-\int  u(x) X_D(dx)  \big)\Big)=\exp\Big(- \N_x\big( 1-\exp\big(-\int  u(y) X_D(dy)\big)\big) \Big),
\end{align*}
 the third  equality by \eqref{e4.01} with $X_0=X_D$, and the last by the decomposition \eqref{exitdecomp}. Therefore 
 \[u(x)=\N_x\big( 1-\exp\big(-\int  u(y) X_D(dy)\big)\big)\quad\forall x\in D.\] 
 Note $u$ is bounded in $G$ by \eqref{e4.02}, and hence on $\partial D$. Use Theorem V.6 of \cite{Leg99} to conclude \[\Delta u(x)=(u(x))^2, \ \forall x\in D,\text{ and, in particular, for }x=x'.\] 
 Since $x'$ is arbitrary, it holds for all $x\in G$.
\end{proof}

Let $X_0=\delta_x$ in \eqref{e4.01} for $x\in G$ to get 
\begin{align}\label{e4.1}
\E_{\delta_x}\Big( \exp\big(-\sum_{i=1}^2 \lambda_i \frac{X_{G_\varepsilon^{x_i}}(1)}{\varepsilon^2}\big)\Big) =\exp(-U^{\vec{\lambda},\vec{x},\varepsilon}(x)).
\end{align} 
Monotone convergence and the convexity of $e^{-ax}$ for $a,x>0$ allow us to differentiate the left-hand side of \eqref{e4.1} with respect to $\lambda_i>0$ through the expectation and so conclude that for $i=1,2,$ $U_i^{\vec{\lambda},\vec{x},\varepsilon}(x)=\frac{\partial}{\partial \lambda_i} U^{\vec{\lambda},\vec{x},\varepsilon}(x)$ exists and
\[\E_{\delta_x}\Big( \frac{X_{G_\varepsilon^{x_i}}(1)}{\varepsilon^2}\exp\big(-\sum_{i=1}^2 \lambda_i \frac{X_{G_\varepsilon^{x_i}}(1)}{\varepsilon^2}\big)\Big)=e^{-U^{\vec{\lambda},\vec{x},\varepsilon}(x)} U_i^{\vec{\lambda},\vec{x},\varepsilon}(x) \text{ for } \lambda_i>0, \lambda_{3-i}\geq 0.\]
Repeat the above to see that $U^{\vec{\lambda},\vec{x},\varepsilon}(x)$ is $C^2$ in $\lambda_1, \lambda_2>0$ and if $U_{1,2}^{\vec{\lambda},\vec{x},\varepsilon}(x)=\frac{\partial^2}{\partial \lambda_1 \partial \lambda_2} U^{\vec{\lambda},\vec{x},\varepsilon}(x)$, then
\begin{align}\label{e4.2}
\E_{\delta_x}\Big(& \frac{X_{G_\varepsilon^{x_1}}(1)}{\varepsilon^2} \frac{X_{G_\varepsilon^{x_2}}(1)}{\varepsilon^2}\exp\big(-\sum_{i=1}^2 \lambda_i \frac{X_{G_\varepsilon^{x_i}}(1)}{\varepsilon^2}\big)\Big)\\
\nn&=e^{-U^{\vec{\lambda},\vec{x},\varepsilon}(x)} \Big[U_1^{\vec{\lambda},\vec{x},\varepsilon}(x)U_2^{\vec{\lambda},\vec{x},\varepsilon}(x)-U_{1,2}^{\vec{\lambda},\vec{x},\varepsilon}(x)\Big],\text{ for $\lambda_1, \lambda_2>0$}. 
\end{align}

The  next monotonicity result follows just as in the proof of Lemma~9.2 of \cite{MP17}.
\begin{lemma}\label{l4.1}
{\ }
\begin{enumerate}[(a)]
\item $U_i^{\vec{\lambda},\vec{x},\varepsilon}(x)>0$ is strictly decreasing in $\vec{\lambda} \in \{(\lambda_1,\lambda_2):\lambda_i>0,\lambda_{3-i}\geq 0\},$ for $i=1,2$.
\item $-U_{1,2}^{\vec{\lambda},\vec{x},\varepsilon}(x)>0$ is strictly decreasing in $\vec{\lambda} \in (0,\infty)^2.$
\end{enumerate}
\end{lemma}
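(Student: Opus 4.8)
The plan is to differentiate the defining identity \eqref{e4.3} through the canonical measure $\N_x$, following the proof of Lemma~9.2 of \cite{MP17}. Write $M_i:=\varepsilon^{-2}X_{G_\varepsilon^{x_i}}(1)\ge0$ for $i=1,2$ and $\langle\vec\lambda,\vec M\rangle:=\lambda_1M_1+\lambda_2M_2$, so that $U^{\vec\lambda,\vec x,\varepsilon}(x)=\N_x\bigl(1-e^{-\langle\vec\lambda,\vec M\rangle}\bigr)$ for $x\in G$. First I would justify differentiating this in $\vec\lambda$ under the $\sigma$-finite measure $\N_x$: for $\vec\lambda$ in a fixed compact subset of $(0,\infty)^2$ and $|h|$ small, the relevant difference quotients are dominated by a $\vec\lambda$-dependent multiple of $M_1+M_2$ (the elementary bounds $0\le\frac{1-e^{-hm}}{h}\le m$ and $m\,e^{-\lambda m}\le(e\lambda)^{-1}$ make this routine), and $\N_x(M_i)=\varepsilon^{-2}P_x(\tau_i<\infty)<\infty$ by Proposition~V.3 of \cite{Leg99}, exactly as in \eqref{e4.02}. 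Dominated and monotone convergence then give
\[
U_i^{\vec\lambda,\vec x,\varepsilon}(x)=\N_x\!\bigl(M_ie^{-\langle\vec\lambda,\vec M\rangle}\bigr),\qquad
U_{1,2}^{\vec\lambda,\vec x,\varepsilon}(x)=-\N_x\!\bigl(M_1M_2e^{-\langle\vec\lambda,\vec M\rangle}\bigr),
\]
the first valid for $\lambda_i>0,\ \lambda_{3-i}\ge0$ (extending to $\lambda_{3-i}=0$ by monotone convergence) and the second for $\vec\lambda\in(0,\infty)^2$; both are consistent with \eqref{e4.2} via the Poisson cluster decomposition \eqref{exitdecomp}.

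Next I would differentiate these two formulas once more (with the same type of domination) to obtain, for $i,j\in\{1,2\}$,
\[
\frac{\partial}{\partial\lambda_j}U_i^{\vec\lambda,\vec x,\varepsilon}(x)=-\N_x\!\bigl(M_iM_je^{-\langle\vec\lambda,\vec M\rangle}\bigr)\le0,\qquad
\frac{\partial}{\partial\lambda_j}\bigl(-U_{1,2}^{\vec\lambda,\vec x,\varepsilon}(x)\bigr)=-\N_x\!\bigl(M_1M_2M_je^{-\langle\vec\lambda,\vec M\rangle}\bigr)\le0,
\]
so $U_i$ and $-U_{1,2}$ are non-increasing in $\vec\lambda$. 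Since $\vec M$ is finite $\N_x$-a.e. we have $e^{-\langle\vec\lambda,\vec M\rangle}>0$ $\N_x$-a.e., and so all four assertions of the lemma reduce to the single claim
\begin{equation}\label{jointpos}
\N_x(M_1>0,\ M_2>0)>0\qquad\text{for every }x\in G.
\end{equation}
Indeed, on the positive-$\N_x$-measure event $\{M_1>0,M_2>0\}$ each of $M_ie^{-\langle\vec\lambda,\vec M\rangle}$, $M_iM_je^{-\langle\vec\lambda,\vec M\rangle}$, $M_1M_2e^{-\langle\vec\lambda,\vec M\rangle}$ and $M_1M_2M_je^{-\langle\vec\lambda,\vec M\rangle}$ is strictly positive, so all of the $\N_x$-integrals above are strictly positive; integrating the two displays along coordinate segments in $\vec\lambda$ then upgrades non-increasing to strictly decreasing, and the representations of $U_i$ and $-U_{1,2}$ give $U_i^{\vec\lambda,\vec x,\varepsilon}(x)>0$ and $-U_{1,2}^{\vec\lambda,\vec x,\varepsilon}(x)>0$.

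The only substantive step is \eqref{jointpos}, which I would prove using the special Markov property at $G_\varepsilon^{x_1}$. Since $|x_1-x_2|>5\varepsilon$ we have $d\bigl((G_\varepsilon^{x_2})^c,\overline{G_\varepsilon^{x_1}}\bigr)\ge|x_1-x_2|-2\varepsilon>3\varepsilon>0$, so \eqref{Widecomp} applies with $G=G_\varepsilon^{x_1}$ and $D=G_\varepsilon^{x_2}$, whence $M_2>0$ as soon as some excursion $W^i$ of $W$ outside $G_\varepsilon^{x_1}$ has $X_{G_\varepsilon^{x_2}}(W^i)\neq0$. Also $\N_x(M_1>0)=\N_x(X_{G_\varepsilon^{x_1}}(1)>0)=U^{\infty,\varepsilon}(x-x_1)\in(0,\infty)$ for $x\in G\subset G_\varepsilon^{x_1}$ (from \eqref{UinfLT} and the exponential formula $\P_{\delta_x}(X_{G_\varepsilon^{x_1}}(1)=0)=\exp(-\N_x(X_{G_\varepsilon^{x_1}}(1)>0))$). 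Conditioning on $\cE_{G_\varepsilon^{x_1}}$ and invoking \eqref{SMP1}, on the set $\{X_{G_\varepsilon^{x_1}}(1)>0\}$ the $\N_x$-conditional probability that no such $W^i$ charges $\partial G_\varepsilon^{x_2}$ equals
\[
\exp\!\Bigl(-\!\int U^{\infty,\varepsilon}(y-x_2)\,X_{G_\varepsilon^{x_1}}(dy)\Bigr)<1,
\]
because $X_{G_\varepsilon^{x_1}}$ is a non-zero measure carried by $\{|y-x_1|=\varepsilon\}$, on which $|y-x_2|\ge4\varepsilon>\varepsilon$ so that $U^{\infty,\varepsilon}(y-x_2)\in(0,\infty)$ (recall \eqref{Uinftyprop}); integrating over the positive-$\N_x$-measure event $\{X_{G_\varepsilon^{x_1}}(1)>0\}$ yields \eqref{jointpos}. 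I expect this last step --- producing positive canonical measure of clusters that charge \emph{both} exit boundaries --- to be the main obstacle, although it goes through cleanly once the (elementary) disjointness and support bookkeeping is in place; the remainder is essentially a transcription of the argument for Lemma~9.2 of \cite{MP17}.
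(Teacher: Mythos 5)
Your overall strategy coincides with the paper's: differentiate the exponential formula $U^{\vec\lambda,\vec x,\varepsilon}(x)=\N_x(1-e^{-\langle\vec\lambda,\vec M\rangle})$ under the canonical measure to get $U_i^{\vec\lambda,\vec x,\varepsilon}(x)=\N_x(M_ie^{-\langle\vec\lambda,\vec M\rangle})$ and $-U_{1,2}^{\vec\lambda,\vec x,\varepsilon}(x)=\N_x(M_1M_2e^{-\langle\vec\lambda,\vec M\rangle})$, and read off positivity and strict monotonicity (this is exactly the argument of Lemma~9.2 of \cite{MP17} that the paper invokes, the paper simply asserting $\N_x(X_{G_\varepsilon^{x_1}}(1)X_{G_\varepsilon^{x_2}}(1)>0)>0$ without proof). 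The differentiation and domination steps, and the reduction of all four assertions to the joint positivity $\N_x(M_1>0,M_2>0)>0$, are fine.

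The gap is in the step you yourself flag as the crux. Your appeal to \eqref{Widecomp} and \eqref{SMP1} with $G=G_\varepsilon^{x_1}$ and $D=G_\varepsilon^{x_2}$ is not legitimate: the hypothesis there is $d(D^c,\bar G)>0$, i.e.\ $D^c$ must lie at positive distance from $\overline{G}$, which geometrically means one must exit $G$ strictly before one can exit $D$. Here $D^c=\overline{B_\varepsilon(x_2)}$ is \emph{contained in} $\overline{G_\varepsilon^{x_1}}=\{|y-x_1|\ge\varepsilon\}$ (since $|x_1-x_2|>5\varepsilon$), so $d(D^c,\overline{G_\varepsilon^{x_1}})=0$; the quantity $|x_1-x_2|-2\varepsilon$ you computed is $d(D^c,G^c)$, not $d(D^c,\bar G)$. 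Concretely, mass can hit $\partial B_\varepsilon(x_2)$ without its ancestral line ever entering $\overline{B_\varepsilon(x_1)}$, so $X_{G_\varepsilon^{x_2}}$ is not a sum over the excursions of $W$ outside $G_\varepsilon^{x_1}$, and the identity you need ("$M_2>0$ as soon as some excursion $W^i$ has $X_{G_\varepsilon^{x_2}}(W^i)\neq0$") does not follow from \eqref{Widecomp}; moreover the exit measure of an excursion $W^i$ registers first exits of $G_\varepsilon^{x_2}$ \emph{after} the entrance time into $\overline{B_\varepsilon(x_1)}$, which need not match the full snake's first exits. The conclusion is nevertheless true and your computation can be repaired by working with ranges instead of exit measures: up to $\N_x$-null sets $\{M_i>0\}=\{\mR\cap\overline{B_\varepsilon(x_i)}\neq\emptyset\}$ (one inclusion is \eqref{exitsupport}, and both events have the same finite $\N_x$-measure $U^{\infty,\varepsilon}(x-x_i)$), the full range contains the range of every excursion outside $G_\varepsilon^{x_1}$, and conditionally on $\cE_{G_{\varepsilon}^{x_1}}$ the probability that no excursion's range meets $\overline{B_\varepsilon(x_2)}$ is exactly the exponential you wrote, which is $<1$ on $\{X_{G_\varepsilon^{x_1}}(1)>0\}$. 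With that substitution your proof of \eqref{jointpos}, and hence of the lemma, goes through.
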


\pf{\begin{proof}
(a) Differentiate \eqref{e4.3} with respect to $\lambda_i>0$ to conclude
\begin{align}\label{e4.21}
U_i^{\vec{\lambda},\vec{x},\varepsilon}(x)=\int \frac{X_{G_\varepsilon^{x_i}}(1)}{\varepsilon^2} \exp\big(-\sum_{i=1}^2 \lambda_i \frac{X_{G_\varepsilon^{x_i}}(1)}{\varepsilon^2}\big) \N_x(dW), \ \forall \lambda_i>0,
\end{align}
where differentiation through the integral follows by Monotone Convergence Theorem. This shows the strict monotonicity of $U_i^{\vec{\lambda},\vec{x},\varepsilon}(x)>0$ in $\vec{\lambda}$ since $\N_x(X_{G_\varepsilon^{x_i}}(1)>0)>0$.\\

(b) For $\lambda_1, \lambda_2>0$ we can take $i=1$ and differentiate \eqref{e4.21} with respect to $\lambda_2$ and use Monotone
Convergence to differentiate through the integral, and so conclude
\[-U_{1,2}^{\vec{\lambda},\vec{x},\varepsilon}(x)=\int \frac{X_{G_\varepsilon^{x_1}}(1)}{\varepsilon^2} \frac{X_{G_\varepsilon^{x_2}}(1)}{\varepsilon^2} \exp\big(-\sum_{i=1}^2 \lambda_i \frac{X_{G_\varepsilon^{x_i}}(1)}{\varepsilon^2}\big) \N_x(dW)>0.\] This shows the strict monotonicity of $-U_{1,2}^{\vec{\lambda},\vec{x},\varepsilon}(x)>0$ in $\vec{\lambda}$ since $\N_x(X_{G_\varepsilon^{x_1}}(1) X_{G_\varepsilon^{x_2}}(1)>0)>0$.
\end{proof}}

Note that 
\begin{align}\label{e4.20}
U^{\vec{\lambda},\vec{x},\varepsilon}(x)=U^{\lambda_{i}\varepsilon^{-2},\varepsilon}(x-x_i), \ \text{ for } \lambda_{i}>0 \text{ and } \lambda_{3-i}=0.
\end{align}
The above monotonicity results easily give the following, just as for Lemma~9.3 of \cite{MP17}.
\begin{lemma}\label{l4.2}
\begin{enumerate}[(a)]
\item For all $\lambda_i>12$ and $\lambda_{3-i}\geq 0$,
\[U_i^{\vec{\lambda},\vec{x},\varepsilon}(x)\leq \frac{2}{\lambda_i}(U^{\lambda_i \varepsilon^{-2},\varepsilon}(x_i-x)-U^{(\lambda_i/2) \varepsilon^{-2},\varepsilon}(x_i-x)) \leq \frac{2}{\lambda_i} \frac{2^p}{|x_i-x|^p} D^{\lambda_i/2}(2)\varepsilon^{p-2},\ \forall |x_i-x|\geq 2\varepsilon.\] 
\item For all $\lambda_1, \lambda_2>12$,
\begin{align*}
-U_{1,2}^{\vec{\lambda},\vec{x},\varepsilon}(x)&\leq \frac{4}{\lambda_1 \lambda_2}\min_{i=1,2} (U^{\lambda_i \varepsilon^{-2},\varepsilon}(x_i-x)-U^{(\lambda_i/2) \varepsilon^{-2},\varepsilon}(x_i-x))\\
&\leq \frac{4}{\lambda_1 \lambda_2}{2^p} ([D^{\lambda_1/2}(2)|x_1-x|^{-p}]\wedge [D^{\lambda_2/2}(2)|x_2-x|^{-p}]) \varepsilon^{p-2},\ \forall |x_i-x|\geq 2\varepsilon,\ i=1,2.
\end{align*}

\end{enumerate}
\end{lemma}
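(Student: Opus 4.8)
The plan is to run, for both parts, the scheme used in the proof of Lemma~9.3 of \cite{MP17}, based on the elementary bound
\[
t\,e^{-t}\le 2\bigl(e^{-t/2}-e^{-t}\bigr),\qquad t\ge 0,
\]
equivalently $(t+2)e^{-t/2}\le 2$, which holds because $h(t):=(t+2)e^{-t/2}$ satisfies $h(0)=2$ and $h'(t)=-\tfrac t2 e^{-t/2}\le 0$. Write $a_i=X_{G_\varepsilon^{x_i}}(1)/\varepsilon^2\ge 0$. Differentiating \eqref{e4.3} once in $\lambda_i$, and once more in $\lambda_{3-i}$ --- differentiation through the $\N_x$-integral being justified by monotone convergence, exactly as in the proof of Lemma~\ref{l4.1} --- I get
\[
U_i^{\vec\lambda,\vec x,\varepsilon}(x)=\int a_i\,e^{-\lambda_1 a_1-\lambda_2 a_2}\,\N_x(dW),\qquad -U_{1,2}^{\vec\lambda,\vec x,\varepsilon}(x)=\int a_1 a_2\,e^{-\lambda_1 a_1-\lambda_2 a_2}\,\N_x(dW).
\]
By \eqref{e4.20}, \eqref{e4.3} (with one argument set to $0$) and the radial symmetry of $U^{\mu,\varepsilon}$, one also has $\N_x(1-e^{-\mu a_i})=U^{\mu\varepsilon^{-2},\varepsilon}(x_i-x)$ for every $\mu>0$, so for $0<\mu'<\mu$ the difference $\int(e^{-\mu' a_i}-e^{-\mu a_i})\,\N_x(dW)=U^{\mu\varepsilon^{-2},\varepsilon}(x_i-x)-U^{\mu'\varepsilon^{-2},\varepsilon}(x_i-x)$ is a difference of $\N_x$-integrable functions, hence well defined.

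For part (a) I would apply the elementary bound with $t=\lambda_i a_i$, discard the extra factor $e^{-\lambda_{3-i}a_{3-i}}\le 1$, and integrate against $\N_x$ to obtain
\[
\lambda_i\,U_i^{\vec\lambda,\vec x,\varepsilon}(x)\le 2\bigl(U^{\lambda_i\varepsilon^{-2},\varepsilon}(x_i-x)-U^{(\lambda_i/2)\varepsilon^{-2},\varepsilon}(x_i-x)\bigr),
\]
which is the first inequality of (a). For part (b) I would write the integrand of $-U_{1,2}^{\vec\lambda,\vec x,\varepsilon}(x)$ as $(\lambda_1\lambda_2)^{-1}\bigl(\lambda_1 a_1 e^{-\lambda_1 a_1}\bigr)\bigl(\lambda_2 a_2 e^{-\lambda_2 a_2}\bigr)$, use $\lambda_2 a_2 e^{-\lambda_2 a_2}\le 1$ together with $\lambda_1 a_1 e^{-\lambda_1 a_1}\le 2(e^{-(\lambda_1/2)a_1}-e^{-\lambda_1 a_1})$, and integrate as before, obtaining $-U_{1,2}^{\vec\lambda,\vec x,\varepsilon}(x)\le\frac{2}{\lambda_1\lambda_2}\bigl(U^{\lambda_1\varepsilon^{-2},\varepsilon}(x_1-x)-U^{(\lambda_1/2)\varepsilon^{-2},\varepsilon}(x_1-x)\bigr)$; the symmetric choice gives the same bound with $1$ and $2$ interchanged, and since $2\le 4$ the minimum of the two is the first inequality of (b).

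For the second inequalities in (a) and (b), since $U^{\mu,\varepsilon}\uparrow U^{\infty,\varepsilon}$ as $\mu\uparrow\infty$ I have
\[
U^{\lambda_i\varepsilon^{-2},\varepsilon}(x_i-x)-U^{(\lambda_i/2)\varepsilon^{-2},\varepsilon}(x_i-x)\le U^{\infty,\varepsilon}(x_i-x)-U^{(\lambda_i/2)\varepsilon^{-2},\varepsilon}(x_i-x),
\]
and because $\lambda_i>12$ forces $\lambda_i/2>6$ while the hypothesis gives $|x_i-x|\ge 2\varepsilon$, Corollary~\ref{c1.4}(a) applied with $\lambda=\lambda_i/2$ and $R=2$ bounds the right-hand side by $2^p|x_i-x|^{-p}D^{\lambda_i/2}(2)\,\varepsilon^{p-2}$; substituting into the two displays above finishes both parts. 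I do not expect any real obstacle: the only things needing care are the verification of $t e^{-t}\le 2(e^{-t/2}-e^{-t})$, the $\N_x$-integrability of the relevant differences, and aligning the hypotheses with the constraints $\lambda\ge 6$ and $R>1$ of Corollary~\ref{c1.4}(a) --- which is exactly why $\lambda_i>12$ and $|x_i-x|\ge 2\varepsilon$ appear in the statement.
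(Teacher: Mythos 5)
Your proof is correct, and it reaches the same two first inequalities by what is at bottom the same averaging idea as the paper, but implemented at the level of the integrand rather than of the function $U$. The paper's proof never unpacks the $\N_x$-representation inside this lemma: it invokes the strict monotonicity of $U_1^{(\lambda_1',\lambda_2),\vec x,\veps}$ in $\lambda_1'$ (Lemma~\ref{l4.1}(a)) to bound $U_1^{\vec\lambda,\vec x,\veps}(x)$ by its average $\frac{2}{\lambda_1}\int_{\lambda_1/2}^{\lambda_1}U_1^{(\lambda_1',\lambda_2),\vec x,\veps}(x)\,d\lambda_1'$, drops $\lambda_2$ to $0$ by monotonicity in $\lambda_2$, and then applies the Fundamental Theorem of Calculus to identify the integral with $U^{\lambda_1\veps^{-2},\veps}-U^{(\lambda_1/2)\veps^{-2},\veps}$; for (b) it integrates $-\partial_{\lambda_1'}U_2$ over the same interval and then quotes part (a), arriving at the constant $4/(\lambda_1\lambda_2)$. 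Your pointwise inequality $t e^{-t}\le 2(e^{-t/2}-e^{-t})$ is exactly the integrand-level form of that averaging step (it says $a e^{-\lambda a}$ is at most twice its average over $\mu\in[\lambda/2,\lambda]$ of $a e^{-\mu a}$), so once you have the representations $U_i^{\vec\lambda,\vec x,\veps}=\N_x(a_i e^{-\lambda_1a_1-\lambda_2a_2})$ and $-U_{1,2}^{\vec\lambda,\vec x,\veps}=\N_x(a_1a_2e^{-\lambda_1a_1-\lambda_2a_2})$ — which are precisely what the proof of Lemma~\ref{l4.1} establishes by monotone differentiation through the integral — everything goes through, and you correctly handle the $\N_x$-integrability of the differences and the hypotheses of Corollary~\ref{c1.4}(a) ($\lambda_i/2>6$, $R=2$, $|x_i-x|/\veps\ge 2$). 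What your route buys is a slightly more self-contained argument that bypasses Lemma~\ref{l4.1} as a black box and yields the sharper constant $2/(\lambda_1\lambda_2)$ in (b), which of course implies the stated $4/(\lambda_1\lambda_2)$; what it costs is that you must explicitly justify the integral representations, whereas the paper's version only quotes monotonicity.
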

\pf{\begin{proof} 
By Corollary \ref{c1.4}(a) with $\lambda_i/2>6$ and $R=2$, it suffices to show the first inequalities in $(a)$ and $(b)$.\\

(a) By symmetry take $i= 1$. The monotonicity in Lemma \ref{l4.1}(a) and the Fundamental Theorem of Calculus imply
\begin{align*}
U_1^{\vec{\lambda},\vec{x},\varepsilon}(x)\leq \frac{2}{\lambda_1} \int_{\lambda_1/2}^{\lambda_1} U_1^{(\lambda_1^{'},\lambda_2),\vec{x},\varepsilon}(x) d\lambda_1^{'}&\leq \frac{2}{\lambda_1} \int_{\lambda_1/2}^{\lambda_1} U_1^{(\lambda_1^{'},0),\vec{x},\varepsilon}(x) d\lambda_1^{'}\\
&= \frac{2}{\lambda_1}(U^{\lambda_1 \varepsilon^{-2},\varepsilon}(x_1-x)-U^{(\lambda_1/2) \varepsilon^{-2},\varepsilon}(x_1-x)) .
\end{align*}
(b) Argue as above using the monotonicity in Lemma \ref{l4.1}(b) to see that
\begin{align*}
-U_{1,2}^{\vec{\lambda},\vec{x},\varepsilon}(x)&\leq \frac{2}{\lambda_1} \int_{\lambda_1/2}^{\lambda_1} -\frac{\partial}{\partial \lambda_1^{'}} U_2^{(\lambda_1^{'},\lambda_2),\vec{x},\varepsilon}(x) d\lambda_1^{'}\leq \frac{2}{\lambda_1}  U_2^{(\lambda_1/2, \lambda_2),\vec{x},\varepsilon}(x) \\
&\leq \frac{4}{\lambda_1 \lambda_2}(U^{\lambda_2 \varepsilon^{-2},\varepsilon}(x_2-x)-U^{(\lambda_2/2) \varepsilon^{-2},\varepsilon}(x_2-x)),
\end{align*}
the last line by part (a) with $i=2$. The first inequality now follows by symmetry.
\end{proof}}

Let $r_{\varepsilon}=2\varepsilon$ and assume $0<r_{\varepsilon}<\min\{|x_i-x|: i=1,2\}$. Set $T_{r_{\varepsilon}}^i=\inf\{t\geq 0: |B_t-x_i|\leq r_{\varepsilon} \}$ and $T_{r_{\varepsilon}}=T_{r_{\varepsilon}}^1 \wedge T_{r_{\varepsilon}}^2$, and let $(\cF_t)$ denote the right-continuous filtration generated by the Brownian motion $B$, which starts at $x$ under $P_x$.

\begin{lemma}\label{l4.3}
Let $\lambda_1, \lambda_2>0$.
\begin{enumerate}[(a)]
\item $U_1^{\vec{\lambda},\vec{x},\varepsilon}(B(t\wedge T_{r_{\varepsilon}}))-\int_0^{t\wedge T_{r_{\varepsilon}}} U^{\vec{\lambda},\vec{x},\varepsilon}(B(s))U_1^{\vec{\lambda},\vec{x},\varepsilon}(B(s)) ds$ is an $(\cF_t)$-martingale. 
\item For any $t>0$, \[U_1^{\vec{\lambda},\vec{x},\varepsilon}(x)=E_x\Big( U_1^{\vec{\lambda},\vec{x},\varepsilon}(B({t\wedge T_{r_{\varepsilon}}})\exp\big(-\int_0^{t\wedge T_{r_{\varepsilon}}} U^{\vec{\lambda},\vec{x},\varepsilon}(B(s)) ds\big)\Big).\]
\end{enumerate}
\end{lemma}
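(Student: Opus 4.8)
The plan is to reduce both parts to the single fact that $U_1^{\vec{\lambda},\vec{x},\varepsilon}$ is, on $G$, a classical solution of the linearisation of $\Delta U=U^2$, namely
\[
\tfrac12\Delta U_1^{\vec{\lambda},\vec{x},\varepsilon}=U^{\vec{\lambda},\vec{x},\varepsilon}\,U_1^{\vec{\lambda},\vec{x},\varepsilon}\qquad\text{on }G.
\]
I abbreviate $U=U^{\vec{\lambda},\vec{x},\varepsilon}$, $U_1=U_1^{\vec{\lambda},\vec{x},\varepsilon}$, and put $G'=\{|y-x_1|>2\varepsilon\}\cap\{|y-x_2|>2\varepsilon\}$, so $\overline{G'}\subset G$ (since $2\varepsilon>\varepsilon$) and $T_{r_\varepsilon}$ is the first exit time of $B$ from $G'$. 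I would first record the uniform bound $0\le U_1\le\varepsilon^{-2}$ on $G$: differentiating \eqref{e4.3} under $\N_y$ (justified by monotone convergence) yields $U_1(y)=\N_y\big(\varepsilon^{-2}X_{G_\varepsilon^{x_1}}(1)\exp(-\sum_i\lambda_i\varepsilon^{-2}X_{G_\varepsilon^{x_i}}(1))\big)\le\varepsilon^{-2}\N_y(X_{G_\varepsilon^{x_1}}(1))\le\varepsilon^{-2}$, the last step exactly as in the proof of Lemma~\ref{l4.0}.

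\textbf{Step 1: the linearised equation.} For small $h\ne0$ with $\lambda_1+h>0$, the difference quotient $W_h=h^{-1}\big(U^{(\lambda_1+h,\lambda_2),\vec{x},\varepsilon}-U\big)$ solves, by \eqref{e4.4}, the linear equation $\Delta W_h=c_hW_h$ on $G$ with $c_h=U^{(\lambda_1+h,\lambda_2),\vec{x},\varepsilon}+U$; by Lemma~\ref{l4.0} and the bound on $U_1$ above, $c_h$ and $W_h$ are bounded on $G$ uniformly in small $h$. Interior $W^{2,p}$ estimates with $p>d$ then make $\{W_h\}$ precompact in $C^1_{\mathrm{loc}}(G)$; since $W_h\to U_1$ pointwise (the partial derivative exists), the whole family converges to $U_1$ in $C^1_{\mathrm{loc}}(G)$, while $c_h\to 2U$ in $L^1_{\mathrm{loc}}(G)$ by dominated convergence in \eqref{e4.3}. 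Letting $h\to0$ gives $\Delta U_1=2UU_1$ distributionally on $G$, and since $2UU_1$ is continuous (indeed $C^2$, by Lemma~\ref{l4.0}) elliptic regularity upgrades $U_1$ to a classical $C^2$ (in fact $C^\infty$) solution of the displayed equation; this parallels the corresponding step for the two-point function in \cite{MP17}.

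\textbf{Step 2: It\^o and Feynman--Kac.} Given the linearised equation, part (a) follows from It\^o's formula, since $B(\cdot\wedge T_{r_\varepsilon})$ stays in $\overline{G'}\subset G$:
\[
U_1\big(B(t\wedge T_{r_\varepsilon})\big)=U_1(x)+\int_0^{t\wedge T_{r_\varepsilon}}\nabla U_1(B_s)\cdot dB_s+\int_0^{t\wedge T_{r_\varepsilon}}(UU_1)(B_s)\,ds,
\]
so the process in (a) equals $U_1(x)$ plus a continuous local martingale, which is bounded on each $[0,T]$ (as $U_1$ and $UU_1$ are bounded on $G$) and hence is a true $(\cF_t)$-martingale. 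For part (b) I would apply the integration-by-parts formula to $Y_t:=U_1(B(t\wedge T_{r_\varepsilon}))$ and the continuous finite-variation process $A_t:=\int_0^{t\wedge T_{r_\varepsilon}}U(B_s)\,ds$: with $M$ the martingale from part (a),
\[
d\big(Y_te^{-A_t}\big)=e^{-A_t}\,dM_t+e^{-A_t}\big[(UU_1)(B_t)-U_1(B_t)U(B_t)\big]1(t<T_{r_\varepsilon})\,dt,
\]
and the $dt$-term vanishes, so $Y_te^{-A_t}$ is a bounded continuous local martingale, hence a martingale; equating expectations at $0$ and $t$ gives $U_1(x)=E_x\big(Y_te^{-A_t}\big)$, which is (b).

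The only non-routine ingredient is Step~1 --- justifying differentiation of the semilinear equation $\Delta U=U^2$ in the boundary parameter $\lambda_1$ and obtaining classical regularity of the derivative $U_1$. Once that is secured, (a) and (b) are standard stochastic calculus.
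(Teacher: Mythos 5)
Your proof is correct, but for part (a) it takes a genuinely different route from the paper. The paper never establishes that $U_1^{\vec\lambda,\vec x,\veps}$ is a classical solution of the linearised equation: instead it applies It\^o only to $U^{\vec\lambda,\vec x,\veps}$ itself (which is $C^2$ and bounded by Lemma~\ref{l4.0}), obtaining that $U^{\vec\lambda,\vec x,\veps}(B(t\wedge T_{r_\veps}))-\int_0^{t\wedge T_{r_\veps}}\tfrac12\Delta U^{\vec\lambda,\vec x,\veps}(B(s))\,ds$ is a martingale, and then takes difference quotients in $\lambda_1$ of this martingale identity; the equation $\Delta U=U^2$ converts the difference quotient of $\tfrac12\Delta U$ into one of $\tfrac12 U^2$, which converges pointwise and boundedly to $U^{\vec\lambda,\vec x,\veps}U_1^{\vec\lambda,\vec x,\veps}$, so dominated convergence through the conditional expectations yields (a) with no elliptic regularity theory at all. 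Your Step~1 instead proves $\tfrac12\Delta U_1=UU_1$ classically via interior $W^{2,p}$ estimates, $C^1_{\mathrm{loc}}$ compactness of the difference quotients, and elliptic regularity, and then applies It\^o directly to $U_1$; this is more machinery but gives the stronger conclusion that $U_1$ is smooth and solves the linearised PDE, which the paper's argument deliberately avoids needing. One point to tighten in Step~1: mere continuity of the right-hand side $2UU_1$ does not suffice to upgrade a distributional solution to a classical one (and Lemma~\ref{l4.0} only gives $U\in C^2$, not $U_1$); you should instead invoke the local H\"older continuity of $U_1$ that your own $W^{2,p}\hookrightarrow C^{1,\alpha}$ compactness step already provides, so that $2UU_1\in C^{0,\alpha}_{\mathrm{loc}}$ and Schauder applies. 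Your uniform bound $U_1\le\veps^{-2}$ from the probabilistic representation is in fact cleaner than the paper's appeal to Lemma~\ref{l4.2}(a) (which formally requires $\lambda_1>12$), and your part (b) coincides with the paper's.
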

This result follows from Lemmas~\ref{l4.0}, \ref{l4.2} and It\^o's Lemma,  exactly as for  Lemma~9.4 in \cite{MP17},
and so the proof is omitted.
\pf{\begin{proof}
(a) By Lemma \ref{l4.0}, for $\delta>0$,
\begin{align}\label{e4.22}
\Big(\frac{\Delta U^{\vec{\lambda}+(\delta,0),\vec{x},\varepsilon}}{2}(x)-\frac{\Delta U^{\vec{\lambda},\vec{x},\varepsilon}}{2}(x)\Big) \delta^{-1}&=\Big(\frac{\big(U^{\vec{\lambda}+(\delta,0),\vec{x},\varepsilon}(x)\big)^2}{2}-\frac{\big(U^{\vec{\lambda},\vec{x},\varepsilon}(x)\big)^2}{2}\Big) \delta^{-1}\nonumber \\
&\to U_1^{\vec{\lambda},\vec{x},\varepsilon}(x) U^{\vec{\lambda},\vec{x},\varepsilon}(x),
\end{align}
 as $\delta \to 0$.
Moreover the bounds on $U_1^{\vec{\lambda},\vec{x},\varepsilon}(x)$ in Lemma \ref{l4.2}(a) and on $U^{\vec{\lambda},\vec{x},\varepsilon}(x)$ in Lemma \ref{l4.0} show that the
above pointwise convergence is also uniformly bounded for $x$ satisfying $|x -x_i| > r_{\varepsilon}$. It\^o's lemma
shows that $U^{\vec{\lambda},\vec{x},\varepsilon}(B(t\wedge T_{r_{\varepsilon}}))-\int_0^{t\wedge T_{r_{\varepsilon}}} 
\frac{\Delta U^{\vec{\lambda},\vec{x},\varepsilon}(B(s))}{2} ds$ is an $\cF_t$-martingale. Therefore if $s < t$ and $\delta > 0$,
\begin{align*}
E_x\Big(&\frac{U^{\vec{\lambda}+(\delta,0),\vec{x},\varepsilon}(B(t\wedge T_{r_{\varepsilon}}))-U^{\vec{\lambda},\vec{x},\varepsilon}(B(t\wedge T_{r_{\varepsilon}}))}{\delta} \Big| \cF_s\Big)\\
&=E_x\Big(\int_0^{t\wedge T_{r_{\varepsilon}}} \frac{\Delta U^{\vec{\lambda}+(\delta,0),\vec{x},\varepsilon}(B(r))-\Delta U^{\vec{\lambda},\vec{x},\varepsilon}(B(r))}{2\delta} dr \Big| \cF_s\Big).
\end{align*}
The left-hand side of the above approaches $E_x\Big(U_1^{\vec{\lambda},\vec{x},\varepsilon}(B(t\wedge T_{r_{\varepsilon}}))\Big| \cF_s\Big)$as $\delta \to 0$ (the uniform boundedness
of $U_1^{\vec{\lambda},\vec{x},\varepsilon}(x)$ noted above allows us to take the limit through the conditional expectation).
The result now follows by letting $\delta \to 0$ in the above and applying \eqref{e4.22} and the boundedness
established above to take the limits through the conditional expectation and Lebesgue integral on
the right-hand side.\\
(b) By (a), It\^o's lemma, and boundedness of $U_1^{\vec{\lambda},\vec{x},\varepsilon}(x)$ on $\{|x-x_1|>2\varepsilon\}$ (from Lemma \ref{l4.2} (a)), $U_1^{\vec{\lambda},\vec{x},\varepsilon}(B({t\wedge T_{r_{\varepsilon}}}))\exp\big(-\int_0^{t\wedge T_{r_{\varepsilon}}} U^{\vec{\lambda},\vec{x},\varepsilon}(B(s)) ds\big)$
is an $\cF_t$-martingale and so the result follows.
\end{proof}}

\begin{lemma}\label{l4.4}
For all $\lambda_1, \lambda_2>0$,
\begin{align*}
 -U_{1,2}^{\vec{\lambda},\vec{x},\varepsilon}(x)
 =&E_x\Big( \int_0^{T_{r_\varepsilon}} \prod_{i=1}^2 U_i^{\vec{\lambda},\vec{x},\varepsilon}(B({t}))\exp\big(-\int_0^{t} U^{\vec{\lambda},\vec{x},\varepsilon}(B(s)) ds\big)dt\Big)\\
 &+E_x\Big( \exp\big(-\int_0^{ T_{r_{\varepsilon}}} U^{\vec{\lambda},\vec{x},\varepsilon}(B(s)) ds\big)1(T_{r_{\varepsilon}}<\infty) (-U_{1,2}^{\vec{\lambda},\vec{x},\varepsilon}(B( T_{r_{\varepsilon}}))\Big)
 \end{align*}
\end{lemma}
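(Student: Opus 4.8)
The plan is to obtain the Feynman--Kac--type representation of Lemma~\ref{l4.4} by differentiating the identity in Lemma~\ref{l4.3}(b) with respect to $\lambda_2$ and then using a strong Markov/renewal argument to handle the contribution from the boundary $\partial B_{r_\veps}(x_1)\cup\partial B_{r_\veps}(x_2)$. Concretely, recall from Lemma~\ref{l4.3}(b) that for any $t>0$,
\[U_1^{\vec\lambda,\vec x,\veps}(x)=E_x\Bigl(U_1^{\vec\lambda,\vec x,\veps}(B(t\wedge T_{r_\veps}))\exp\bigl(-\int_0^{t\wedge T_{r_\veps}}U^{\vec\lambda,\vec x,\veps}(B(s))\,ds\bigr)\Bigr).\]
I would first let $t\uparrow\infty$; since $U_1^{\vec\lambda,\vec x,\veps}$ is bounded on $\{|y-x_i|\ge r_\veps\}$ (Lemma~\ref{l4.2}(a)) and, on $\{T_{r_\veps}=\infty\}$, $\int_0^t U^{\vec\lambda,\vec x,\veps}(B(s))\,ds$ grows like $\int^t c/|B(s)|^2\,ds$ near $\infty$ and hence the exponential factor vanishes (transience/the behaviour of $U^{\vec\lambda,\vec x,\veps}$ at infinity, as in the uses of Lemma~\ref{lem:22_7_1}), dominated convergence gives
\[U_1^{\vec\lambda,\vec x,\veps}(x)=E_x\Bigl(1(T_{r_\veps}<\infty)\,U_1^{\vec\lambda,\vec x,\veps}(B(T_{r_\veps}))\exp\bigl(-\int_0^{T_{r_\veps}}U^{\vec\lambda,\vec x,\veps}(B(s))\,ds\bigr)\Bigr).\]

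Next I would differentiate this in $\lambda_2>0$. Monotone/bounded convergence (justified exactly as in Lemma~\ref{l4.1}(b) and Lemma~\ref{l4.2}(b), where the relevant second derivatives are shown to exist and be bounded on $\{|y-x_i|\ge r_\veps\}$) lets one pass $\partial/\partial\lambda_2$ through the expectation. The integrand is a product of three $\lambda_2$-dependent factors: $U_1^{\vec\lambda,\vec x,\veps}(B(T_{r_\veps}))$, whose $\lambda_2$-derivative is $U_{1,2}^{\vec\lambda,\vec x,\veps}(B(T_{r_\veps}))$; and $\exp(-\int_0^{T_{r_\veps}}U^{\vec\lambda,\vec x,\veps}(B(s))\,ds)$, whose $\lambda_2$-derivative brings down $-\int_0^{T_{r_\veps}}U_2^{\vec\lambda,\vec x,\veps}(B(s))\,ds$. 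Collecting terms yields
\begin{align*}
-U_{1,2}^{\vec\lambda,\vec x,\veps}(x)
&=E_x\Bigl(1(T_{r_\veps}<\infty)\exp\bigl(-\int_0^{T_{r_\veps}}U^{\vec\lambda,\vec x,\veps}(B(s))\,ds\bigr)\bigl(-U_{1,2}^{\vec\lambda,\vec x,\veps}(B(T_{r_\veps}))\bigr)\Bigr)\\
&\quad+E_x\Bigl(1(T_{r_\veps}<\infty)\,U_1^{\vec\lambda,\vec x,\veps}(B(T_{r_\veps}))\exp\bigl(-\int_0^{T_{r_\veps}}U^{\vec\lambda,\vec x,\veps}(B(s))\,ds\bigr)\int_0^{T_{r_\veps}}U_2^{\vec\lambda,\vec x,\veps}(B(s))\,ds\Bigr).
\end{align*}
To turn the second term into the stated $\int_0^{T_{r_\veps}}\prod_iU_i^{\vec\lambda,\vec x,\veps}(B(t))\exp(-\int_0^tU\,ds)\,dt$ form, I would apply the Markov property at time $t$ inside the time integral: writing $\int_0^{T_{r_\veps}}U_2^{\vec\lambda,\vec x,\veps}(B(s))\,ds=\int_0^\infty 1(t<T_{r_\veps})U_2^{\vec\lambda,\vec x,\veps}(B(t))\,dt$, Fubini's theorem and the Markov property at $t$ (together with the identity of the previous paragraph applied at the point $B(t)$, and the absence of an extra indicator since $t<T_{r_\veps}$ forces $B(t)\in\{|y-x_i|>r_\veps\}$) replace the conditional expectation of the remaining factor by $U_1^{\vec\lambda,\vec x,\veps}(B(t))$. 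This is the step I expect to be the main obstacle: carefully checking that the Markov property may be applied at the (random, but non-stopping) time $t$ under the time integral, that all interchanges of integration are licensed by the boundedness estimates of Lemma~\ref{l4.2}, and that the factor $\exp(-\int_0^tU\,ds)$ splits correctly. Once this is in place, the two terms combine to give exactly the expression in Lemma~\ref{l4.4}. The argument mirrors the proof of Lemma~9.5 in \cite{MP17}, with half-space exit times replaced by the ball-exit time $T_{r_\veps}$.
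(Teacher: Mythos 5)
Your argument is essentially the paper's: differentiate the Feynman--Kac representation of $U_1^{\vec\lambda,\vec x,\veps}$ from Lemma~\ref{l4.3}(b) with respect to $\lambda_2$, then use Fubini and the Markov property at the (after Fubini, deterministic) time $t$ under the time integral, together with Lemma~\ref{l4.3}(b) applied at $B(t)$, to convert the cross term into $\int_0^{T_{r_\veps}}\prod_{i}U_i^{\vec\lambda,\vec x,\veps}(B(t))\exp\bigl(-\int_0^tU^{\vec\lambda,\vec x,\veps}(B(s))\,ds\bigr)\,dt$, the remaining term being the boundary contribution. The only deviation is that you send $t\to\infty$ in Lemma~\ref{l4.3}(b) \emph{before} differentiating, which makes the domination needed to pass $\partial/\partial\lambda_2$ through the expectation slightly more delicate (the factor $\int_0^{T_{r_\veps}}U_2^{\vec\lambda,\vec x,\veps}(B(s))\,ds$ brought down by the exponential is no longer bounded by $Ct$), whereas the paper differentiates at finite $t$, where this is immediate from the bounds of Lemma~\ref{l4.2}, and only then lets $t\to\infty$, using monotone convergence for the product term and dominated convergence (via the boundedness and decay at infinity of $-U_{1,2}^{\vec\lambda,\vec x,\veps}$) for the boundary term.
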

This follows from Lemmas~\ref{l4.2} and \ref{l4.3}, as in the proof of Lemma~9.5 of \cite{MP17}.
\pf{\begin{proof}
Using the bounds in Lemma \ref{l4.2} one may easily differentiate the representation for $U_{1}^{\vec{\lambda},\vec{x},\varepsilon}(x)$
in Lemma \ref{l4.3}(b) with respect to $\lambda_2 >0$ through the expectation and obtain
\begin{align}\label{e4.23}
 -U_{1,2}^{\vec{\lambda},\vec{x},\varepsilon}(x)
 =&E_x\Big( U_1^{\vec{\lambda},\vec{x},\varepsilon}(B(t \wedge T_{r_\varepsilon}))\exp\big(-\int_0^{t \wedge T_{r_\varepsilon} } U^{\vec{\lambda},\vec{x},\varepsilon}(B(s)) ds\big) \int_0^{t \wedge T_{r_\varepsilon} } U_2^{\vec{\lambda},\vec{x},\varepsilon}(B({s}))ds\Big)\nonumber \\
 &-E_x\Big(  U_{1,2}^{\vec{\lambda},\vec{x},\varepsilon}(B( t \wedge T_{r_{\varepsilon}})) \exp\big(-\int_0^{ t \wedge T_{r_\varepsilon} } U^{\vec{\lambda},\vec{x},\varepsilon}(B(s)) ds\big)\Big)\nonumber\\
 \equiv & I_1(t)+I_2(t).
 \end{align}
Use the Markov property, then Lemma \ref{l4.3}(b), and then Monotone Convergence to see that
\begin{align}\label{e4.24}
I_1(t)=&E_x\Big( \int_0^{t \wedge T_{r_\varepsilon}} U_2^{\vec{\lambda},\vec{x},\varepsilon}(B({s})) \exp\big(-\int_0^{s } U^{\vec{\lambda},\vec{x},\varepsilon}(B(r)) dr\big) \nonumber\\
& \ \ \ \ \ \times E_{B_s} \Big(U_1^{\vec{\lambda},\vec{x},\varepsilon}(B((t-s) \wedge T_{r_\varepsilon})) \exp\big(-\int_0^{(t-s) \wedge T_{r_\varepsilon} } U^{\vec{\lambda},\vec{x},\varepsilon}(B(r)) dr\big)\Big) ds\Big)\nonumber\\
=&E_x\Big( \int_0^{t \wedge T_{r_\varepsilon}} U_2^{\vec{\lambda},\vec{x},\varepsilon}(B({s})) U_1^{\vec{\lambda},\vec{x},\varepsilon}(B({s})) \exp\big(-\int_0^{s } U^{\vec{\lambda},\vec{x},\varepsilon}(B(r)) dr\big) ds\Big)\nonumber\\
\to &E_x\Big( \int_0^{ T_{r_\varepsilon}} U_2^{\vec{\lambda},\vec{x},\varepsilon}(B({s})) U_1^{\vec{\lambda},\vec{x},\varepsilon}(B({s})) \exp\big(-\int_0^{s } U^{\vec{\lambda},\vec{x},\varepsilon}(B(r)) dr\big) ds\Big), 
 \end{align}
as $t \to \infty.$
 Lemma \ref{l4.2}(b)  shows that $-U_{1,2}^{\vec{\lambda},\vec{x},\varepsilon}(x)$ is uniformly bounded on $\{x : |x- x_i| \geq \varepsilon, i = 1, 2\}$ and $\lim_{|x| \to \infty} -U_{1,2}^{\vec{\lambda},\vec{x},\varepsilon}(x)=0$. Therefore by Dominated Convergence
 \[I_2(t) \to E_x\Big( \exp\big(-\int_0^{ T_{r_{\varepsilon}}} U^{\vec{\lambda},\vec{x},\varepsilon}(B(s)) ds\big)1(T_{r_{\varepsilon}}<\infty) (-U_{1,2}^{\vec{\lambda},\vec{x},\varepsilon}(B( T_{r_{\varepsilon}}))\Big). \]
 as $t\to \infty$, and this, together with \eqref{e4.23} and \eqref{e4.24}, completes the proof.
\end{proof}}

\begin{proof}[Proof of Proposition \ref{p3.1}]
Recall $r_\varepsilon=2\varepsilon$. For the case $\varepsilon \in [\varepsilon_0/2,\varepsilon_0)$, the result follows immediately by  letting $c_{\ref{p3.1}}\geq e^{-2} 2^{2(p-2)} \varepsilon_0^{-2(p-2)}$ and by using $xe^{-x}\leq e^{-1}, \text{ for } x\geq 0$, so we assume 
\begin{align}\label{e4.81}
r_\varepsilon=2\varepsilon<\varepsilon_0.
\end{align}
 Recall that $T_{r_{\varepsilon}}^i=\inf\{t\geq 0: |B_t-x_i|\leq r_{\varepsilon} \}$ and $T_{r_{\varepsilon}}=T_{r_{\varepsilon}}^1 \wedge T_{r_{\varepsilon}}^2$. Since $|x_i|\geq \varepsilon_0$, we have $T_{r_{\varepsilon}}>0, P_0$-a.s.. We set $\vec{\lambda}=(\lambda,\lambda)$, $\vec{x}=(x_1,x_2)$, and $\Delta=|x_1-x_2|$, where the constant $\lambda>0$ will be chosen large below.

Apply  \eqref{e4.2} and Lemma \ref{l4.2}(a) to see that for $\lambda>12$,
\begin{align}\label{e4.8}
&\E_{\delta_0}\Big( \lambda^2 \frac{X_{G_\varepsilon^{x_1}}(1)}{\varepsilon^2} \frac{X_{G_\varepsilon^{x_2}}(1)}{\varepsilon^2}\exp\big(-\lambda \sum_{i=1}^2 \frac{X_{G_\varepsilon^{x_i}}(1)}{\varepsilon^2}\big)\Big)=\lambda^2 e^{-U^{\vec{\lambda},\vec{x},\varepsilon}(x)} \Big[U_1^{\vec{\lambda},\vec{x},\varepsilon}(0)U_2^{\vec{\lambda},\vec{x},\varepsilon}(0)-U_{1,2}^{\vec{\lambda},\vec{x},\varepsilon}(0)\Big]\nonumber \\
\leq& 2^{2p+2} (D^{\lambda/2}(2))^2 |x_1|^{-p}|x_2|^{-p} \varepsilon^{2(p-2)} -\lambda^2 U_{1,2}^{\vec{\lambda},\vec{x},\varepsilon}(0)\leq c\varepsilon_0^{-2p} \varepsilon^{2(p-2)}+\lambda^2(-U_{1,2}^{\vec{\lambda},\vec{x},\varepsilon}(0)).
\end{align}
To bound the last term, use Lemma \ref{l4.4} to get
\begin{align}\label{e4.9}
\lambda^2(-U_{1,2}^{\vec{\lambda},\vec{x},\varepsilon}(0)) =&\lambda^2 E_0\Big( \int_0^{T_{r_\varepsilon}} \prod_{i=1}^2 U_i^{\vec{\lambda},\vec{x},\varepsilon}(B({t}))\exp\big(-\int_0^{t} U^{\vec{\lambda},\vec{x},\varepsilon}(B(s)) ds\big)dt\Big)\nonumber\\
 &+\lambda^2 E_0\Big( \exp\big(-\int_0^{ T_{r_{\varepsilon}}} U^{\vec{\lambda},\vec{x},\varepsilon}(B(s)) ds\big)1(T_{r_{\varepsilon}}<\infty) (-U_{1,2}^{\vec{\lambda},\vec{x},\varepsilon}(B( T_{r_{\varepsilon}}))\Big)\nonumber\\
 \equiv&K_1+K_2.
\end{align}

We first consider $K_2$. On $\{T_{r_\varepsilon}<\infty\}$ we may set $x_\varepsilon(\omega)=B(T_{r_\varepsilon})$ and choose $i(\omega)$ so that $|x_i-x_\varepsilon|\geq \Delta/2$. By the definition of $T_{r_\varepsilon}, |x_i-x_\varepsilon|\geq r_\varepsilon=2\varepsilon, $ and so $|x_i-x_\varepsilon|\geq \frac{1}{2}(\Delta \vee r_\varepsilon).$ Lemma \ref{l4.2}(b) and the above imply
\[
\lambda^2(-U_{1,2}^{\vec{\lambda},\vec{x},\varepsilon}(B( T_{r_{\varepsilon}})))\leq 4\cdot {2^p} ( D^{\lambda/2}(2)(\Delta \vee r_\varepsilon)^{-p} 2^p) \varepsilon^{p-2}\leq c (\Delta \vee r_\varepsilon)^{-p}\varepsilon^{p-2}.
\]
 This shows that 
 \begin{align}\label{e4.5}
 K_2\leq c(\Delta \vee r_\varepsilon)^{-p}\varepsilon^{p-2} \sum_{i=1}^2 E_0\Big( 1(T_{r_{\varepsilon}}^i<\infty)\exp\big(-\int_0^{ T_{r_{\varepsilon}}^i} U^{\vec{\lambda},\vec{x},\varepsilon}(B(s)) ds\big) \Big).
 \end{align}
 Use \eqref{e4.20} and Corollary \ref{c1.4}(a) with $|B(s)-x_i|\geq r_\varepsilon=2\varepsilon$ and $R=2$ to see that
 \begin{align}\label{e4.11}
 U^{\vec{\lambda},\vec{x},\varepsilon}(B(s))\geq U^{\lambda\varepsilon^{-2},\varepsilon}(B(s)-x_i)\geq& U^{\infty,\varepsilon}(B(s)-x_i)- 2^p |B(s)-x_i|^{-p} D^{\lambda}(2)\varepsilon^{p-2}\nonumber \\
 \geq& V^{\infty}(B(s)-x_i)- 2^p |B(s)-x_i|^{-p} D^{\lambda}(2)\varepsilon^{p-2},
 \end{align}
 where the last follows by using \eqref{Uinflb} and scaling to see that $U^{\infty,\varepsilon}(x)=\varepsilon^{-2} U^{\infty,1}(x/\varepsilon) \geq \varepsilon^{-2} V^\infty(x/\varepsilon)=V^\infty(x)$ for all $|x|/\varepsilon>1$.
 Let $\tau_{r_\varepsilon}=\inf\{t: |B_t|\leq r_\varepsilon\}$ and let $\mu,\nu$ be as in \eqref{e1.5}. Use the above in \eqref{e4.5} and then use Brownian scaling to see that for $i=1,2$,
  \begin{align}\label{e4.6}
 &E_0\Big( 1(T_{r_{\varepsilon}}^i<\infty)\exp\Big(-\int_0^{ T_{r_{\varepsilon}}^i} U^{\vec{\lambda},\vec{x},\varepsilon}(B(s)) ds\Big) \Big)\nonumber\\
\leq&E_{-x_i}\Big( 1(\tau_{r_{\varepsilon}}<\infty)\exp\Big(\int_0^{ \tau_{r_{\varepsilon}}} \frac{2^p D^{\lambda}(2)\varepsilon^{p-2}}{|B(s)|^{p}}  ds\Big) \exp\Big(-\int_0^{ \tau_{r_{\varepsilon}}} \frac{2(4-d)}{|B(s)|^{2}}  ds\Big) \Big)\nonumber\\
\leq&E_{-x_i/r_{\varepsilon}}\Big( 1(\tau_{1}<\infty)\exp\Big(\int_0^{ \tau_{1}} \frac{2^p D^{\lambda}(2)\varepsilon^{p-2} r_\varepsilon^{2-p}}{|B(s)|^{p}}  ds\Big) \exp\Big(-\int_0^{ \tau_{1}} \frac{2(4-d)}{|B(s)|^{2}}  ds\Big) \Big)\nonumber\\
=& E_{|x_i|/r_{\varepsilon}}^{(2+2\nu)}\Big( \exp\Big(\int_0^{ \tau_{1}} \frac{4 D^{\lambda}(2)}{\rho_s^{p}}  ds\Big)\Big|\tau_{1}<\infty \Big) (|x_i|/r_{\varepsilon})^{-p}
  \end{align}
where we have used 
Lemma~\ref{lem:22_7_1} in the last line, and recalled that $p=\nu+\mu$. Choose $\lambda>12$ large such that 
\[2\gamma \equiv 2\cdot 4 D^{\lambda}(2) \leq 2(4-d)< \nu^2,\]  and then apply Lemma \ref{expbound2} 
 to conclude that \eqref{e4.6} is bounded by
\[c_{\ref{expbound2}} (p,\nu) (|x_i|/r_{\varepsilon})^{-p}\leq c_{\ref{expbound2} }(p,\nu) \varepsilon_0^{-p} r_{\varepsilon}^{p}.\]
So \eqref{e4.5} becomes 
 \begin{align}\label{e4.7}
K_2\leq c(\Delta \vee r_\varepsilon)^{-p}\varepsilon^{p-2} 2 c_{\ref{expbound2} }(p,\nu) \varepsilon_0^{-p} r_{\varepsilon}^{p} \leq c(\varepsilon_0)\Delta^{2-p} \varepsilon^{p-2} r_\varepsilon^{p-2}=2^{p-2} c(\varepsilon_0)\Delta^{2-p} \varepsilon^{2(p-2)}.
 \end{align}
 
In view of \eqref{e4.8}, \eqref{e4.9} and \eqref{e4.7}, it remains to prove
\begin{align}\label{e4.10}
K_1\leq C(\varepsilon_0) \Delta^{2-p} \varepsilon^{2(p-2)}.
 \end{align}
Let $\Delta_i=x_{3-i}-x_i$, so that $|\Delta_i|=\Delta$. Let $T_{r_\varepsilon}^{',i}=\inf\{t: |B_t|\leq r_\varepsilon \text{ or } |B_t-\Delta_i|\leq r_\varepsilon\}$. Lemma \ref{l4.2}(a) and then \eqref{e4.11} imply that
\begin{align}\label{e4.12}
K_1\leq \lambda^2 \frac{1}{\lambda^2} (2^{p+1}\varepsilon^{p-2} &D^{\lambda/2}(2))^2 E_0\Big( \int_0^{T_{r_\varepsilon}} \prod_{i=1}^2 |B_t-x_i|^{-p}\exp\Big(-\int_0^{t} U^{\vec{\lambda},\vec{x},\varepsilon}(B(s)) ds\Big)dt\Big)\nonumber\\
\leq c \varepsilon^{2(p-2)}\sum_{i=1}^2 &E_{-x_i} \Big( \int_0^{T_{r_\varepsilon}^{',i}}  |B_t|^{-p} |B_t-\Delta_i|^{-p}1( |B_t|\leq |B_t-\Delta_i| )\nonumber\\
&\times \exp\Big(\int_0^{ t} \frac{2^p D^{\lambda}(2)\varepsilon^{p-2}}{|B(s)|^{p}}  ds\Big) \exp\Big(-\int_0^{ t} \frac{2(4-d)}{|B(s)|^{2}}  ds\Big) dt\Big).
\end{align}
On $\{ |B_t|\leq |B_t-\Delta_i| \}$, we have \[  \Delta=|\Delta_i|\leq |B_t-\Delta_i|+|B_t|\leq 2|B_t-\Delta_i|, \] and hence
\[|B_t-\Delta_i|^{-p} \leq \Bigl(\frac{1}{2}\Delta \vee |B_t|\Bigr)^{-p} \leq 2^p (\Delta^{-p}\wedge |B_t|^{-p}).\]
Use $T_{r_\varepsilon}^{',i}\leq \tau_{r_\varepsilon}$ and Brownian scaling to see that
\begin{align*}
K_1\leq c 2^{p} \varepsilon^{2(p-2)}\sum_{i=1}^2 E_{-x_i} &\Big( \int_0^{\tau_{r_\varepsilon}}  |B_t|^{-p} (|B_t|^{-p}\wedge \Delta^{-p})\\
&\times \exp\Big(\int_0^{ t} \frac{2^p D^{\lambda}(2)\varepsilon^{p-2}}{|B(s)|^{p}}  ds\Big) \exp\Big(-\int_0^{ t} \frac{2(4-d)}{|B(s)|^{2}}  ds\Big) dt\Big)\\
\leq c \varepsilon^{2(p-2)}\sum_{i=1}^2 E_{-x_i/r_\varepsilon} &\Big( \int_0^{\tau_{1}} r_\varepsilon^{2-2p} |B_t|^{-p} (|B_t|^{-p}\wedge (\Delta/r_\varepsilon)^{-p}) \\
&\times \exp\Big(\int_0^{ t} \frac{2^p D^{\lambda}(2)\varepsilon^{p-2} r_\varepsilon^{2-p}}{|B(s)|^{p}}  ds\Big) \exp\Big(-\int_0^{ t} \frac{2(4-d)}{|B(s)|^{2}}  ds\Big)dt \Big)\\
= c \varepsilon^{-2}\sum_{i=1}^2 \int_0^\infty E_{-x_i/r_\varepsilon} &\Big( 1(t<\tau_1) |B(t\wedge \tau_1)|^{-p} (|B(t\wedge \tau_1)|^{-p}\wedge (\Delta/r_\varepsilon)^{-p}) \\
&\times \exp\Big(\int_0^{t\wedge \tau_1} \frac{4D^{\lambda}(2)}{|B(s)|^{p}}  ds\Big) \exp\Big(-\int_0^{t\wedge \tau_1} \frac{2(4-d)}{|B(s)|^{2}}  ds\Big)\Big) dt
\end{align*}
Now let $\delta=4D^{\lambda}(2)$,  $\mu, \nu$ be as in \eqref{e1.5}, and use Lemma~\ref{lem:22_7_1} as in 
\eqref{e4.6}, to get
\begin{align*}
K_1\leq c \varepsilon^{-2}\sum_{i=1}^2 \int_0^\infty (|x_i|/r_\varepsilon)^{\nu-\mu}E_{|x_i|/r_\varepsilon}^{(2+2\nu)} &\Big( 1(t<\tau_1) {\rho(t\wedge \tau_1)}^{-p} ({\rho(t\wedge \tau_1)}^{-p}\wedge (\Delta/r_\varepsilon)^{-p}) \\
&\times \exp\big(\int_0^{t\wedge \tau_1} \delta \rho_s^{-p}  ds\big) {\rho(t\wedge \tau_1)}^{-\nu+\mu} \Big)dt\\
= c  \varepsilon^{\mu-\nu-2} \sum_{i=1}^2 |x_i|^{\nu-\mu}E_{|x_i|/r_\varepsilon}^{(2+2\nu)} \Big(  \int_0^{\tau_1} &{\rho_t}^{-p-\nu+\mu} (\rho_t^{-p}\wedge (\Delta/r_\varepsilon)^{-p}) \exp\big(\int_0^{t} \delta \rho_s^{-p}  ds\big) dt\Big).
\end{align*}
We interrupt the proof of the proposition for another auxiliary result from \cite{MP17}.
\begin{lemma} \label{l3.1}
There is some universal constant $c_{\ref{l3.1}}>0$ such that for any $r>0$ with $r<(|x_i| \wedge \Delta)$ and $0<\delta<(p-2)(2-\mu)$, we have
\begin{align*}
E_{|x_i|/r}^{(2+2\nu)} \Big(  \int_0^{\tau_1} {\rho_t}^{-p-\nu+\mu}& (\rho_t^{-p}\wedge (\Delta/r)^{-p}) \exp\big(\int_0^{t} \delta \rho_s^{-p}  ds\big) dt\Big) \leq c_{\ref{l3.1}}  r^{-2+2p+\nu-\mu} |x_i|^{-2\nu} \Delta^{2-p}.
\end{align*}
\end{lemma}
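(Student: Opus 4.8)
The plan is to reduce the statement to a bound for a single Bessel functional by scaling. Set $a:=|x_i|/r$ and $M:=\Delta/r$; by the hypothesis $r<|x_i|\wedge\Delta$ we have $a,M>1$. Using $p=\mu+\nu$ from \eqref{pmunu} (so that $-p-\nu+\mu=-2\nu$) and matching powers of $r$, one checks that the claimed inequality is equivalent to
\begin{equation*}
J:=E_a^{(2+2\nu)}\Bigl(\int_0^{\tau_1}\rho_t^{-2\nu}\bigl(\rho_t^{-p}\wedge M^{-p}\bigr)\exp\Bigl(\int_0^t\delta\rho_s^{-p}\,ds\Bigr)dt\Bigr)\le c\,a^{-2\nu}M^{2-p},
\end{equation*}
where $\tau_1=\inf\{t:\rho_t\le1\}$ (so $\rho_t\ge1$ on $[0,\tau_1]$) and $c$ depends only on $d$.

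To remove the multiplicative weight, write $g(y)=y^{-2\nu}(y^{-p}\wedge M^{-p})$ and $h(x)=E_x^{(2+2\nu)}(\int_0^{\tau_1}g(\rho_t)\,dt)$. Substituting the identity $\exp(\int_0^t\delta\rho_s^{-p}ds)=1+\int_0^t\delta\rho_u^{-p}\exp(\int_0^u\delta\rho_s^{-p}ds)\,du$ into $J$, interchanging the $t$- and $u$-integrations (Tonelli; every integrand is non‑negative, so this is legitimate in $[0,\infty]$), and applying the Markov property at time $u$ gives
\begin{equation*}
J=h(a)+\delta\,E_a^{(2+2\nu)}\Bigl(\int_0^{\tau_1}\rho_u^{-p}\exp\Bigl(\int_0^u\delta\rho_s^{-p}ds\Bigr)h(\rho_u)\,du\Bigr).
\end{equation*}
Running exactly the same manipulation on $\widetilde J:=E_a^{(2+2\nu)}(\int_0^{\tau_1}\rho_u^{-2\nu-p}\exp(\int_0^u\delta\rho_s^{-p}ds)\,du)$, with $\widetilde h(x):=E_x^{(2+2\nu)}(\int_0^{\tau_1}\rho_t^{-2\nu-p}dt)$ in place of $h$ and then inserting a pointwise bound $\widetilde h(\rho_u)\le c_1\rho_u^{-2\nu}$, produces the self-improving inequality $\widetilde J\le\widetilde h(a)+c_1\delta\,\widetilde J$ in $[0,\infty]$.

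The computational core is to solve the associated linear ODEs explicitly. The generator of $\mathrm{BES}(2+2\nu)$ is $\tfrac12\mathcal L$ with $\mathcal L=\frac{d^2}{dx^2}+\frac{1+2\nu}{x}\frac{d}{dx}$, so $\mathcal L x^{\alpha}=\alpha(\alpha+2\nu)x^{\alpha-2}$; in particular $1$ and $x^{-2\nu}$ are $\mathcal L$‑harmonic, $\tfrac12\mathcal L(x^{2-2\nu-p})=\tfrac12(p-2)(2\nu+p-2)x^{-2\nu-p}$, and $\tfrac12\mathcal L(x^{2-2\nu})=(2-2\nu)x^{-2\nu}$. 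Solving $\tfrac12\mathcal L w=-f$ with $w(1)=0$ and the correct decay at $\infty$ (the probabilistic solutions $w=h,\widetilde h$ are $w(x)=\int_1^\infty f(y)G(x,y)\,dy$ for the Green's function $G$ of $\mathrm{BES}(2+2\nu)$ killed at $1$) gives
\begin{equation*}
\widetilde h(x)=\frac{2}{(p-2)(2\nu+p-2)}\,x^{-2\nu}\bigl(1-x^{2-p}\bigr)\le c_1\,x^{-2\nu},\qquad c_1:=\frac{2}{(p-2)(2\nu+p-2)},
\end{equation*}
and, after matching the piecewise solution for $f=g$ across $x=M$, the bound $h(x)\le c_0\,x^{-2\nu}M^{2-p}$ for all $x\ge1$, with $c_0=c_0(d)$; here one uses $x^{-2\nu}\ge M^{-2\nu}$ on $\{x\le M\}$ and $x^{2-p}\le M^{2-p}$ on $\{x\ge M\}$ to absorb every term into $x^{-2\nu}M^{2-p}$. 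To close the self-improving inequality, note $\widetilde J<\infty$: by Cauchy–Schwarz $\widetilde J\le E_a^{(2+2\nu)}(e^{2\int_0^{\tau_1}\delta\rho_s^{-p}ds})^{1/2}E_a^{(2+2\nu)}((\int_0^{\tau_1}\rho_u^{-2\nu}du)^2)^{1/2}$, whose second factor is finite by a further Green's-function computation and whose first factor is finite since $4\delta<\nu^2$ (Lemma~\ref{expbound2} on $\{\tau_1<\infty\}$, plus an unconditional exponential-moment bound for $\int_0^\infty\delta\rho_s^{-p}ds$ proved by the same argument). Hence $\widetilde J\le(1-c_1\delta)^{-1}\widetilde h(a)\le c_1(1-c_1\delta)^{-1}a^{-2\nu}$ \emph{provided} $c_1\delta<1$, and this is exactly where $\delta<(p-2)(2-\mu)$ is used: $c_1(p-2)(2-\mu)=\tfrac{2(2-\mu)}{2\nu+p-2}<1$ because $2-\mu<\nu$ in each of $d=1,2,3$ (a direct check with $\nu=\sqrt{\mu^2+4(4-d)}$), and moreover $\sup_{0<\delta<(p-2)(2-\mu)}(1-c_1\delta)^{-1}<\infty$. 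Therefore
\begin{equation*}
J\le h(a)+c_0 M^{2-p}\delta\,\widetilde J\le c_0 a^{-2\nu}M^{2-p}+\frac{c_0c_1\delta}{1-c_1\delta}\,a^{-2\nu}M^{2-p}=\frac{c_0}{1-c_1\delta}\,a^{-2\nu}M^{2-p},
\end{equation*}
which is a constant depending only on $d$; undoing the scaling gives the lemma.

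The step I expect to require the most care is the piecewise solve for $h$ with the discontinuous forcing $g$ — equivalently, splitting $\int_1^\infty g(y)G(x,y)\,dy$ across the regimes cut out by $\min(x,M)$ and $\max(x,M)$ — so as to recover precisely the factor $x^{-2\nu}M^{2-p}$ with a constant independent of $x$ and $M$; the contributions from $\{y>x\}$ and $\{y>M\}$ are the delicate ones and rely on the convergence exponents $2\nu+p>2$ and $p<2+2\nu$, which hold comfortably since $\nu>2$ for $d\le3$. A second, more routine matter is to make the integrability statements ($\widetilde J<\infty$ and the exponential moment on $\{\tau_1=\infty\}$) fully rigorous; this is standard given the Bessel estimates already in the paper but must be stated with care.
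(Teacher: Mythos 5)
Your proof is correct, and it takes a genuinely different route from the paper, whose ``proof'' of Lemma~\ref{l3.1} is only a pointer into the proof of Proposition~6.1 of \cite{MP17} (the chain (9.23)--(9.27) there and a case analysis on the relative sizes of $r$, $|x_i|$ and $\Delta$, with the exponential weight absorbed through Khasminskii-type bounds as in Lemma~\ref{expbound2}). You instead peel off the exponential weight with the renewal identity $e^{A_t}=1+\int_0^t A'_u e^{A_u}\,du$ and the Markov property, reducing everything to two explicit Poisson-equation solves for the $\mathrm{BES}(2+2\nu)$ generator plus a contraction argument. I checked the two steps you flagged: the piecewise solve does deliver $h(x)\le c_0x^{-2\nu}M^{2-p}$ (the matching gives $C_2=C_1+\tfrac{p}{2\nu(p-2)}M^{2-p}$, $B_1=O(M^{2-2\nu-p})$ and $C_1=-\kappa^{-1}M^{-p}+O(M^{2-2\nu-p})$ with $\kappa=2\nu-2$, and each term is absorbed exactly as you describe), and the contraction condition $\sup_{\delta<(p-2)(2-\mu)}c_1\delta<1$ reduces to $2(2-\mu)<2\nu+p-2$, i.e.\ to $p>2$, so your sufficient check $2-\mu<\nu$ is more than enough. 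What your route buys is transparency: the hypothesis $\delta<(p-2)(2-\mu)$ appears as an explicit contraction constant rather than being buried in the case analysis of \cite{MP17}, and the argument is self-contained. One simplification worth adopting: the Cauchy--Schwarz/exponential-moment digression, needed only to certify $\widetilde J<\infty$ (and which forces you to extend Lemma~\ref{expbound2} to $\{\tau_1=\infty\}$), can be skipped by running the self-improving inequality for the truncation $\widetilde J_T=E_a^{(2+2\nu)}\bigl(\int_0^{\tau_1\wedge T}\rho_u^{-2\nu-p}e^{A_u}\,du\bigr)\le Te^{\delta T}<\infty$, which satisfies the same bound $\widetilde J_T\le\widetilde h(a)+c_1\delta\,\widetilde J_T$, and then letting $T\to\infty$ by monotone convergence.
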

\begin{proof}
This is included in the proof of Proposition~6.1 of \cite{MP17} with $r=r_{\lambda}$. In particular, the above expectation appears in (9.23) of \cite{MP17} and is bounded by $eJ_i$ in (9.27) of that paper.  Following the inequalities in that work, noting we only 
need consider Case 1 or Case 3 (the latter with $r\le |x_i|\le \Delta$) at the end of the proof, we arrive at the above bound. \pf{[Note to self: Case 3 ends with a bound of 
$$cr^{p+2\nu-2}|x_i|^{2(1-\nu)}|\Delta^{-p}=cr^{-2+2p+\nu-\mu}|x_i|^{-2\nu}|x_i|^2\Delta^{-p}\le cr^{-2+2p+\nu-\mu}|x_i|^{-2\nu}\Delta^{2-p}.]$$}
\end{proof}
Returning now to the proof of Proposition \ref{p3.1}. Pick $\lambda>12$ large such that $\delta<(p-2)(2-\mu)$.  Note we assumed $|x_i|\geq \varepsilon_0>r_\varepsilon$ by \eqref{e4.81} and  $\Delta=|x_1-x_2|> 5\varepsilon>r_\varepsilon$ at the very beginning of this section. So use Lemma \ref{l3.1} applied with $r=r_\varepsilon$ to see that
\[K_1\leq c \varepsilon^{\mu-\nu-2} \sum_{i=1}^2 |x_i|^{{\nu}-\mu} c_{\ref{l3.1}}  r_\varepsilon^{-2+2p+\nu-\mu} |x_i|^{-2\nu} \Delta^{2-p}\leq C \varepsilon_0^{-\mu-\nu}\Delta^{2-p} \varepsilon^{2p-4}.\]  This gives \eqref{e4.10}, and so the proof is complete.
\end{proof}}

\bibliographystyle{plain}
\def\cprime{$'$}

\end{document}